%%%%%%%%%%%%%%%%%%%%%%%%%%%%%%%%%%%%%%%%%%%%%%%%
% Dissertation
%%%%%%%%%%%%%%%%%%%%%%%%%%%%%%%%%%%%%%%%%%%%%%%%%
\documentclass[headsepline,12pt,a4paper]{report}
\usepackage{amsmath,amssymb,amsfonts,amsthm,exscale,calc,hyperref}
\usepackage[latin1]{inputenc}
\usepackage{graphicx}
\usepackage{parskip}
\usepackage{floatflt}
\usepackage{picins}
\usepackage{mathrsfs}
\usepackage{vmargin}
\usepackage{dsfont}
\usepackage[ngerman,english]{babel}

\theoremstyle{theorem}
\newtheorem{Thm}{Theorem}
\newtheorem{thm}{Theorem}\numberwithin{thm}{chapter}
\newtheorem{lemma}[thm]{Lemma}
\newtheorem{prop}[thm]{Proposition}
\newtheorem{cor}[thm]{Corollary}
\newtheorem{question}{Question}

\theoremstyle{definition}

\newtheorem{Ex}[thm]{Example}

\newtheorem{Def}[thm]{Definition}

\newcommand{\Hm}[1]{\leavevmode{\marginpar{\tiny%
$\hbox to 0mm{\hspace*{-0.5mm}$\leftarrow$\hss}%
\vcenter{\vrule depth 0.1mm height 0.1mm width \the\marginparwidth}%
\hbox to 0mm{\hss$\rightarrow$\hspace*{-0.5mm}}$\\\relax\raggedright
#1}}}

\newcommand{\C}{{\mathbb C}}

\newcommand{\EE}{{\mathbb E}}
\newcommand{\h}{{\mathbb H}}

\newcommand{\N}{{\mathbb N}}
\newcommand{\PP}{{\mathbb P}}
\newcommand{\R}{{\mathbb R}}
\newcommand{\TT}{\mathbb{T}}
\newcommand{\Z}{{\mathbb Z}}

\newcommand{\A}{{\mathcal A}}
\newcommand{\E}{{\mathcal E}}

\newcommand{\T}{{\mathcal T}}
\newcommand{\V}{{\mathcal V}}
\newcommand{\W}{{\mathcal W}}

\newcommand{\Sp}{{\mathbb S}}

\newcommand{\dd}{{\partial}}

\newcommand{\Vis}{\mathrm{Vis}}
\newcommand{\Leb}{\mathrm{Leb}}
\newcommand{\Lip}{\mathrm{QC}}
\newcommand{\clos}{{\mathrm {clos}\,}}
\newcommand{\inn }{{\mathrm {int }}}

\newcommand{\qand}{{\quad\mathrm {and}\quad}}
\newcommand{\qqand}{{\qquad\mathrm {and}\qquad}}
\newcommand{\dist}{{\mathrm{dist}}}
\newcommand{\diam}{{\mathrm{diam}}}
\newcommand{\id}{{\mathrm{id}}}

\newcommand{\ka}{{\kappa}}
\newcommand{\al}{{\alpha}}
\newcommand{\be}{{\beta}}
\newcommand{\de}{{\delta}}
\newcommand{\De}{{\Delta}}
\newcommand{\eps}{{\varepsilon}}
\newcommand{\gm}{{\gamma}}
\newcommand{\Gm}{{\Gamma}}
\newcommand{\ph}{{\varphi}}
\newcommand{\lm}{{\lambda}}

\newcommand{\te}{{\theta}}
\newcommand{\om}{{\omega}}
\newcommand{\Om}{{\Omega}}
\newcommand{\si}{{\sigma}}

\newcommand{\ap}[1]{\left( #1\right)}
\newcommand{\mo}[1]{\left\vert #1\right\vert}
\newcommand{\na}[1]{\left\vert #1\right\vert_{\arg}}
\newcommand{\ma}[1]{\vert #1\vert_{\arg}}
\newcommand{\no}[1]{\left\Vert #1\right\Vert}

\newcommand{\set}[1]{\left\{ #1\right\}}
\newcommand{\ip}[1]{\langle #1\rangle}

\newcommand{\ab}[1]{\left( #1\right)}
\newcommand{\ac}[1]{\left\{ #1\right\}}
\newcommand{\as}[1]{\langle #1\rangle}
\newcommand{\av}[1]{\left\vert #1\right\vert}
\newcommand{\am}[1]{\vert #1\vert_{\arg}}
\newcommand{\aM}[1]{\left\vert #1\right\vert_{\arg}}
\newcommand{\aV}[1]{\left\Vert #1\right\Vert}

\newcommand{\ov}[1]{\overline{#1}}
\newcommand{\ow}[1]{\widetilde{#1}}
\newcommand{\oh}[1]{\widehat{#1}}

\let\Im\undefined
\let\Re\undefined
\DeclareMathOperator{\Im}{Im}
\DeclareMathOperator{\Re}{Re}
\DeclareMathAlphabet{\mathpzc}{OT1}{pzc}{m}{it}

\begin{document}
\newcommand{\HRule}{\rule{\linewidth}{0.5mm}}

\begin{titlepage}
\begin{center}
\HRule \\[0.4cm]
{ \Huge  \bfseries On the spectral theory of  operators on trees}\\[0.4cm]

\HRule \\[2.5cm]
\includegraphics[width=10cm]{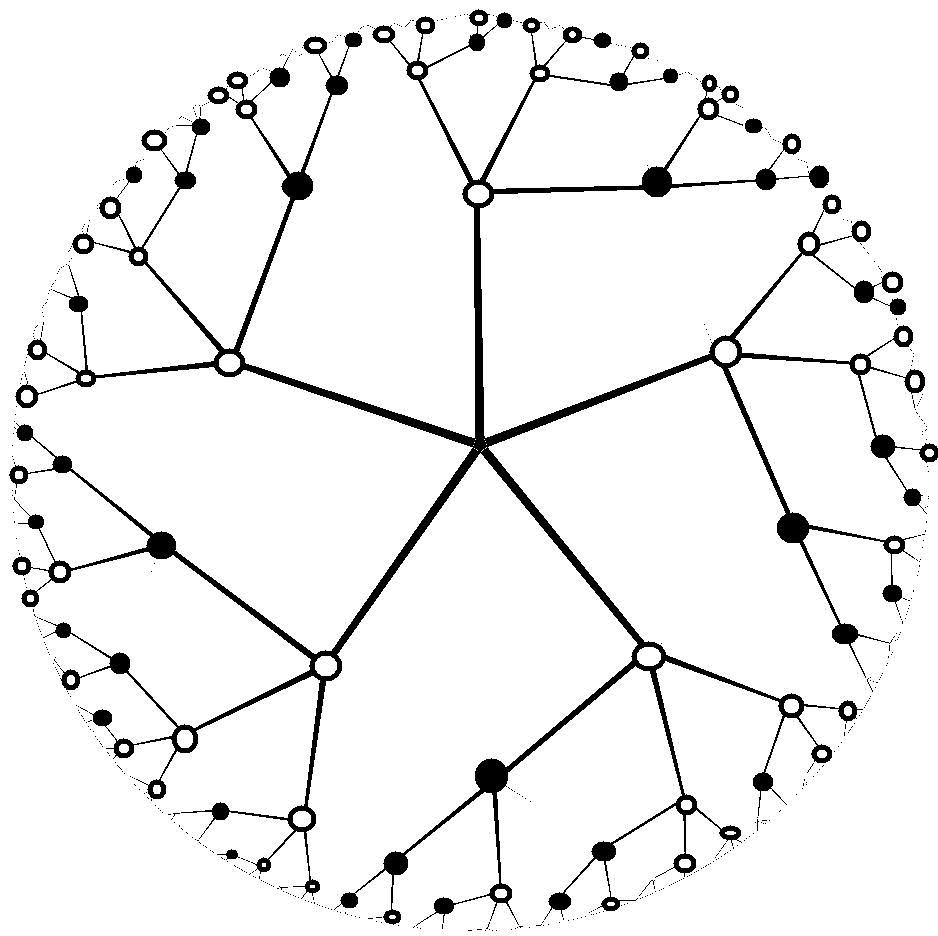}\\[2.5cm]
\bfseries{\huge Matthias Keller}\\[1cm]
\bfseries{\Large PhD Thesis}\\[.5cm]
\bfseries{\Large submitted September 3rd 2010}\\[.2cm]
\bfseries{\Large defended December 17th 2010}\\[1.5cm]
\vfill
\end{center}
\end{titlepage}

\abstract
We study a class of rooted trees with a substitution type structure. These trees are not necessarily regular, but exhibit a lot of symmetries. We consider nearest neighbor operators which reflect the symmetries of the trees. The spectrum of such operators is proven to be purely absolutely continuous and to consist of finitely many intervals.  We further investigate stability of the absolutely continuous spectrum under perturbations by sufficiently small potentials.
On the one hand, we look at a class of deterministic potentials which include radial symmetric ones. The absolutely continuous spectrum is stable under sufficiently small perturbations of this type if and only if the tree is not regular. On the other hand, we study random potentials. In this case, we prove stability of absolutely continuous spectrum for both regular and non regular trees provided the potentials are sufficiently small.

\pagenumbering{roman}
\cleardoublepage
\tableofcontents
%\thispagestyle{empty}
%\cleardoublepage
%\listoffigures
%\newpage
%\thispagestyle{empty}
\chapter*{Acknowledgements}
The first thanks belong undoubtedly to Daniel Lenz. I am more than fortunate to have an advisor, teacher and friend like him. The uncountable hours of explanations and discussion of ideas have contributed a major deal to what I know about mathematics today. I am very thankful for all the encouragement, advice and inspiration during the last years.

The work of this thesis was done partially at the Technical University of Chemnitz,  Princeton University and Friedrich Schiller University Jena. The years  in Chemnitz are of great importance to me. I am grateful to the group of Peter Stollmann for the friendly and supportive atmosphere and the profound education I received there. I would like to thank Michael Aizenman and the Princeton Department of Mathematics for their generous support and hospitality. Especially,  I am indebted to Simone Warzel for the numerous invaluable discussions and explanations on various topics during my time in Princeton. I am also very thankful to the group in Jena. In particular,  completing this work would not have been half as much fun without my office mates and friends Sebastian Haeseler and Felix Pogorzelski.

During my research I enjoyed the hospitality of various institutions. In particular,  I want to thank Norbert Peyerimhoff (Durham University), J\'ozef Dodziuk (Graduate Center CUNY, New York), Florian Sobieczky (Technical University of Graz) and Rados{\l}aw Wojciechowski (Group of Mathematical Physics, Lisbon) for the invitations and the wonderful time that I spent at their places.

During the first year of my graduate studies, I was financially supported by the German Research Council (DFG) and, for the rest of the time, by the German Business Foundation (SDW). I appreciate this support as it gave me the freedom to concentrate on research.

I am very grateful to my parents who always supported me strongly. I thank them for their love, encouragement and their outstanding example that you can achieve more than you ever thought by working hard.

My son Elliott is an abundant source of joy to me and a wonderful reminder that there is more to life than mathematics. Finally,  I want to express my love and gratitude to my wife Yvonne. Without her support and love during all these years this work would not have been possible. It is wonderful being married to her and I cannot imagine a life without her.

\thispagestyle{empty}
\pagenumbering{arabic}
\setcounter{page}{0} %reset the page counter
\setcounter{chapter}{-1}

\chapter{Introduction}
\begin{quote}
\begin{flushright}
\scriptsize{And out of the ground made the Lord God to grow
every \emph{tree} that is pleasant to the sight, and good for food;
the \emph{tree} of life also in the midst of the garden, and the \emph{tree} of knowledge of good and evil.} Genesis 2:9\end{flushright}
\end{quote}

The spectral theory of graphs has a long tradition and to this day  it is a vibrant branch of mathematics. The motivations to study nearest neighbor operators on graphs, often referred to as combinatorial Laplacians, reach from random walks over discrete spectral geometry to mathematical physics.

There is a large amount of literature found on questions concerning the return probability of random walks. We refer to \cite{Woe2} and references therein. A great survey on many classical results of the spectral theory of certain operators on graphs is given in \cite{MW}. A question of particular importance concerns the bottom of the spectrum. There are global geometric invariants such as isoperimetric constants and volume growth which can be used to obtain estimates for the bottom of the spectrum, see \cite{Do,DKe,DKa,Fu1,Fu2}. Since these geometric invariants are not always explicitly computable, one is also interested in the implications obtained by local quantities. For instance, curvature is a powerful tool to study planar graphs, see  \cite{Fu2,Hi,Kel,KLPS,KP,Woe1,Woj}.
However,  to study spectral properties in greater detail, such as the type of the spectral measures, one needs to take a much closer look at the particular structure of the graph. At the moment there are two classes of graphs which seem to be accessible for such a detailed spectral analysis.

One class consists of abelian coverings of finite graphs where one can apply Fourier/ Bloch theory to prove that the corresponding operators have absolutely continuous spectrum. For a recent development see \cite{HN} and references therein. The most prominent example is $\Z^{^d}$ for which the spectral theory was known long before. Moreover, in \cite{MRT} ladder graphs are studied for which the analysis can be more or less reduced to the situation of $\Z$.

The other class of examples are trees. It is mathematical folklore that the nearest neighbor Laplacian on a regular tree has purely absolutely continuous spectrum. Moreover, the spectrum consists of one interval depending on the branching number of the graph. In \cite{Ao}, a characterization for the existence of eigenvalues is given for operators on trees which are invariant under a group action. On the other hand, there are classes of trees for which singular spectrum has been proven. For example,  the Laplacian on sparsely branching, radial symmetric trees exhibits singular continuous spectrum, see \cite{Br}. In  \cite{BF} it is shown that Laplacians on radial symmetric trees whose branching number does not become eventually periodic have pure singular spectrum. The proofs of these results depend strongly on reducing the operators to a one dimensional situation. Another model of a tree which has pure point spectrum is the canopy tree, studied in \cite{AW}. From a particular viewpoint, the canopy tree is the limit of a regular tree truncated at the $n$-th sphere as $n$ tends to infinity.

The class of trees which we study in this work have a substitution type structure. Although still rich in symmetry,  the trees in this class are, in general, distinguished by a loss of regularity. In general, they are neither regular nor radial symmetric. One motivation to study such trees is that they appear in the context of regular hyperbolic tessellations  as spanning trees.

In the first step, we study nearest neighbor operators which exhibit the substitution type structure of the underlying tree. We call them label invariant operators. The spectrum of these operators turns out to be purely absolutely continuous and to consist of finitely many intervals, see Theorem~\ref{main1}. In the second step, we analyze how small perturbations of these operators by potentials effect the spectral properties.

Before describing the perturbation results in more detail we want to explain the motivation which comes from mathematical physics. There such operators are used to model quantum mechanical phenomena of solid states. In particular, the structure of the graph describes the order of atoms in the solid state and the spectral properties of the operators stand in close correspondence to its conductivity properties. The first step is to study systems which exhibit a lot of symmetries. In the second step, one is interested in models where the symmetric structure is broken.

Particular attention has been attracted by models where the loss of symmetry is due to some randomness. In this way the model becomes totally disordered but the symmetries remain in a statistical sense. A question which arises naturally is how much of the spectral properties are preserved under such small random perturbations. Operators of this kind have been widely studied for $\Z^{d}$ (or $\R^{d}$ in the continuous case). See the monographs \cite{CFKS,CL,Sto} for details and further reference.

A meta theorem in this context is that a lot of symmetry corresponds to absolutely continuous spectrum of the operator and conductance while large disorder corresponds to point spectrum and the behavior of an insulator. For operators on $\Z$, it was proven that an arbitrary small perturbation by a random potential turns the absolutely continuous spectrum of the operator completely into pure point spectrum  \cite{CKM,KS}.  For higher dimensions, i.e., $\Z^{d}$, $d\geq2$, this behavior is only proven for large disorder or energy regimes close to the band edges of the spectrum.

It is expected by physicists that, for $d=2$, the absolutely continuous spectrum turns immediately into point spectrum as soon as disorder occurs. On the other hand, for $d\geq3$, one expects that parts of the absolutely continuous spectrum remain stable for small disorder. This phenomena is referred to as the extended states conjecture or stability of absolutely continuous spectrum. But there are no rigorous results known so far for this questions on $\Z^{d}$.

However,  in \cite{Kl1} exactly this behavior expected for $\Z^{d}$, $d\geq3$, was observed and proven rigorously for regular trees, see also \cite{Kl2,Kl3}. Later in \cite{ASW1,FHS2} similar results were obtained by different methods of proof, see also \cite{ASW2,ASW3, FHS1,FHS3}. In some sense, a tree can be considered as an infinite dimensional analogue of the euclidean lattice. But the absence of cycles in the graph makes the analysis of the spectrum much more accessible. However,  the analysis of \cite{ASW1,FHS2,Kl1} uses strongly the regularity of the tree. A first step away from regularity was taken in \cite{Hal1,Hal2} where additional vertices are inserted into edges of  a regular tree.  There, extending the methods of \cite{FHS2} stability of pure absolutely continuous spectrum is proven for this model.

We generalize the geometric setting to a much larger class of trees. In contrast to regular trees, or the model of \cite{Hal1,Hal2}, the statement that the underlying unperturbed operator has pure absolutely spectrum is neither known nor immediate but has to be proven. Having this established, see Theorem~\ref{main1}, we then show stability of the absolutely continuous spectrum. This is done, firstly, for certain deterministic potentials, see Theorem~\ref{main2}, and, secondly, for random potentials, see Theorem~\ref{main3}.

In some sense, the absolutely continuous spectrum of our trees is even more stable than the one of regular trees. For regular trees, it is known that the absolutely continuous spectrum is  generically destroyed completely by radial symmetric potentials, see \cite[Appendix A]{ASW1}.
In clear contrast to this,   for non regular trees of our class, we prove stability of  absolutely continuous spectrum under radial symmetric potentials which are  sufficiently small, see Theorem~\ref{main2}. In this sense the loss of symmetry (i.e., the loss of regularity of the tree) stabilizes the absolutely continuous spectrum.

Secondly, we consider perturbations by random potentials. This extends the major statements of \cite{Kl1,ASW1,FHS2} to a much larger class of trees. We prove that arbitrary fixed parts of the absolutely continuous spectrum are stable almost surely for sufficiently small potentials. While the overall strategy of our approach owes to  \cite{FHS1,FHS2}, the actual steps of our proof are rather different. This reflects also in the results. For instance,  all our estimates are explicit, which could be used to calculate a lower bound on the magnitude of the perturbation parameter. Moreover, we obtain a certain continuity  for the density of the spectral measures in the absolutely continuous regime in the perturbation parameter.

The text is structured as follows. In Chapter~\ref{s:main} the class of trees and the operators of our interest are introduced. We state the results which are proven in the later chapters and discuss several examples. Chapter~\ref{c:basicconcepts} surveys the basic concepts of our analysis. Although many of these concepts are well known, we give full proofs for the sake of completeness. Moreover,  at some places we generalize known results for bounded operators on trees to unbounded operators. In Chapter~\ref{c:freeoperator} we prove two of the main results. This concerns the spectral theory of unperturbed operators and small perturbations by the class of deterministic potentials.  These results are submitted for publication in \cite{KLW}. In Chapter~\ref{c:main3} the issue of small random perturbations is tackled. %Our methods allow us to do explicit calculations. Therefore,  we sketch how to obtain explicit numerical estimates for the spectrum and certain quantities of interest in Chapter~\ref{c:computations}.

%%%%%%%%%%%%%%%%%%%%%%%%%%%%%%%%%%%%%%%%%%%%%%%%%%%%%%%%%%%%%%%%%%%%%%%%%%%%%%%%%%%%%%%%%%%%%%%%%%%%%%%%%%%%%%%%%%%%
%%%%%%%%%%%%%%%%%%%%%%%%%%%%%%%%%%%%%%%%%%%%%%%%%%%%%%%%%%%%%%%%%%%%%%%%%%%%%%%%%%%%%%%%%%%%%%%%%%%%%%%%%%%%%%%%%%%%
%% Chapter 1 Models and Results
%%%%%%%%%%%%%%%%%%%%%%%%%%%%%%%%%%%%%%%%%%%%%%%%%%%%%%%%%%%%%%%%%%%%%%%%%%%%%%%%%%%%%%%%%%%%%%%%%%%%%%%%%%%%%%%%%%%%
%%%%%%%%%%%%%%%%%%%%%%%%%%%%%%%%%%%%%%%%%%%%%%%%%%%%%%%%%%%%%%%%%%%%%%%%%%%%%%%%%%%%%%%%%%%%%%%%%%%%%%%%%%%%%%%%%%%%

\chapter{Models and Results}\label{s:main}
\begin{quote}
\begin{flushright}
\scriptsize{The fruit of the righteous is a \emph{tree} of life.} Proverbs 11:30 \end{flushright}
\end{quote}

In this chapter we provide the definitions and present the main results of this work. In Section~\ref{s:trees} we introduce the trees of interest, which are constructed by rules encoded in a substitution matrix. These trees exhibit a lot of symmetry although they are not necessarily regular trees. In Section~\ref{s:LIOp} we define operators which are compatible with the geometric structure. We will call them \emph{label invariant}. Their spectrum is purely absolutely continuous and consists of finitely many intervals. Moreover, the absolutely continuous spectrum of the underlying label invariant operators is stable for certain sufficiently small perturbations on subsets. These perturbations are, on the one hand, radial label symmetric potentials, see Section~\ref{s:RLSymPot}, and, on the other hand, random potentials, see Section~\ref{s:RandPot}.

\section{Trees with a substitution type structure}\label{s:trees}
A graph consists of a pair $(\V,\E)$. The set $\V$, called the \emph{vertex set}, is at most countable. The set $\E$, called the \emph{edge set}, consists of subsets of $\V$ with exactly two elements. If $\{x,y\}\in \E$ for two vertices $x,y\in\V$ we write $x\sim y$ and call $x$ and $y$ \emph{adjacent} or \emph{neighbors}. A \emph{path} of \emph{length} $n$ is a subset of $n$ distinct vertices $\{x_0,\ldots,x_n\}$, $n\in\N$, such that $x_{k-1}\sim x_{k}$ for $k=1,\ldots,n$. A graph is called \emph{connected} if every two vertices can be joined by a finite path. The distance $d(x,y)$ between two vertices $x,y\in \V$ in a connected graph is the smallest number $n$ such that $x$ and $y$ can be connected by a path of length $n$. A path $\{x_0,\ldots,x_n\}$ which additionally satisfies $x_0\sim x_n$ is called a \emph{cycle}. A connected graph without cycles is called a \emph{tree}. A tree $\T=(\V,\E)$ with a distinguished vertex $o\in \V$ is called a \emph{rooted tree} and $o$ is called the \emph{root}. We denote a rooted tree by the pair $(\T,o)$.
In a rooted tree the vertices can be ordered according to \emph{spheres}, i.e., the distance $|\cdot|=d(o,\cdot)$ to the root. A vertex in the $(n+1)$-th sphere which is connected to $x$ in the $n$-th sphere is called a \emph{forward neighbor} of $x$. For a vertex $x$ the \emph{forward tree} $\T_x=(\V_x,\E_x)$ is the subgraph of $\T=(\V,\E)$ for which every path connecting the root of $\T$ with a vertex in $\T_x$ passes $x$, i.e., $y\in\T_x$ whenever $d(x,y)=|y|-|x|$.

We now introduce the class of trees which will be the subject of our analysis: Let $\A$ be a finite set, whose elements will be called \emph{labels}. Furthermore, let a matrix $M$ be given
\begin{align*}
M : \mathcal{A}\times \mathcal{A}\to \N_0, \quad (j,k) \mapsto M_{j,k},
\end{align*}
which we refer to as the \emph{substitution matrix}.
To each label $j\in\A$ we construct inductively a tree $\T=\T(M,j)$ with the vertex set $\V=\V(M,j)$ and the edge set $\E=\E(M,j)$. Each tree comes with a labeling of the vertices, i.e., a function
$$a:\V\to\A,$$
assigning to each vertex its label as  follows: The root of the tree gets the label $j$. Each vertex with label $k\in\A$ of the $n$-th sphere is joined by edges to $M_{k,l}$ vertices of label $l$ of the $(n+1)$-th sphere. %We call a tree constructed in this way a \emph{tree with finite forward cone type}.

Let us give some examples.
\medskip

\begin{Ex}\label{ex:M} (1.) Assume $\A$ consists of only one element $j$ and $M_{j,j}=k$ for some $k\ge1$. Then, $\T=\T(M,j)$ is a $k$-regular tree, i.e., a tree where each vertex has exactly $k$ forward neighbors.

(2.) Let $\A=\{1,2\}$ and $M=\left(
\begin{array}{cc}
2 & 1 \\
1 & 1 \\
\end{array}
\right).
$
Figure~\ref{f:tree} illustrates the tree $\T=\T(M,2)$.
\begin{figure}[!h]
\centering
\scalebox{0.3}{\includegraphics{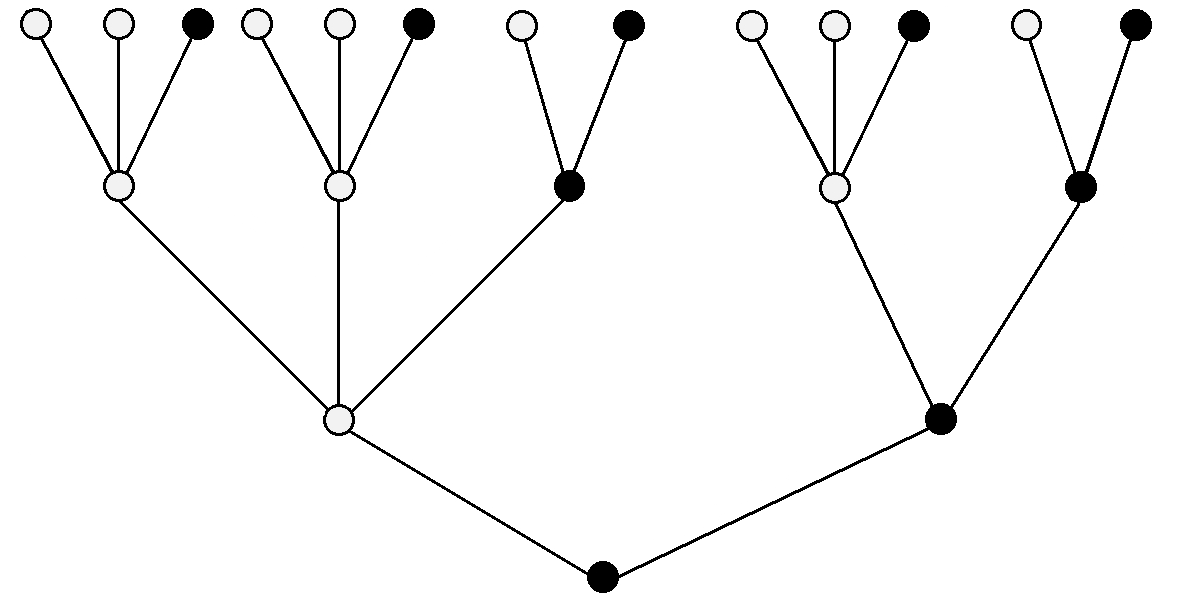}}
\caption{An example of a tree constructed by a substitution matrix.}\label{f:tree}
\end{figure}
\end{Ex}

\begin{figure}[!h]
\centering
\scalebox{0.3}{\includegraphics{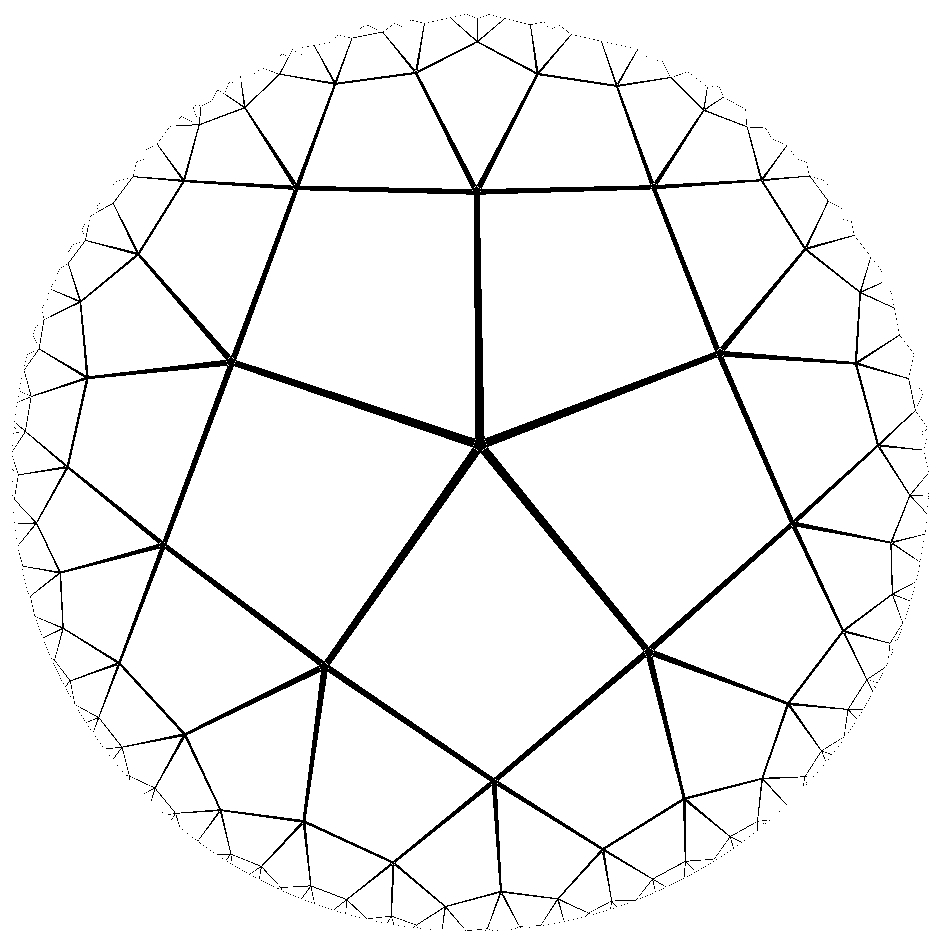}}
\scalebox{0.3}{\includegraphics{tess45_spanntree.jpg}}
\caption{The regular hyperbolic (4,5)-tessellation and a spanning tree which is constructed by a substitution matrix.}
\label{f:tess}
\end{figure}
Let us mention that such trees occur in a very natural way as spanning trees of regular tessellations of the hyperbolic plane. We want to illustrate this connection by giving a particular example in Figure~\ref{f:tess}. There, the $(4,5)$-tessellation is pictured on the left hand side. The pair $(4,5)$ means that five squares meet in any vertex.
Now, consider five copies of the tree $\T=\T(M,1)$ introduced in Example~\ref{ex:M}~(2.) glued together by connecting the roots of the copies to an additional vertex. The resulting tree is now a spanning tree of the $(4,5)$-tessellation which is illustrated at the right hand side of Figure~\ref{f:tess}.

For the purposes pursued in this text we need some additional assumptions on $M$. First of all, we want to exclude the one-dimensional case:
\begin{itemize}
\item [(M0)] If $\A$ contains only one element, then $M>1$,\emph{ (non one-dimensional)}.
\end{itemize}
The assumption excludes the case of the graph with vertex set $\N$ and edges $\{n,n+1\}$, $n\in\N$. Although this assumption will not be necessary for the results on the unperturbed operators, it is essential for the perturbation theory. We will also assume the following:
\begin{itemize}
\item [(M1)] $M_{j,j} \geq1$ for all $j\in \A$ \emph{(positive diagonal)}.
\item [(M2)] There exists $n=n(M)\in\N$ such that $M^n$ has positive entries \emph{(primitivity)}.
\end{itemize}

There are very natural geometric interpretations of (M1) and (M2). Assumption (M1) guarantees that each vertex has a vertex of its own label as a forward neighbor. On the other hand, (M2) implies that in the forward tree of every vertex one can find vertices of every label.

Note that the assumptions (M0), (M1), (M2) imply that the vertex degree of every vertex is larger or equal to three. Suppose the opposite, which implies that there is a vertex which has only one forward neighbor. Then, by (M1) the only forward neighbor has the same label. By (M2) there cannot be any other labels than this one and we are in the one dimensional situation which is excluded by (M0).

We want to discuss (M1) and (M2) by giving some examples and counter examples.
\medskip

\begin{Ex}\label{ex:M2} (1.) If $M$ has strictly positive entries, i.e., $M:\A\times\A\to\N$, then $M$ satisfies (M1) and (M2). This is in particular true for Example~\ref{ex:M}~(1.) and (2.)  above as well as for the spanning tree of Figure~\ref{f:tess}.

(2.) The trees investigated in \cite{Hal1,Hal2} can also be constructed by a substitution matrix. They do satisfy (M2) but not (M1). For the precise definition we refer the reader to \cite{Hal1,Hal2}. Loosely speaking, they are obtained from a regular tree by adding additional vertices within the existing edges.
We give the most prominent example of this class, which is the Fibonacci tree. The Fibonacci tree can be constructed by $\A=\{1,2\}$ and
$$M=\left(
\begin{array}{cc}
1 & 1 \\
1 & 0 \\
\end{array}
\right).
$$
Clearly, (M1) is not satisfied but since
\begin{align*}
M^2=\left(
\begin{array}{cc}
2 & 1 \\
1 & 1 \\
\end{array}
\right),
\end{align*}
we have (M2). The tree constructed by $M^2$ is the one of Example~\ref{ex:M}~(2.) and  Figure~\ref{f:tree} which yields the spanning trees illustrated in Figure~\ref{f:tess}. Sometimes this tree is also referred to as the Fibonacci tree or the two step Fibonacci tree.

(3.) In the literature on random walks and related topics (see for example \cite{Kro,KT,NW,Mai,Tak}) self similar graphs are widely studied. Our class of trees is there often called trees of finite cone type (sometimes known as periodic trees). Another interesting class can be described as follows: Let $\T=(\V,\E)$ be a tree. Two vertices $x,y\in\V$ are called equivalent if there is a graph isomorphism (i.e., a bijective map which preserves the adjacency relation) mapping $x$ to $y$. Now, assume that there are only finitely many equivalence classes. In particular, this means that $\T$ admits only finitely many rooted trees $(\T,x)$, ${x\in\V}$  up to graph isomorphisms. In this case, $\T$ can be constructed by a substitution matrix, but does not satisfy (M2).
\end{Ex}

\section{Label invariant operators}\label{s:LIOp}
We next turn to the operators which are compatible with the tree structure. Let $\T=(\V,\E)$ be a tree $\T(M,j)$ given by a substitution  matrix $M$ on a finite set of labels $\A$ satisfying (M0), (M1), (M2) and whose root carries the label $j\in\A$. Moreover, let $\nu:\V\to(0,\infty)$ be a \emph{label invariant} measure, i.e., for $x,y\in\V$ with $a(x)=a(y)$ one has $\nu(x)=\nu(y)$. We consider the Hilbert space of $\nu$-square-summable functions on the vertices
\begin{align*}
\ell^2(\V,\nu)=\{\ph:\V\to \C\mid \sum_{x\in\V}|\ph(x)|^2\nu(x)<\infty\}.
\end{align*}
For $\nu\equiv1$ we simply write $\ell^2(\V)$. We study self adjoint operators $T:\ell^2(\V,\nu)\to \ell^2(\V,\nu)$ acting as
\begin{align}\label{e:T}
(T \ph)(x) =\sum_{x\sim y} t(x,y) \ph (y) +w(x) \ph(x),
\end{align}
where $t:\V\times\V\to\C$ and $w:\V\to\R$ are functions such that $t$ respects the adjacency relation and is symmetric with respect to $\nu$, i.e., for all $ x, y \in \V $
\begin{itemize}
\item [(T0)] $t(x,y)\neq0$ if and only if $x\sim y$ \emph{(adjacency)},
\item [(T1)] $ t(x,y)\nu(x) = \overline{t(y,x)}\nu(y)$ \emph{(symmetry)},
\end{itemize}
(here the overbar denotes complex conjugation) and $t$ and $w$ are \emph{label invariant}, i.e., there are
a matrix $ (m_{j,k})_{j,k}$ and a vector $ (m_{j})_{j\in\A}$ on the labels such that for all $ x, y \in \V $
\begin{itemize}
\item [(T2)]
$m_{a(x),a(y)}= | t(x,y) |^2 \, M_{a(x),a(y)}$ and $m_{a(x)}=w(x)$ \emph{(label invariance)}.
\end{itemize}
The multiplication by $ M $ in the first term is for convenience further down. Note that the positive diagonal assumption (M1) and the adjacency assumption (T0) imply that $m_{j,j}>0$ for all $j\in\A$. \medskip

\begin{Def}
We call a linear operator $T$ on $\ell^{2}(\V,\nu)$ acting as \eqref{e:T} and satisfying (T0), (T1) and (T2) a \emph{label invariant operator} on $\T=\T(M,j)$.
\end{Def}

Obviously, a label invariant operator is always bounded. We want to discuss the most important examples.
\medskip

\begin{Ex}\label{ex:operators} Let $\deg:\V\to \N$ be the function which associates to a vertex $x\in\V$ the vertex degree, i.e., the number of edges emanating from $x$. Moreover, let  $\de_o$ be the function which equals one at the root $o$ of the tree and zero elsewhere.

We discuss three combinatorial versions of the Laplace operator on a tree. In order to ensure that the operators are  label invariant we add a boundary condition in form of a potential at the root. This is often referred to as Dirichlet boundary conditions.

(1.) Let $\nu\equiv 1$, $t(x,y)=-1$ for all $x\sim y$ and $w=\deg+\de_o$. Then, $T$ is the nearest neighbor Laplacian $\De$ on $\ell^2(\V)$ with Dirichlet boundary condition at the root $o$ given  by
$$(\De\ph)(x)=\sum_{y\sim x}(\ph(x)-\ph(y))+\ph(o)\de_o(x).$$
The operator is widely used in mathematical physics. The operator $\De$ is positive and bounded by the constant $2\max_{x\in\V}\deg(x)=2(1+\max_{j\in\A}\sum_{k\in\A}M_{j,k})$.

(2.) Let $\nu=\deg+\de_o$,
$t(\cdot,y)=-1/\nu(\cdot)$ for $y\in\V$ and $w\equiv1$. Then, one obtains for $T$ the normalized nearest neighbor Laplacian $\ow \De$ on $\ell^2(\V,\nu)$ with Dirichlet boundary condition at the root $o$ given by
\begin{equation*}
(\ow\De\ph)(x)=\frac{1}{\nu(x)}\sum_{y\sim x}(\ph(x)-\ph(y))+\frac{\ph(o)}{\nu(o)}\de_o(x).
\end{equation*}
The operator is often used in the context of discrete spectral geometry and random walks. The operator $\ow \De$ is positive, bounded and its spectrum is contained in $[0,2]$.

(3.) Let $\nu\equiv 1$, $t(x,y)=-1$ if $x\sim y$ and $w\equiv0$. This gives an operator on $\ell^2(\V)$ called the adjacency matrix via
\begin{equation*}
(A\ph)(x)=\sum_{y\sim x}\ph(y).
\end{equation*}

%(4.) Let us give an example of a label invariant operator which reflects almost no invariance with respect to the tree. Let $\nu\equiv 1$ and $\te:\V\times\V\to\R$ which satisfies (T0) and (T1) for $t=\te$ (but not necessarily (T2)). Let the operator $\Te:\ell^2(\V)\to\ell^2(\V)$ act as
%\begin{equation*} (\Te\ph)(x)=\sum_{y\sim x}e^{i\te(x,y)}\ph(y). \end{equation*}
%Choose the matrix $(m_{j,k})$  as $M$ and the vector $(m_j)$ equal to zero we see that $\Te$ satisfies (T2)  since  $\mo{e^{i\te(x,y)}}=1$ for all $x,y\in\V$.
\end{Ex}

We now state our first main result which concerns the spectrum of label invariant operators.
\medskip

\begin{Thm}\label{main1}(Label invariant operators.) Let $T$ be a label invariant operator. Then, the spectrum of $T$ consists of finitely many intervals and is purely absolutely continuous.
\end{Thm}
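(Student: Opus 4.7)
The plan is to translate the spectral problem into a finite system of polynomial Herglotz equations for label-indexed Green functions, and then to extract pure absolute continuity from the algebraic structure of that system.

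For each label $j \in \A$ pick a vertex $x_j$ of label $j$ and let $T_j$ denote the restriction of the label invariant operator to the forward subtree $\T_{x_j}$. Label invariance together with the substitution structure makes $T_j$ well-defined up to unitary equivalence and makes the forward Green function
$$\Gm_j(z) := \langle \delta_{x_j}, (T_j - z)^{-1} \delta_{x_j} \rangle, \qquad z \in \C \setminus \R,$$
depend on $j$ alone. A Schur complement at $x_j$, using (T1) to handle the $\nu$-weighted inner product and (T2) to assemble the contributions of the $M_{j,k}$ forward neighbors of label $k$, yields the coupled recursion
$$\Gm_j(z)^{-1} = m_j - z - \sum_{k \in \A} \alpha_{j,k}\,\Gm_k(z), \qquad j \in \A,$$
with non-negative coefficients $\alpha_{j,k}$ whose support coincides with that of $M_{j,k}$. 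By (M1) the diagonal coefficients $\alpha_{j,j}$ are strictly positive, and by (M2) the coupling matrix $(\alpha_{j,k})$ is primitive.

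The centerpiece is a complex-analytic analysis of this polynomial system. The Herglotz asymptotic $\Gm_j(z) = -1/z + O(z^{-2})$ singles out a distinguished solution branch among the finitely many algebraic branches. By the implicit function theorem this branch extends holomorphically across every $E \in \R$ at which the Jacobian of the system is non-degenerate, so the exceptional set $\Sigma \subset \R$ lies in the real projection of the branch locus and, being contained in the zero locus of the (non-trivial) discriminant polynomial, is finite. On $\R \setminus \Sigma$ each $\Gm_j$ admits continuous boundary values. Taking imaginary parts in the recursion shows that $(\Im \Gm_j(E + i0))_{j \in \A}$ is a non-negative solution of a linear system whose coefficient matrix has the same sign pattern as $(\alpha_{j,k})$; a Perron--Frobenius argument then forces this vector to be either strictly positive or identically zero. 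Hence $\Lambda := \{E \in \R : \Im \Gm_j(E + i0) > 0\}$ is open and its complement in $\R$ has finitely many components, so $\overline\Lambda$ is a finite union of compact intervals.

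To pass to the full operator, for any $x \in \V$ a Schur complement at $x$ expresses $G(x,x;z) = \langle \delta_x, (T-z)^{-1} \delta_x \rangle$ as a finite continued fraction in the $\Gm_k(z)$: the forward self-energy is $\sum_k \alpha_{a(x),k}\,\Gm_k(z)$, while the backward self-energy is obtained by unfolding the finite path from $x$ to the root vertex by vertex. The resulting function is Herglotz, rational in $(\Gm_k)$ and $z$, and therefore extends continuously to $\R$ outside a finite set, with strictly positive imaginary part on $\Lambda$ and vanishing imaginary part on its complement. Stieltjes inversion yields that the spectral measure at $\delta_x$ is purely absolutely continuous with support $\overline\Lambda$, and totality of $\{\delta_x\}_{x \in \V}$ in $\ell^2(\V,\nu)$ gives $\sigma(T) = \overline\Lambda$ and pure absolute continuity of $T$. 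The main obstacle is the middle step: constructing the distinguished Herglotz branch globally, controlling its finitely many real branch points by an explicit discriminant, and invoking primitivity to propagate positivity of the imaginary parts uniformly across all labels.
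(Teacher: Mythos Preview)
Your algebraic approach is largely sound but has one genuine gap. From ``$G(x,x;\cdot)$ extends continuously to $\R$ outside a finite set, with $\Im G>0$ on $\Lambda$ and $\Im G=0$ off $\overline\Lambda$'' you cannot conclude by Stieltjes inversion alone that the spectral measure is purely absolutely continuous: the finitely many exceptional points (your branch locus, and the band edges) could still carry point masses, and nothing in your outline excludes eigenvalues there.

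The fix is a one-line consequence of the recursion you wrote down but did not exploit. Taking imaginary parts in $\Gm_j^{-1}=m_j-z-\sum_k\alpha_{j,k}\Gm_k$ and keeping only the diagonal term on the right gives $\Im\Gm_j/|\Gm_j|^2\ge\alpha_{j,j}\,\Im\Gm_j$, hence $|\Gm_j(z)|\le\alpha_{j,j}^{-1/2}$ uniformly on $\h$; this is exactly where (M1) enters and is the paper's Lemma~\ref{l:Gmbounds}. The bound propagates to every $G(x,x;\cdot)$ along your continued fraction, and a uniformly bounded Herglotz function has no point masses. In fact this uniform bound alone already yields pure absolute continuity via the $L^p$ criterion (Theorem~\ref{t:Klein}), with no branch-point analysis whatsoever---so your implicit-function-theorem and discriminant machinery is needed only for the finitely-many-intervals statement, not for purity of the absolutely continuous spectrum.

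On that remaining point the paper takes a different route. Rather than producing a discriminant, it identifies $\Sigma_1=\{E\in\R:\text{the system has a solution in }\h^{\A}\}$ as the projection of a semi-algebraic set and invokes Milnor's bound on Betti numbers of real varieties (Lemma~\ref{l:milnor}) to get finitely many components. For continuity and uniqueness of the Herglotz branch it does not use the implicit function theorem but shows that the recursion map $g\mapsto\bigl(-(z-m_j+\sum_k\alpha_{j,k}g_k)^{-1}\bigr)_{j\in\A}$ is, after finitely many iterates, a uniform contraction on $\h^{\A}$ for the hyperbolic metric, for every $E$ outside a finite exceptional set (Theorem~\ref{t:Contraction}). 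That contraction framework is heavier than what you need for Theorem~\ref{main1} in isolation, but it is precisely what drives the perturbation Theorems~\ref{main2} and~\ref{main3}, which a purely algebraic argument does not set up.
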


The proof of this theorem will be given in  Chapter~\ref{c:freeoperator}.
The only case excluded by (M0) is the one of Jacobi operators on $\N$ with constant diagonal and constant modulus on the off diagonal. For this case, it is well known that the theorem remains true. Thus, the theorem remains true if we drop (M0).

However,  the following counter example shows that the statement of the theorem becomes  false  in general if we drop assumption (M1).\medskip

\begin{Ex} Let $\A=\{1,2\}$,
$$M=\left(
\begin{array}{cc}
0 & 2 \\
1 & 0 \\
\end{array}
\right)
$$
and consider $\T=(M,1)$. Then, the adjacency matrix $A$ on $\T$ discussed in Example~\ref{ex:operators}~(3.) has an eigenfunction to the eigenvalue $0$. The eigenfunction vanishes on the odd spheres and takes values $+\frac{1}{2^n}$ and $-\frac{1}{2^n}$ on the $2n$-th spheres.
\end{Ex}

\section{Radial label symmetric potentials}\label{s:RLSymPot}
We introduce a class of potentials on $\T$ which are symmetric with respect to the labeling in each sphere.\medskip

\begin{Def}(Radial label symmetric potentials)
We say a function $v:\V\to[-1,1]$ is \emph{radial label symmetric} if $a(x)=a(y)$ and $|x|=|y|$ implies $v(x)=v(y)$ for all $x,y\in \V$. We define
$$\W_{\mathrm{sym}}(\T):=\{v:\V\to[-1,1]\mid \mbox{$v$ is radial label symmetric}\}.$$
\end{Def}
By the symmetry, a potential $v\in\W_{\mathrm{sym}}(\T)$ can be reduced to a map $\N_0\times\A\to[-1,1]$ which we denote by slight abuse of notation also by $v$, i.e.,
$$v_{s,j}=v_{|x|,a(x)}=v(x),$$
for $x\in\V$ with $|x|=s$ and $a(x)=j$ and $v_{s,j}=0$ whenever there is no $x\in\V$ such that $(s,j)=(|x|,a(x))$.
\medskip

\begin{Ex}\label{ex:W} All radially symmetric potentials are radial label symmetric and, therefore, contained in $\W_{\mathrm{sym}}(\T)$.
\end{Ex}

We want to prove stability of absolutely continuous spectrum for this type of potentials. To this end, we have to exclude a class of operators which act similarly to operators on regular trees.\medskip

\begin{Def}(Regular tree operators)
We call a label invariant operator $T$ a \emph{regular tree operator} if it satisfies
\begin{itemize}
\item [(R1)] $\sum\limits_{l\in\A}m_{j,l}=\sum\limits_{l\in\A}m_{k,l}$  for all $j,k\in\A$ \emph{    (constant branching)},
\item [(R2)] $m_j=m_k$  for all $j,k\in\A$ \emph{(constant diagonal)},
\end{itemize}
where the matrix $(m_{j,k})$ and the vector $(m_{j})$ are taken from (T2) above. On the other hand, if (R1) or (R2) fails we call $T$  a \emph{non regular tree operator}.
\end{Def}

Let us discuss the assumptions of  a (non) regular tree operator by giving some examples.
\medskip

\begin{Ex}\label{ex:regTree}
(1.) Let $T=\De$. In this case, $m_j=\sum_{l\in\A}m_{j,l}+1$ is the vertex degree of a vertex with label $j\in\A$. Hence, $\De$ is a regular tree operator if and only if the underlying tree $\T$ is regular. By similar arguments we see that the same applies to the operators $\ow\De$ and $A$.

(2.) Probably the simplest example of a non regular tree operator is discussed in \cite{FHS1}. Let $\T$ be a binary tree and $v:\V\to\{-\lm,\lm\}$, $\lm>0$ such that, for all vertices, $v$ takes the value $\lm$ on one of the forward neighbors and $-\lm$ on the other. Then, the operators $\De+v$, $\ow\De+v$ and $A+v$ are non regular tree operators.

(3.) Note that  (R1) and (R2) are only assumptions on the operator and not on the underlying geometry of the tree. For instance,  we can define a regular tree operator on the non regular tree introduced in Example~\ref{ex:M}~(2.). Let $\nu\equiv 1$, $w\equiv 0$ and
\begin{align*}
t(x,y)=\left\{
       \begin{array}{ll}
               \frac{1}{\sqrt2} &: a(x)=a(y)=1, \\
               1 &: \mbox{else.}\\
       \end{array}
       \right.
\end{align*}
Then, the corresponding operator is a regular tree operator although the underlying graph is not regular at all.

\end{Ex}

The next theorem deals with stability of absolutely continuous spectrum of $T$ under small perturbations by radial label symmetric potentials in the case of non regular tree operators.
\medskip

\begin{Thm}\label{main2} (Perturbations by label radial symmetric potentials.) Let $T$ be a non regular tree operator. Then there exists a finite subset $\Sigma_0\subset\si(T)$ such that for every compact set $I\subseteq \si(T) \setminus \Sigma_0$ there exists $\lm_0>0$ such that for all $v\in\W_{\mathrm{sym}}(\T)$ and $\lm\in[0,\lm_0]$ we have
$$I\subseteq \si_{\mathrm{ac}}(T+\lm v)\qqand I\cap \si_{\mathrm{sing}}(T+\lm v)=\emptyset.$$
\end{Thm}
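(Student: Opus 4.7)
The plan is to exploit the symmetry preserved by any radial label symmetric perturbation in order to reduce the spectral analysis of $T+\lm v$ to that of a single matrix-valued Jacobi operator on the half line, and then to apply a Kotani/trace-class type perturbation argument in the matrix setting. The non regularity hypothesis on $T$ will enter precisely as a non-degeneracy condition on the transfer matrices of the reduced operator, guaranteeing a multi-channel structure that survives the perturbation.

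First I would identify the group $G\subseteq\mathrm{Aut}(\T)$ of label- and root-preserving graph automorphisms. Because $T$ is label invariant and every $v\in\W_{\mathrm{sym}}(\T)$ is constant on sphere-label classes, both $T$ and multiplication by $v$ commute with the unitary $G$-representation on $\ell^2(\V,\nu)$. Decomposing $\ell^2(\V,\nu)$ into isotypic subspaces accordingly, the $G$-invariant part $\mathcal{H}_{\mathrm{rad}}$ is spanned by indicators of sphere-label pairs $(s,j)$; on $\mathcal{H}_{\mathrm{rad}}$ the operator $T+\lm v$ is unitarily equivalent to a half-line matrix-valued Jacobi operator $J(\lm)$ on $\ell^2(\N_0)\otimes\C^{\A}$, with bulk coefficients read off from $(m_{j,k})$ and $(m_j)$ and perturbed by the diagonal matrix potential $\lm V(s)=\lm\,\mathrm{diag}((v_{s,j})_{j\in\A})$. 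The orthogonal complement $\mathcal{H}_{\mathrm{rad}}^{\perp}$ decomposes as a direct sum of pieces indexed by vertices $x\in\V$, each supported in $\T_x$ and each again unitarily equivalent to $J(\lm)$ shifted by $|x|$ with a modified initial weight. The spectral analysis therefore reduces uniformly to that of $J(\lm)$.

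Second, I would analyze the unperturbed operator $J(0)$. Its coefficients are constant in $s\ge1$, so its spectrum forms a union of finitely many bands given by the dispersion relation of the associated symplectic transfer matrix cocycle, and by Theorem~\ref{main1} this union equals $\si(T)$. The excluded set $\Sigma_0$ would consist of the band edges together with the finitely many internal thresholds at which the transfer matrix $S(E)$ has an eigenvalue on the unit circle of higher multiplicity or at which the stable/unstable splitting degenerates. Away from $\Sigma_0$, the unit-circle eigenvalues of $S(E)$ are simple and their eigenprojections depend analytically on the energy, which is the analytic input needed for the perturbation step.

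Third, and this I view as the technical core, I would argue that non regularity of $T$ produces a uniform lower bound on the angle between the stable and unstable invariant subbundles of the transfer matrix cocycle on every compact $I\subseteq\si(T)\setminus\Sigma_0$. If both (R1) and (R2) held, the vector $\mathbf{1}\in\C^{\A}$ would give an invariant scalar direct summand of $J(0)$, on which small radial perturbations are known to destroy the absolutely continuous spectrum (see \cite[Appendix A]{ASW1}); failure of (R1) or (R2) rules out any such degenerate summand outside a finite set, which can be absorbed into $\Sigma_0$. Given this non-degeneracy, I would close the argument by a Pr\"ufer-variable or $m$-function computation for matrix-valued Jacobi operators, in the spirit of the Kotani-Last school, showing that for $\lm$ below an explicit threshold $\lm_0=\lm_0(I)$ the spectrum of $J(\lm)$ is purely absolutely continuous on $I$; the symmetry decomposition then transports this conclusion back to $T+\lm v$. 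The main obstacle will be making the non-degeneracy bound quantitative and uniform over $v\in\W_{\mathrm{sym}}(\T)$, since the family of admissible perturbations is infinite dimensional and an insufficient uniform estimate would let $\lm_0$ collapse to zero near the boundary of $I$.
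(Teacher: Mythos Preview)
Your approach is genuinely different from the paper's, and the reduction to a matrix-valued Jacobi operator is a natural idea, but there is a real gap in the third step. The paper does not decompose $\ell^2(\V,\nu)$ at all; instead it works directly with the recursion relation for the truncated Green functions $\Gm_{s,j}(z,T+\lm v)$, which by radial label symmetry reduce to a vector in $\h^{\A}$ for each sphere. The key mechanism is that for $E\in\Sigma_1\setminus\Sigma_0$ the unperturbed recursion map $\Phi_E:\h^{\A}\to\h^{\A}$ is, after $(n{+}1)$ iterations, a \emph{uniform contraction} on hyperbolic balls about its fixed point (Theorem~\ref{t:Contraction}). The non regularity hypothesis enters precisely via Lemma~\ref{l:Sigma0}, which shows that outside a finite set of energies the components $\Gm_j(E,T)$ are not all positive real multiples of one another; this nonzero relative argument is exactly what makes the averaging part $\tau$ strictly contracting (Lemma~\ref{l:tau}, Lemma~\ref{l:al}). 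Stability of the fixed point under small $\lm$ and all $v\in\W_{\mathrm{sym}}(\T)$ then follows from an abstract limit-point stability principle (Lemma~\ref{l:fixpoint2}, Proposition~\ref{p:LabRadPotGm}), and Theorem~\ref{t:Klein} converts the resulting uniform Green-function bounds into the spectral statement.

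The gap in your proposal is the passage from ``non-degeneracy of the stable/unstable splitting'' to ``pure ac spectrum of $J(\lm)$ on $I$''. For bounded, non-decaying, non-ergodic perturbations of a matrix Jacobi operator there is no off-the-shelf Kotani or Pr\"ufer result that yields this; Kotani theory characterizes ac spectrum for ergodic cocycles but says nothing about stability under arbitrary $\ell^\infty$ perturbations, and the scalar case (regular tree, Example~\ref{ex:radsymPot}) shows that such stability genuinely fails without additional structure. What you would actually have to prove is that the Weyl $m$-matrix recursion remains a contraction in some metric on a product of half-planes (or the Siegel upper half-space), uniformly in $s$ and in $v$; once formulated this way, the problem is essentially identical to what the paper solves in $(\h^{\A},\gm_{\A})$, and your outline gives no alternative mechanism. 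A secondary issue: the operator you obtain on $\mathcal{H}_{\mathrm{rad}}$ does not have constant coefficients in $s$, because the sphere-label multiplicities $n_{s,j}$ evolve under $M^\top$ and are only asymptotically proportional to the Perron eigenvector; the Jacobi coefficients therefore converge exponentially but are not literally constant, so even the unperturbed band-structure step already requires a trace-class comparison that you do not mention.
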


Here the set $\si(H)$, ($\si_{\mathrm{ac}}(H)$, $\si_{\mathrm{sing}}(H)$) denotes the (absolutely continuous, singular) spectrum of a linear operator $H$. In  Section~\ref{s:Greenfunction} definitions are recalled and discussed.

The assumption that $T$ is a non regular tree operator is essential. As proven in \cite[Appendix A]{ASW1} for regular trees there are potentials $v:\V\to[-\lm,\lm]$ which destroy the absolutely continuous spectrum of $\De$ (or $\ow\De$ or $A$) completely no matter how small we choose $\lm$. Examples of such potentials are radially symmetric  ones where the common value in each sphere is given by a random variable. The absolutely continuous spectrum of these operators coincides with the one of one-dimensional operators (up to translation and rescaling). Therefore,  the absolutely continuous spectrum vanishes almost surely, confer \cite{CL,PF}. See also Example~\ref{ex:radsymPot} in Section~\ref{s:problemsLI} for a detailed discussion.

There is a corollary about decaying potentials.
\medskip

\begin{cor}\label{main2cor} Let $T$ be a non regular tree operator. Then, for all $v\in\W_{\mathrm{sym}}(\T)$ such that $v(x)\to0$ as $|x|\to\infty$, we have
$$\si_{\mathrm{ac}}(T+ v)=\si_{\mathrm{ac}}(T).$$
\begin{proof} Multiplication by a  $v$ vanishing at infinity is a compact operator. Therefore, we have $\si_{\mathrm{ess}}(T+v)=\si_{\mathrm{ess}}(T)$. Moreover, by Theorem~\ref{main1}, the spectrum of $T$ is purely absolutely continuous.
Thus, $\si_{\mathrm{ac}}(T+ v)\subseteq \si_{\mathrm{ess}}(T+ v)=\si_{\mathrm{ess}}(T) \subseteq \si(T)=\si_{\mathrm{ac}}(T)$.\\
Conversely, the absolutely continuous spectrum is stable under finitely supported perturbations. In particular, setting $v$ zero at the vertices $x$ where $|v(x)|\geq\lm$ leaves the absolutely continuous spectrum of $T+v$ invariant. By Theorem~\ref{main2}  the absolutely continuous spectrum of every compact subset included in the spectrum of $T$ (except for a finite set) can be preserved under perturbations by $v\in\W_{\mathrm{sym}}(\T)$ if $v$ is sufficiently small. Since $v$ vanishes at infinity, every such interval is contained in the absolutely continuous spectrum of $T+v$.%Hence, for every compact subset $I$ of $\si(T)\setminus\Sigma_0$ we find a $\lm$ such that $I\subseteq \si_{\mathrm{ac}}(T+ v^{(\lm)})=\si_{\mathrm{ac}}(T+ v)$. As we can exhaust the interior of $\si(T)$ by such compact sets, we obtain $\si_{\mathrm{ac}}(T)\subseteq \si_{\mathrm{ac}}(T+ v)$.
\end{proof}
\end{cor}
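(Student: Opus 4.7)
The goal is to establish the two inclusions $\si_{\mathrm{ac}}(T+v) \subseteq \si_{\mathrm{ac}}(T)$ and $\si_{\mathrm{ac}}(T) \subseteq \si_{\mathrm{ac}}(T+v)$. The first is the soft direction and follows from general principles together with Theorem~\ref{main1}; the second is where Theorem~\ref{main2} enters crucially, via an approximation of $v$ by a radial label symmetric potential of small norm plus a finitely supported remainder.

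For the inclusion $\si_{\mathrm{ac}}(T+v) \subseteq \si_{\mathrm{ac}}(T)$ I would first argue that multiplication by $v$ is compact on $\ell^2(\V,\nu)$: since $v(x)\to 0$ as $|x|\to\infty$, $v$ is the uniform (operator norm) limit of its restrictions to finite balls, each of which is a finite rank operator. Weyl's theorem on essential spectrum then yields $\si_{\mathrm{ess}}(T+v)=\si_{\mathrm{ess}}(T)$. By Theorem~\ref{main1} the spectrum of $T$ is a finite union of (nondegenerate) intervals and is purely absolutely continuous, so $T$ has no isolated eigenvalues and $\si_{\mathrm{ess}}(T)=\si(T)=\si_{\mathrm{ac}}(T)$. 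Combined with the universal inclusion $\si_{\mathrm{ac}}(H)\subseteq\si_{\mathrm{ess}}(H)$ applied to $H=T+v$, this chains to $\si_{\mathrm{ac}}(T+v)\subseteq\si_{\mathrm{ac}}(T)$.

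For the reverse inclusion, let $\Sigma_0$ denote the finite exceptional set from Theorem~\ref{main2}. I fix an arbitrary compact subset $I\subseteq\si(T)\setminus\Sigma_0$ and the associated $\lm_0=\lm_0(I)>0$. Because $v(x)\to 0$, the set $F=\{x\in\V:|v(x)|\geq\lm_0\}$ is finite. I split $v=v_s+v_f$ with $v_s=v\cdot\mathbf{1}_{\V\setminus F}$ and $v_f=v\cdot\mathbf{1}_{F}$. The threshold $\{|v|<\lm_0\}$ is itself radial label symmetric, so $v_s\in\W_{\mathrm{sym}}(\T)$ and $\|v_s\|_\infty\leq\lm_0$; Theorem~\ref{main2} then yields $I\subseteq\si_{\mathrm{ac}}(T+v_s)$. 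The remainder $v_f$ has finite support, hence is a finite rank (trace class) self adjoint perturbation, and the Kato--Rosenblum theorem gives $\si_{\mathrm{ac}}(T+v)=\si_{\mathrm{ac}}(T+v_s+v_f)=\si_{\mathrm{ac}}(T+v_s)$. Therefore $I\subseteq\si_{\mathrm{ac}}(T+v)$.

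Taking the union over all compact sets $I\subseteq\si(T)\setminus\Sigma_0$ recovers $\si(T)\setminus\Sigma_0$, and since $\si(T)$ is a finite union of intervals while $\Sigma_0$ is finite, $\si(T)\setminus\Sigma_0$ is dense in $\si(T)=\si_{\mathrm{ac}}(T)$; closedness of $\si_{\mathrm{ac}}(T+v)$ gives $\si_{\mathrm{ac}}(T)\subseteq\si_{\mathrm{ac}}(T+v)$. The main obstacle is purely organizational: keeping track that $\lm_0$ depends on $I$, confirming that the truncation of $v$ at level $\lm_0$ stays in $\W_{\mathrm{sym}}(\T)$, and verifying that removing the finite set $\Sigma_0$ before taking closures does not shrink the absolutely continuous spectrum we recover.
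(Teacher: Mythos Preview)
Your proof is correct and follows essentially the same route as the paper's: compactness of $v$ plus Weyl's theorem and Theorem~\ref{main1} for one inclusion, and for the other the same truncation of $v$ at level $\lm_0$, invariance of $\si_{\mathrm{ac}}$ under the resulting finite-rank remainder, and Theorem~\ref{main2} applied to the small radial label symmetric part. You have simply spelled out more carefully than the paper does why the truncated potential remains in $\W_{\mathrm{sym}}(\T)$ and why exhausting $\si(T)\setminus\Sigma_0$ by compacta and taking closures recovers all of $\si_{\mathrm{ac}}(T)$.
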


\section{Random potentials}\label{s:RandPot}
We now introduce a class of random potentials. We show that sufficiently small perturbations by these potentials preserve parts of the absolutely continuous spectrum of a label invariant operator.

Let us first recall some basic definitions. Let $(\Om,\PP)$ be a probability space and $J$ an arbitrary index set. Let $X:\Om\times J\to\C$, $(\om,j)\mapsto X_{j}^{\om}$ be a stochastic process, i.e., a measurable map. We say the family of random variables $(X_j)_{j\in J}$ is \emph{independently distributed} if
$$\PP({\{\om\in\Om\mid \bigcap_{k=1}^n X^\om_{j_k} \in B_k\}})=\prod_{k=1}^n \PP\ab{\ac{\om\in\Om\mid X^\om_{j_k} \in B_k}},$$
for all $n\in\N$, $j_k\neq j_l$ if $k\neq l$ and Borel sets $B_k\subseteq \C$, $k=1,\ldots,n$. Moreover, we say the random variables $(X_{j})_{j\in J}$ are \emph{identically distributed} if there is a probability measure $\mu$ on $\C$, called the \emph{distribution} or the \emph{push forward measure}, such that
$$\PP\ab{\ac{\om\in\Om\mid X_j^{\om}\in B}}=\mu(B),$$
for Borel sets $B\subseteq\C$ and $j\in J$. In this case, $(X_{j})_{j\in J}$ is said to be \emph{distributed by} $\mu$.  We call two random variables $X$ and $Y$ on the vertex sets of two isomorphic trees $\T_X$ and $\T_Y$  \emph{identically distributed} if for every graph isomorphism $\psi:\T_X\to\T_Y$ the random variables $X$ and $Y\circ\psi$ are identically distributed.

We will consider random potentials whose distribution is strongly related to the structure of the tree. In particular, we want the random potential $v:\Om\times \V\to[-1,1]$, $(\om,x)\mapsto v_x^\om$ to satisfy the following two assumptions:
\begin{itemize}
\item [(P1)] For all $x,y\in\V$ the random variables $v_x$ and $v_y$ are independently distributed if $ \V_x\cap\V_y=\emptyset$.
\item [(P2)] For all $x,y\in\V$ with $a(x)=a(y)$ the random variables $v\vert_{\V_{x}}$ and $v\vert_{\V_{y}}$ are identically distributed.
\end{itemize}

We denote
$$\W_{\mathrm{rand}}:=\W_{\mathrm{rand}}(\Om,\T):=\ac{v:\Om\times \V\to[-1,1]\mid \mbox{ $v$ satisfies (P1) and (P2)}}.$$

We give some examples of potentials which satisfy (P1) and (P2).\medskip

\begin{Ex}
(1.)
Let  $v:\Om\times \V\to[-1,1]$ be a potential such that all $v({x})$, $x\in \V$  are independently distributed and $v({x})$ and $v({y})$ are identically distributed whenever $a(x)=a(y)$. Then, $v$ satisfies (P1) and (P2). The most important special case is the one of independently and identically distributed potentials.

(2.) Let $w:\Om\to[-1,1]^{ \V}$ be such that $\om\mapsto w_{x}^{\om}$,  ${x\in\V}$  are independently distributed random variables and $f:\R^\V\times \V\to[-1,1]$, $(r,x)\mapsto f_x(r)$ such that $f_x(r)$ depends only on the $r_y$ with $y\in \V_x$ for $x\in \V$. Then, $v=f\circ w:\Om\times \V\to[-1,1]$, $(\om,x)\mapsto f(w(\om),x)$ satisfies (P1).\\
A special case which also satisfies (P2) is the following: Let $\T$ be a binary tree and let the random variable $w:\V\to\{-1,1\}$ take the value $-1$ and $1$ with probability $1/2$ each on every vertex. Then
$$v:\Om\times V\to[-1,1],\quad (\om,x)\mapsto \sum_{y\in \V_x}\frac{1}{2^{d(x,y)+1}}w_y^\om,$$
satisfies (P1) and (P2).

%(2.b) Let us give another example. Let $\T$ be the binary tree and $\dd \V$ the set of infinite paths starting at the root $o\in\V$. Let $p:\V\to \dd \V$ be such that $x\in p(x)$ and if $p(x)=p(y)$ then $a(x)=a(y)$ for all $x,y\in\V$. (This implies in particular that $y\not\in p(x)$ if $\V_x\cap\V_y=\emptyset$, $x,y\in\V$.) Let  $w:\dd \V\to [0,1]$ be a independently and uniformly distributed random variable and let $\te\in[0,1]$ be arbitrary. The function $$v:\Om\times \V\to[-1,1],\quad (\om,x)\mapsto \cos(2\pi(\te d(x,o)+w^\om_{p(x)}))$$ satisfies (P1) and (P2). In contrast to the previous example the value of the potential on the vertex $y$ can here be explicitly computed by knowing the value of the potential at $x$ whenever $y\in p(x)$.
\end{Ex}

We denote the operator of multiplication by a bounded function $f$ on $\ell^2(\V,\nu)$ also by $f$.
Let $\lm\ge 0$ and $v\in\W_{\mathrm{rand}}(\Om,\T)$ be given. We define the family of random operators $H^{\lm,\om}$, ${\om\in\Om}$, on $\ell^2(\V,\nu)$ by
\begin{align*}
H^{\lm,\om}:=T+\lm v^\om.
\end{align*}

We will prove the following theorem.\medskip

\begin{Thm}\label{main3} (Perturbations by random potentials) Let $T$ be a label invariant operator.  Then there exists a finite subset $\Sigma_0\subset\si(T)$ such that for every compact set $I\subseteq \si(T) \setminus \Sigma_0$ there exists $\lm_0>0$ such that for all $v\in\W_{\mathrm{rand}}(\Om,\T)$, $\lm\in[0,\lm_0]$ and almost every $\om\in\Om$
\begin{align*}
I\subseteq \si_{\mathrm{ac}}(H^{\lm,\om})\qqand I\cap \si_{\mathrm{sing}}(H^{\lm,\om})=\emptyset.
\end{align*}
\end{Thm}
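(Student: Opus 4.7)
The plan is to adapt the strategy developed in \cite{Kl1,FHS2,ASW1} for regular trees, exploiting the finitely many labels to replace the scalar Green's function recursion by a coupled finite system. For each vertex $x$ and each $z\in\Cp$, set
\begin{align*}
\Gm_x^{\lm,\om}(z):=\ip{\de_x,(T_x^{\lm,\om}-z)^{-1}\de_x},
\end{align*}
where $T_x^{\lm,\om}$ denotes the restriction of $H^{\lm,\om}$ to the forward subtree $\T_x$. A standard resolvent / Schur complement expansion around $x$ yields
\begin{align*}
\Gm_x^{\lm,\om}(z)=\frac{1}{w(x)+\lm v_x^{\om}-z-\sum_{y\succ x}\mo{t(x,y)}^{2}\,\Gm_y^{\lm,\om}(z)},
\end{align*}
where $y\succ x$ means $y$ is a forward neighbor of $x$. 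Because $T$ is label invariant and $v$ satisfies (P1), (P2), the $\Gm_y^{\lm,\om}$ at distinct forward neighbors of $x$ are independent and the distribution of $\Gm_x^{\lm,\om}$ depends on $x$ only through $a(x)$. Thus the joint distribution is encoded in a tuple of $|\A|$ probability measures $(\mu_j^{\lm,z})_{j\in\A}$ on $\Cp$ solving a distributional fixed point equation of the form above.

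At $\lm=0$, the operator on $\T_x$ is itself label invariant on $\T(M,a(x))$, so Theorem~\ref{main1} implies that its spectrum is purely absolutely continuous and a finite union of intervals. Hence the boundary values $\Gm_j^{0}(E+i0)$ exist and satisfy $\Im\Gm_j^{0}(E+i0)>0$ for every $E$ in the interior of $\si(T)$, and the deterministic system
\begin{align*}
\Gm_j(z)=\frac{1}{m_j-z-\sum_{k\in\A}m_{j,k}\,\Gm_k(z)},\qquad j\in\A,
\end{align*}
defines the corresponding self-consistency map $F_z$ on $\Cp^{|\A|}$. I would define $\Sigma_0$ as the finite set consisting of the band edges of $\si(T)$ together with the (finitely many) energies where the derivative $DF_{E+i0}$ at the fixed point $(\Gm_j^{0}(E+i0))_{j\in\A}$ fails to be a strict contraction in the hyperbolic (Poincar\'e) metric. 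The primitivity (M2), positive diagonal (M1) and non one-dimensionality (M0) enter here precisely to ensure that this exceptional set is finite.

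For compact $I\subseteq\si(T)\setminus\Sigma_0$, the next step is to show that $F_z$ is a strict contraction on a fixed hyperbolic ball around $(\Gm_j^{0}(z))_{j\in\A}$ in $\Cp^{|\A|}$, uniformly for $z$ in a complex neighborhood of $I$. Lifting this contraction to probability measures on $\Cp^{|\A|}$ via the Wasserstein distance induced by the hyperbolic metric, the distributional map determined by the random recursion is a contraction of rate strictly below $1$ whenever $\lm$ is small enough, uniformly in $\eta=\Im z\in(0,1]$. This yields a unique solution $(\mu_j^{\lm,z})_{j\in\A}$ depending continuously on $(\lm,z)$ and, in particular,
\begin{align*}
\sup_{\eta\in(0,1]}\EE\!\big[\Im\Gm_x^{\lm,\om}(E+i\eta)\big]<\infty\qand\inf_{\eta\in(0,1]}\EE\!\big[\Im\Gm_x^{\lm,\om}(E+i\eta)\big]>0,
\end{align*}
uniformly for $E\in I$. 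Combined with the analogous one-step resolvent identity expressing $\ip{\de_o,(H^{\lm,\om}-z)^{-1}\de_o}$ in terms of the $\Gm_x^{\lm,\om}$ for $x\succ o$, one derives that $\Im\ip{\de_o,(H^{\lm,\om}-E-i0)^{-1}\de_o}$ is finite and strictly positive for Lebesgue a.e.\ $E\in I$ and $\PP$-a.e.\ $\om$. A standard spectral criterion then gives $I\subseteq\si_{\mathrm{ac}}(H^{\lm,\om})$ and $I\cap\si_{\mathrm{sing}}(H^{\lm,\om})=\emptyset$ almost surely; the extension from the spectral measure at $\de_o$ to the full operator is handled by the same estimates applied at every root vertex, which is permitted by label invariance of the averaged quantities.

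The main obstacle is the hyperbolic contraction analysis of $F_z$ and its distributional lift in the coupled multi-label setting. In the regular case this reduces to a single scalar map whose contractivity follows from a direct computation on $\Cp$; here one is confronted with a genuine system whose contraction rate is governed by a spectral quantity of the Jacobian of $F_z$, essentially a matrix built from $(m_{j,k})$ weighted by the deterministic Green's functions $\Gm_j^{0}(z)$. Proving strict contraction uniformly on $\si(T)\setminus\Sigma_0$, together with controlling how independent convolution with the random potential degrades this rate, forms the technical core and, together with the finiteness of $\Sigma_0$, drives the whole result.
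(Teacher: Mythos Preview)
Your overall architecture is correct: the recursion for the truncated Green functions, the reduction by labels to a finite system on $\h^{\A}$, the hyperbolic metric, and the identification of a finite exceptional set $\Sigma_0$ all appear in the paper. The gap is in the step you flag yourself as the ``technical core'': the Wasserstein lift.

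The deterministic map $\Phi_z:\h^{\A}\to\h^{\A}$ (your $F_z$) is \emph{not} a strict hyperbolic contraction for real $z$. Its decomposition $\rho\circ\si_z\circ\tau$ has $\rho$ an isometry, $\si_z$ an isometry when $\Im z=0$, and $\tau$ only a quasi-contraction (rate exactly $1$ along the ``diagonal'' where all components are real multiples of one another). What the paper shows is that an \emph{iterate} $\Phi_E^{n+1}$ is a uniform contraction on balls for $E\in\Sigma_1\setminus\Sigma_0$; but this does not transfer to the random recursion, because one step of the distributional map feeds in \emph{independent} samples rather than the same point of $\h^{\A}$, so you cannot simply compose the deterministic contraction. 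If you attempt the Wasserstein lift of a single step, the best coupling only yields Lipschitz constant $1$, and your argument stalls. Your definition of $\Sigma_0$ via $DF_{E+i0}$ failing strict contraction therefore singles out \emph{every} energy, not finitely many.

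The paper replaces the Wasserstein idea by a direct moment argument that exploits the i.i.d.\ structure in a different way. The key objects are a \emph{two-step expansion} (expanding the recursion through $o$ and a forward neighbour $o'$ with $a(o')=a(o)$) and an \emph{averaged contraction coefficient}
\[
\ka_o^{(p)}(g)=\frac{\sum_{\pi\in\Pi}Z_0(z,v,g\circ\pi)^p}{\sum_{\pi\in\Pi}Z_1^{(p)}(z,v,g\circ\pi)},
\]
where $\Pi$ is the group of \emph{label-preserving permutations} of the double sphere $S_{o,o'}$. Because vertices with the same label carry i.i.d.\ Green functions, averaging over $\Pi$ costs nothing in expectation, and the permutations are precisely what kills the non-contracting directions of $\tau$: either some $\gm_x$ is much smaller than the others (Jensen error), or some $\Im g_x$ is much smaller (geometric/arithmetic mean gap in $Q_{x,y}$), or else a two-step argument forces a uniformly large relative argument $\al_{x,y}$. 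This yields $\ka_o^{(p)}\le 1-\de$ outside a ball shrinking with $\lm$, and hence a vector inequality $\EE\gm\le(1-\de)P\,\EE\gm+C(\lm)$ with a stochastic matrix $P$, closed by Perron--Frobenius.

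A second, smaller gap: your conclusion extracts only that $\EE[\Im\Gm_x]$ is finite and positive. That is not enough for the spectral criterion used here (absence of singular spectrum requires an $L^p$ bound, $p>1$, on $|G_x(E+i\eta)|$ uniformly in $\eta$). The paper's moment bound is on $\EE[\gm(\Gm_x^{\lm,\om},\Gm_x^0)^p]$ with $p>1$, which via $|g|\le 4\gm(g,h)\Im h+2|h|$ gives the needed $L^p$ control.
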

The proof will be given in Chapter~\ref{c:main3}.
In contrast to Theorem~\ref{main2} we do not have to exclude the case of regular trees. A similar result for operators with random off diagonal perturbations, Theorem~\ref{t:offdiag}, is stated and proven in Section~\ref{s:offdiagonal}.

%Moreover, we do not have ergodicity of model, but nevertheless we obtain a statement for the absolutely continuous spectrum almost surely.

Let us put the statement of the theorem above in the context of the present literature. For regular trees a similar statement was proven by \cite{Kl1} and for the binary tree in \cite{FHS2}. The method of \cite{FHS2} was generalized in \cite{Hal1} to make it work for regular trees with arbitrary branching number as well. Moreover, in \cite{FHS2} potentials which take larger values with small probability are allowed. The authors of \cite{ASW1} allow for some dependence within the spheres for the random potentials. They are able to show preservation of some absolutely continuous spectrum. However,  their method does not yield purity of the absolutely continuous spectrum. Dependent random potentials were also studied later, among other questions, in \cite{FHS3}.

%%%%%%%%%%%%%%%%%%%%%%%%%%%%%%%%%%%%%%%%%%%%%%%%%%%%%%%%%%%%%%%%%%%%%%%%%%%%%%%%%%%%%%%%%%%%%%%%%%%%%%%%%%%%%%%%%%%%
%%%%%%%%%%%%%%%%%%%%%%%%%%%%%%%%%%%%%%%%%%%%%%%%%%%%%%%%%%%%%%%%%%%%%%%%%%%%%%%%%%%%%%%%%%%%%%%%%%%%%%%%%%%%%%%%%%%%
%% Chapter 3 Basic concepts
%%%%%%%%%%%%%%%%%%%%%%%%%%%%%%%%%%%%%%%%%%%%%%%%%%%%%%%%%%%%%%%%%%%%%%%%%%%%%%%%%%%%%%%%%%%%%%%%%%%%%%%%%%%%%%%%%%%%
%%%%%%%%%%%%%%%%%%%%%%%%%%%%%%%%%%%%%%%%%%%%%%%%%%%%%%%%%%%%%%%%%%%%%%%%%%%%%%%%%%%%%%%%%%%%%%%%%%%%%%%%%%%%%%%%%%%%

\chapter{Basic concepts}\label{c:basicconcepts}
 \begin{quote}
\begin{flushright}
\scriptsize{ A good \emph{tree} cannot bring forth evil fruit, neither can a corrupt \emph{tree} bring forth good fruit. } Matthew 7:18  \end{flushright}
\end{quote}

In this chapter we introduce the objects and the basic techniques of our analysis. The majority of the statements are well known and we will refer to the particular references at the appropriate places. The concepts presented here apply to far more general operators and trees than the ones introduced before. Therefore,  we will treat them in the general setting.

Let $\T=(\V,\E)$ be an arbitrary tree with the only  assumption that it is \emph{locally finite}, i.e., every vertex has only finitely many neighbors.

%%%%%%%%%%%%%%%%%%%%%%%%%%%%%%%%%%%%%%%%%%%%%%%%%%%%%%%%%%%%%%%%%%%%%%%%%%%%%%%%%%%%%%%%%%%%%%%%%%%%%%%%%%%%%%%%%%%%
%% Section
%%%%%%%%%%%%%%%%%%%%%%%%%%%%%%%%%%%%%%%%%%%%%%%%%%%%%%%%%%%%%%%%%%%%%%%%%%%%%%%%%%%%%%%%%%%%%%%%%%%%%%%%%%%%%%%%%%%%

\section{Self adjoint operators on trees}
%%%%%%%%%%%%%%%%%%%%%%%%%%%%%%%%%%%%%%%%%%%%%%%%%%%%%%%%%%%%%%%%%%%%%%%%%%%%%%%%%%%%%%%%%%%%%%%%%%%%%%%%%%%%%%%%%%%%
%% Subsection
%%%%%%%%%%%%%%%%%%%%%%%%%%%%%%%%%%%%%%%%%%%%%%%%%%%%%%%%%%%%%%%%%%%%%%%%%%%%%%%%%%%%%%%%%%%%%%%%%%%%%%%%%%%%%%%%%%%%
We first introduce nearest neighbor operators on trees. Then, we recall some basic notions of spectral theory such as the spectral measures and the Green functions of an operator and discuss some fundamental properties. Finally,  we give a sufficient criterion to exclude singular spectrum.

\subsection{Definition of the operators}
Let $c_c(\V)$ be the space of functions which vanish outside of a finite set. Let $\nu$ be a measure on  $\V$, i.e., a positive function $\nu:\V\to(0,\infty)$. We denote the scalar product of $\ell^2\ab{\V,\nu}$ by $\as{\cdot,\cdot}$ and the corresponding norm by $\|\cdot\|$. In order to introduce a nearest neighbor operator on
$\T$, let a function  $w:\V\to \R$, called the \emph{diagonal}, and a function
$t:\V\times\V\to \C$, called the \emph{off diagonal}, which satisfies \begin{itemize}
\item [(H0)] $t(x,y)\neq 0$ if and only if $x\sim y$ for all $x,y\in\V$ \emph{(adjacency)},
\item [(H1)] $t(x,y)\nu(x)=\ov{t(y,x)}\nu(y)$ for all $x,y\in\V$ \emph{(symmetry)}
\end{itemize}
be given.
We consider self adjoint operators $H$ with $c_c(\V)\subseteq D(H)\subseteq\ell^2(\V,\nu)$ acting as
\begin{equation}\label{e:H}
(H\ph)(x)={\sum_{y\sim x} t(x,y)\ph(y) + w(x) \ph(x)}, \quad \ph\in D(H).
\end{equation}

In the case where the restriction of $H$ to $c_c(\V)$ has a unique self adjoint extension  the operator $H$ is called \emph{essentially self adjoint} on $c_{c}(\V)$. Note that this is not necessarily the case, as it is already known from the theory of Jacobi matrices, (see, for instance, \cite{Be}). In Subsection~\ref{s:UniquenessFixPoints}, Corollary~\ref{c:suffessSA} we give a sufficient criterion for  essential self adjointness.

Note that in contrast to the definition of label invariant operators in the previous chapter, we do not assume that $t$ and $w$ satisfy any type of invariance. Let us give some examples.\medskip

\begin{Ex}(1.) Let $T$ be a label invariant operator. Obviously, (H0) and (H1) follow from (T0) and (T1). As remarked above, the operators are also bounded. Moreover, the operators $T+ \lm v$ with $v\in\W_{\mathrm{sym}}(\T)$ and $H^{\om,\lm}=T+\lm v^\om$ with $v\in\W_{\mathrm{rand}}(\Om,\T)$, $\om\in\Om$ and $\lm\geq0$ also satisfy (H0), (H1) and are bounded since they differ from $T$ only by a bounded potential.

(2.) Other examples are operators arising from regular Dirichlet forms on an $\ell^2$ space of a countable measure space $(\V,\nu)$, see \cite{KL,KL2}.
In our context, where $\T=(\V,\E)$ is a locally finite graph, such forms are given by a symmetric map $b:\V\times\V\to[0,\infty)$ which is non zero if and only if the two vertices in the argument are adjacent and a potential $c:\V\to[0,\infty)$ via
$$h(\ph,\psi) =\frac{1}{2}\sum_{x,y\in\V}b(x,y)(\ph(x)-\ph(y))(\psi(x)-\psi(y)) +\sum_{x\in\V}c(x)\ph(x)\psi(x).$$
It is easy to check that $h\geq0$ on $c_c(\V)$ and the domain of $h$ is the completion of $c_c(\V)$ under the scalar product $\as{\cdot,\cdot}_h=h({\cdot,\cdot})+\as{\cdot,\cdot}$.
The assumptions (H0), (H1) for the corresponding operator can be checked by setting
$$t(x,y)=-\frac{b(x,y)}{\nu(x)}\quad\mbox{and}\quad w(x)=\frac{1}{\nu(x)}\ab{\sum_{y\in \V}b(x,y)+c(x)}.$$
Moreover, we can add to $h$ a bounded sesquilinear form $h'$ with $t'$ satisfying (H0), (H1) as long as $t+t'$ still satisfies (H0).

(3.) A special case of operators arising from (2.) are the Laplacians $\De$ and $\ow \De$ as in Example~\ref{ex:operators}. (Note that the Dirichlet boundary conditions are not necessary in this context but could be imposed by a potential at the root). To see this, let $b:\V\times\V\to\{0,1\}$, $c\equiv 0$ and $\De$ is obtained by choosing $\nu\equiv1$ while $\ow \De$ is obtained by choosing $\nu=\deg$.
\end{Ex}

Let us remark that an operator $H$ given by \eqref{e:H} is bounded if and only if there is a constant $C\geq0$ such that
$$\sup_{x\in\V} \ab{\sum_{y\sim x}|t(x,y)|+|w(x)|} \leq C.$$
The norm of $H$ has the bound $2C$ in this case.
This can be easily seen using the inequality $|(\xi+\zeta)|^2\leq 2|\xi|^2+2|\zeta|^2$, $\xi,\zeta\in \C$ to estimate the terms in the form $h(\ph,\ph)=\as{\ph,H\psi}$.

In the case of bounded operators it indeed suffices to consider operators with positive coefficients as
 the following lemma shows.\medskip

\begin{lemma}
Let $H$ be a bounded self adjoint operator  acting as \eqref{e:H} on $\ell^{2}(\V,\nu)$ and satisfying $\mathrm{(H0)}$, $\mathrm{(H1)}$. For a map $\te:\V\times\V\to\TT:=\{z\in\C\mid\mo{z}=1\}$ satisfying $\mathrm{(H0)}$, $\mathrm{(H1)}$ let $H^{\te}$ be the  operator on $\ell^{2}(V,\nu)$ given by
\begin{align*}
(H^{\te}\ph)(x)=\sum_{y\sim x} \te(x,y)t(x,y)\ph(y)+w(x)\ph(x),\quad\ph\in\ell^{2}(\V,\nu).
\end{align*}
Then, $H^{\te}$ is unitary equivalent to $H$.
\begin{proof}
Let $U:\ell^2(\V,\nu)\to\ell^2(\V,\nu)$ be the unitary diagonal operator with entries
\begin{align*}
    U(x,x)=\prod_{j=0}^{n-1}\te(x_j,x_{j+1}),
\end{align*}
where $x_0\sim\ldots\sim x_n$ is the unique path connecting the root $o$ of $\T$ to $x$. One directly checks that $U^{\ast}=U^{-1}$ and $
  U H  U^{\ast}=H^{\te}$.
\end{proof}
\end{lemma}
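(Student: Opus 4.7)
The natural strategy is to conjugate away the phase $\theta$ by a diagonal unitary operator whose entries are phases obtained by multiplying $\theta$ along the unique paths from the root. Since $\T$ is a tree, every vertex $x$ is joined to the root $o$ by a unique finite path $o = x_0 \sim x_1 \sim \dots \sim x_n = x$, so the assignment
\begin{align*}
U(x,x) := \prod_{j=0}^{n-1} \theta(x_j, x_{j+1})
\end{align*}
is well defined. The plan is to verify that $U$, viewed as the diagonal multiplication operator on $\ell^2(\V,\nu)$, is unitary and satisfies $U H U^{\ast} = H^{\theta}$.

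Unitarity is immediate because $|\theta| \equiv 1$ on adjacent pairs (and the product is empty at the root), so every diagonal entry of $U$ lies in $\TT$; hence $U^{\ast}$ is the diagonal operator with entries $\overline{U(x,x)}$ and $U U^{\ast} = U^{\ast} U = I$. For the conjugation step I would compute the matrix coefficients of $U H U^{\ast}$ directly. The diagonal entries are unchanged since $U$ is diagonal: $(U H U^{\ast})(x,x) = |U(x,x)|^{2} w(x) = w(x)$, matching the diagonal of $H^{\theta}$. For the off-diagonal entries, given $x \sim y$, I would split into the two cases according to which of the two vertices is a forward neighbor of the other. If $y$ is the forward neighbor of $x$, the tree structure yields the recursion $U(y,y) = U(x,x)\,\theta(x,y)$, so that the factor picked up between the two vertices is precisely the desired phase from $\theta$. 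The remaining case follows from the symmetry condition \mathrm{(H1)} applied to $\theta$, which forces $\theta(y,x)$ and $\theta(x,y)$ to be compatible in exactly the way needed so that both orientations of the edge produce the correct phase once the $\nu$-weighted symmetry of $H^{\theta}$ is compared to that of $H$.

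The main (and essentially only) obstacle is ensuring that the path-product definition is consistent with the two-sided matrix-element formula in both edge orientations. This is where the hypotheses genuinely enter: the absence of cycles guarantees that the product is path-independent (no consistency condition to check along loops), and the symmetry assumption \mathrm{(H1)} on $\theta$ guarantees that the phase coming from the backward edge is the conjugate of the one coming from the forward edge, which is exactly what is required for $U H U^{\ast}$ and $H^{\theta}$ to agree as self-adjoint operators on $\ell^2(\V,\nu)$. Once the off-diagonal matching is established in both orientations, the identity $U H U^{\ast} = H^{\theta}$ holds on $c_c(\V)$ and extends by boundedness to all of $\ell^2(\V,\nu)$, finishing the argument.
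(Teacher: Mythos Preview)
Your proposal is correct and follows exactly the same approach as the paper: define $U$ as the diagonal multiplication by the path-product of $\theta$ from the root, then verify unitarity and the conjugation identity. The paper's proof is terse (``one directly checks''), while you spell out the diagonal/off-diagonal verification and the role of the tree structure and of \textrm{(H1)}; there is no substantive difference in strategy.
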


A similar statement can be proven for unbounded operators $H$ provided that $U$ is a bijection between $D(H)$ and $D(H^{\te})$.

%%%%%%%%%%%%%%%%%%%%%%%%%%%%%%%%%%%%%%%%%%%%%%%%%%%%%%%%%%%%%%%%%%%%%%%%%%%%%%%%%%%%%%%%%%%%%%%%%%%%%%%%%%%%%%%%%%%%
%% Section
%%%%%%%%%%%%%%%%%%%%%%%%%%%%%%%%%%%%%%%%%%%%%%%%%%%%%%%%%%%%%%%%%%%%%%%%%%%%%%%%%%%%%%%%%%%%%%%%%%%%%%%%%%%%%%%%%%%%
\subsection{Spectral measures and Green functions}\label{s:Greenfunction}

We recall some basic concepts of spectral theory, such as the spectrum, spectral measures and the Green function. Moreover, we prove some fundamental properties of the Green function  and give a sufficient condition to exclude singular spectrum on an interval.

Let $\T=(\V,\E)$ be a tree and $H$ be a self adjoint operator with domain $D(H)\subseteq\ell^2(\V,\nu)$ acting as \eqref{e:H}. The \emph{resolvent set} of the operator $H$ is the set of $z\in \C$ such that the inverse operator $(H-z)^{-1}=\frac{1}{H-z}$, called the \emph{resolvent}, exists and is a bounded operator on $\ell^2(\V,\nu)$. The complement of the resolvent set of $H$ is called the \emph{spectrum} of $H$ and it is denoted by $\si(H)$. The spectrum of a self adjoint operator is always a closed subset of $\R$. It is compact if and only if the operator is bounded. This is, in particular, the case for the label invariant operators defined in Section~\ref{s:LIOp}.

The spectral theorem (see for instance \cite[Theorem~2.5.1]{Da}) tells us that there exists a finite measure $\mu$ on $\si(H)\times\N$ and a unitary operator $$U: \ell^2(\V,\nu)\to L^2(\si(H)\!\times\!\N,\mu)\quad\mbox{ with }\quad H=U^{-1} M_{\mathrm{id}} U.$$ Here, $M_{f}$ denotes the operator of componentwise multiplication by a function $f:\si(H)\to\C$ and $\id$ denotes the identity function.
Let $f:\si(H)\to\C$ be a Borel-measurable function. We define the operator $f(H)$ by
$$D(f(H)):=U^{\ast}\{\ph\in L^2(\si(H)\times\N,\mu)\mid \ph\in D(M_f)\subseteq L^2(\si(H)\times\N,\mu)\}$$
and
$$f(H):=U^{\ast} M_{f} U.$$
The operators $f(H)$ are bounded if and only if $f$ is bounded. Let $\mathds{1}_B$ be the characteristic function of a Borel-measurable set $B$.
Then, the spectral projections
$ \mathds{1}_{B}(H)$ define bounded operators on $\ell^2(\V,\nu)$. We can write
$$f(H)=\int_{\si(H)}f(t)d\chi_t,$$
where $\chi_t:=\mathds{1}_{(-\infty,t]}(H)$.
For $\ph,\psi\in \ell^2(\V,\nu)$ we define the \emph{spectral measure} $\mu_{\ph,\psi}$ of $H$ with respect to $\ph$ and $\psi$ via
$$\mu_{\ph,\psi}(B)=\int_B d\ip{\ph,\chi_t\psi}$$
for Borel-measurable sets $B\subseteq \si(H)$.
For a vertex $x\in\V$, we let $\de_x:\V\to\C $ be defined by
$$\de_x(y):=\left\{
\begin{array}{cl}
{\nu(x)}^{-\frac{1}{2}}&:y=x, \\
0&: \mbox{else} \\
\end{array}
\right.
$$
and we denote
$$\mu_x:=\mu_{\de_x,\de_x}.$$
Since $\{\de_x\mid x\in \V\}$ is an orthonormal basis of $\ell^2(\V,\nu)$, the whole spectral information of $H$ is encoded in the measures $\mu_x$, $x\in \V$.

Let $x\in \V$. A useful tool to analyze the spectral measure $\mu_x$ is the Green function, which is also known as the Borel transform of $\mu_x$.  The \emph{Green function} for $x\in\V$ and $z\in\C\setminus \si(H)$ is defined as
$$G_x(z,H):=\as{\de_x,\frac{1}{H-z}\de_x} =\int_{\si(H)}\frac{1}{t-z} d\mu_x(t).$$
For convenience we will also write  $G_x$ and $G_x(z)$ for $G_x(z,H)$.

Although $G_x$ is defined on $\C\setminus\si(H)$, we will consider it only as a function on the upper half plane $\h$, which is defined as
$$\h:=\{z\in\C\mid \Im z>0\}.$$
Since $H$ is self adjoint we have $\si(H)\subseteq \R$ and thus $\h\subseteq \C\setminus \si(H)$. We want to mention some well known properties of the Green function.
\medskip

\begin{lemma}\label{l:Herglotz} The Green function $z\mapsto G_x(z,H)$ is a {Herglotz function}  for all $x\in\V$, i.e., it is analytic on $\h$ and maps $\h$ into $\h$.
\begin{proof}Let $z_0\in\h$ and $r:=\|(H-z_0)^{-1}\de_x\|$. Then, for $z\in\h$ with $|z-z_0|<r$, one can easily check that
$$G_x(z,H)=\as{\de_x,\frac{1}{H-z}\de_x}= \sum_{n=0}^\infty(z-z_0)^n\as{\de_x,(H-z)^{-(n+1)}\de_x},$$
which proves the analyticity. To see that $G_x$ maps $\h$ into $\h$ we calculate for $z\in\C$
\begin{align*}
\Im G_x(z,H)&=\frac{1}{2i} \ap{G_x(z,H)+\ov{G_x(z,H)}} =\frac{1}{2i}\as{\de_x,\ab{\frac{1}{H-z}+\frac{1}{H-\ov{z}}}\de_x}\\
&=\Im z\as{\de_x,\frac{1}{H-z}\frac{1}{H-\ov{z}}\de_x}= \Im z \aV{\frac{1}{H-z}\de_x}^2.
\end{align*}
We used a resolvent identity in the third equation. Hence, $\Im G_x(z,H)>0$ if and only if $\Im z>0$.
\end{proof}
\end{lemma}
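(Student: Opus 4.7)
The plan is to verify the two defining properties of a Herglotz function separately, both relying on elementary operator theory facts about the resolvent $(H-z)^{-1}$.

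For analyticity on $\h$, I would start from a fixed base point $z_0\in\h$. Since $H$ is self adjoint, $z_0\notin \si(H)$ and hence $(H-z_0)^{-1}$ is a bounded operator, say of norm at most $r^{-1}$ for some $r>0$. For $z\in\h$ with $|z-z_0|<r$ one has $(H-z)=(H-z_0)(1-(z-z_0)(H-z_0)^{-1})$ with the second factor invertible by a Neumann series. Taking the matrix element against $\de_x$ on both sides and exchanging sum and inner product gives a convergent power series expansion of $G_x(z,H)$ around $z_0$. Since $z_0\in\h$ was arbitrary, this yields analyticity on all of $\h$. (An alternative, equally clean route is to differentiate the integral representation $G_x(z,H)=\int (t-z)^{-1}d\mu_x(t)$ under the integral sign, which is justified on $\h$ since $\mathrm{dist}(z,\si(H))>0$ there.)

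For the mapping property $G_x(\h)\subseteq \h$, I would compute $\Im G_x(z,H)$ directly. The self adjointness of $H$ gives the adjoint identity $\ov{G_x(z,H)}=G_x(\ov{z},H)$, so
\begin{align*}
\Im G_x(z,H) = \frac{1}{2i}\as{\de_x,\ab{(H-z)^{-1}-(H-\ov z)^{-1}}\de_x}.
\end{align*}
Applying the first resolvent identity $(H-z)^{-1}-(H-\ov z)^{-1}=(z-\ov z)(H-z)^{-1}(H-\ov z)^{-1}$ and using $z-\ov z=2i\Im z$ together with $(H-z)^{-1}=((H-\ov z)^{-1})^\ast$, this collapses to
\begin{align*}
\Im G_x(z,H)=\Im z\cdot\aV{(H-\ov z)^{-1}\de_x}^2.
\end{align*}
Since $\de_x\neq 0$ and $(H-\ov z)^{-1}$ is a bijection on $\ell^2(\V,\nu)$, the right hand side is strictly positive whenever $\Im z>0$.

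I do not expect any real obstacle here; both steps are standard. The only point requiring a small amount of care is ensuring strict (as opposed to non-negative) positivity of the imaginary part, which follows at once from injectivity of the resolvent on the nonzero vector $\de_x$.
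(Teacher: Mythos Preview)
Your proposal is correct and follows essentially the same route as the paper: a Neumann series expansion of the resolvent for analyticity, and the first resolvent identity applied to $(H-z)^{-1}-(H-\ov z)^{-1}$ to obtain $\Im G_x(z,H)=\Im z\,\|(H-\ov z)^{-1}\de_x\|^2$ for the mapping property. Your treatment is in fact slightly more careful than the paper's (you state the convergence radius correctly and explicitly justify strict positivity via injectivity of the resolvent), but there is no genuine difference in approach.
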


Moreover, the spectral measures $\mu_{x}$, $x\in\V$ are given as the vague limit of the measures $\pi^{-1}\Im G_x(E+i\eta,H)dE$ as $\eta\downarrow0$.
\medskip

\begin{lemma}\label{l:mu}(Vague convergence of spectral measures.) For all $x\in \V$ the measures ${\pi^{-1}}\Im
G_x(E+i\eta)dE$ converge vaguely to $\mu_x$ as $\eta\downarrow0$ in the sense that
$$\lim_{\eta\downarrow 0}\frac{1}{\pi}\int_\R f(E)\Im G_x(E+i\eta)dE=\int_{\R} f(E) d\mu_x(E),$$
for all compactly supported continuous functions $f$ on $\R$.
%In particular,  if there is an $\eps>0$ such that $\limsup_{\eta\downarrow 0}\Im G_x(E'+i\eta)=0$ for Lebesgue almost all $E'\in(E-\eps,E+\eps)$ then $E\in\R\setminus \si(H)$.
\begin{proof}
Let $a,b\in\R$ such that $b\geq a$. The functions $f_\eta:\R \to\R$, given by
\begin{align*}
f_\eta(t)&:= \frac{1}{2\pi i} \int_a^b\ab{\frac{1}{t-E-i\eta}-\frac{1}{t-E+i\eta}}dE\\
&= \frac{1}{\pi}\ap{\arctan\ap{\frac{b-t}{\eta} } -\arctan\ap{\frac{a-t}{\eta}}},
\end{align*}
converge pointwise to the function $\frac{1}{2}(\mathds{1}_{[a,b]}+\mathds{1}_{(a,b)})$ as $\eta\downarrow 0$. Clearly, $|f_\eta(t)|$ is uniformly bounded in $\eta$ we obtain by the spectral theorem
$$\frac{1}{2\pi i}\int_a^b \as{\de_x,\ab{\frac{1}{H-E-i\eta}-\frac{1}{H-{E+i\eta}}}\de_x}dE\to
\frac{1}{2}\as{\de_x,\ap{\mathds{1}_{[a,b]}(H)+\mathds{1}_{(a,b)}(H)}\de_x},$$
as $\eta\downarrow0$.
This equation is known as Stone's formula.
By a resolvent identity one directly computes that
$2i\Im G_x(z,H)=\as{\de_x,((H-z)^{-1}-(H-\ov{z})^{-1})\de_x}$. Approximating compactly supported continuous functions  by characteristic functions, we obtain the statement.
\end{proof}
\end{lemma}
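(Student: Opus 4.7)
The plan is to derive the statement from Stone's formula, which expresses spectral projections as limits of imaginary parts of resolvents. The Green function is essentially the Borel transform of $\mu_x$, so its imaginary part on the line $\Im z=\eta$ gives a Poisson-type smoothing of $\mu_x$. As $\eta\downarrow 0$, this smoothing concentrates on the real axis and recovers $\mu_x$ in the vague topology.

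More concretely, I would first reduce the claim to test functions of the form $\mathds{1}_{[a,b]}$ or rather to the symmetric average $\tfrac12(\mathds{1}_{[a,b]}+\mathds{1}_{(a,b)})$, since the set of continuity points of $\mu_x$ is co-countable and an approximation argument on compactly supported continuous functions is then straightforward. For the characteristic-function case I would introduce
\begin{align*}
f_\eta(t)&:=\frac{1}{2\pi i}\int_a^b\ap{\frac{1}{t-E-i\eta}-\frac{1}{t-E+i\eta}}dE\\
&=\frac{1}{\pi}\ap{\arctan\ab{\frac{b-t}{\eta}}-\arctan\ab{\frac{a-t}{\eta}}},
\end{align*}
which, by the elementary behaviour of $\arctan$, converges pointwise to $\tfrac12(\mathds{1}_{[a,b]}(t)+\mathds{1}_{(a,b)}(t))$ as $\eta\downarrow 0$, and is uniformly bounded by $1$ in $\eta$.

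Next I would apply the spectral theorem to $f_\eta(H)$: by dominated convergence against $\mu_x$,
\begin{equation*}
\as{\de_x,f_\eta(H)\de_x}\lra \tfrac12\as{\de_x,(\mathds{1}_{[a,b]}(H)+\mathds{1}_{(a,b)}(H))\de_x}=\tfrac12(\mu_x([a,b])+\mu_x((a,b))).
\end{equation*}
On the other hand, using the resolvent identity $(H-z)^{-1}-(H-\bar z)^{-1}=2i\Im z\,(H-z)^{-1}(H-\bar z)^{-1}$, the left-hand side equals $\pi^{-1}\int_a^b\Im G_x(E+i\eta)\,dE$. This gives the claim for indicator functions of intervals $[a,b]$ with $a,b$ chosen so that $\mu_x(\{a\})=\mu_x(\{b\})=0$; the set of such $a,b$ is dense.

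Finally, I would upgrade to continuous compactly supported $f$ by a standard $\eps$-argument: approximate $f$ uniformly by step functions whose jumps avoid the (at most countable) set of atoms of $\mu_x$, use the previous step on each piece, and control the error via the uniform bound $\pi^{-1}\int_I\Im G_x(E+i\eta)dE\le \mu_x(\widetilde I)+O(\eta)$ on slightly enlarged intervals $\widetilde I$. The only subtlety is this last uniform control, which follows from the Herglotz representation together with the fact that $\pi^{-1}\Im G_x(\cdot+i\eta)dE$ is a probability-like measure of fixed total mass (equal to $\mu_x(\R)=\|\de_x\|^2=1$ up to normalisation by $\nu(x)$), preventing mass escape.
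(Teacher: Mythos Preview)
Your proposal is correct and follows essentially the same approach as the paper: both derive Stone's formula via the explicit arctan computation for $f_\eta$, invoke pointwise convergence and uniform boundedness to pass through the spectral theorem, identify the resulting expression with $\pi^{-1}\int_a^b\Im G_x(E+i\eta)\,dE$ via the resolvent identity, and then approximate compactly supported continuous functions by characteristic functions. You spell out the final approximation step (avoiding atoms, total-mass control) more carefully than the paper, which dispatches it in a single sentence, but the underlying argument is the same.
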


One can say even more about pointwise convergence of the Green function.  A proof of the following statement can e.g. be found in \cite[Theorem 1.4.6]{DeKr}.\medskip

\begin{lemma}\label{l:Gpointwise}(Pointwise Convergence.) Let $x\in\V$. The limits
$\lim_{\eta\downarrow0}G_{x}(E+i\eta)$ exist and are finite for almost every $E\in\R$.
\end{lemma}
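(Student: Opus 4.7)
The plan is to reduce the claim to a classical boundary-value result for Herglotz (Nevanlinna) functions. By Lemma~\ref{l:Herglotz}, the map $z \mapsto G_x(z,H)$ is Herglotz, and the spectral theorem gives the Borel transform representation
\begin{equation*}
G_x(z,H) = \int_{\si(H)} \frac{1}{t-z}\,d\mu_x(t),
\end{equation*}
where $\mu_x$ is a finite positive Borel measure on $\R$ (of total mass $1$). The question is thus reduced to a.e.\ existence and finiteness of the boundary values of the Cauchy/Borel transform of a finite measure.

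Writing $z = E + i\eta$ with $\eta > 0$, I would split into real and imaginary parts:
\begin{equation*}
\Im G_x(E+i\eta) = \int \frac{\eta}{(t-E)^2 + \eta^2}\,d\mu_x(t),
\qquad
\Re G_x(E+i\eta) = \int \frac{t-E}{(t-E)^2 + \eta^2}\,d\mu_x(t).
\end{equation*}
The first expression is (up to the factor $\pi$) the Poisson integral of $\mu_x$. By the standard Lebesgue differentiation argument applied to the Poisson kernel, this converges for Lebesgue-a.e.\ $E \in \R$ to $\pi$ times the Radon--Nikodym derivative of the absolutely continuous part of $\mu_x$ with respect to Lebesgue measure; in particular the limit exists and is finite a.e. The second expression is the symmetric truncation of the Hilbert/Stieltjes transform of $\mu_x$, and its a.e.\ convergence to the principal value is the classical Privalov--Plemelj theorem for Cauchy integrals of finite measures.

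Combining the two a.e.\ statements, the exceptional set where either the real or imaginary part fails to converge has Lebesgue measure zero, and off this null set the complex limit $\lim_{\eta \downarrow 0} G_x(E+i\eta)$ exists and is finite. The main obstacle is really the real part: the imaginary part is a direct application of Lebesgue differentiation via the Poisson kernel, whereas the real part requires the more delicate theory of a.e.\ existence of the Hilbert transform of a finite measure. Since both ingredients are standard parts of the theory of Herglotz functions, the proof reduces to invoking these classical results (as the author does by citing \cite{DeKr}).
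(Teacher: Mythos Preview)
Your proposal is correct: this is precisely the standard argument for a.e.\ boundary values of Herglotz functions, via the Poisson integral for the imaginary part and the Hilbert transform for the real part. The paper itself does not give a proof at all but simply refers to \cite[Theorem~1.4.6]{DeKr}, so your sketch is in fact more detailed than what appears in the paper; there is nothing further to compare.
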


%%%%%%%%%%%%%%%%%%%%%%%%%%%%%%%%%%%%%%%%%%%%%%%%%%%%%%%%%%%%%%%%%%%%%%%%%%%%%%%%%%%%%%%%%%%%%%%%%%%%%%%%%%%%%%%%%%%%
%% Subection
%%%%%%%%%%%%%%%%%%%%%%%%%%%%%%%%%%%%%%%%%%%%%%%%%%%%%%%%%%%%%%%%%%%%%%%%%%%%%%%%%%%%%%%%%%%%%%%%%%%%%%%%%%%%%%%%%%%%

\subsection{A criterion for absolutely continuous spectrum}

Let $x\in\V$ be fixed for this subsection. By Lebesgue's decomposition theorem \cite[Theorem I.14]{RS} the measure $\mu_x$ is the sum of a measure $\mu_{\mathrm{ac},x}$ which is absolutely continuous with respect to Lebesgue measure and a singular measure $\mu_{\mathrm{sing},x}$, i.e.,
$$\mu_{x}=\mu_{\mathrm{ac},x} + \mu_{\mathrm{sing},x}.$$
If $E\in\R$ is in the support of $\mu_{\mathrm{ac},x}$ (respectively, $\mu_{\mathrm{sing},x}$) we say $E$ is in the absolutely continuous (respectively, singular) spectrum of $H$ and we write $E\in\si_{\mathrm{ac}}(H)$ (respectively, $E\in\si_{\mathrm{sing}}(H)$).

The following variant of the well known limiting absorption principle gives a sufficient condition for a subset $I\subseteq \si(H)$ to be included in $\si_{\mathrm{ac}}(H)$. We give a proof for the sake of completeness. See also
\cite{Hal1,Kl1,Si1}.
\medskip

\begin{thm}\label{t:Klein}(Absence of singular spectrum.) Suppose that for an open interval $I\subseteq \R$ and $p>1$
$$\liminf_{\eta\downarrow 0}\int_{I}|G_x(E+i\eta)|^pdE < \infty.$$
Then
$$I\cap \si_{\mathrm{sing}}(H)=\emptyset.$$
% and $d\mu_x=\liminf_{\eta\downarrow 0}G_x(E+i\eta)dE$.
\begin{proof} Let $I\subseteq \R$ and consider the space $L^p(I,dE)$ of $p$-integrable functions with respect to the Lebesgue measure and norm $\no{ \cdot}_p$.
By assumption, there exists a sequence $\eta_n\downarrow 0$ such that $$\lim_{n\to\infty}\int_{I}|G_x(E+i\eta_n)|^pdE=\liminf_{\eta\downarrow 0}\int_{I}|G_x(E+i\eta)|^pdE=:C<\infty.$$
By Lemma~\ref{l:mu} the measures $\pi^{{-1}}\Im G_x(E+i\eta_n)dE$ converge vaguely to $\mu_{x}$ for $n\to\infty$. We compute for $f$ continuous with support in $I$ by the Hölder inequality with $q\in(1,\infty)$ such that $1/p+1/q=1$.
\begin{align*}
\mo{\int_{I}f(E)d\mu_{x}(E)}&= \frac{1}{\pi}\lim_{n\to0}\mo{\int_{I}f(E)\Im G_x(E+i\eta_n)dE}\\
&\leq \frac{1}{\pi}\no{f}_q\lim_{n\to\infty}\no{\Im G_x(\cdot+i\eta_n)}_p\\
&= \frac{1}{\pi}\no{f}_q\liminf_{\eta\downarrow0}\no{\Im G_x(\cdot+i\eta)}_p\\
&= C\no{f}_q.
\end{align*}
Hence, $\mu_{x}$ defines a continuous linear functional on a dense subset of $L^p(I,dE)$. Therefore,  there exists $g\in L^q(I,dE)$ such that ${\int_{I}f(E)d\mu_{x}(E)}={\int_{I}f(E)g(E)dE}$. This implies that $\mu_{x}$ has a density  in $L^q(I,dE)$. Therefore, the measure $\mu_{x}$ is absolutely continuous with respect to the Lebesgue measure.
\end{proof}
\end{thm}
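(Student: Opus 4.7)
The plan is to deduce absolute continuity of $\mu_x$ on $I$ from the $L^p$ bound, using the vague convergence from Lemma~\ref{l:mu} and a duality argument. First I would extract a sequence $\eta_n \downarrow 0$ along which the $\liminf$ in the hypothesis is attained, so that
\begin{equation*}
\sup_{n} \,\|G_x(\cdot+i\eta_n)\|_{L^p(I)} =: C^{1/p} < \infty.
\end{equation*}
Since $|\Im G_x(E+i\eta_n)| \leq |G_x(E+i\eta_n)|$ pointwise, the same bound holds for $\Im G_x(\cdot+i\eta_n)$ in $L^p(I)$.

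Next, for any continuous $f$ with compact support contained in $I$, Lemma~\ref{l:mu} gives
\begin{equation*}
\int_I f(E)\,d\mu_x(E) = \lim_{n\to\infty}\frac{1}{\pi}\int_I f(E)\,\Im G_x(E+i\eta_n)\,dE.
\end{equation*}
Applying H\"older's inequality with conjugate exponent $q = p/(p-1) \in (1,\infty)$ and passing to the limit yields
\begin{equation*}
\left|\int_I f\,d\mu_x\right| \leq \frac{1}{\pi}\,\|f\|_{L^q(I)}\,\liminf_{n\to\infty}\|\Im G_x(\cdot+i\eta_n)\|_{L^p(I)} \leq \frac{C^{1/p}}{\pi}\,\|f\|_{L^q(I)}.
\end{equation*}

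Thus $f \mapsto \int_I f\,d\mu_x$ extends to a bounded linear functional on the dense subspace $C_c(I) \subseteq L^q(I,dE)$, and hence to all of $L^q(I,dE)$. By the Riesz representation theorem there exists $g \in L^p(I,dE)$ such that $\int_I f\,d\mu_x = \int_I f\,g\,dE$ for every $f \in C_c(I)$. Since both $\mu_x|_I$ and $g\,dE$ are finite Borel measures (the latter because $g \in L^p \subseteq L^1_{\mathrm{loc}}$) agreeing on $C_c(I)$, they coincide as measures on $I$. Therefore $\mu_x$ restricted to $I$ is absolutely continuous with respect to Lebesgue measure, which gives $I \cap \mathrm{supp}\,\mu_{\mathrm{sing},x} = \emptyset$ and hence $I \cap \sigma_{\mathrm{sing}}(H) = \emptyset$.

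The proof is largely bookkeeping once the right ingredients are identified; the only point requiring some care is ensuring that the vague limit combined with the H\"older estimate really yields an $L^q$ density (as opposed to merely showing $\mu_x(I)$ is finite). The key observation is that one must test against $C_c(I)$ — not against $\mathds{1}_I$ — because Lemma~\ref{l:mu} only supplies vague, not weak, convergence. Density of $C_c(I)$ in $L^q(I,dE)$ then closes the argument.
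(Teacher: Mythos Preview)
Your proof is correct and follows essentially the same route as the paper: extract a sequence realizing the $\liminf$, invoke the vague convergence of Lemma~\ref{l:mu}, apply H\"older to bound $f\mapsto\int_I f\,d\mu_x$ by $\|f\|_{L^q}$, and use Riesz representation to obtain an $L^p$ density. You are in fact slightly more careful than the paper in distinguishing the roles of $p$ and $q$ (the functional is bounded on $L^q$, so the density lies in $L^p$) and in spelling out why testing against $C_c(I)$ suffices.
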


%%%%%%%%%%%%%%%%%%%%%%%%%%%%%%%%%%%%%%%%%%%%%%%%%%%%%%%%%%%%%%%%%%%%%%%%%%%%%%%%%%%%%%%%%%%%%%%%%%%%%%%%%%%%%%%%%%%%
%% Section
%%%%%%%%%%%%%%%%%%%%%%%%%%%%%%%%%%%%%%%%%%%%%%%%%%%%%%%%%%%%%%%%%%%%%%%%%%%%%%%%%%%%%%%%%%%%%%%%%%%%%%%%%%%%%%%%%%%%
\section{Recursion relations for the Green functions}\label{s:Recursion}

We next introduce Green functions associated to operators restricted to forward trees. We refer to them as truncated Green functions. These Green functions satisfy certain recursion relations. That will be the starting point of our analysis.

There are three equivalent formulations of the recursion relations. Firstly, there are the recursion formulas for the Green functions. They  can be derived by applying a  resolvent identity twice or by constructing the unique solution of a certain difference equation. This recursion formula is given in Proposition~\ref{p:Gm} and we present two alternative proofs. Secondly, the recursion formula can be translated into an infinite system of polynomial equations which have  the truncated Green function as roots. Finally,  there is a so called recursion map which has the truncated Green functions as fixed points.
Each  viewpoint on the recursion relations reveals particular properties of the Green functions.

Denote by $o\in\V$ the root of the tree $\T=(\V,\E)$. For a vertex $x\in \V$, we define the \emph{(forward) sphere} $S_x^{n}$ of distance $n\in\N_0$ by
$$S_x^{n}:=\{y\in \V\mid d(x,y)=|y|-|x|=n\}$$
and, for $n=1$, we write $S_x:=S_x^1$.
For $x=o$ we drop the subscript writing
$S^n:=S_o^n$. Recall that we denoted by $\T_x=(\V_x, \E_x)$ the forward tree of a vertex $x\in\V$ with respect to the root. Then, the vertex set $\V_{x}$ can be decomposed into
$$\V_x=\bigcup_{n\in\N_0}S_x^n.$$

As above, let $\nu:\V\to(0,\infty)$ be a measure and denote the restriction of $\nu$ to a subtree $\T'=(\V',\E')$ of $\T=(\V,\E)$ by $\nu_{\T'}$. The Hilbert space $\ell^2(\V',\nu_{\T'})$ is a closed subspace of $\ell^2(\V,\nu)$. Let $p_{\T'}:\ell^2(\V,\nu)\to\ell^2(\V',\nu_{\T'})$ be the canonical projection and $i_{\T'}:\ell^2(\V',\nu_{\T'})\to \ell^2(\V,\nu)$ be its adjoint operator which is the continuation by zero. For a the self adjoint operator $H$ on $D(H)$, we define the restriction of $H$ to $\T'$ by
\begin{align*}
D(H_{\T'}):=p_{\T'}D(H)=\{\ph\in \ell^2(\V',\nu_{\T'})\mid i_{\T'}\ph\in D(H)\}
\end{align*}
and
\begin{align*}
H_{\T'}&:=p_{\T'}Hi_{\T'}.
\end{align*}

The \emph{truncated Green function} of an operator $H$ with respect to the forward tree $\T_x=(\V_x,\E_x)$, $x\in\V$ is denoted by
\begin{align*}
\Gm_x(z,H):=G_x(z,H_{\T_x})=\as{\de_x,\frac{1}{H_{\T_x}-z}\de_x},\quad z\in \h.
\end{align*}
Note that, if $x=o$ we have $\Gm_o(z,H)=G_o(z,H)$. For convenience we will sometimes  write $\Gm_{x}$ or $\Gm_x(z)$ for $\Gm_x(z,H)$.

%%%%%%%%%%%%%%%%%%%%%%%%%%%%%%%%%%%%%%%%%%%%%%%%%%%%%%%%%%%%%%%%%%%%%%%%%%%%%%%%%%%%%%%%%%%%%%%%%%%%%%%%%%%%%%%%%%%%
%% Subection
%%%%%%%%%%%%%%%%%%%%%%%%%%%%%%%%%%%%%%%%%%%%%%%%%%%%%%%%%%%%%%%%%%%%%%%%%%%%%%%%%%%%%%%%%%%%%%%%%%%%%%%%%%%%%%%%%%%%

\subsection{Recursion formulas}\label{ss:Gm}
We now present the recursion formulas for $\Gm_x$. The formulas can be already found in a similar form in \cite{Kl1,ASW1,FHS1,FHS2}. We present two versions of the proof.
\medskip

\begin{prop}\label{p:Gm} (Recursion formulas.)
For $x\in \V$ and $z\in\h$ we have
\begin{equation}\label{e:Gm}
-\frac{1}{\Gm_x(z,H)}= z-w(x)+\sum_{y\in S_x} |t(x,y)|^2 \Gm_y(z,H).
\end{equation}
\begin{proof}
Let $\Lambda$ be the self adjoint operator which connects $x\in\V$ to its forward neighbors, i.e., $\as{\Lambda\de_x,\de_{y}}=\ov{\as{\Lambda\de_y,\de_x}}=t(x,y)$ for all $y\in S_x$ and all other matrix elements vanish. Then, $H':=H-\Lambda$ is a direct sum of the operators $H_{\T_y}$, $y\in S_x$ and $H_{\T'}$ where $\T'=\T\setminus \bigcup_{y\in S_x}\T_y$. We can think of $H'$ as the operator on the tree where all edges connecting $x$ with vertices in $S_{x}$ are removed. Applying a resolvent identity twice yields
\begin{align*}
\frac{1}{H-{z}}&=\frac{1}{H'-{z}} -\frac{1}{H'-{z}}\Lambda\frac{1}{H-{z}}\\
&=\frac{1}{H'-{z}} -\frac{1}{H'-{z}}\Lambda\frac{1}{H'-{z}} +\frac{1}{H'-{z}}
\Lambda\frac{1}{H'-{z}}\Lambda\frac{1}{H-{z}}.
\end{align*}
Since $H'$ is a direct sum of operators, so is the resolvent $(H'-z)^{-1}$. In particular,  off diagonal matrix elements $\as{\de_y,\ap{H'-z}^{-1}\de_{y'}}$ are non zero if and only if $y$ and $y'$ are in the same component after removing the corresponding edges by subtracting $\Lambda$ from $H$.

We start with $x=o$.  Note that $H_{\T'}$ is the number $w(o)$ in this case. We consider the $(o,o)$ matrix element and see that the left hand side above is equal to $G_{o}(z,T)=\Gm_{o}(z,T)$.
We get, by the considerations above, via a direct calculation
\begin{align*}
\Gm_{o}(z,T)=\frac{1}{w(o)-z}+\frac{1}{w(o)-z}\Gm_{o}(z,T)\sum_{y\in S_o}|t(o,y)|^2\Gm_y(z,T).
\end{align*}
This yields the statement for $x=o$. As an arbitrary vertex $x$ is the root of the forward tree $\T_{x}$, the statement follows.
\end{proof}
\end{prop}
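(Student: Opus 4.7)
Since every vertex $x\in\V$ is the root of its own forward tree $\T_x$, and the identity involves only $H_{\T_x}$ together with the further restrictions $H_{\T_y}$ for $y\in S_x$, it suffices to prove the recursion at the root $o$ and then invoke the identical argument at an arbitrary $x$. The plan is to decouple $o$ from $S_o$ by removing the edges joining them. Explicitly, I let $\Lambda$ be the finite-rank self-adjoint operator on $\ell^2(\V_o,\nu)$ carrying exactly the off-diagonal entries $t(o,y)$, $t(y,o)$ for $y\in S_o$, and set $H':=H_{\T_o}-\Lambda$. Then $H'$ splits as the direct sum of multiplication by $w(o)$ on $\mathbb{C}\de_o$ with the operators $H_{\T_y}$ on $\ell^2(\V_y,\nu)$, $y\in S_o$, so $(H'-z)^{-1}$ is block diagonal with simple $(w(o)-z)^{-1}$ and $\Gm_y(z,H)$ expressions on the respective blocks.

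The next step is to iterate the second resolvent identity twice to write
\begin{equation*}
\frac{1}{H_{\T_o}-z}=\frac{1}{H'-z}-\frac{1}{H'-z}\Lambda\frac{1}{H'-z}+\frac{1}{H'-z}\Lambda\frac{1}{H'-z}\Lambda\frac{1}{H_{\T_o}-z},
\end{equation*}
and then to take the $(o,o)$ matrix element. The first term contributes $(w(o)-z)^{-1}$. The middle term vanishes because $\Lambda$ sends $\de_o$ into $\mathrm{span}\{\de_y:y\in S_o\}$, which lies in a block of $(H'-z)^{-1}$ orthogonal to $\de_o$. The remaining cubic-in-resolvent term, after using block-diagonality and the definition $\Gm_y(z,H)=\as{\de_y,(H_{\T_y}-z)^{-1}\de_y}$, collapses to a product of $(w(o)-z)^{-1}$, $\Gm_o(z,H)$, and $\sum_{y\in S_o}|t(o,y)|^2\Gm_y(z,H)$. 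The resulting scalar equation is linear in $\Gm_o(z,H)$; solving for $\Gm_o(z,H)$ and rearranging yields the stated recursion.

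As a cross-check which I would run in parallel, there is a direct difference-equation derivation: set $\psi:=(H_{\T_o}-z)^{-1}\de_o$, so that the equation $(H_{\T_o}-z)\psi=\de_o$ evaluated at $o$ reads $(w(o)-z)\psi(o)+\sum_{y\in S_o}t(o,y)\psi(y)=\nu(o)^{-1/2}$, while on each forward subtree the restriction $\psi|_{\V_y}$ solves $(H_{\T_y}-z)\psi|_{\V_y}=-t(y,o)\psi(o)\mathbf{1}_{\{y\}}$. This pins $\psi(y)$ to a scalar multiple of $\Gm_y(z,H)\psi(o)$, and substituting back into the equation at $o$ produces the same recursion. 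The main technical nuisance in either route is the careful bookkeeping of the normalization $\de_x=\nu(x)^{-1/2}\mathbf{1}_{\{x\}}$ together with the $\nu$-weighted symmetry (H1), $t(x,y)\nu(x)=\overline{t(y,x)}\nu(y)$: these factors must combine so that the coefficient that finally appears in front of each $\Gm_y(z,H)$ is precisely the quantity $|t(o,y)|^2$ of the statement, rather than an unsymmetric variant such as $t(o,y)t(y,o)$.
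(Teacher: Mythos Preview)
Your proposal is correct and follows essentially the same approach as the paper: decouple $o$ from $S_o$ via the finite-rank $\Lambda$, apply the second resolvent identity twice, take the $(o,o)$ matrix element, and solve the resulting linear relation; the reduction to the root is also handled identically. Your cross-check via the difference equation $(H_{\T_o}-z)\psi=\de_o$ is precisely the paper's ``alternative proof'' (given there only for $\nu\equiv 1$), so both routes you outline coincide with the two proofs in the text.
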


There is also a more `pedestrian' way of proving the recursion formulas. It follows by constructing the unique solution $\ph\in D(H_{x})$ to the equation $(H_{\T_{x}}-z)\ph=\de_x$. However, for convenience we only treat the case $\nu\equiv1$

\begin{proof}[Alternative proof of Proposition~\ref{p:Gm} for $\nu\equiv 1$] Fix $x\in\V$. For $y\in \V_x$ let
\begin{align*}
\Gm_{x,y}:=\Gm_{x,y}(z,H) &:= \as{\de_y,\frac{1}{H_{\T_x}-z}\de_x}.
\end{align*}
Note that $y\mapsto\Gm_{x,y}(z,H)$ is the unique function $\ph\in D(H_{\T_x})$ such that
\begin{equation*}
(H_{\T_x}-z)\ph=\de_x.
\end{equation*}
For $v\in\V_x$ let
$$\ph(v):=
\frac{1}{z-w(x)+\sum_{y\in S_x}|t(x,y)|^2\Gm_{y}}
\left\{
\begin{array}{cl}
\ov{t(x,y)}\Gm_{y,v} &: v\in \V_y,\,y\in S_x, \\
-1 &: v=x. \\
\end{array}
\right.
$$
We claim that $\ph\in D(H_{\T_x})$ and $(H_{\T_x}-z)\ph=\de_x$. By the discussion above this yields $\ph=\Gm_x$. It is clear that $\ph\in D(H_{\T_x})$ since $\Gm_{y,\cdot}$ are Green functions. To check the equation we have to consider three cases. For $v=x$ we get
$$(H_{\T_x}-z)\ph(x)=
\sum_{y\in S_x}t(x,y)\ph(y)+(w(x)-z)\ph(x)
=1.$$
For $v\in S_x$ we get
\begin{align*}
(H_{\T_x}-z)\ph(v) &= t(v,x)\ph(x)+
\sum_{y\in S_v}t(v,y)\ph(y)+(w(v)-z)\ph(v) \\
&= \frac{1}{(\dots)}\ab{-t(v,x)+t(v,x)(H_{\T_v}-z)\Gm_{v,v}}\\
&=\frac{1}{(\dots)}\ab{-t(v,x)+t(v,x)\de_v(v)}\\
&=0.
\end{align*}
For $v\in \V_y\setminus \{ y \}$, $y\in S_x$ note that $H_{\T_x}\ph(v)=H_{\T_y}\ph(v)$. Thus,
$$(H_{\T_x}-z)\ph(v) =(H_{\T_y}-z)\ph(v)= \frac{1}{(\dots)}(H_{\T_y}-z)\Gm_{v,v}=\frac{1}{(\dots)}\de_y(v)=0.
$$
\end{proof}

%%%%%%%%%%%%%%%%%%%%%%%%%%%%%%%%%%%%%%%%%%%%%%%%%%%%%%%%%%%%%%%%%%%%%%%%%%%%%%%%%%%%%%%%%%%%%%%%%%%%%%%%%%%%%%%%%%%%
%% Subection
%%%%%%%%%%%%%%%%%%%%%%%%%%%%%%%%%%%%%%%%%%%%%%%%%%%%%%%%%%%%%%%%%%%%%%%%%%%%%%%%%%%%%%%%%%%%%%%%%%%%%%%%%%%%%%%%%%%%

\subsection{Polynomial equations}\label{ss:Polynom}
We give an equivalent formulation of the recursion relation \eqref{e:Gm} by a system of polynomial equations. For $x\in \V$ we define the polynomial
\begin{equation*}
P_x:\C\times\C^{\V}\to\C,\quad(z,\xi)\mapsto \ab{z-w(x)+\sum_{y\in S_x}|t(x,y)|^2\xi_y}\xi_x+1
\end{equation*}
and consider the infinite system of polynomial equations
\begin{equation}\label{e:Polynom}
P(z,\xi):=(P_x(z,\xi))_{x\in\V}\equiv 0.
\end{equation}
By $\eqref{e:Gm}$ we see that $(z,\Gm(z))$, with $\Gm(z):=(\Gm_x(z))_{x\in\V}$ is a solution of \eqref{e:Polynom}, i.e.,
$$P_x(z,\Gm(z))=0.$$
We show in Section~\ref{s:UniquenessFixPoints} that \eqref{e:Polynom} has a unique solution in $\h\times\h^{\V}$ which must then be $(z,\Gm(z))$.

%%%%%%%%%%%%%%%%%%%%%%%%%%%%%%%%%%%%%%%%%%%%%%%%%%%%%%%%%%%%%%%%%%%%%%%%%%%%%%%%%%%%%%%%%%%%%%%%%%%%%%%%%%%%%%%%%%%%
%% Subection
%%%%%%%%%%%%%%%%%%%%%%%%%%%%%%%%%%%%%%%%%%%%%%%%%%%%%%%%%%%%%%%%%%%%%%%%%%%%%%%%%%%%%%%%%%%%%%%%%%%%%%%%%%%%%%%%%%%%
\subsection{Recursion maps}\label{ss:Psi}

Another equivalent formulation of the recursion formulas \eqref{e:Gm} is given by recursion maps. Define for $z\in\h\cup\R$ the function $\Psi_z:=\Psi^{(H)}_z:\h^\V\to\h^\V$ via the components $\Psi_{z,x}:=\Psi^{(H)}_{z,x}$ in $x\in \V$  by
\begin{equation}\label{e:Psi}
\Psi_{z,x}:\h^{S_x}\to\h,\quad g\mapsto-\frac{1}{z-w(x)+\sum_{y\in S_x}|t(x,y)|^2g_y}.
\end{equation}
For a subset $W\subseteq \V$ and $g\in\h^{\V}$ we write $g_{W}\in\h^{W}$ for the restriction of $g$ to $W$. We denote the restriction $\Psi_{z,S^n}:=\Psi^{(H)}_{z,S^{n}}$ of $\Psi_z$ to the spheres $S^{n}$, $ n\in\N$ of the tree by $$\Psi_{z,S^n}:\h^{S^{n+1}}\to\h^{S^n}, \quad g\mapsto \ab{\Psi_{z,x}^{(H)}(g_{S_x})}_{x\in S^{n}}.$$
By the recursion formula $\Gm(z)=(\Gm_x(z))_{x\in\V}\in\h^\V$ is a fixed point of $\Psi_z$, i.e.,
$$\Psi_z(\Gm(z))=\Gm(z).$$
We will show in Section~\ref{s:Contraction} that $\Psi_{z,S^n}$ is a contraction with respect to a (semi) hyperbolic metric whenever $z\in\h$. We want to mention at this point that the concept of studying the Green function on graphs via hyperbolic contraction properties of maps similar to $\Psi_{{z,S^n}}$ was first introduced in \cite{FHS1}. There, operators on $\ell^2(\V)$ on general graphs $(\V,\E)$ are considered which satisfy several additional conditions. For example,  these conditions imply that the operators are bounded.

%%%%%%%%%%%%%%%%%%%%%%%%%%%%%%%%%%%%%%%%%%%%%%%%%%%%%%%%%%%%%%%%%%%%%%%%%%%%%%%%%%%%%%%%%%%%%%%%%%%%%%%%%%%%%%%%%%%%
%% Subection
%%%%%%%%%%%%%%%%%%%%%%%%%%%%%%%%%%%%%%%%%%%%%%%%%%%%%%%%%%%%%%%%%%%%%%%%%%%%%%%%%%%%%%%%%%%%%%%%%%%%%%%%%%%%%%%%%%%%
\subsection{Application to label invariant operators}\label{ss:application}

Let us discuss the definitions of the previous subsections in the special case of label invariant operators and radial label symmetric potentials.

Let $\T=\T(M,j)$ be a tree $(\V,\E)$ given by a substitution matrix $M$  on a finite label set $\A$ and $j\in\A$. Moreover, let $a:\V\to\A$ be the labeling function of $\T$.

For an operator $T$  the label invariance (T2) yields that the restrictions $T_{\T_x}$ and $T_{\T_y}$ are unitarily equivalent operators whenever $a(x)=a(y)$.
Therefore,  the truncated Green functions $\Gm_x(z,T)$ and $\Gm_y(z,T)$ agree in this case. Consequently, the vector of truncated Green functions $(\Gm_x(z,T))_{x\in\V}$ can be reduced to a finite dimensional vector
$$\Gm(z,T):=(\Gm_j(z,T))_{j\in\A},$$
 by letting $\Gm_j(z,T)$, $j\in\A$ be any $\Gm_x(z,T)$ with $x\in\V$ such that $a(x)=j$. Therefore,  the recursion formulas \eqref{e:Gm} also reduce to finitely many equations
$$-\frac{1}{\Gm_j(z,T)}=z-m_j+\sum_{k\in\A}m_{j,k}\Gm_k(z,T),\quad j\in\A.$$
Similarly, the infinite system of polynomial equations \eqref{e:Polynom} can be reduced to a finite system of polynomial equations
\begin{align*}
P_j(z,\xi) =\ap{z-m_j+\sum_{k\in\A}m_{j,k}\xi_k}\xi_j+1=0 ,\quad j\in\A,
\end{align*}
with $z\in\h\cup\R$, $\xi\in\h^\A$. We have $P_j(z,\Gm(z,T))=0$ for all $j\in\A$.
We can also reduce the recursion map $\Psi_z:\h^{\V}\to\h^{\V}$  to a map $$\Phi_z:\h^\A\to\h^\A, \quad g\mapsto \ab{-\frac{1}{z-m_j+\sum_{k\in\A}m_{j,k}g_k}}_{j\in\A}$$
and we have that $\Gm(z,T)$ is a fixed point of $\Phi_z$, i.e., $\Phi_{z}(\Gm(z,T))=\Gm(z,T)$.

Let us turn to radial label invariant operators $H=T+\lm v$, i.e., $T$ is a label invariant operator, $\lm\geq0$ and $v\in\W_{\mathrm{sym}}(\T)$ is a radial label symmetric potential. Recall that this means that $v(x)=v(y)$ whenever $a(x)=a(y)$ and $|x|=|y|$ for $x,y\in\V$. For these operators we have less symmetry than in the case of label invariant operators. Still, by the radial label symmetry of $v$ the operators $H_{\T_x}$ and $H_{\T_y}$ are unitarily equivalent operators whenever $a(x)=a(y)$ and $|x|=|y|$. Hence, the vector $(\Gm_x(z,H))_{x\in\V}$ in $\h^\V$ can be reduced to a vector $(\Gm_{s,j}(z,H))_{s\in\N_0,j\in\A}$ in $\h^{\N_0\times\A}$.

As discussed in Section~\ref{s:RLSymPot} for $s\leq n(M)$ there are finitely many sites $(s,j)$ for which $v_{s,j}$ is not defined by $v\in\W_{\mathrm{sym}}(\T)$. (Recall that $n(M)$ is the smallest number $n\in\N$ such $M^{n}$ has only non zero entries which is defined in (M2).) So one may ask how $\Gm_{s,j}(z,T)$ shall be defined on these sites.  On the one hand, this is not a relevant problem since these components do not enter anywhere in the analysis. On the other hand, keeping these components void might be considered as a flaw. Therefore,  we present a way that these components can be reasonably defined.

Let $\T'=(\V',\E')$ be a tree given by $\T(M,k)$ for some $k\in\A$ and $\T=(\V,\E)$ a subtree $\T'_{x}=(\V_x',\E_x')$ of $\T'$ which is given by $\T(M,j)$ for some $j\in\A$, i.e., the $x\in\V'$ has label $a(x)=j$.
By the primitivity assumption, a label invariant measure $\nu$ and a label invariant operator $T$ on $\T$  uniquely define a label invariant measure $\nu'$ and a label invariant operator $T'$ on $\T'$. Similarly, a radial label symmetric potential $v$ defined on $\T=\T'_{x}$ induces a radial label symmetric potential $v'$ on $\T'$ by
\begin{align*}
v'(y)=\left\{\begin{array}{cl}
v_{|y|,a(y)}  &:\mbox{there exists $x_0\in\V$ such that $(|x_0|,a(x_0))=(|y|,a(y))$,}  \\
0  & :\mbox {else} . \\
\end{array}\right.
\end{align*}

In this manner a radial label symmetric operator $H=T+\lm v$ on $\T$ induces a radial label symmetric operator $H'$ on the super tree $\T'$ via $H'=T'+\lm v'$.  As $\T=(\V,\E)$ is a subtree of $\T'=(\V',\E')$ the components of the truncated Green functions coincide on $\V$. We choose the super tree $\T'$ and $\T=\T_x'$ such that $|x|\geq n(M)$ where $|\cdot|$ is considered with respect to the root of $\T'$.  We now define
\begin{align*}
    \Gm_{|y|-|x|, a(y)}(z,H)=\Gm_{y}(z,H'),\quad y\in\V.
\end{align*}
Hence, we defined $\Gm_{s,j}(z,T)$ for all $(s,j)\in\N_0\times\A$ and the definition does not depend on the choice of the particular super tree $\T'$.

With these conventions we can write the recursion formulas for the truncated Green functions in the reduced way
\begin{align*}
\frac{1}{\Gm_{s,j}(z,H)}={z-m_j-v_{s,j}+ \sum_{k\in\A}m_{j,k}\Gm_{s+1,j}(z,H)},
\end{align*}
for $(s,j)\in\N_0\times\A$. Since we do not need the viewpoint of polynomial equations for this model we skip it here. On the other hand, the recursion maps will be important. The restrictions $\Psi_{z,S^s}$ of the maps $\Psi_z$ to the spheres $S^{s}$ can be reduced to a map
$$\Psi_{z,s}:\h^\A\to\h^\A, \quad g\mapsto \ab{-\frac{1}{z-m_j-v_{s,j}+\sum_{k\in\A}m_{j,k}g_y}}_{j\in\A},$$
such that we have the following equation for the reduced truncated Green function
$$\Psi_{z,s}\ab{(\Gm_{s+1,j}(z,H))_{j\in\A}}=(\Gm_{s,j}(z,H))_{j\in\A}.$$

%%%%%%%%%%%%%%%%%%%%%%%%%%%%%%%%%%%%%%%%%%%%%%%%%%%%%%%%%%%%%%%%%%%%%%%%%%%%%%%%%%%%%%%%%%%%%%%%%%%%%%%%%%%%%%%%%%%%
%% Subection
%%%%%%%%%%%%%%%%%%%%%%%%%%%%%%%%%%%%%%%%%%%%%%%%%%%%%%%%%%%%%%%%%%%%%%%%%%%%%%%%%%%%%%%%%%%%%%%%%%%%%%%%%%%%%%%%%%%%
\subsection{More formulas for the Green functions}

In this subsection we want to relate  $G_x(z,H)=\ip{\de_x,(H-z)^{-1}\de_x}$ and $\Gm_x(z,H)=\ip{\de_x,(H_{\T_x}-z)^{-1}\de_x}$ for general self adjoint $H$ acting as \eqref{e:H}. Moreover, we will give a formula for the off diagonal elements of the resolvents.
\medskip

\begin{prop}\label{p:G}For $x\in \V$, $y\in S_x$ and $z\in\h$ we have
\begin{equation}\label{e:G}
G_y(z,H)= \Gm_y(z,H)+ |t(x,y)|^2 \Gm_y(z,H)^2G_x(z,H).
\end{equation}
\begin{proof}
As in the proof of Proposition~\ref{p:Gm} let $\Lambda$ be the self adjoint operator which connects $x\in\V$ to its forward neighbors. So, $H':=H-\Lambda$ is a direct sum of the operators $H_{\T_y}$, $y\in S_x$, and $H_{\T'}$ where $\T'=\T\setminus \bigcup_{y\in S_x}\T_y$. Again applying a resolvent identity twice yields
\begin{align*}
\frac{1}{H-{z}}
&=\frac{1}{H'-{z}} -\frac{1}{H'-{z}}\Lambda\frac{1}{H'-{z}} +\frac{1}{H'-{z}}\Lambda\frac{1}{H-{z}}\Lambda\frac{1}{H'-{z}}
\end{align*}
and $\eqref{e:G}$ follows by a similar reasoning as in the proof of Proposition~\ref{e:Gm}.
\end{proof}
\end{prop}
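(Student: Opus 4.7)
The approach is to imitate the proof of Proposition~\ref{p:Gm}, but now applying the first-order resolvent identity twice (once on the diagonal $(y,y)$-entry and once on the off-diagonal $(x,y)$-entry) rather than just once. Keep the same self adjoint operator $\Lambda$ cutting the edges from $x$ to $S_x$, so that $H'=H-\Lambda$ decomposes as the direct sum $H_{\T'}\oplus\bigoplus_{y'\in S_x}H_{\T_{y'}}$ on the corresponding orthogonal decomposition of $\ell^2(\V,\nu)$. Since $y$ sits as the root of its own block $\T_y$ under $H'$, one immediately has $\langle\de_y,(H'-z)^{-1}\de_y\rangle = \Gm_y(z,H)$.

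Starting from $R=R'-R'\Lambda R$ with $R=(H-z)^{-1}$ and $R'=(H'-z)^{-1}$, I take the $(y,y)$-entry to obtain $G_y(z,H) = \Gm_y(z,H) - \langle\de_y,R'\Lambda R\de_y\rangle$. The last matrix element expands as $\sum_{v,w}\langle\de_y,R'\de_v\rangle \Lambda_{v,w}\langle\de_w,R\de_y\rangle$, where I write $\Lambda_{v,w}:=\langle\de_v,\Lambda\de_w\rangle$. Using the support constraints (namely that $R'\de_y$ lives in $\T_y$, while $\Lambda$ meets $\T_y$ only through the single edge $\{x,y\}$), the double sum collapses to the single contribution $\Gm_y\,\Lambda_{y,x}\,G_{x,y}$, where $G_{x,y}:=\langle\de_x,R\de_y\rangle$.

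To compute the off-diagonal piece $G_{x,y}$, apply the mirror identity $R=R'-R\Lambda R'$ to the $(x,y)$-entry. The $R'$-piece vanishes because $x\in\T'$ and $y\in\T_y$ sit in different blocks of $H'$, and the same support argument reduces the remaining piece to $G_{x,y} = -G_x\,\Lambda_{x,y}\,\Gm_y$. Plugging back in gives $G_y = \Gm_y + \Lambda_{y,x}\Lambda_{x,y}\,\Gm_y^2\, G_x$, and the factor $\Lambda_{y,x}\Lambda_{x,y}$ is precisely the $|t(x,y)|^2$ that appeared in the recursion formula~\eqref{e:Gm}, finishing the proof.

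The only real obstacle is the bookkeeping in the two support-collapse steps, but the pattern is exactly the one already used for Proposition~\ref{p:Gm}: viewed from within the forward-tree block, $\Lambda$ touches $\T_y$ only through the single ``cut'' edge $\{x,y\}$, so each of the double sums reduces to a single surviving term. This seems cleaner to write up than expanding the doubled identity $R=R'-R'\Lambda R'+R'\Lambda R'\Lambda R$ directly on the $(y,y)$-entry, where one would have to handle more cross terms and verify that a middle piece $\langle\de_y,R'\Lambda R'\de_y\rangle$ vanishes (which it does, since after one $\Lambda$ application the vector ends up in $\T'$, and $R'$ keeps it there).
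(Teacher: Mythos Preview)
Your proof is correct and follows essentially the same approach as the paper: the same coupling operator $\Lambda$, the same block decomposition $H'=H-\Lambda$, and the same resolvent-identity mechanism. The only difference is organizational---you apply the first-order identity twice in sequence (once for $G_y$, once for the off-diagonal $G_{x,y}$), whereas the paper writes down the iterated identity $R=R'-R'\Lambda R'+R'\Lambda R\Lambda R'$ and evaluates the $(y,y)$-entry directly; as you yourself note at the end, these amount to the same computation.
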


This formula combined with the recursion formula \eqref{e:Gm} in Proposition~\ref{p:Gm} yields the following statements.
\medskip

\begin{prop}\label{p:G2}(Extension from $\Gm$ to $G$.) Let $E\in\R$.

$\mathrm{(1.)}$ If $\Gm_{x}(E+i\eta,H)$ is uniformly bounded in $\eta>0$ for all $x\in \V$, then $G_{x}(E+i\eta,H)$ is uniformly bounded in $\eta>0$ for all $x\in \V$.

$\mathrm{(2.)}$ If $\Gm_{x}(E+i\eta,H)$ is uniformly bounded in $\eta>0$ and  $\lim_{\eta\downarrow0}\Im\Gm_{x}(E+i\eta,H)=0$ for all $x\in\V$, then     $\lim_{\eta\downarrow0}\Im G_{x}(E+i\eta,H)=0$ for all $x\in\V$.

$\mathrm{(3.)}$ If $\Gm_{x}(E,H):=\lim_{\eta\downarrow0}\Gm_{x}(E+i\eta,H)$ exists and $\Im \Gm_{x}(E,H)>0$  for all $x\in\V$, then $G_{x}(E,H):=\lim_{\eta\downarrow0}G_{x}(E+i\eta,H)$ exists and $\Im G_{x}(E,H)>0$ for all $x\in\V$.
\begin{proof}
(1.), (2.) and the statement about the existence of the limits in (3.) directly  follows from Proposition~\ref{p:G} by induction over the distance to the root.

It remains to show the statement  in (3.) about positivity of the imaginary parts. For $x\in\V$ let $x_0\sim x$ be the vertex which lies on the path connecting $x$ with the root $o$. We single out $x$ to be the root of the rooted tree $(\T,x)$. Applying the recursion relation \eqref{e:Gm} with respect to the rooted tree $(\T,x)$ yields for $z=E+i\eta$
\begin{align*}%\label{e:backwards}
-\frac{1}{G_{x}(z,H)}=z-w(x)+\sum_{y\in S_x}
|t(x,y)|^{2}\Gm_{y}(z,H) +|t(x,x_0)|^{2}G_{x_0}(z,H_{\T\setminus \T_{x}}).
\end{align*}
We estimate $\Im G_{x_0}(z,H_{\T\setminus \T_{x}})>0$, go over to the limit $\eta\downarrow0$, take imaginary parts and multiply by $|G_{x}(E,H)|^{2}$ to get
\begin{align*}
\Im G_{x}(E,H)&\geq\ap{\sum_{y\in S_x}|t(x,y)|^{2}\Im\Gm_{y}(E,H)}|G_{x}(E,H)|^{2}.
\end{align*}
To conclude positivity of the left hand side we have to show $|G_{x}(E,H)|>0$. To see this we apply the recursion formula \eqref{e:Gm} to $G_{x_0}(z,H_{\T\setminus \T_{x}})$ (with respect to the rooted tree $(\T,x_0)$) in the first equation of the proof. We  take the modulus, go over to the limit $\eta\downarrow0$  and  obtain
\begin{align*}
\frac{1}{|G_{x}(z,H)|}\leq\mo{E}+\mo{w(x)}+\sum_{y\in S_x}|t(x,y)|^{2}|\Gm_{y}(z,H)| +\frac{|t(x,x_0)|^{2}}{|t(x_0,y_0)|^{2}\Im\Gm_{y_0}(E,H)},
\end{align*}
where we estimated the denominator of the last term first by its imaginary part and then dropped all but one term for some vertex  $y_0$  in $S_{x_0}\setminus\{x_0\}$. Since we assumed $\Im \Gm_{y}(E,H)>0$ for all $y\in\V$ the statement follows.
\end{proof}
\end{prop}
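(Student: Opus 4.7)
The proof plan rests on Proposition~\ref{p:G}, which expresses $G_y$ at a child $y \in S_x$ as
$$G_y(z,H) = \Gm_y(z,H) + |t(x,y)|^2 \Gm_y(z,H)^2 G_x(z,H),$$
combined with the base case $G_o(z,H) = \Gm_o(z,H)$ at the root. I would therefore prove parts (1.) and (2.) by induction on $|x|$, the distance of $x$ from the root. For (1.), the base case is immediate, and if $G_x(E+i\eta,H)$ is uniformly bounded in $\eta > 0$, then the identity above together with the boundedness of $\Gm_y(E+i\eta,H)$ shows the same for $G_y(E+i\eta,H)$ at each forward neighbor $y \in S_x$. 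For (2.), taking imaginary parts of Proposition~\ref{p:G}, one finds
$$\Im G_y = \Im \Gm_y + |t(x,y)|^2\bigl(2\Re(\Gm_y)\Im(\Gm_y)\Re(G_x) + \Re(\Gm_y^2)\Im(G_x)\bigr),$$
so if $\Im \Gm_y(E+i\eta,H) \to 0$ and $\Gm_y(E+i\eta,H)$ stays bounded, and inductively $\Im G_x(E+i\eta,H) \to 0$, then $\Im G_y(E+i\eta,H) \to 0$. Part (1.) also supplies the boundedness of $G_x$ required by the inductive step.

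For the existence of the limits in (3.), the same induction works: if $\lim_{\eta\downarrow 0}\Gm_y(E+i\eta,H)$ exists for every $y$ and $\lim_{\eta\downarrow 0} G_x(E+i\eta,H)$ exists, then Proposition~\ref{p:G} exhibits $\lim_{\eta\downarrow 0} G_y(E+i\eta,H)$ as an algebraic combination of these limits.

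The delicate point, which I view as the main obstacle, is the positivity $\Im G_x(E,H) > 0$ in (3.). Taking imaginary parts of Proposition~\ref{p:G} does not obviously give positivity, since $\Re(\Gm_y^2)$ need not be non-negative and the cross-term has no fixed sign. The natural workaround is to re-root the tree at $x$: let $x_0 \sim x$ be the parent of $x$ in $(\T,o)$, and apply the recursion formula of Proposition~\ref{p:Gm} to $G_x(z,H)$ with respect to the rooted tree $(\T,x)$. In this rooted tree the forward neighbors of $x$ are the original $S_x$ together with $x_0$, and the forward tree of $x_0$ becomes $\T \setminus \T_x$. This yields
$$-\frac{1}{G_x(z,H)} = z - w(x) + \sum_{y \in S_x}|t(x,y)|^2 \Gm_y(z,H) + |t(x,x_0)|^2 G_{x_0}(z,H_{\T\setminus\T_x}).$$
Since $G_{x_0}(\,\cdot\,,H_{\T\setminus\T_x})$ is itself a Herglotz function (Lemma~\ref{l:Herglotz}), its imaginary part is non-negative on $\h$ and survives in the limit $\eta\downarrow 0$ (at worst as $0$). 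Passing to the limit, taking imaginary parts, and multiplying by $|G_x(E,H)|^2$ then gives
$$\Im G_x(E,H) \geq |G_x(E,H)|^2 \sum_{y\in S_x}|t(x,y)|^2 \Im\Gm_y(E,H),$$
and the right-hand side is strictly positive provided $|G_x(E,H)| > 0$.

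The last small hurdle is precisely showing $|G_x(E,H)| > 0$, since otherwise the inequality is vacuous. To secure this I would return to the same re-rooted recursion, apply it once more to $G_{x_0}(z,H_{\T\setminus\T_x})$, take absolute values, and in the resulting denominator bound one term from below by the imaginary part $\Im \Gm_{y_0}(E,H) > 0$ for some $y_0 \in S_{x_0} \setminus \{x\}$ (such a $y_0$ exists because under assumptions (M0)--(M2) every vertex has degree at least three). This gives a finite upper bound on $1/|G_x(z,H)|$ uniformly near $\eta = 0$, hence $|G_x(E,H)| > 0$ and the induction closes.
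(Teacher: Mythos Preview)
Your proposal is correct and follows essentially the same route as the paper: parts (1.), (2.), and the existence of limits in (3.) by induction on $|x|$ via Proposition~\ref{p:G}, and the positivity in (3.) by re-rooting at $x$, applying the recursion \eqref{e:Gm} to obtain the lower bound $\Im G_x(E,H)\geq |G_x(E,H)|^{2}\sum_{y\in S_x}|t(x,y)|^{2}\Im\Gm_y(E,H)$, and then securing $|G_x(E,H)|>0$ by expanding $G_{x_0}(z,H_{\T\setminus\T_x})$ once more and bounding its modulus below via $\Im\Gm_{y_0}(E,H)$. Your remark that the existence of $y_0\in S_{x_0}\setminus\{x\}$ relies on vertex degree at least three is a helpful addition the paper leaves implicit.
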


At the end of this section we present a formula for the off diagonal elements of the resolvents. For $x,y\in\V$ let
\begin{align*}
G_{x,y}(z,H):=\as{\de_y,\frac{1}{H-z}\de_x} =\int_{\si(H)}\frac{1}{t-z} d\mu_{x,y}(t),
\end{align*}
where $\mu_{x,y}:=\mu_{\de_x,\de_y}$ is an off diagonal spectral measure.
\medskip

\begin{prop}\label{p:Goffdiag}Let $x,y\in\V$ and $z\in\h$. Moreover, let $x_0,\ldots,x_n$ be a path from $x$ to $y$ in $\V$.
Then
\begin{align*}
    G_{x,y}(z,H)=G_{x}(z,H)\prod_{j=1}^{n}\Gm_{x_{j}}(z,H).
\end{align*}
\begin{proof} The statement is clear for $x=y$. As in the proof of the Propositions~\ref{e:Gm} and~\ref{e:G} let $\Lambda$ be the self adjoint operator which connects $x\in\V$ to its forward neighbors and $H':=H-\Lambda$. Applying the resolvent identity once yields
\begin{align*}
\frac{1}{H-z}=\frac{1}{H'-z}+\frac{1}{H-z}\Lambda\frac{1}{H'-z}.
\end{align*}
As $\as{\de_y,(H'-z)^{-1}\de_x}=0$, we get by taking the $(x,y)$ matrix element
\begin{align*}
G_{x,y}(z,H)=G_{x}(z,H)\Gm_{x_1,y}(z,H).
\end{align*}
Iterating this argument with $\Gm_{x_1,y}(z,H)$ we obtain the statement.
\end{proof}
\end{prop}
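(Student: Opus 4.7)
The plan is to prove the claim by induction on the path length $n$, with a one-step reduction that mimics the resolvent-identity argument already used in the proofs of Propositions~\ref{p:Gm} and~\ref{p:G}. The base case $n=0$ is just the definition: $G_{x,y}(z,H) = G_x(z,H)$ and the empty product is $1$.

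For the inductive step, I would reroot the tree at $x$ so that the path $x_0, x_1, \ldots, x_n$ goes purely forward, and then cut the edges from $x$ to its forward neighbors exactly as in Proposition~\ref{p:Gm}. That is, let $\Lambda$ be the self adjoint operator supported on the edges $\{x,y'\}$ with $y' \in S_x$ (matrix entries given by the symmetric pair $t(x,y')$, $\overline{t(x,y')}$ with the appropriate $\nu$-weights), and set $H' := H - \Lambda$. Then $H'$ is a direct sum over the forward subtrees $H_{\T_{y'}}$, $y' \in S_x$, and the residual operator on $\T' = \T \setminus \bigcup_{y'\in S_x}\T_{y'}$. Since $y \in \V_{x_1}$ while $x \in \V_{\T'}$ sits in the complementary component, we get the vanishing matrix element
\begin{align*}
\langle \de_y, (H'-z)^{-1}\de_x\rangle = 0.
\end{align*}
Feeding this into the first order resolvent identity $(H-z)^{-1} = (H'-z)^{-1} + (H'-z)^{-1}\Lambda(H-z)^{-1}$ and taking the $(y,x)$ matrix element, the only contribution comes from the single edge $\{x,x_1\}$: all other $y' \in S_x$ give $\langle \de_y, (H'-z)^{-1}\de_{y'}\rangle = 0$ by the same component argument. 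After collecting factors, this collapses to the one step identity
\begin{align*}
G_{x,y}(z,H) = G_x(z,H)\,\Gm_{x_1,y}(z,H),
\end{align*}
where $\Gm_{x_1,y}(z,H) := \langle \de_y, (H_{\T_{x_1}}-z)^{-1}\de_{x_1}\rangle$ is the off diagonal truncated Green function on the forward tree $\T_{x_1}$.

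To iterate, observe that $\Gm_{x_1,y}(z,H)$ is, by definition, the full off diagonal Green function $G_{x_1,y}(z, H_{\T_{x_1}})$ of the operator $H_{\T_{x_1}}$ on the rooted tree $(\T_{x_1}, x_1)$, and that the remaining subpath $x_1, \ldots, x_n$ has length $n-1$ and lies entirely in $\V_{x_1}$. The induction hypothesis, applied to this restricted operator, gives
\begin{align*}
\Gm_{x_1,y}(z,H) = \Gm_{x_1}(z,H)\prod_{j=2}^n \Gm_{x_j}(z,H),
\end{align*}
after using the nesting $\V_{x_1} \supseteq \V_{x_2} \supseteq \cdots \supseteq \V_{x_n}$ to identify $\Gm_{x_j}(z, H_{\T_{x_1}})$ with $\Gm_{x_j}(z,H)$ for $j \ge 2$. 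Combining with the one step identity yields the claimed product formula.

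The main obstacle I anticipate is administrative rather than conceptual: keeping the normalization of $\de_x = \nu(x)^{-1/2}\mathds{1}_{\{x\}}$ and the symmetry relation $t(x,y)\nu(x) = \overline{t(y,x)}\nu(y)$ aligned so that, when the matrix element of $(H'-z)^{-1}\Lambda(H-z)^{-1}$ is expanded, the off-diagonal contributions from the edge $\{x,x_1\}$ combine into exactly the factor $G_x(z,H)\Gm_{x_1,y}(z,H)$. A minor subtlety is that rerooting the tree at $x$ changes the notion of "forward neighbor," so when one applies the induction hypothesis with $H_{\T_{x_1}}$ one has to verify that the $\Gm_{x_j}$ produced by the iterated recursion agree with the ones in the original rooted tree $(\T,o)$; this is immediate from the forward subtree nesting above.
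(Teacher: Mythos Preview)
Your proposal is correct and follows essentially the same route as the paper: cut the edges from $x$ to its forward neighbors, apply a first-order resolvent identity to obtain the one-step reduction $G_{x,y}=G_x\,\Gm_{x_1,y}$, and then iterate (i.e., induct on the path length). You are simply more explicit than the paper about rerooting at $x$, about the inductive structure, and about the identification $\Gm_{x_j}(z,H_{\T_{x_1}})=\Gm_{x_j}(z,H)$ via the nesting $\V_{x_1}\supseteq\V_{x_2}\supseteq\cdots$; the paper compresses all of this into the single word ``Iterating''. The only cosmetic difference is that you write the resolvent identity in the order $(H-z)^{-1}=(H'-z)^{-1}+(H'-z)^{-1}\Lambda(H-z)^{-1}$, which is in fact the order that makes the factor $G_x(z,H)$ (rather than $G_x(z,H')$) appear directly when one takes the $(x,y)$ matrix element.
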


These formulas relating the Green functions to the truncated Green functions give us a criterion to decide whether two operators acting as \eqref{e:H} on $c_c(\V)$ agree.\medskip

\begin{cor}\label{c:essSA} Let $\T$ be a tree and $H$, $H'$ two self adjoint operators with diagonals $w$, $w'$ and off diagonals $t$, $t'$ acting as \eqref{e:H} on $c_c(\V)$. If the off diagonals agree and  $\Gm_{x}(z,H)=\Gm_{x}(z,H')$ for all ${x\in\V}$, then $H=H'$.
\begin{proof}
By Proposition~\ref{p:G} the assumption yields that the diagonal elements of the resolvents of $H$ and $H'$ coincide. By Proposition~\ref{p:Goffdiag} this follows as well for the off diagonal elements. Since the matrix elements of the resolvents $(H-z)^{-1}$ and $(H'-z)^{-1}$ coincide the operators must be equal.
\end{proof}
\end{cor}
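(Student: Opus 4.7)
The plan is to show that the hypotheses force $(H-z)^{-1}$ and $(H'-z)^{-1}$ to coincide as bounded operators on $\ell^2(\V,\nu)$ for any $z\in\h$. Since $\{\de_x : x\in\V\}$ is an orthonormal basis, it suffices to verify that every matrix element $\as{\de_y,(H-z)^{-1}\de_x}$ agrees with $\as{\de_y,(H'-z)^{-1}\de_x}$; once this is done, the spectral theorem immediately gives $H=H'$. This route avoids any issue with essential self adjointness on $c_c(\V)$, which the statement does not assume.

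First I would establish that the diagonal elements $G_x(z,H)=G_x(z,H')$ for every $x\in\V$ by induction on the distance $|x|$. The base case $x=o$ is trivial since $G_o=\Gm_o$. For the step, each vertex $y$ with $|y|=|x|+1$ lies in $S_x$ for the unique predecessor $x$ on the path to the root, and Proposition~\ref{p:G} gives
\begin{align*}
G_y(z,H)=\Gm_y(z,H)+|t(x,y)|^{2}\Gm_y(z,H)^{2}G_x(z,H).
\end{align*}
Since $t=t'$, $\Gm_y(z,H)=\Gm_y(z,H')$ by hypothesis and $G_x(z,H)=G_x(z,H')$ by induction, the equality transports to $G_y$.

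Second, I would invoke Proposition~\ref{p:Goffdiag}, which expresses the off diagonal resolvent entries as
\begin{align*}
G_{x,y}(z,H)=G_{x}(z,H)\prod_{j=1}^{n}\Gm_{x_{j}}(z,H),
\end{align*}
where $x_0,\ldots,x_n$ is the unique path from $x$ to $y$. Every factor on the right hand side has already been shown to be invariant under passing from $H$ to $H'$, so the off diagonal elements coincide as well.

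The main obstacle I anticipate is conceptual rather than computational: one has to resist the temptation to compare $H$ and $H'$ directly through their coefficients $w,w'$, because without essential self adjointness on $c_c(\V)$ agreement of $w,w'$ and $t,t'$ would not suffice to conclude $H=H'$. Working instead at the level of the resolvents, whose matrix elements are controlled purely by $\Gm$ and $t$ via Propositions~\ref{p:G} and~\ref{p:Goffdiag}, sidesteps this entirely.
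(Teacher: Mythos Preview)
Your proof is correct and follows essentially the same route as the paper: use Proposition~\ref{p:G} (inductively over spheres) to match the diagonal resolvent entries, then Proposition~\ref{p:Goffdiag} for the off diagonal ones, and conclude $H=H'$ from equality of resolvents. Your explicit spelling out of the induction and the remark on why one must work at the resolvent level rather than compare $w,w'$ directly are helpful additions but do not change the argument.
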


%%%%%%%%%%%%%%%%%%%%%%%%%%%%%%%%%%%%%%%%%%%%%%%%%%%%%%%%%%%%%%%%%%%%%%%%%%%%%%%%%%%%%%%%%%%%%%%%%%%%%%%%%%%%%%%%%%%%
%%%%%%%%%%%%%%%%%%%%%%%%%%%%%%%%%%%%%%%%%%%%%%%%%%%%%%%%%%%%%%%%%%%%%%%%%%%%%%%%%%%%%%%%%%%%%%%%%%%%%%%%%%%%%%%%%%%%
%%%%%%%%%%%%%%%%%%%%%%%%%%%%%%%%%%%%%%%%%%%%%%%%%%%%%%%%%%%%%%%%%%%%%%%%%%%%%%%%%%%%%%%%%%%%%%%%%%%%%%%%%%%%%%%%%%%%
%% Section
%%%%%%%%%%%%%%%%%%%%%%%%%%%%%%%%%%%%%%%%%%%%%%%%%%%%%%%%%%%%%%%%%%%%%%%%%%%%%%%%%%%%%%%%%%%%%%%%%%%%%%%%%%%%%%%%%%%%
%%%%%%%%%%%%%%%%%%%%%%%%%%%%%%%%%%%%%%%%%%%%%%%%%%%%%%%%%%%%%%%%%%%%%%%%%%%%%%%%%%%%%%%%%%%%%%%%%%%%%%%%%%%%%%%%%%%%
%%%%%%%%%%%%%%%%%%%%%%%%%%%%%%%%%%%%%%%%%%%%%%%%%%%%%%%%%%%%%%%%%%%%%%%%%%%%%%%%%%%%%%%%%%%%%%%%%%%%%%%%%%%%%%%%%%%%

\section{A hyperbolic semi metric}\label{s:semi metric}

We will introduce a hyperbolic semi metric $\gm$ in order to study contraction properties of the recursion maps introduced in the previous section. A \emph{semi metric} is a map which satisfies the axioms of a metric except for the triangle inequality. Moreover, an \emph{extended (semi) metric} is a map which possibly takes the value $+\infty$ but otherwise satisfies the axioms of a (semi) metric.

We discuss in this section the relation of $\gm$ to the standard hyperbolic metric, prove two limit point principles for general semi metrics, show a substitute for the triangle inequality and study some characteristics of distance balls in this semi metric. One of the limit point principles will be needed at the end of this chapter. The other results will be needed in Chapter~\ref{c:freeoperator} and Chapter~\ref{c:main3}. The reason why we prove them here is that they are of a purely geometric nature. However,  they are not necessary for the understanding of the rest of this chapter.

Let $J$ be a finite index set.
We introduce the metric $\dist_{\h^J} $ on $\h^J$  by
$$\dist_{\h^J}(g,h)=\cosh^{-1}\ab{\frac{1}{2}\gm_J(g,h)+1},$$
where $\gm_J:\h^J\times\h^J\to[0,\infty)$ is given by
\begin{align*}
\gm_J(g,h):=\max_{j\in J}\gm(g_j,h_j),\qquad g,h\in\h^{J},
\end{align*}
and $\gm:\h\times\h\to[0,\infty)$
\begin{align*}
\gm(g,h):=\frac{|g-h|^2}{\Im g\Im h},\qquad g,h\in\h.
\end{align*}

Indeed, $\dist_{\h}$ is the standard hyperbolic metric on $\h$. See \cite[Theorem~1.2.6]{Ka} for a proof and discussion. Obviously, $\gm$ is positive definite and symmetric. One easily checks that the triangle inequality is not satisfied. (For instance,  let $h_{1}=i$, $h_{2}=2+i$, $h_{3}=1+i$ and observe that $4=\gm(h_1,h_2)>\gm(h_1,h_3)+\gm(h_3,h_2)=2$.)

Let us define distance balls with respect to
$\gm_J$. For $r\geq 0$ and $h\in\h^J$ let
$$B_r(h):=\{g\in\h^J\mid \gm_J(g,h)\leq r\}.$$

We want to recall the notions of isometry, quasi contraction and (uniform) contraction. Let $(X,d_X)$ and $(Y,d_Y)$ be two semi metric spaces and $X_0\subseteq X$. Then, a map $\ph:(X,d_X)\to (Y,d_Y)$ is called an
\begin{align*}
\ac{\begin{array}{c}
\mbox{\emph{isometry}} \\
\mbox{\emph{quasi contraction}} \\
\mbox{\emph{contraction}}
\end{array}}
\mbox{on $X_0$, if }\;
d_Y(\ph(x),\ph(x'))
\ac{\begin{array}{c}
= \\
\leq \\
<
\end{array}}
d_X(x,x'),
\end{align*}
for all $x,x'\in X_0$ with $x\neq x'$.
A contraction $\ph$ is called \emph{uniform} on $X_0\subseteq X$ if there exists $c_0<1$ such that $d_Y(\ph(x),\ph(x'))\leq c_0 d_X(x,x')$ for all $x,x'\in X_0$. In this case $c_0$ is called a \emph{contraction coefficient} of $\ph$.
Note that a quasi contraction (or an isometry or a contraction) must be continuous.
For a subset $U$ of a semi metric space $(X,d_{X})$ let
\begin{equation*}
\diam (U):=\sup_{x,y\in U}d_X(x,y).
\end{equation*}

%%%%%%%%%%%%%%%%%%%%%%%%%%%%%%%%%%%%%%%%%%%%%%%%%%%%%%%%%%%%%%%%%%%%%%%%%%%%%%%%%%%%%%%%%%%%%%%%%%%%%%%%%%%%%%%%%%%%
%% Subsection
%%%%%%%%%%%%%%%%%%%%%%%%%%%%%%%%%%%%%%%%%%%%%%%%%%%%%%%%%%%%%%%%%%%%%%%%%%%%%%%%%%%%%%%%%%%%%%%%%%%%%%%%%%%%%%%%%%%%

\subsection{Limit point principles}\label{ss:limitpointprinciple}

A limit point principle is a generalization of a fixed point principle. Instead of studying a contraction on a (semi) metric space into itself, we draw our attention to a sequence of functions mapping between a sequence of (semi) metric spaces.

Let $(X,d_X)$, $(Y,d_Y)$ be semi metric spaces. Let $\Lip(X,Y)$ be the space of all quasi contractions from $X$ to $Y$. The space $\Lip(X,Y)$ can be equipped  with an extended semi metric $d_{X,Y}$ via
\begin{align*}
d_{X,Y}(\ph,\psi) :=\sup_{x\in X}d_{Y}\ap{\ph(x),\psi(x)}.
\end{align*}
Whenever $X$ or $Y$ is compact then $d_{X,Y}$ is a semi metric. In the case where $X$ and $Y$ are metric spaces $d_{X,Y}$ is an extended metric. It is even a metric whenever $X$ or $Y$ is additionally compact.

Let a sequence of semi metric spaces $X=((X_j,d_j))_{j\in\N_0}$  be given. We define
\begin{align*}
\Lip(X):=\{\ph=(\ph_{j})_{j\in\N_0}\mid\ph_j\in \Lip(X_{j+1}, X_{j}),\, j\in\N_0\}.
\end{align*}

We call a sequence $h\in X$ a \emph{limit point} of $\ph\in \Lip(X)$ if
\begin{align*}
h_j\in\bigcap_{n\geq j} \ph_{j}\circ\ldots\circ\ph_{n}(X_{n+1})\subseteq X_j\;\mbox{ for all $j\in\N_0$}.
\end{align*}
Notice that the intersection above is decreasing. Therefore,
if the spaces $X_{j}$ are compact, then the intersection is non empty. Then, $\ph$ has at least one limit point. If the limit point
$h$ is unique, then it is a fixed point of $\ph$, i.e., $\ph(h)=h$.

The question of uniqueness is addressed in the following lemma.
\medskip

\begin{lemma}\label{l:fixpoint} (Existence of limit points.)
Let $X=((X_j,d_j))_{j\in\N_0}$ be a sequence of compact semi metric spaces, $x_j=\diam (X_j)$ and the components $\ph_{j}$ of $\ph\in \Lip(X)$  uniform contractions with contraction coefficients $c_{j}\in[0,1)$, $j\in\N_0$.
If for some $\be>0$
\begin{align*}
\limsup_{n\to\infty}\frac{1}{x_n^{\be}} \sum_{j=1}^{n-1}\frac{(1-c_{j}^{\be})}{c_{j}^{\be}} =\infty,
\end{align*}
then $\ph$ has a unique limit point $h\in X$ which can be obtained for arbitrary sequences $g\in X$ via
\begin{align*}
    h_j=\lim_{n\to\infty} \ph_{j}\circ\ldots\circ \ph_{n}(g_n).
\end{align*}
\begin{proof}
Let $B_{n}=\ph_{0}\circ\ldots\circ \ph_{n-1}(X_{n})\subseteq X_{0}$ for $n\in\N$. Let $(n_{k})$ be a sequence realizing the $\limsup$ in the assumption. For given $n\in\N$, let $l\in\N$ be the largest number such that $n_l\leq n$.
Since $B_{n}\subseteq B_{n_l}$ we have
\begin{align*}
\diam (B_{n})\leq \diam (B_{n_{l}})\leq x_{n_l}\prod_{j=1}^{n_l-1}c_{j}.
\end{align*}
Clearly, convergence of the right hand side to zero is invariant under taking powers of $\be>0$.
Moreover, by taking inverses, we get
\begin{align*}
\prod_{j=1}^{n_l-1}\frac{1}{c_{j}^{\be}} =\prod_{j=1}^{n_l-1}
\ap{1+\frac{(1-c_{j}^{\be})}{c_{j}^{\be}}} \geq
\sum_{j=1}^{n_l-1}\frac{(1-c_{j}^{\be})}{c_{j}^{\be}}.
\end{align*}
This yields, by our assumption, that the product in the estimate of $\diam (B_n)$ above converges to zero.
Since the $X_j$ are compact and $\ph_{j}$ are continuous the sets $B_j$ are compact. Moreover, since for $n\geq m$ we have $B_{n}\subseteq B_{m}$ and therefore
\begin{align*}
 \bigcap_{n\in\N}\ph_{0}\circ\ldots\circ\ph_{n-1}(X_{n})=  \bigcap_{n\in\N} B_n\neq\emptyset.
\end{align*}
Moreover, since $\diam (B_n)\to0$ as $n\to\infty$ there is a unique  $h_0\in X_0$ which can be obtained as a limit as claimed. We can apply the same argument for every $j\in\N_0$ to find the unique limit point $h=(h_j)$ of $\ph$.
\end{proof}
\end{lemma}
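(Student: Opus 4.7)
The plan is a standard nested-compact-sets argument, with the analytic content hidden in a clever manipulation of the hypothesis. First I would fix $j \in \N_0$ and introduce the descending family
\begin{align*}
B_n^{(j)} := \varphi_j \circ \cdots \circ \varphi_{n-1}(X_n) \subseteq X_j, \quad n > j.
\end{align*}
Each $B_n^{(j)}$ is the continuous image of the compact set $X_n$ under finitely many quasi-contractions (quasi-contractions are continuous), hence compact, and clearly $B_{n+1}^{(j)} \subseteq B_n^{(j)}$. So $\bigcap_{n>j} B_n^{(j)}$ is a non-empty compact subset of $X_j$ by the finite intersection property; the goal is to show it is a singleton $\{h_j\}$.

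The next step is to estimate $\diam(B_n^{(j)})$ using the uniform-contraction hypothesis. For any $u,v \in X_n$, iterating the contraction inequality $c_k$-times at each stage gives
\begin{align*}
d_j\bigl(\varphi_j \circ \cdots \circ \varphi_{n-1}(u),\, \varphi_j \circ \cdots \circ \varphi_{n-1}(v)\bigr) \leq \Bigl(\prod_{k=j}^{n-1} c_k\Bigr)\, d_n(u,v),
\end{align*}
so $\diam(B_n^{(j)}) \leq x_n \prod_{k=j}^{n-1} c_k$. Since the finitely many factors $c_0,\ldots,c_{j-1}$ are harmless, it suffices to show $x_n \prod_{k=1}^{n-1} c_k$ tends to zero along some subsequence of $n$'s.

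The main and only interesting step is to insert the hypothesis through the elementary inequality
\begin{align*}
\prod_{k=1}^{n-1} \frac{1}{c_k^{\beta}} = \prod_{k=1}^{n-1}\Bigl(1 + \frac{1-c_k^{\beta}}{c_k^{\beta}}\Bigr) \geq \sum_{k=1}^{n-1} \frac{1-c_k^{\beta}}{c_k^{\beta}}.
\end{align*}
Choosing a sequence $(n_l)$ realising the divergence in the hypothesis and raising everything to the $\beta$-th power shows
\begin{align*}
x_{n_l}^{\beta} \prod_{k=1}^{n_l-1} c_k^{\beta} \;\leq\; \frac{x_{n_l}^{\beta}}{\sum_{k=1}^{n_l-1}(1-c_k^{\beta})/c_k^{\beta}} \;\longrightarrow\; 0,
\end{align*}
so $x_{n_l} \prod_{k=1}^{n_l-1} c_k \to 0$. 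For an arbitrary $n$, I would pick the largest $n_l \leq n$ and use monotonicity $B_n^{(j)} \subseteq B_{n_l}^{(j)}$ to pass this decay to the full sequence. Then $\bigcap_n B_n^{(j)}$ is a singleton $\{h_j\}$, and for any $g \in X$ the element $\varphi_j \circ \cdots \circ \varphi_n(g_{n+1})$ lies in $B_{n+1}^{(j)}$, so it converges to $h_j$ as $n \to \infty$. Assembling the $h_j$ yields the unique limit point.

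The only real obstacle is the bookkeeping around the subsequence $(n_l)$ and the $\beta$-th powers; once one notices that $\prod(1+a_k) \geq \sum a_k$ converts the $\limsup$ hypothesis into the quantitative decay of $\prod c_k^\beta$ relative to $x_n^\beta$, the rest is just a compactness argument.
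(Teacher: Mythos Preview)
Your proof is correct and follows essentially the same approach as the paper: nested compact images, the diameter bound $\diam(B_n)\le x_n\prod c_k$, and the key inequality $\prod(1+a_k)\ge\sum a_k$ to convert the $\limsup$ hypothesis into decay of $x_{n_l}^\beta\prod c_k^\beta$, together with monotonicity of the $B_n$'s to pass from the subsequence $(n_l)$ to all $n$. The only cosmetic difference is that you treat a general level $j$ from the start, whereas the paper does $j=0$ and then remarks that the same argument works for every $j$.
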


Suppose that all spaces $(X_j,d_j)$ and all maps $\ph_j$, $j\in\N_0$ are equal. If  $h$ is a \emph{fixed point} of $\ph_j$, i.e. $\ph_j(h)=h$, $j\in\N_0$, then the sequence of elements $h_j=h$, $j\in \N_0$ is a limit point of $(\ph_j)$. However,  the contrary is not true. For example, consider a rotation of the unit circle $\Sp^{1}$. Such a map has $\Sp^{1}$ as limit points but it only has fixed points if the rotation is the identity map. Nevertheless, whenever there is a unique limit point $h=(h_j)$ then all $h_{j}$, $j\in\N_0$ must be equal and, therefore, be fixed points. Thus, the previous lemma implies an immediate corollary which is a fixed point principle.\medskip

\begin{cor}\label{c:fixpoint} Let $(X,d_X)$ be a compact semi metric space and $\ph:X\to X$ a uniform contraction. Then,
$\ph$ has a unique fixed point $h\in X$ which can be obtained for arbitrary sequences $g=(g_j)$ in $X$ via
\begin{align*}
    h=\lim_{n\to\infty} \ph^{n}(g_n).
\end{align*}
%\begin{proof} We directly apply the previous lemma to obtain a unique limit point which is a sequence $(h_j)$ such that $h_j=\lim_{n\to\infty} \ph^{n}(g_n)$. This implies that all $h_j$, $j\in\N_0$ are equal.\end{proof}
\end{cor}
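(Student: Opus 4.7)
The plan is to obtain the corollary as a direct specialization of Lemma~\ref{l:fixpoint} by taking the constant sequence of spaces $X_j := X$ and maps $\ph_j := \ph$ for every $j \in \N_0$. Two degenerate cases I would dispose of at the outset: if $\diam(X) = 0$, then $X$ is a single point and the claim is trivial; if a contraction coefficient $c_0 = 0$ is available, then the inequality in the definition of a uniform contraction forces $\ph$ to be constant, so its single value is the unique fixed point and the asserted limit is realized trivially. Assume henceforth that $\diam(X) > 0$ and $c_0 \in (0,1)$.

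The next step is to verify the summability hypothesis of Lemma~\ref{l:fixpoint} for some (in fact any) $\be > 0$. With $x_n = \diam(X)$ and $c_j = c_0$ constant, the relevant quantity is
$$\frac{1}{x_n^{\be}}\sum_{j=1}^{n-1}\frac{1-c_j^{\be}}{c_j^{\be}} = \frac{(n-1)(1-c_0^{\be})}{\diam(X)^{\be}\, c_0^{\be}} \longrightarrow \infty$$
as $n \to \infty$, so the lemma applies. It produces a unique limit point $h = (h_j)_{j \in \N_0} \in X^{\N_0}$ together with the convergence formula $h_j = \lim_{n \to \infty} \ph^{n-j+1}(g_n)$, valid for every sequence $(g_n)$ in $X$.

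The only nontrivial step is to convert this ``limit point'' conclusion into the ``fixed point'' statement of the corollary. Testing the convergence formula with a constant sequence $g_n \equiv g$ gives $h_j = \lim_n \ph^{n-j+1}(g)$, which after reindexing $k = n - j + 1$ is seen to be independent of $j$; hence all $h_j$ agree with a common value, call it $h \in X$. Continuity of $\ph$, automatic for a contraction, then yields $\ph(h) = \ph(\lim_k \ph^k(g)) = \lim_k \ph^{k+1}(g) = h$, so $h$ is a fixed point. Uniqueness follows because any other fixed point $h'$ lies in $\ph^k(X)$ for every $k$, so the constant sequence $(h', h', \ldots)$ is also a limit point of $(\ph_j)$; the uniqueness clause of Lemma~\ref{l:fixpoint} then forces $h' = h$.

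Finally, to match the precise form $h = \lim_n \ph^n(g_n)$ stated in the corollary, I would apply the $j = 0$ instance $h = \lim_n \ph^{n+1}(g_n)$ of the lemma's formula to the shifted sequence $g'_n := g_{n+1}$, obtaining $h = \lim_n \ph^{n+1}(g_{n+1}) = \lim_m \ph^m(g_m)$ after a trivial change of index. The main obstacle throughout is purely conceptual rather than computational, namely recognizing that in the constant setting a fixed point of $\ph$ automatically yields a constant limit-point sequence, so that the uniqueness of limit points in the lemma is in fact stronger than uniqueness of fixed points and directly delivers it.
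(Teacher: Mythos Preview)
Your proposal is correct and follows essentially the same approach the paper intends: specialize Lemma~\ref{l:fixpoint} to the constant sequence $(X_j,d_j)\equiv (X,d_X)$, $\ph_j\equiv\ph$, and then extract a genuine fixed point from the unique limit point. The paper does not spell out a proof of the corollary at all; it merely remarks, just before the statement, that in the constant setting a unique limit point $(h_j)$ forces all $h_j$ to coincide and hence to be a fixed point of $\ph$. Your verification of the divergence hypothesis, your treatment of the degenerate cases $\diam(X)=0$ and $c_0=0$, and your index shift to match the exact form $h=\lim_n\ph^n(g_n)$ are all correct elaborations of what the paper leaves implicit.
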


We next show that if two sequences of maps $\ph,\psi\in \Lip(X)$ are close and $\ph_j$ are uniform contractions, then  $\psi$ has a limit point which is close to the unique limit point of $\ph$.
Let $X=((X_j,d_j))_{j\in\N_0}$ be a sequence of locally compact metric spaces. We define, for $h\in X$ and $R \geq 0$, the set $B(R):=(B_{j}(R))_{j\in\N_0}\subseteq X$ via
\begin{align*}
B_{j}(R)=\{g\in X_j\mid d_j(g,h_j)\leq R\}.
\end{align*}
Note that the restrictions of $\ph_{j}\in\Lip(X_{j+1},X_{j})$ to $B_{j+1}(R)$ is in $\Lip(B_{j+1}(R),X_{j})$  and for $\ph,\psi\in\Lip(X)$ and $j\in\N_0$.
\begin{align*}
d_{B_{j+1}(R),X_{j}}(\ph_{j},\psi_{j})\leq d_{X_{j+1},X_{j}}(\ph_{j},\psi_{j}).
\end{align*}

\begin{lemma}\label{l:fixpoint2} (Stability of limit points.)
Let $X=((X_j,d_j))_{j\in\N_0}$ be a sequence of locally compact metric spaces and $\ph\in \Lip(X)$ with a limit point $h\in X$.
Assume  that there exists $R>0$ such that the $B_j(R)$ are compact and $\ph_j:B_{j+1}(R)\to B_{j}(R)$ are uniform contractions with  contraction coefficient $c\in[0,1)$ for all $j\in\N_0$.
Then,
\begin{itemize}
\item  [(0.)]
$h$ is the unique limit point of $\ph$ in $B(R)$,
\item  [(1.)]
for every $\eps>0$ there exists $\de=\de(\eps)>0$ such that all $\psi\in \Lip(X)$ with $\sup_{j\in\N_0}d_{B_{j+1}(R),X_j}(\ph_{j},\psi_{j})\leq \de$ have a limit point $g\in B(\eps)$.
\end{itemize}
Moreover, for some $\eps>0$ let $U\subseteq \Lip(X)$ be such that $\sup_{j\in\N_0}d_{B_{j+1}(R),X_j}(\ph_{j},\psi_{j})\leq \de(\eps)$ for  $\psi\in U$ and $U_0\subseteq U$ be a dense subset with respect to componentwise convergence in $\Lip(B_{j+1}(R),B_{j}(R+\de))$, $j\in\N_0$. Suppose the limit points $g(\psi)\in B(R)$ of  $\psi\in U_0$ are unique in $X$ and the maps $g_j:U_0\to B_j(R)$, $\psi\mapsto g_j(\psi)$ are continuous.
Then,
\begin{itemize}
\item [(2.)]  the maps $g_j:U_0\to B_j(R)$ have continuous extensions $\oh g_j:U\to B_j(R)$ such that $\oh g(\psi)=(\oh g_j(\psi))$ is the unique limit point of $\psi\in U$ in $B(R)$.
\end{itemize}
Moreover, if all of the preceding assumptions  hold for every limit point $h$ of $\ph$, then
\begin{itemize}
\item [(3.)] $h$ is the unique limit point of $\ph$ in $X$.
\end{itemize}
\begin{proof}
Let $B:=(B_j)_{j\in\N_0}$ with $B_{j}:=B_j(R)$, $j\in\N_0$.

(0.) By Lemma~\ref{l:fixpoint} we know that $h$ is the unique limit point of $\ph$ in $B$.

(1.) For given $\eps\in(0,R]$ let $\de=(1-c)\eps$, where
$c\in[0,1)$ is the contraction coefficient from the assumption.
Let $\psi\in \Lip(X)$ be given such that $\sup_{j\in\N_0}d_{B_{j+1},X_{j}}(\ph_{j},\psi_{j})\leq \de$.
Then, for every $g\in B_{j+1}(\eps)$, we have
\begin{align*}
d_{j}\ap{h_{j},\psi_{j}(g)} &=d_{j}\ap{\ph_{j}(h_{j+1}),\psi_{j}(g)}\leq d_{j}\ap{\ph_{j}(h_{j+1}),\ph_{j}(g)} +d_{j}\ap{\ph_{j}(g),\psi_{j}(g)}\\
&\leq c\,d_{j+1}\ap{h_{j+1},g}+d_{B_{j+1},X_{j}}(\ph_j,\psi_j)\leq c\,\eps+(1-c)\eps=\eps.
\end{align*}
Hence, $\psi_j(B_{j+1}(\eps))\subseteq B_{j}(\eps)$.
This implies that $\psi$ has a limit point in $B(\eps)$.

(2.) Let $\psi\in U$ and $\psi^{(n)}\in U_0$ converge componentwise to $\psi$  with respect to the metrics $d_{B_{j+1},X_j}$. Let $g^{(n)}$ be the unique limit points of the sequence  $\psi^{(n)}$. By compactness and continuity the limit points $g^{(n)}\in X$ converge to some $g\in X$. Let us show that $g$ is a limit point of $\psi$. Let $\eps>0$, $j\in\N_0$ be arbitrary and $N\in\N$ such that $\sup_{j\in\N_0}d_{B_{j+1},X_j}(\psi_j,\psi_j^{(n)})\leq\eps/3$ and $d_{i}(g^{(n)}_{i},g_{i})\leq \eps/3$ for  $i\in\{j,j+1\}$ and all $n\geq N$. Since $(g^{(n)}_{j})$ are limit points and  $\psi_{j}^{(n)}$ are quasi contractions we have
\begin{align*}
d_{j}\ap{\psi_j^{(n)}(g_{j+1}),g_{j}^{(n)}}
=d_{j}\ap{\psi_j^{(n)}(g_{j+1}),\psi_j^{(n)}\ap{g_{j+1}^{(n)}}}
\leq d_{j+1}\ap{g_{j+1},g_{j+1}^{(n)}}\leq \frac{\eps}{3}.
\end{align*}
Therefore,  we get
\begin{align*}
d_{j}\ap{\psi_j(g_{j+1}),g_{j}}&\leq d_{j}\ap{\psi_j(g_{j+1}),\psi_j^{(n)}\ap{g_{j+1}}}+ d_{j}\ap{\psi_j^{(n)}(g_{j+1}),g_{j}^{(n)}}+ d_{j}\ap{g_{j}^{(n)},g_{j}}\\
&\leq \eps.
\end{align*}
Since $\eps$ was arbitrary, $g$ must be a limit point of $\psi$. \\
Suppose  $\psi\in U$ has two limit points $g$ and $g'$ in $B$. Then, by (1.) there is $\ow \psi\in U_0$ whose unique limit point is arbitrarily close to both $g$ and $g'$ for $\ow \psi$  sufficiently close to $\psi$. However,  this is only possible if $g$ and $g'$ coincide.

(3.) By the arguments above, every limit point of $\ph$ must be close to a limit point of $\psi$ whenever $\psi\in U_0$ is close to $\ph$. However,  since the limit points of $\psi\in U_0$ are unique in $X$, this is only possible if the limit point of $\ph$ is unique.
\end{proof}
\end{lemma}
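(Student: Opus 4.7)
The plan is to handle the four claims in order, building each on the previous one.

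For (0.), the restrictions $\ph_j|_{B_{j+1}(R)}$ form a sequence of uniform contractions on the compact spaces $B_j(R)$ with common coefficient $c<1$, and the diameters $x_n = \diam(B_n(R))$ are uniformly bounded by $2R$, so with $\be=1$
\begin{align*}
\frac{1}{x_n}\sum_{j=1}^{n-1}\frac{1-c}{c} \geq \frac{(1-c)(n-1)}{2Rc}\lra\infty.
\end{align*}
Lemma~\ref{l:fixpoint} applied to this restricted system then yields that $h$ is the unique limit point of $\ph$ in $B(R)$; in particular $h_j = \ph_j(h_{j+1})$ for every $j$. For (1.), set $\de(\eps) := (1-c)\eps$ with $\eps \leq R$. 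For every $g\in B_{j+1}(\eps)\subseteq B_{j+1}(R)$ the triangle inequality and the contraction of $\ph_j$ give
\begin{align*}
d_j(\psi_j(g), h_j) \leq d_j(\psi_j(g),\ph_j(g)) + d_j(\ph_j(g),\ph_j(h_{j+1})) \leq \de + c\,\eps = \eps,
\end{align*}
so that $\psi_j(B_{j+1}(\eps))\subseteq B_j(\eps)$. The decreasing chain of nonempty compact sets $\psi_0\circ\ldots\circ\psi_{n-1}(B_n(\eps))$ has nonempty intersection, providing the desired limit point of $\psi$ in $B(\eps)$.

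For (2.), fix $\psi\in U$ and approximate it by a sequence $\psi^{(n)}\in U_0$ in the relevant metrics. By compactness of $B_j(R)$, the unique limit points $g_j(\psi^{(n)})$ have subsequential limits in $B_j(R)$, which I denote $\oh g_j(\psi)$. To verify that $\oh g(\psi)$ is itself a limit point of $\psi$, I would use that $\psi_j:X_{j+1}\to X_j$ is a quasi contraction to split
\begin{align*}
d_j(\psi_j(\oh g_{j+1}(\psi)),\oh g_j(\psi)) &\leq d_{j+1}(\oh g_{j+1}(\psi),g_{j+1}(\psi^{(n)})) \\
&\quad + d_{B_{j+1}(R),X_j}(\psi_j,\psi_j^{(n)}) + d_j(g_j(\psi^{(n)}),\oh g_j(\psi)),
\end{align*}
where each summand vanishes along the chosen subsequence. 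The harder step is to show that the limit point of $\psi$ in $B(R)$ is unique; once established, the subsequential limit $\oh g_j(\psi)$ becomes independent of the approximating sequence, and $\oh g_j$ inherits continuity from $g_j$ on the dense $U_0$. If $g, g'$ were two limit points of $\psi$ in $B(R)$, a perturbative version of (1.) applied around each would force any $\ow\psi\in U_0$ sufficiently close to $\psi$ to have its globally unique limit point simultaneously close to $g$ and $g'$, which as $\eps\to 0$ implies $g=g'$. Part (3.) then follows from the same uniqueness mechanism applied directly to $\ph$: for any two candidate limit points $h, h'$ of $\ph$, a single $\psi\in U_0$ close enough to $\ph$ in the metrics associated to each has its globally unique limit point close to both $h$ and $h'$, forcing them to coincide.

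The main obstacle is the uniqueness step in (2.) and its analogue in (3.). The contraction of $\ph$ was used crucially in (1.) to propagate $\eps$-closeness through the compositions, but a general $\psi\in U$ is only a quasi contraction, so one cannot directly substitute $\psi$ for $\ph$ in the argument of (1.). The fix is to transfer rigidity from $\ph$ to $\psi$ through the $U_0$-approximation: elements of $U_0$ close to $\psi$ are automatically close to $\ph$, inherit the $\eps$-closeness dictated by (1.), and by the global uniqueness of their limit points they pin down a single candidate limit point for $\psi$ in $B(R)$.
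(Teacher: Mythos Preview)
Your proposal follows the paper's approach essentially step for step. Part (0.) is handled via Lemma~\ref{l:fixpoint} (you verify its hypothesis explicitly, the paper just cites it); part (1.) uses the identical choice $\de=(1-c)\eps$ and the same triangle-inequality computation showing $\psi_j(B_{j+1}(\eps))\subseteq B_j(\eps)$; part (2.) proceeds by approximation from $U_0$, compactness, and the same three-term splitting to verify that the limit is a limit point of $\psi$; parts (2.) uniqueness and (3.) are argued in both by appealing back to (1.) for elements of $U_0$ near $\psi$.

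One remark: you are more careful than the paper in using subsequential limits rather than asserting full convergence up front, and you are right that full convergence only follows once uniqueness is established. You also correctly flag the uniqueness step as the delicate point---the paper's proof is equally terse there, writing only ``by (1.)'' without spelling out how the contraction of $\ph$ (rather than of $\psi$) forces the limit point of any nearby $\ow\psi\in U_0$ to be close to \emph{every} limit point of $\psi$ in $B(R)$. Your closing paragraph identifies the right mechanism (routing through $\ph$ via the triangle inequality), and this matches what the paper intends; neither version writes out the iteration that makes this fully rigorous, so your level of detail is comparable to the original.
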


%%%%%%%%%%%%%%%%%%%%%%%%%%%%%%%%%%%%%%%%%%%%%%%%%%%%%%%%%%%%%%%%%%%%%%%%%%%%%%%%%%%%%%%%%%%%%%%%%%%%%%%%%%%%%%%%%%%%
%% Subsection
%%%%%%%%%%%%%%%%%%%%%%%%%%%%%%%%%%%%%%%%%%%%%%%%%%%%%%%%%%%%%%%%%%%%%%%%%%%%%%%%%%%%%%%%%%%%%%%%%%%%%%%%%%%%%%%%%%%%

\subsection{Comparison to the hyperbolic standard metric}

In this subsection we relate the semi metric space $(\h^J,\gm_J)$ to the metric space $(\h^J,\dist_{\h^J})$.\medskip

\begin{lemma}\label{l:compactness}(Comparison of $\gm$ and $\dist_{\h}$.)
Let $I,J$ be finite index sets.
\begin{itemize}
\item [(1.)] Let $\ph:(\h^{I},\gm_{I}) \to (\h^{J},\gm_{J})$ be an isometry (resp. a contraction) on $\h^{I}$. Then, $\ph: (\h^{I},\dist_{\h^{I}}) \to (\h^{J},\dist_{\h^{J}})$ is an isometry (resp. a contraction) on $\h^{I}$.
\item [(2.)] Let $\ph:(\h^{I},\gm_{I}) \to (\h^{J},\gm_{J})$ be a uniform contraction on a compact set $K$ with contraction coefficient $c_0$. Then, $\ph: (\h^{I},\dist_{\h^{I}}) \to (\h^{J},\dist_{\h^{J}})$ is a uniform contraction on $K$ and the contraction coefficient depends only on $\diam (K)$ and $c_0$.
\end{itemize}
\begin{proof} The first statement is clear by definition and monotonicity of $\cosh^{-1}$. To prove (2.) we start with the claim that the function
$$c:[0,\infty)\to[0,\infty),\quad r\mapsto\frac{\cosh^{-1}(c_0r+1)}{\cosh^{-1}(r+1)}.$$
is uniformly less than one on every bounded subset of $[0,\infty)$ whenever $c_0\in[0,1)$. The claim is obvious for bounded subsets of $(0,\infty)$ so we only have to check the case $r\to0$. To this end, we apply L'Hospitals theorem. The derivative of $x\mapsto\cosh^{-1}(x)$ is $x\mapsto \ap{x^2-1}^{-\frac{1}{2}}$ and therefore
$$\frac{\frac{d}{dr}\cosh^{-1}(c_0r+1)}{\frac{d}{dr}\cosh^{-1}(r+1)} =\frac{c_0\sqrt{r^2+2r}}{\sqrt{c_0^2r^2+2c_0r}}\to \sqrt{c_0},$$
as $r\to 0$. This proves the claim.\\
By definition of $\dist_{\h^J}$ and the assumptions of the lemma we have that
$$\frac{\dist_{\h^J}(\ph(g),\ph(h))}{\dist_{\h^{I}}(g,h)}\leq c\ab{r},$$
with $r=\gm_{I}(g,h)/2$ for all $g,h\in K$. Let $r_0=\diam(K)/2$. We know by the claim above that $c$ is uniformly smaller than one on $[0,r_0]$. This yields the existence of a contraction coefficient $c_1=\sup_{r\in[0,r_0]}c(r)<1$ which proves the statement.
\end{proof}
\end{lemma}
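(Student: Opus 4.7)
The whole argument rests on the single identity $\dist_{\h^J}(g,h) = \cosh^{-1}(\tfrac{1}{2}\gm_J(g,h) + 1)$ and the strict monotonicity of $\cosh^{-1}$ on $[1, \infty)$. The plan is to push both statements through the scalar function
$$F_{c_0}(r) := \frac{\cosh^{-1}(c_0 r + 1)}{\cosh^{-1}(r + 1)}, \qquad r > 0, \ c_0 \in [0, 1],$$
and to control its behaviour at $r = 0$.

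For part (1), set $r = \tfrac12 \gm_I(g, h)$ and $r' = \tfrac12 \gm_J(\ph(g), \ph(h))$. An isometry gives $r = r'$, hence $\dist_{\h^J}(\ph(g), \ph(h)) = \cosh^{-1}(r' + 1) = \cosh^{-1}(r + 1) = \dist_{\h^I}(g, h)$; a contraction gives $r' < r$, whence the same with $=$ replaced by $<$ by strict monotonicity. No analysis is required.

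Part (2) is where the real work is. The hypothesis gives $r' \leq c_0 r$ with $c_0 < 1$, so the ratio $\dist_{\h^J}(\ph(g), \ph(h))/\dist_{\h^I}(g, h)$ is bounded above by $F_{c_0}(r)$, and as $g, h$ range over the compact set $K$, the parameter $r$ ranges over the compact interval $[0, r_0]$ with $r_0 = \tfrac12 \diam(K)$. I therefore need $\sup_{r \in (0, r_0]} F_{c_0}(r) < 1$. Strict monotonicity of $\cosh^{-1}$ gives $F_{c_0}(r) < 1$ for every fixed $r > 0$, and continuity on $(0, r_0]$ yields a uniform bound below $1$ on any set bounded away from $0$.

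The main obstacle is the endpoint $r \downarrow 0$, where both numerator and denominator vanish so pointwise strict inequality alone is insufficient. I plan to resolve this by L'H\^opital's rule: using $(\cosh^{-1})'(x) = (x^2 - 1)^{-1/2}$, the ratio of derivatives simplifies to $c_0\sqrt{r^2 + 2r}/\sqrt{c_0^2 r^2 + 2 c_0 r}$, which tends to $\sqrt{c_0} < 1$ as $r \downarrow 0$. Hence $F_{c_0}$ extends continuously to $[0, r_0]$ with values strictly less than $1$ everywhere, and by compactness and continuity $c_1 := \sup_{r \in [0, r_0]} F_{c_0}(r) < 1$ is attained. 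Since $c_1$ is built only from $c_0$ and $r_0 = \tfrac12 \diam(K)$, it serves as the required contraction coefficient and satisfies the claimed dependence. A small Taylor expansion of $\cosh^{-1}(1 + t) = \sqrt{2t}\bigl(1 + O(t)\bigr)$ as $t \downarrow 0$ would give the same limit $\sqrt{c_0}$ directly and could replace the L'H\^opital step if preferred.
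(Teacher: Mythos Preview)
Your proof is correct and follows essentially the same route as the paper: both reduce the problem to the scalar ratio $\cosh^{-1}(c_0 r+1)/\cosh^{-1}(r+1)$, resolve the $r\downarrow 0$ indeterminacy via L'H\^opital to obtain the limit $\sqrt{c_0}$, and conclude by taking the supremum over $r\in[0,\tfrac12\diam(K)]$. The only addition you make is the optional Taylor-expansion remark, which is a harmless alternative to the L'H\^opital step.
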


As $c(r)\to1$ for $r\to\infty$, for $c$ from the proof, the statement of  the previous lemma  becomes false if we drop the compactness assumption. Hence, approaching the boundary of $\h^J$, we lose any uniformity of contraction of $\gm_{J}$ in $\dist_{\h^J}$.

%%%%%%%%%%%%%%%%%%%%%%%%%%%%%%%%%%%%%%%%%%%%%%%%%%%%%%%%%%%%%%%%%%%%%%%%%%%%%%%%%%%%%%%%%%%%%%%%%%%%%%%%%%%%%%%%%%%%
%% Subsection
%%%%%%%%%%%%%%%%%%%%%%%%%%%%%%%%%%%%%%%%%%%%%%%%%%%%%%%%%%%%%%%%%%%%%%%%%%%%%%%%%%%%%%%%%%%%%%%%%%%%%%%%%%%%%%%%%%%%

\subsection{A substitute for the triangle inequality}

As discussed above, $\gm $ does not satisfy the triangle inequality. We will prove a similar estimate instead. The idea is to estimate $\gm(g+\lm,h)$ by a linear function of $\gm(g,h)$ where the coefficients depend only on $h$ and $\lm$. The first inequality can be used to estimate diagonal perturbations, i.e., perturbations by a potential. The second one applies to off diagonal perturbations.
\medskip

\begin{lemma}\label{l:ti}(Substitute triangle inequality.) For all $g,h,\lm\in\h$ we have
\begin{align*}
\gm(g+\lm,h)\leq c_0(\lm)\gm(g,h)+(c_0(\lm)-1),
\end{align*}
with $c_0(\lm)=({1+{2|\lm|}/{\Im h}}^{2})$.
Moreover, for $\lm\in(-1,\infty)$,
\begin{align*}
\gm((1+\lm)g,h)\leq{(1+\lm)^{-1}}\ap{c_1(\lm){\gm(g,h)+(c_1(\lm)-1)}},
\end{align*}
with $c_1(\lm)=(1+2\sqrt2 {|\lm||h|}/{\Im h})^{2}$.
\begin{proof}
We start with the first inequality and distinguish two cases. If $|g-h|\geq \Im h/2$, then, by the triangle inequality of the modulus $|\cdot|$, we get
$$\gm(g+\lm,h)\leq \ab{1+\frac{|\lm|}{|g-h|}}^2 \gm(g,h)\leq \ab{1+\frac{2|\lm|}{\Im h}}^2 \gm(g,h).$$
If, on the other hand, $|g-h|\leq \Im h/2$, then $\Im g\geq\Im h/2$ and we obtain by direct computation
\begin{align*}
\gm(g+\lm,h)\leq \gm(g,h)+\frac{2|\lm||g-h|+|\lm|^2}{\Im g\Im h}\leq \gm(g,h)+2\frac{|\lm|\Im h+|\lm|^2}{(\Im h)^2}.
\end{align*}
The first inequality of the statement now follows from the definition of $c_0$.

Let us turn to the second inequality. If $|g-h|\geq \Im h/2$, then, by the triangle inequality of the modulus $|\cdot|$, we get
$$\gm((1+\lm) g,h)\leq\frac{1}{1+\lm} \ab{1+\frac{|\lm||g|}{|g-h|}}^2 \gm(g,h)\leq \frac{1}{1+\lm}\ap{1+|\lm|+\frac{|\lm||h|}{\Im h}}^2 \gm(g,h),$$
by estimating $|g|\leq|g-h|+|h|$ in the last step. On the other hand, if $|g-h|\leq \Im h/2$, then  $\Im g\geq\Im h/2$ and $|g|\leq 2|h|$, so we get
\begin{align*}
\gm((1+\lm) g,h)&\leq \frac{1}{(1+\lm)} \ap{\gm(g,h)+\frac{2|\lm||g||g-h|+|\lm|^2|g|^2}{\Im g\Im h}}\\
&\leq \frac{1}{(1+\lm)} \ap{\gm(g,h)+ \frac{4|\lm||h|}{\Im h}+\frac{8|\lm|^{2}|h|^{2}}{\Im h}}.
\end{align*}
By the definition of $c_1$ the second inequality of the statement follows.
\end{proof}
\end{lemma}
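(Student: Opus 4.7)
The lemma asserts two ``quasi triangle'' inequalities for the semi metric $\gm(g,h) = |g-h|^{2}/(\Im g\,\Im h)$: one for an additive perturbation $g \mapsto g+\lm$ with $\lm\in\h$, and one for a multiplicative perturbation $g\mapsto (1+\lm)g$ with real $\lm\in(-1,\infty)$. My plan in both cases is a dichotomy based on whether $|g-h|\geq \Im h/2$ or $|g-h|<\Im h/2$: in the ``far'' regime the relative size of the perturbation is immediately bounded by a quantity involving $\Im h$, while in the ``close'' regime $\Im g$ is forced to be comparable to $\Im h$, which allows direct expansion of the perturbed numerator.

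In the ``far'' case $|g-h|\geq \Im h/2$ I would bound $|g+\lm-h|^{2}\leq (|g-h|+|\lm|)^{2}$ by the triangle inequality, factor $|g-h|^{2}$ out to expose $\gm(g,h)$, and use the case hypothesis to replace $|\lm|/|g-h|$ by $2|\lm|/\Im h$. The denominator is not an issue since $\Im(g+\lm)\geq \Im g$ for $\lm\in\h$. For the multiplicative inequality I would proceed along the same lines, additionally using $|g|\leq |g-h|+|h|$ to convert $|\lm||g|/|g-h|$ into an expression depending only on $|\lm|,|h|,\Im h$, and pull out the exact factor $(1+\lm)^{-1}$ coming from $\Im((1+\lm)g)=(1+\lm)\Im g$.

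In the ``close'' case $|g-h|\leq \Im h/2$ two geometric facts are central: $|\Im(g-h)|\leq|g-h|$ forces $\Im g\geq \Im h/2$, and (for the multiplicative inequality) the triangle inequality gives $|g|\leq 2|h|$. With these I would expand $|g+\lm-h|^{2}=|g-h|^{2}+2\Re((g-h)\ov{\lm})+|\lm|^{2}$ (and the analogous identity for $(1+\lm)g-h$) and estimate the two new terms as additive perturbations of $\gm(g,h)$, controlled by $|\lm|/\Im h$ and $(|\lm|/\Im h)^{2}$ respectively (by $|\lm||h|/\Im h$ and $(|\lm||h|/\Im h)^{2}$ in the multiplicative case).

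The final step is to check that each case's estimate is dominated by the right hand side $c_{i}(\lm)\gm(g,h)+(c_{i}(\lm)-1)$ of the statement, using the slack provided by $c_{i}(\lm)\geq 1$ and the expansion $c_{i}(\lm)-1 = 2\cdot(\text{linear in }|\lm|) + (\text{quadratic in }|\lm|)$. I expect this constant bookkeeping to be the main obstacle: the factor $2\sqrt 2$ in $c_{1}(\lm)$ is forced by the quadratic term $8|\lm|^{2}|h|^{2}/(\Im h)^{2}$ that arises in the ``close'' case from $|g|\leq 2|h|$ and $\Im g\geq \Im h/2$, and in the ``far'' case one additionally uses $|h|\geq \Im h$ to absorb the linear residual $|\lm|$ into the $|\lm||h|/\Im h$ terms. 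Beyond these verifications the argument reduces to one line calculations in each branch.
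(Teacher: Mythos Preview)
Your proposal is correct and follows exactly the same route as the paper: the identical dichotomy at $|g-h|=\Im h/2$, the triangle-inequality factoring in the far case (with $|g|\le|g-h|+|h|$ for the multiplicative version), and the direct expansion in the close case using $\Im g\ge\Im h/2$ and $|g|\le 2|h|$. Your anticipation that $|h|\ge\Im h$ is needed to absorb the residual $|\lm|$ in the far-case constant check is precisely the missing remark in the paper's argument.
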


%%%%%%%%%%%%%%%%%%%%%%%%%%%%%%%%%%%%%%%%%%%%%%%%%%%%%%%%%%%%%%%%%%%%%%%%%%%%%%%%%%%%%%%%%%%%%%%%%%%%%%%%%%%%%%%%%%%%
%% Subsection
%%%%%%%%%%%%%%%%%%%%%%%%%%%%%%%%%%%%%%%%%%%%%%%%%%%%%%%%%%%%%%%%%%%%%%%%%%%%%%%%%%%%%%%%%%%%%%%%%%%%%%%%%%%%%%%%%%%%
\subsection{The center of balls and euclidean distances}\label{ss:eps1}

Finally,  we  discuss two quantities which play a role in Section~\ref{s:kaproof}. This discussion will also give  an insight into the geometry of hyperbolic balls.

Let $J$ be a finite index set.
For $h\in\h^J$ and $r\geq 0$ we define
\begin{align*}
\eps_1:=\eps_1(r) &:= \inf_{g\in \h^{J}\setminus B_r(h)}\; \min_{j\in J}\; |g_j-h_j|,\\
\eps_2:=\eps_2(r) &:= \min_{g\in B_r(h)} \;\min_{j\in J}\; \Im g_j.
\end{align*}

The quantity $\eps_1$ describes the shortest euclidean distance from the boundary of a $\gm$-ball to the center and $\eps_2$ describes the shortest distance from the boundary of the ball to the `real line' boundary of $\h^{J}$. Figure~\ref{f:eta} illustrates the quantities $\eps_1$ and $\eps_2$ for the one dimensional case, i.e., $J$ being a singleton set. This will be proven in Lemma~\ref{l:eta_0dim}.

\begin{figure}[!h]
\centering
\scalebox{.4}{\includegraphics{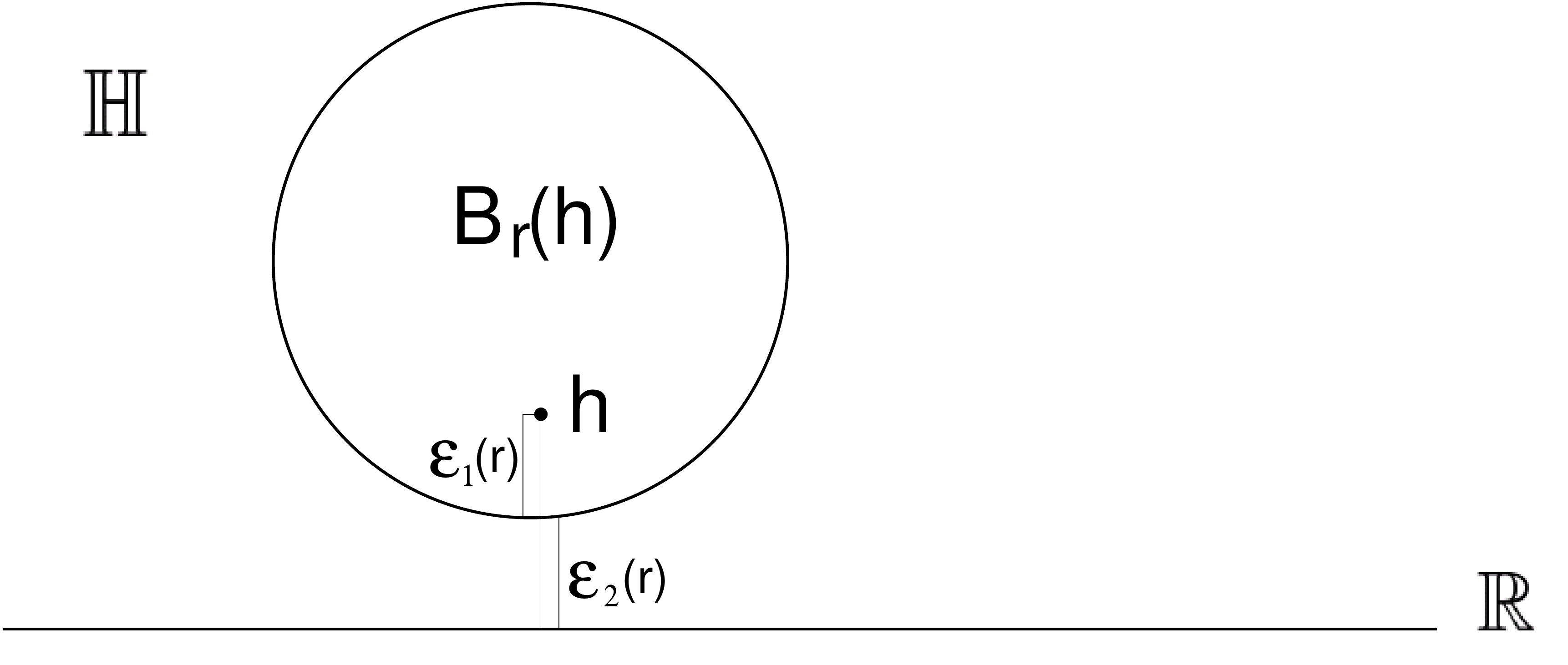}}
\caption{Illustration of the quantities $\eps_1$ and $\eps_2$ in the one dimensional case.} \label{f:eta}
\end{figure}

Let us discuss the properties of the corresponding quantities in the euclidean metric in the one dimensional case. Let $\eps_1^{\mathrm{eucl}}=\eps_1^{\mathrm{eucl}}(r)$ and $\eps_2^{\mathrm{eucl}}=\eps_2^{\mathrm{eucl}}(r)$ be quantities corresponding to $\eps_1$ and $\eps_2$, where the ball  is taken with respect to the euclidean metric.
If $\zeta$ is the center and $r$  is the radius  of a euclidean ball we have $\eps_1^{\mathrm{eucl}}(r)=r$ and
$\Im \zeta=\eps_1^{\mathrm{eucl}}(r)+\eps_2^{\mathrm{eucl}}(r)$.
Moreover, for $\eps_2^{\mathrm{eucl}}$ there is a unique $\xi\in B_r(h)$ for which the minimum is realized, while $\eps_1^{\mathrm{eucl}}$ is realized by all boundary points of a euclidean ball.

In the hyperbolic space this is in some aspects very different. First of all, notice that a $\gm$-ball with radius $r$ describes a hyperbolic $\dist_{\h}$ ball with radius $\cosh^{-1}(r/2+1)$. Below we find that $\eps_1$ and $\eps_2$ are realized by the same unique element in the boundary of the $\gm$-ball. For the radius $r$ and the center $\zeta$ of a $\gm$-ball we have that
$$r=\eps_1(r)^{2}/(\eps_2(r)\Im\zeta)\quad\mbox{ and } r=\eps_1(r)+\eps_2(r).$$ For the one dimensional case, this will be proven in the next lemma and, for the  higher dimensional situation, this will be shown afterwards.

Denote by $\dd B_r$ the boundary of $B_r(\zeta)=\{\xi\in\h\mid\gm(\xi,\zeta)\leq r\}$ for $\zeta\in\h$ and $r\geq0$.\medskip

\begin{lemma}\label{l:eta_0dim}(Hyperbolic balls - the one dimensional case.)
Let $\zeta\in\h$, $r\geq 0$ and $\xi_1,\xi_2\in \h$ be such that
\begin{align*}
|\xi_1-\zeta|= \inf_{\xi'\in \h\setminus B_r(\zeta)}|\xi'-\zeta|\;(=\eps_1(r))\qand
\Im\xi_2 = \min_{\xi'\in B_r(\zeta)}\Im\xi'\;(=\eps_2(r)).
\end{align*}
Then, $\xi_1$ and $\xi_2$ are equal to the unique element $\xi=\xi(\zeta)\in\dd_r B$ which satisfies
\begin{align}
\Re\xi&=\Re \zeta,\label{e:Re}\\
\Im \xi&=\Im \zeta -|\xi-\zeta|, \label{e:Im}
\end{align}
i.e., $\xi=\xi_1=\xi_2$. Moreover, the functions $\zeta\mapsto|\xi(\zeta)-\zeta|$ and $\zeta\mapsto\xi(\zeta)$ are constant in $\Re \zeta$ and are increasing in $\Im \zeta$.
\begin{proof}
It is clear that $\xi_1,\xi_2\in \dd B_r$, i.e.,
\begin{align*}
|\xi_1-\zeta|= \min_{\xi'\in \dd B_r} |\xi'-\zeta| \qand
\Im\xi_2 = \min_{\xi'\in \dd B_r} \Im \xi'.
\end{align*}
We first show that, if $\xi_1$ and $\xi_2$ satisfy \eqref{e:Re}, then they also satisfy \eqref{e:Im}. This can be seen since there are only two points $\xi_\pm\in \dd B_r$ which satisfy \eqref{e:Re}. In particular,  $\Im \xi_\pm=\Im \zeta \pm|\xi_\pm-\zeta|$ which implies $\Im \xi_+\geq \Im \xi_-$ and $|\xi_+-\zeta|\geq|\xi_--\zeta|$. Thus, $\xi_1,\xi_2$ both satisfy \eqref{e:Im} and $\xi_1=\xi_2=\xi_-$ if $\xi_1,\xi_2$ satisfy \eqref{e:Re}.
This also shows that the element $\xi(\zeta)$ is indeed unique.\\
We continue by showing that $\xi_2$ satisfies \eqref{e:Re}. The idea is to write $ \xi_2$ as $\Re \zeta+x+iy$ for suitable $(x,y)\in\R\times\R^+$. Indeed, the function $(x,y)\mapsto y$ attains its minimum over the set
$$R:=\ac{(x,y)\in\R\times\R_+\mid \gm(\Re \zeta+x+iy,\zeta)=\frac{ x^2+(y-\Im \zeta)^2}{y \Im \zeta}=r}$$
at $(0,y')$ where $y'$ is the smaller root of the quadratic polynomial $\ph:y\mapsto y^2-\Im \zeta(r+2)y+(\Im \zeta)^{2}$. Hence, $\xi_2=\Re \zeta +iy'$, i.e., $\Re \xi_2=\Re\zeta$ which implies $\xi_2=\xi$ by the uniqueness of $\xi$.\\
By writing out the formula for $y'$ and the discussion above, it can be  seen that the function $\zeta\mapsto\Im\xi(\zeta)=y'(\zeta)$ is constant in $\Re \zeta$ and uniformly increasing in $\Im \zeta$.\\
We now turn to $\xi_1$ which we also write as $\Re \zeta+x+iy$. The function $(x,y)\mapsto |\Re \zeta+x+iy-\zeta|=\sqrt{x^2+(y-\Im \zeta)^2}$ attains its minimum over $R$ at $(0,y'')$ for some $y''>0$.
This implies $\xi_1=\Re \zeta +iy''$, i.e., $\Re \xi_1=\Re\zeta$ which implies $\xi_1=\xi_2=\xi$ again by the uniqueness of $\xi$. Moreover, we have that $y''=y'$ is the minimum of the quadratic polynomial $\ph$ from above. Hence, it can be checked by direct computation, that the function $\zeta\mapsto |\xi(\zeta)-\zeta|=|y'(\zeta)-\Im\zeta|$ is constant in $\Re \zeta$ and monotone increasing in $\Im \zeta$.
\end{proof}
\end{lemma}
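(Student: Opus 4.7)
\textbf{Proof plan for Lemma~\ref{l:eta_0dim}.} The strategy is to reduce everything to a single quadratic in $y = \Im \xi$ by parameterizing the boundary $\dd B_r$ explicitly, and to exploit the identity $|\xi-\zeta|^2 = r\,\Im\xi\,\Im\zeta$ that holds on $\dd B_r$ in order to identify the two minimizers $\xi_1$ and $\xi_2$.

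First I would record that any minimizer must lie on the boundary. For $\xi_2$ this is immediate since $B_r(\zeta)$ is closed and the minimum of $\Im$ on the interior cannot beat the one on the boundary (by convexity/continuity of the constraint, or directly from the parameterization below). For $\xi_1$ the infimum is over the open set $\h\setminus B_r(\zeta)$, hence equals the distance from $\zeta$ to $\dd B_r$, and any realizing sequence accumulates on $\dd B_r$. Thus both extremal problems can be restated as
\begin{align*}
\min\ac{|\xi-\zeta|\mid \xi\in\dd B_r}\qand \min\ac{\Im \xi\mid \xi\in\dd B_r}.
\end{align*}

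Next, write $\xi = \Re\zeta + x + iy$ with $y>0$. The defining equation $\gm(\xi,\zeta) = r$ becomes
\begin{align*}
x^2 + (y-\Im\zeta)^2 = r\, y\, \Im\zeta,
\end{align*}
i.e.\ the quadratic $\ph(y) := y^2 - (r+2)\Im\zeta\,y + \big(x^2 + (\Im\zeta)^2\big) = 0$ in $y$ (for fixed $x$). Its two roots $y_\pm(x)$ exist precisely when $4x^2 \leq r(r+4)(\Im\zeta)^2$, and $y_-(x)$ is strictly increasing in $|x|$ (the discriminant is decreasing in $x^2$), while $y_+(x)$ is strictly decreasing in $|x|$. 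The crucial observation is that, from the boundary equation rewritten as $|\xi-\zeta|^2 = r\,y\,\Im\zeta$, on $\dd B_r$ one has $|\xi-\zeta|$ a strictly increasing function of $y = \Im\xi$. Therefore both minimization problems are equivalent to minimizing $y$ over the parameter range of $\dd B_r$, and the joint minimum is attained uniquely at $(x,y) = (0,\, y_-(0))$ with
\begin{align*}
y_-(0) = \frac{\Im\zeta\big(r+2 - \sqrt{r^2+4r}\big)}{2}.
\end{align*}
This yields a single candidate $\xi := \Re\zeta + i\, y_-(0)$, forcing $\xi_1 = \xi_2 = \xi$.

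It remains to check the defining equations \eqref{e:Re} and \eqref{e:Im} and the uniqueness of any boundary point satisfying them, and then the monotonicity. Equation \eqref{e:Re} is immediate from the construction, and \eqref{e:Im} follows from the identity $(\Im\zeta - \Im\xi)^2 = r\,\Im\xi\,\Im\zeta = |\xi-\zeta|^2$ together with the sign check $y_-(0) < \Im\zeta$ (equivalent to $r \leq \sqrt{r^2+4r}$). For uniqueness, \eqref{e:Re} forces $\xi \in \{\Re\zeta + i y_-(0),\, \Re\zeta + i y_+(0)\}$, and \eqref{e:Im} (which says $\Im\xi \leq \Im\zeta$) singles out the root $y_-(0)$. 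Finally, since the closed form $y_-(0) = \Im\zeta(r+2-\sqrt{r^2+4r})/2$ depends only on $\Im\zeta$ and is linear and strictly increasing in it (as $0 < (r+2-\sqrt{r^2+4r})/2 < 1$), both $\zeta \mapsto \xi(\zeta)-\Re\zeta$ (equivalently, $\xi(\zeta)$ viewed up to the tautological real shift) and $\zeta\mapsto |\xi(\zeta)-\zeta| = \Im\zeta(\sqrt{r^2+4r}-r)/2$ are independent of $\Re\zeta$ and strictly increasing in $\Im\zeta$.

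There is no real obstacle here beyond careful bookkeeping; the only non-routine step is spotting the identity $|\xi-\zeta|^2 = r\,\Im\xi\,\Im\zeta$ on $\dd B_r$, which collapses the two apparently different optimization problems to the same one-dimensional quadratic minimization and is what forces the coincidence $\xi_1 = \xi_2$.
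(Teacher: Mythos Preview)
Your proof is correct and follows the same overall parameterization as the paper (writing $\xi=\Re\zeta+x+iy$ and reducing to the quadratic $y^2-(r+2)\Im\zeta\,y+\mathrm{const}=0$), but you organize the argument more efficiently. The paper treats the two minimization problems for $\xi_1$ and $\xi_2$ separately, verifying for each that the optimum has $x=0$ and then observing that the two resulting $y$-values coincide. You instead use the boundary identity $|\xi-\zeta|^2=r\,\Im\xi\,\Im\zeta$ (which is just the definition of $\gm$ rewritten) to see at once that $|\xi-\zeta|$ is a strictly increasing function of $\Im\xi$ along $\dd B_r$, so the two minimizations are the \emph{same} problem. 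This is a genuine simplification: it removes the need to minimize $\sqrt{x^2+(y-\Im\zeta)^2}$ over $R$ as a separate step and makes the coincidence $\xi_1=\xi_2$ transparent rather than a computational coincidence. The remaining steps (explicit formula $y_-(0)=\Im\zeta\,(r+2-\sqrt{r^2+4r})/2$, the sign check for \eqref{e:Im}, uniqueness, and the monotonicity in $\Im\zeta$) match the paper's and are correctly carried out.
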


The next lemma considers the higher dimensional case. We will prove three formulas. The third one plays a crucial role in the proof of Proposition~\ref{p:al>0} in  Chapter~\ref{c:main3}. It tells us how to compute the radius $r$ of a $\gm$ ball for a given value of $\eps_1(r)$.
The proof of the lemma follows basically from the one dimensional case.
\medskip

\begin{lemma}\label{l:eta}(Hyperbolic balls - the higher dimensional case.) Let $h\in\h^J$ and $\eps_0:=\min_{j\in J}\Im h_j$. Then
\begin{itemize}
\item [(1.)] $\eps_1(r)+\eps_2(r)=\eps_0$. In particular, $\eps_1(r),\eps_2(r)\in(0,\eps_0)$ for all $r> 0$.
\item [(2.)] $\eps_1(r)^2= r\eps_0\eps_2(r)$ for all $r>0$.
\item [(3.)] For given $\de\in(0,\eps_0)$  we have that $\eps_1(r)=\de$ for $r=\frac{\de^{2}}{(\eps_0-\de)\eps_0}$.
%\item [(3.)] The inverse function of $\eps_1$ is given by the relation $$r=\frac{\eps_0(r)^2}{(\eps_0-\eps_1(r))\eps_0}.$$ In particular $\eps_1$ is monotone increasing and $\eps_1(1/2)=\eps_0/2$.
\end{itemize}
\begin{proof}
We start with an argument which shows that it suffices to consider the one dimensional case. Let $h\in\h^J$ and $g\in \h^J$ be in the boundary of $B_r(h)$ such that for each $j\in\A$ the components $g_j$ equal $\xi=\xi_1=\xi_2$ from Lemma~\ref{l:eta_0dim} for $\zeta=h_j$, $j\in J$. Let $j_0\in J$ be such that $\Im h_{j_0}=\eps_0$. By the statements of Lemma~\ref{l:eta_0dim} about the monotonicity behavior of $\eps_1$ and $\eps_2$ with respect to $\Re \zeta$ and $\Im \zeta$ in the one dimensional case we deduce that the second minimum in the definition of $\eps_1$ and $\eps_2$ is realized by the ${j_0}$-th component of $g$. Hence, we only have to check the statements in the one dimensional case.

(1.) The statement follows now directly from \eqref{e:Im}.

(2.) Recall that $g_{j_0}$ is in the boundary of $ B_r(h_{{j_0}})$, i.e., $\gm(g_{j_0},h_{j_{0}})=r$. Using the definition of $\gm$, we get
\begin{equation*}
\eps_1=|g_{j_{0}}-h_{j_{0}}|=\sqrt{r\Im g_{j_{0}}\Im h_{j_{0}}}=\sqrt{r\eps_0\eps_2}.
\end{equation*}

(3.) We employ (1.) into (2.) to get
$\eps_1(r)^{2}={r\eps_0(\eps_0-\eps_1(r))}$. We resolve the identity with respect to $\eps_1$. The positive solution gives a formula for $\eps_1$ depending on $r$. We set this formula equal to $\de$ and resolve it this time with respect to $r$ and obtain (3.).
\end{proof}
\end{lemma}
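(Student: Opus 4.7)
The plan is to reduce each of the three assertions to the one-dimensional case treated in Lemma~\ref{l:eta_0dim}. The key geometric observation is that the $\gm_J$-ball is a Cartesian product, $B_r(h)=\prod_{j\in J}B_r(h_j)$, since $\gm_J(g,h)=\max_{j\in J}\gm(g_j,h_j)\leq r$ is equivalent to $\gm(g_j,h_j)\leq r$ for every $j$. Pick $j_0\in J$ with $\Im h_{j_0}=\eps_0$. For $\eps_2$ the minimization splits coordinatewise, so $\eps_2(r)=\min_{j\in J}\min_{g_j\in B_r(h_j)}\Im g_j$; the inner minimum is the one-dimensional $\Im\xi(h_j)$ of Lemma~\ref{l:eta_0dim}, which is constant in $\Re h_j$ and monotonically increasing in $\Im h_j$, so the outer minimum is attained at $j_0$. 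For $\eps_1$ an analogous reduction works, via test configurations that equal $h$ in all but one coordinate $j^{\ast}$ and equal $\xi(h_{j^{\ast}})$ in that coordinate; the one-dimensional quantity $|\xi(h_j)-h_j|$ has the same monotonicity, so the optimal choice is $j^{\ast}=j_0$. Thus both $\eps_1(r)$ and $\eps_2(r)$ coincide with their one-dimensional analogues computed at the center $h_{j_0}$, and the extremizer is the unique $\xi=\xi(h_{j_0})\in\dd B_r(h_{j_0})$ described in Lemma~\ref{l:eta_0dim}.

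With this reduction, (1) and (2) fall out of the one-dimensional identities. For (1), the identification $\eps_1=|\xi-h_{j_0}|$ and $\eps_2=\Im\xi$ together with \eqref{e:Im} give $\eps_1+\eps_2=|\xi-h_{j_0}|+\Im\xi=\Im h_{j_0}=\eps_0$; positivity of both summands follows because $r>0$ forces $\xi\neq h_{j_0}$ (hence $\eps_1>0$) while $\xi\in\h$ gives $\eps_2=\Im\xi>0$, so $\eps_1,\eps_2\in(0,\eps_0)$. For (2), since $\xi$ lies on $\dd B_r(h_{j_0})$ we have $\gm(\xi,h_{j_0})=r$, and unwrapping the definition of $\gm$ yields $|\xi-h_{j_0}|^2=r\,\Im\xi\,\Im h_{j_0}$, which is exactly $\eps_1(r)^2=r\eps_0\eps_2(r)$.

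Finally, (3) is pure algebra: substituting $\eps_2(r)=\eps_0-\eps_1(r)$ from (1) into (2) gives the one-variable relation $\eps_1(r)^2=r\eps_0(\eps_0-\eps_1(r))$, which solves for $r=\eps_1^2/\bigl(\eps_0(\eps_0-\eps_1)\bigr)$; setting $\eps_1(r)=\de$ produces the asserted value $r=\de^2/\bigl((\eps_0-\de)\eps_0\bigr)$. The only place where anything subtle happens is the reduction in the first paragraph, where both monotonicity statements from Lemma~\ref{l:eta_0dim} (in $\Re\zeta$ and $\Im\zeta$) are needed to certify that the coordinatewise extremization is jointly achieved at $j_0$; once that is secured, the three formulas are immediate.
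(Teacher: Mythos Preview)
Your overall strategy matches the paper's: reduce to the one-dimensional Lemma~\ref{l:eta_0dim} via the monotonicity of $\zeta\mapsto|\xi(\zeta)-\zeta|$ and $\zeta\mapsto\Im\xi(\zeta)$, then read off (1) from \eqref{e:Im}, (2) from $\gm(\xi,h_{j_0})=r$, and (3) by algebra. Your treatment of $\eps_2$ via the product decomposition $B_r(h)=\prod_j B_r(h_j)$ is actually cleaner than the paper's.

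There is, however, a gap in your reduction for $\eps_1$. The test configuration you propose---equal to $h$ in every coordinate except $j^{\ast}$, and equal to $\xi(h_{j^{\ast}})$ there---satisfies $\min_j|g_j-h_j|=0$, since $g_j=h_j$ for all $j\neq j^{\ast}$; so it does not produce the one-dimensional quantity $|\xi(h_{j^{\ast}})-h_{j^{\ast}}|$ you claim. The paper instead takes the boundary point $g$ with \emph{every} component $g_j=\xi(h_j)$; for that $g$ one has $\min_j|g_j-h_j|=\min_j|\xi(h_j)-h_j|$, and monotonicity in $\Im\zeta$ then selects $j_0$. (To be fair, the paper also does not fully justify why this particular $g$ realizes the infimum defining $\eps_1$; but its extremizer at least yields the intended one-dimensional value, whereas yours gives zero.)
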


%%%%%%%%%%%%%%%%%%%%%%%%%%%%%%%%%%%%%%%%%%%%%%%%%%%%%%%%%%%%%%%%%%%%%%%%%%%%%%%%%%%%%%%%%%%%%%%%%%%%%%%%%%%%%%%%%%%%
%%%%%%%%%%%%%%%%%%%%%%%%%%%%%%%%%%%%%%%%%%%%%%%%%%%%%%%%%%%%%%%%%%%%%%%%%%%%%%%%%%%%%%%%%%%%%%%%%%%%%%%%%%%%%%%%%%%%
%%%%%%%%%%%%%%%%%%%%%%%%%%%%%%%%%%%%%%%%%%%%%%%%%%%%%%%%%%%%%%%%%%%%%%%%%%%%%%%%%%%%%%%%%%%%%%%%%%%%%%%%%%%%%%%%%%%%
%% Section
%%%%%%%%%%%%%%%%%%%%%%%%%%%%%%%%%%%%%%%%%%%%%%%%%%%%%%%%%%%%%%%%%%%%%%%%%%%%%%%%%%%%%%%%%%%%%%%%%%%%%%%%%%%%%%%%%%%%
%%%%%%%%%%%%%%%%%%%%%%%%%%%%%%%%%%%%%%%%%%%%%%%%%%%%%%%%%%%%%%%%%%%%%%%%%%%%%%%%%%%%%%%%%%%%%%%%%%%%%%%%%%%%%%%%%%%%
%%%%%%%%%%%%%%%%%%%%%%%%%%%%%%%%%%%%%%%%%%%%%%%%%%%%%%%%%%%%%%%%%%%%%%%%%%%%%%%%%%%%%%%%%%%%%%%%%%%%%%%%%%%%%%%%%%%%
\section{Contraction properties of the recursion map}\label{s:recursionmap}

The concepts introduced in this section are fundamental for the analysis of the upcoming chapters. We decompose the recursion maps $\Psi_{z}$ to study their contraction properties with respect to the semi metric $\gm$. We are interested in contractions for several reasons. First of all, we want to show that $\Gm(z,H)$ is the unique fixed point of $\Psi_{z}$ for $z\in\h$ and thus the unique vector in $\h^{\V}$ which satisfies the recursion formula \eqref{e:Gm}. Secondly, we use contraction properties to study continuity of $\Gm(z,H)$ as $\Im z\downarrow 0$. The third reason concerns the stability of $\Gm(z,H)$ in the limit $\Im z\downarrow 0$ under small perturbations of $H$.

%To show that $\Psi_{E+i\eta}$ is a uniform contraction on sets with bounded imaginary part is a rather immediate. However,  the contraction coefficient is proportional to $\eta$. In order to study $\Psi_{E+i\eta}$ in the limit $\eta\downarrow 0$ we have to look for contraction properties independent of $\eta$.

%%%%%%%%%%%%%%%%%%%%%%%%%%%%%%%%%%%%%%%%%%%%%%%%%%%%%%%%%%%%%%%%%%%%%%%%%%%%%%%%%%%%%%%%%%%%%%%%%%%%%%%%%%%%%%%%%%%%
%% Subsection
%%%%%%%%%%%%%%%%%%%%%%%%%%%%%%%%%%%%%%%%%%%%%%%%%%%%%%%%%%%%%%%%%%%%%%%%%%%%%%%%%%%%%%%%%%%%%%%%%%%%%%%%%%%%%%%%%%%%

\subsection{A decomposition and criterions for contraction}\label{s:Contraction}

Let $z=E+i\eta\in\h\cup\R$ and $g,h\in\h^{S_{x}}$ for some $x\in\V$. In this subsection we
derive a formula of the following type
\begin{align*}
\gm(\Psi_{z,x}(g),\Psi_{z,x}(h))=c(\eta)\sum_{y\in S_{x}}p_{x,y}(h)c_{x,y}(g,h)\gm(g_{y},h_{y}),
\end{align*}
where $c(\eta)\in(0,1)$ is called the \emph{contraction coefficient} which tends to one as $\eta\downarrow0$. Moreover, $p_{x,y}=p_{x,y}(h)\in(0,1)$ are weights, i.e., $\sum_{y\in S_{x}}p_{x,y}=1$ and $c_{x,y}=c_{x,y}(g,h)\in[-1, 1]$ are called the \emph{contraction quantities}. In the case $\eta>0$ we have `contraction' by $c(\eta)<1$. In order to show contraction in the limit case $\eta\downarrow0$ one has to prove that one of the contraction quantities $c_{x,y}$ is uniformly smaller than one. Moreover, we have to ensure that the corresponding weight $p_{x,y}$ stays bounded from below to make the contraction `visible'. We will call the sum on the right hand side a \emph{contraction sum}.

Let us be more precise and introduce the quantities sketched above. Let
$g,h\in \h^\V$. We might think of $h$ as a quantity we have control over and of $g$ as a free variable.

To shorten notation we will write for $x\in\V$
\begin{equation*}
\gm_x:=\gm(g_x,h_x).
\end{equation*}
We introduce the weights for $x,y\in \V$ with $y\in S_x$ by
\begin{equation*}
p_{x,y}:=p_{x,y}(h):= \frac{|t(x,y)|^{2}\Im h_y}{\sum_{v\in S_x}|t(x,v)|^{2}\Im h_v}.
\end{equation*}
Clearly, $\sum_{y\in S_x}p_{x,y}=1$. Moreover, define
\begin{align*}
q_{x,y}:=q_{x,y}(g):=p_{x,y}(g)=\frac{|t(x,y)|^{2}\Im g_y}{\sum_{v\in S_x}|t(x,v)|^{2}\Im g_v}.
\end{align*}
Define the contraction quantities
\begin{align*}
c_{x,y}:=c_{x,y}(g,h):=\sum_{v\in S_x}q_{y,v} Q_{y,v}\cos\al_{y,v},
\end{align*}
where  $Q_{x,y}$ is given by
\begin{equation*}
Q_{x,y}:=Q_{x,y}(g,h):=\frac{\ab{\Im g_x\Im g_y\Im h_x\Im h_y\gm_x\gm_y}^\frac{1}{2}}{\frac{1}{2}\ab{\Im g_x\Im h_y\gm_y+\Im g_y\Im h_x\gm_x}},
\end{equation*}
for arbitrary $x,y\in \V$ with $\gm_{x},\gm_{y}\neq0$, and $Q_{x,y}=0$ otherwise.
Note that $Q_{x,y}$ is the quotient of a geometric and an arithmetic mean. Therefore,  $Q_{x,y}\in[0,1]$.

Next, we turn to the definition of the \emph{relative arguments}
$\al_{x,y}$. The argument  of non zero complex numbers is defined as a map $\arg:\C\setminus\{0\}\to\R/2\pi\Z\cong \Sp^{1}$ which is a continuous group homomorphism.  For  $x,y\in \V$ with $g_{x}\neq h_{x}$ and $g_{y}\neq h_{y}$, we let
\begin{align*}
\al_{x,y}:=\al_{x,y}(g,h):=\arg{(g_x-h_x)\ov{(g_y-h_y)}}.
\end{align*}
The expression $\cos\al_{x,y}$ is well defined as the cosine is a $2\pi$-periodic function.
Note that there is no reasonable way to define $\al_{x,y}$ for $g_{x}=h_{x}$ or $g_{y}=h_{y}$. However,  in the formula of the contraction coefficients $c_{x',x}$ the $\al_{x,y}$'s always come accompanied by $Q_{x,y}$ which is zero in the critical case anyway. Therefore, we set the product $Q_{x,y}\cos\al_{x,y}$ to be zero  whenever $g_{x}=h_{x}$ or $g_{y}=h_{y}$.

We introduce a modulus function $\ma{\cdot}$ for the arguments. For $\al\in \Sp^{1}$ we let
\begin{align*}
\ma{\al}:=d_{\Sp^{1}}(\al,1),
\end{align*}
where $d_{\Sp^{1}}(\cdot,\cdot)$ is the canonical translation invariant metric in $\Sp^{1}$. Of course,   we have a triangle inequality for $\al,\be\in\Sp^{1}$
\begin{align*}
\ma{\al+\be}\leq\ma{\al}+\ma{\be}.
\end{align*}
Note that $\Sp^{1}$ can be mapped bijectively to $[-\pi,\pi)$ which makes it reasonable to write  $\arg\xi\in (a,b)$ (respectively $\arg\xi\not\in(a,b)$) for $\xi\in\C\setminus\{0\}$ and $(a,b)\subseteq [-\pi,\pi)$ if the canonical bijection $\Sp^{1}\to[-\pi,\pi)$ maps the $\arg \xi$ into $(a,b)$ (respectively into $[-\pi,\pi)\setminus (a,b)$).
A corresponding convention applies to closed intervals.

By these definitions, $c_{x,y}$ itself can be considered as a weighted sum of contraction quantities $Q_{x,y}\cos \al_{x,y}\in[-1,1]$ and weights $q_{x,y}$ adding up to one. Thus, $c_{x,y}\in[-1,1]$.
Observe that the quantities $Q_{x,y}$ and $\cos \al_{x,y}$ are symmetric in $x$ and $y$, while the $x$ and $y$ in the quantities $p_{x,y}$ and $c_{x,y}$ are elements of different spheres.

It is obvious that the quantities defined above depend only on finitely many components of $g,h\in \h^\V$. Therefore,  with slight abuse of notation, we sometimes write these quantities as functions of the components they actually depend on. For example, we write $c_{x,y}(g',h')$ with $g',h'\in \h^{S_x}$.

We can now decompose the maps $\Psi_{z}$ into an averaging, a  translating and a reflecting part, i.e.,
\begin{align*}
\Psi_{z}=\tau\circ\si_{z}\circ \rho,
\end{align*}
as it can be found in \cite{FHS1}. Precisely, the components of $\rho$, $\si_{z}$ and $\tau$ in $x\in S^{n}$ are given as functions
\begin{align*}
&\rho_x:\h \to\h,&&\hspace{-3.5cm} g\mapsto -\frac{1}{g}, \\
& \si_{z,x} :\h \to\h,&&\hspace{-3.5cm} g\mapsto z-w(x)+g, \\
&\tau_x: \h^{S_x}\to\h, &&\hspace{-3.5cm} g\mapsto \sum_{y\in S_x}|t(x,y)|^2 g_y.
\end{align*}
We denote the restrictions of $\rho$, $\si$ and $\tau$ to the subspaces  $\h^{S^{n}}$ induced by the spheres $S^{n}$ by $\rho_{S^{n}}$, $\si_{z,S^{n}}$ and $\tau_{S^{n}}$.
In particular, $\rho_{S^{n}}$, $\si_{z,S^{n}}$ are maps $\h^{S^{n}}\to \h^{S^{n}}$ and $\tau_{S^{n}}$ is a map $\h^{S^{n+1}}\to \h^{S^{n}}$.

The basic contraction properties of $\Psi_{z,x}$ and its decomposition are summarized in the next proposition.
\medskip

\begin{lemma}\label{l:Psicontr}(Basic contraction properties.)
Let $x\in\V$, $z\in\h\cup\R$. Then, $\Psi_{z,x}:(\h^{S_x},\gm_{S_{x}})\to(\h,\gm)$ is a quasi contraction. More precisely,
\begin{itemize}
\item [(1.)] $\rho_x:(\h,\gm)\to (\h,\gm)$ is an isometry.
\item [(2.)] $\si_{z,x}:(\h,\gm)\to (\h,\gm)$ is an isometry if $\Im z=0$ and a contraction if $\Im z>0$ which is uniform on sets $\set{h\in\h\mid \Im h\leq C}$ with contraction coefficient $c_0=(1+\Im z/C)^{-2}.$
\item [(3.)] $\tau_x:(\h^{S_x},\gm_{S_x})\to (\h,\gm)$ is a quasi contraction with
\begin{equation*}\label{e:tau}
\gm(\tau_{x}(g),\tau_{x}(h))=\sum_{y\in S_x}p_{x,y}(h)c_{x,y}(g,h)\gm(g_y,h_y).
\end{equation*}
\end{itemize}
\begin{proof}
(1.) Since $ 1/\xi=\ov{\xi}/|\xi|^2$ for $\xi\in\C\setminus\{0\}$ we have for $g,h\in\h$
\begin{equation*}
\gm(\rho(g),\rho(h))= \frac{|g|^{-2}|h|^{-2}|\overline{g}-\overline{h}|^2}{\Im g|g|^{-2}\Im h|h|^{-2}} =\gm(g,h).
\end{equation*}
(2.) A direct calculation yields
\begin{align*}
\gm(\si_{z,x}(g),\si_{z,x}(h))&=\frac{|g-h|^2}{\Im (g+z)\Im(h+z)}=\frac{1}{(1+\Im z/\Im g )(1+\Im z/\Im h)}\gm(g,h).
\end{align*}
The coefficient on the right hand side is smaller than one if and only if $\Im z>0$. On the set $U_C=\{g\mid\Im g\leq C\}\subset \h$ the map $\si_{z,x}$ is a uniform contraction with contraction coefficient $c_0=\sup_{g\in U_C}(1+\Im z/\Im g )^{-2}=(1+\Im z/C)^{-1}$.

(3.) We  compute
\begin{align*}
{\av{\sum_{y\in S_x} |t(x,y)|^{2} (g_y-h_y)}^2}
&=\sum_{y,v\in S_x} |t(x,y)|^{2}|t(x,v)|^{2}\av{g_y-h_y}\av{g_v-h_v}\cos\al_{y,v}\\
\end{align*}
\begin{align*}
\ldots&=\sum_{y,v\in S_x} |t(x,y)|^{2}|t(x,v)|^{2}(\Im g_y\Im g_v\Im h_y\Im h_v\gm_y\gm_v)^{\frac{1}{2}} \cos\al_{y,v}\\
&=\sum_{y,v\in S_x}|t(x,y)|^{2}|t(x,v)|^{2}\ab{\frac{1}{2}\ab{\Im g_y\Im h_v\gm_v}+\frac{1}{2}\ab{\Im g_v\Im h_y\gm_y}}Q_{x,y}\cos\al_{y,v}\\
&=\sum_{y\in S_x}|t(x,y)|^{2}\Im h_y \ab{\sum_{v\in S_x}|t(x,v)|^{2}\Im g_v Q_{y,v}\cos\al_{y,v}}\gm_y.
\end{align*}
Dividing this expression by $\Im\tau_{x}(g)\Im\tau_{x}(h)$ and plugging in the definitions of  yields the formula of (3.).
We continue to estimate using the formula of (3.)
\begin{align*}
\gm(\tau_x(g),\tau_x(h))&
= \ab{\sum_{y\in S_x} p_{x,y}(h)c_{x,y}(g,h)\frac{\gm(g_y,h_y)}{\gm_{S_x}(g,h)}} \gm_{S_x}(g,h)\\
&\leq\ap{ \sum_{y\in S_x} p_{x,y}(h)}
\gm_{S_x}(g,h)= \gm_{S_x}(g,h),
\end{align*}
where the first inequality follows since $c_{x,y}(g,h)\leq1$ and
${\gm(g_y,h_y)}\leq{\gm_{S_x}(g,h)}$ by definition of $\gm_{S_x}$. The last equality follows
by $\sum_{y\in S_x}p_{x,y}(h)=1$. Hence, we have shown that $\tau_{x}$ is a quasi contraction.
\end{proof}
\end{lemma}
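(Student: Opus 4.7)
The plan is to verify each of the three assertions directly from the definition $\gm(g,h)=|g-h|^2/(\Im g\,\Im h)$. The first two are short identities, while the third is the main computation.

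For (1), I would use the identity $-1/g = -\bar g/|g|^2$, so that $\Im(-1/g) = \Im g/|g|^2$ and $|-1/g + 1/h|^2 = |g-h|^2/(|g|^2|h|^2)$. Substituting into $\gm$, the factors $|g|^2|h|^2$ cancel and I obtain $\gm(\rho(g),\rho(h))=\gm(g,h)$.

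For (2), $\sigma_{z,x}$ is a translation so $|(g+z-w(x))-(h+z-w(x))|^2 = |g-h|^2$. The denominator becomes $(\Im g+\Im z)(\Im h+\Im z)$. Writing this as $(1+\Im z/\Im g)(1+\Im z/\Im h)\,\Im g\,\Im h$ gives
\begin{equation*}
\gm(\sigma_{z,x}(g),\sigma_{z,x}(h)) = \frac{1}{(1+\Im z/\Im g)(1+\Im z/\Im h)}\gm(g,h).
\end{equation*}
If $\Im z=0$, this is an isometry; if $\Im z>0$ and $\Im h\leq C$, at least one factor exceeds $1+\Im z/C$, yielding the claimed uniform contraction coefficient.

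For (3), the main obstacle is the double-sum bookkeeping. I would expand
\begin{equation*}
\Bigl|\sum_{y\in S_x}|t(x,y)|^2(g_y-h_y)\Bigr|^2 = \sum_{y,v\in S_x}|t(x,y)|^2|t(x,v)|^2\,|g_y-h_y|\,|g_v-h_v|\cos\al_{y,v},
\end{equation*}
using that only the real part of $(g_y-h_y)\overline{(g_v-h_v)}$ survives by the $(y,v)\leftrightarrow(v,y)$ symmetry of the sum. Rewriting each $|g_y-h_y|=(\Im g_y\Im h_y\gm_y)^{1/2}$ turns each summand into a geometric mean. Then I use AM--GM in the form $(\Im g_y\Im g_v\Im h_y\Im h_v\gm_y\gm_v)^{1/2} = Q_{y,v}\cdot\tfrac12(\Im g_y\Im h_v\gm_v+\Im g_v\Im h_y\gm_y)$, which is exactly the definition of $Q_{y,v}$. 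Symmetrizing in $(y,v)$ inside the sum, I can combine both terms of the arithmetic mean into a single sum with a fixed choice of variable carrying $\Im h$ and the other carrying $\Im g$. Dividing by $\Im\tau_x(g)\,\Im\tau_x(h) = \bigl(\sum_y|t(x,y)|^2\Im g_y\bigr)\bigl(\sum_y|t(x,y)|^2\Im h_y\bigr)$ and recognizing the resulting factors as $p_{x,y}(h)$, $q_{y,v}(g)$ collapses the expression into the stated formula. The quasi-contraction claim then follows because $Q_{y,v}\in[0,1]$ (geometric over arithmetic mean) and $|\cos\al_{y,v}|\leq 1$ force $c_{x,y}\in[-1,1]$, so
\begin{equation*}
\gm(\tau_x(g),\tau_x(h))\leq \sum_{y\in S_x}p_{x,y}(h)\,\gm(g_y,h_y)\leq \gm_{S_x}(g,h),
\end{equation*}
using $\sum_{y\in S_x}p_{x,y}(h)=1$. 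The quasi-contraction property of the composition $\Psi_{z,x}=\tau_x\circ\sigma_{z,x}\circ\rho$ is then immediate from (1)--(3).
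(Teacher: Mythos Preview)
Your proof is correct and follows essentially the same route as the paper: direct verification for $\rho$ and $\sigma_{z,x}$, and for $\tau_x$ the same expansion of the squared modulus, insertion of $\cos\al_{y,v}$, rewriting $|g_y-h_y|^2=\Im g_y\Im h_y\gm_y$, and the AM--GM identity defining $Q_{y,v}$, followed by division by $\Im\tau_x(g)\Im\tau_x(h)$. One small slip: the composition order in your final sentence should be $\Psi_{z,x}=\rho_x\circ\sigma_{z,x}\circ\tau_x$ (not $\tau_x\circ\sigma_{z,x}\circ\rho$), since $\tau_x$ is the only map with domain $\h^{S_x}$ and must act first.
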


Let us end this section with a discussion about what we can learn from the previous proposition about the contraction properties of $\Psi_{z}$.

It follows that $\Psi_{z}$ is a contraction for $\Im z>0$. The set where the contraction is uniform will turn out to be large enough  to show uniqueness of a fixed point if the operators have moderate off diagonal growth. This will be done in the next section.

Moreover, $\si_x\circ\rho_x$ is a hyperbolic isometry for $\Im z=0$. So, in the case where $\Psi_{z,x}$ stays a uniform contraction in the limit $\Im z\downarrow0$, the map $\tau_x$ must be a uniform contraction. Let us take a closer look at $\tau_x$. According to Proposition~\ref{l:Psicontr} it can be expanded into a contraction sum with weights $p_{x,y}$ and contraction quantities $c_{x,y}$ which are contraction sums on their own. Let us discuss under which circumstances we can expect $\tau_{x}$ to be a contraction.

Let us first explain that there is a subset where $\tau$ is not a contraction. On the subset of $\h^{S_x}$ of elements with linearly dependent components over $\R$, the map $\tau_x$ is multiplication by a positive number, which is a hyperbolic isometry. This can be seen right away. Assume that the components of $g$ and $h$ are linearly dependent over $\R$, i.e., there are $g_0,h_0\in\h$ and $\be_y>0$, $y\in S_x$ such that $g_y=\be_y g_0$ and $h_y=\be_y h_0$.
Then, for all $y\in S_x$, one easily checks
\begin{equation*}
\gm(\tau_x(g),\tau_x(h))=\gm(g_0,h_0)=
\gm(g_y,h_y)=\gm_{S_x}(g,h).
\end{equation*}
(Indeed, it can easily be seen that the set of pairs $(g,h)\in\h\times\h$ for which $\tau_x$ is not contracting is even larger.)
We can decompose $\tau_x $ into a multiplication and an averaging part via $\tau_x=\be \cdot\tau_x(g/\be)$, where $\be=|S_x|$. As seen above, multiplication by $\be$ is a hyperbolic isometry.
In the euclidean distance, the average of two vectors $g,h$ of complex numbers is a contraction if and only if the components of the differences are not all pairwise linearly dependent over $\R$. This means there are $y$ and $y'$ such that the argument of the complex numbers $g_y-h_y$ and $g_{y'}-h_{y'}$ is non zero, i.e., $\cos\al_{y,y'}<1$. This is also true in the hyperbolic case, in particular, we have in this case $\cos\al_{y,y'}<1$. Thus, $c_{y,y'}<1$ which yields $\gm(\tau_x(g),\tau_x(h))<\gm(g,h)$. Of course,  this still does not tell us anything about uniformness of the contraction.

The considerations above can teach us two things. First of all, $\tau_x$ is definitely no contraction on $\h^{S_x}$. Secondly, one can try to show  that multiple applications of $\Psi_{z}$ map the linearly dependent elements into linear independent ones. In this case $\tau_{x}$ is a contraction in at least multiple steps. Such a strategy will be pursued in the analysis of label invariant operators in Section~\ref{s:LI_RecursionMaps}.

%%%%%%%%%%%%%%%%%%%%%%%%%%%%%%%%%%%%%%%%%%%%%%%%%%%%%%%%%%%%%%%%%%%%%%%%%%%%%%%%%%%%%%%%%%%%%%%%%%%%%%%%%%%%%%%%%%%%
%% Subsection
%%%%%%%%%%%%%%%%%%%%%%%%%%%%%%%%%%%%%%%%%%%%%%%%%%%%%%%%%%%%%%%%%%%%%%%%%%%%%%%%%%%%%%%%%%%%%%%%%%%%%%%%%%%%%%%%%%%%
\subsection{Uniqueness of fixed points}\label{s:UniquenessFixPoints}

We now prove that the recursion map $\Psi_{z}^{(H)}$ of a given $H$ has a unique fixed point in $\h^\V$ for $z\in\h$ under suitable conditions. This directly implies that there is a unique vector in $\h^\V$ satisfying recursion relations \eqref{e:Gm} which is the truncated Green function $\Gm(z,H)$. Likewise, $\Gm(z,H)$ is then the unique solution of the system of polynomial equations \eqref{e:Polynom}. Moreover, as a direct consequence, the operator $H$ is essentially self adjoint.

The following statement is found in \cite{FHS1} for a class of operators on graphs which satisfy certain  assumptions. These assumptions allow for much more general graphs than trees. But in contrast to our setting, they also imply that the operators are bounded and the considered Hilbert space is $\ell^2(\V)$, i.e., the measure $\nu\equiv1$.

Define
\begin{equation*}
t_n:=\sup_{x\in S^{n}}\sum_{y\in S_x}|t(x,y)|^2.
\end{equation*}
We say an operator $H$ acting as \eqref{e:H} has \emph{moderate off diagonal growth} if
\begin{align*}\label{e:sum}
\limsup_{n\to\infty} \frac{1}{t_{n+1}}\sum_{k=0}^{n}\frac{1}{t_k} =\infty.
\end{align*}

\medskip

\begin{thm}\label{t:uniqueness}(Uniqueness of fixed points.)
Suppose that $H$ has moderate off diagonal growth.
Then, $\Psi_{z}=\Psi^{(H)}_z$ has a unique fixed point in $h^{\V}$ for  $z\in\h$. This fixed point is the truncated Green function $\Gm(z,H)\in\h^\V$. Its restriction to the spheres $\Gm_{S^{j}}$, $j\in\N_0$, can be computed for arbitrary $g\in\h^\V$ via the limit
\begin{align*}
\Gm_{S^{j}}(z,T)=\lim_{n\to\infty}\Psi_{z,S^{j}}\circ\ldots\circ\Psi_{z,S^{j+n}}(g_{S^{j+n+1}}).
\end{align*}
\begin{proof}
Let us fix some notation at the beginning.
Denote $z=E+i\eta$, $U({C}):=\{g\in\h^{\V}\mid |g_x|\leq C, x\in\V\}$ for $C>0$ and by $U^{n}(C)$ the canonical projection of $U(C)$ into $\h^{S^{n}}$. Moreover, denote $\Psi:=\Psi_{z}^{(H)}$. We start with a claim.

\emph{Claim 1: $\Psi$ maps $\h^{\V}$ into $U({{1}/{\eta}})$.} \\
Proof of Claim 1. Let $g\in\h^{S_{x}}$ and calculate
\begin{align*}
\mo{\Psi_{x}(g)}^2
%&=\ap{\ap{E-w(x)+\sum_{y\in S_{x}}|t(x,y)|^2\Re g_y}^2+\ap{\eta+\sum_{y\in S_{x}}|t(x,y)|^2\Im g_y}^2}^{{-1}}\\
&=\frac{1}{\ap{E-w(x)+ \Re \tau_x(g)}^2+\ap{\eta+ \Im \tau_x(g)}^2}\leq \frac{1}{\eta^2},
\end{align*}
as $ \Im\tau_x(g)>0$. This proves the claim.

\emph{Claim 2: $\Psi_{S^{n}}$ is a uniform contraction on the set $\Psi_{S^{n+1}}(\h^{S^{n+2}})$ with contraction coefficient $(1+\eta^2/t_n)^{-2}$.}\\
Proof of Claim 2. By Claim~1 we have that $\Psi_{S^{n+1}}(\h^{S^{n+2}})\subseteq U^{n+1}({{1}/{\eta}})$. One directly checks that $\tau_{S^{n}}$ maps $U^{n+1}({{1}/{\eta}})$ into $U^{n}({{t_n}/{\eta}})$. By Lemma~\ref{l:Psicontr}, the map $\si_{z,S^n}$ is a uniform contraction on $U^{n}({{t_n}/{\eta}})$ with contraction coefficient $(1+\eta^2/t_n)^{-2}$. As $\rho$ is an isometry,  the map $\Psi_{S^{n}}$ on $\Psi_{S^{n+1}}(\h^{S^{n+1}})$ is a contraction with contraction coefficient $(1+\eta^2/t_n)^{-2}$.

\emph{Claim 3: $\Psi_{S^{n}}\circ\Psi_{S^{n+1}}$ maps $\h^{S^{n+2}}$ into a set $B\subseteq\h^{S^{n}}$ with $\diam (B)\leq (2t_n/{\eta^2})^2$, (where the $\diam$ is taken with respect to $\gm$).}\\
Proof of Claim 3.
In the proof of Claim~2 we found that
\begin{align*}
\tau_{S^{n}}\circ\Psi_{S^{n+1}}\ap{\h^{S^{n+2}}}\subseteq \tau_{S^{n}}(U^{n+1}({{1}/{\eta}}))\subseteq  U^{n+1}({{t_{n}}/{\eta}}).
\end{align*}
We estimate
\begin{equation*}
\sup_{g,h\in U^{n}({{t_n}/{\eta}})}\gm\ap{\si_{S^{n}}(g),\si_{S^{n}}(h)}
\leq\sup_{g,h\in U^{n}({{t_n}/{\eta}})} 2\frac{|g|^{2}+|h|^2}{\eta^{2}}=\frac{4}{\eta^4}t_n^2.
\end{equation*}
For the last equality one first checks that the inequality '$\leq$' holds and then one finds the elements which make the estimate sharp.
Since $\rho_{S^{n}}$ is an isometry we obtain, by putting the two arguments together,
\begin{equation*}
\sup_{g,h\in \h^{S^{n+2}}} \gm\ap{\Psi_{S^{n}}\ap{\Psi_{S^{n+1}}\ap{g}}, \Psi_{S^{n}}\ap{\Psi_{S^{n+1}}\ap{h}}}\leq \frac{4}{\eta^4}t_n^2.
\end{equation*}
This proves the claim.

%Denote the iterates of $\Psi$ by $\Psi^k$, $k\in\N$. By the arguments above we get for $g,h\in \h^{\V}$, $k\in\N$ \begin{align*} \gm_{S^{n}}\ab{(\Psi^{k+1}(g))_{S^{n}},(\Psi^{k+1}(h))_{S^{n}}}&\leq \ap{\prod_{j=1}^{k-1}c_{n+j}}\diam \Psi^{S^{n+k+1}}\circ\Psi^{S^{n+k+1}}(\h^{S^{n+k+2}}). \end{align*}

We now want to apply the uniqueness of limit points, Lemma~\ref{l:fixpoint}, with $X_j=\clos(\Psi_{S^{j}}\circ\Psi_{S^{j+1}} (\h^{S^{j+2}}))$, $d_{j}=\gm_{S^{j}}$ and $\ph_{j}=\Psi_{S^{j}}$ for $j\in\N_0$. Here, $\clos U$ of a set $U$ means the closure of $U$.
By Claim~2, we have that $\ph_{j}$ is a uniform contraction with contraction coefficient $c_{j}=(1+\eta^2/t_{j})^{-2}$.
By Claim~3, the spaces $X_j$ are compact and $x_j=\diam (X_j) \leq(2t_{j}/{\eta^2})^2$. One now easily checks that  by the assumption of moderate off diagonal growth we get
\begin{align*}
\limsup_{k\to\infty} \frac{1}{\sqrt{x_{k}}} \sum_{j=0}^{k-1}\frac{(1-\sqrt{c_{j}})}{\sqrt{c_{j}}}
=2\eta^2 \limsup_{j\to\infty} {\frac{1}{t_{k}} \sum_{j=0}^{k-1}\frac{1}{t_{j}}}=
\infty.
\end{align*}
Hence, the assumptions of  Lemma~\ref{l:fixpoint} are fulfilled with $\be=1/2$.  This yields the uniqueness of the limit point $\Gm({z},H)$ and the formula for the restrictions to the spheres. As the limit point is unique, it is a fixed point of $\Psi_{z}$.
\end{proof}
\end{thm}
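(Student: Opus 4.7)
The plan is to apply the limit point principle Lemma~\ref{l:fixpoint} to the sequence of restricted recursion maps $\Psi_{z,S^{n}}:\h^{S^{n+1}}\to\h^{S^{n}}$, $n\in\N_0$, on appropriate compact semi metric spaces sitting inside the spheres. Once uniqueness of a limit point is established, it must coincide with $\Gm(z,H)$ restricted sphere by sphere, since by Proposition~\ref{p:Gm} the vector $\Gm(z,H)$ is a fixed point of $\Psi_{z}$ and any fixed point is trivially a limit point. The limit formula for $\Gm_{S^{j}}(z,H)$ then comes for free from the convergence statement in Lemma~\ref{l:fixpoint}.

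Write $z=E+i\eta$ with $\eta>0$. I would first record three elementary claims by direct computation using the decomposition $\Psi_{z}=\tau\circ\sigma_{z}\circ\rho$ from Lemma~\ref{l:Psicontr}. First, since $\Im\tau_{x}(g)>0$ the definition of $\Psi_{z,x}$ immediately yields $|\Psi_{z,x}(g)|\leq 1/\eta$, so $\Psi_{z,S^{n+1}}(\h^{S^{n+2}})$ is contained in the bounded set $U^{n+1}(1/\eta):=\{h\in\h^{S^{n+1}}:|h_{x}|\leq 1/\eta\}$. Second, $\tau_{S^{n}}$ maps $U^{n+1}(1/\eta)$ into $U^{n}(t_{n}/\eta)$ (by the very definition of $t_{n}$), and on this latter set Lemma~\ref{l:Psicontr}(2) makes $\sigma_{z,S^{n}}$ a uniform hyperbolic contraction with coefficient $c_{n}:=(1+\eta^{2}/t_{n})^{-2}<1$; together with the isometry property of $\rho_{S^{n}}$, this renders $\Psi_{z,S^{n}}$ a uniform contraction on $\Psi_{z,S^{n+1}}(\h^{S^{n+2}})$ with coefficient $c_{n}$. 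Third, one more application of $\sigma_{z,S^{n}}\circ\rho_{S^{n}}$ to $U^{n+1}(t_{n}/\eta)$ yields a uniform bound on the $\gamma$-diameter of $X_{n}:=\mathrm{clos}\,\Psi_{z,S^{n}}\circ\Psi_{z,S^{n+1}}(\h^{S^{n+2}})$ of the order $x_{n}=O(t_{n}^{2}/\eta^{4})$, because $\gamma(\sigma_{z,S^n}(g),\sigma_{z,S^n}(h))\leq 2(|g|^{2}+|h|^{2})/\eta^{2}$ on $U^{n}(t_{n}/\eta)$.

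With these ingredients in hand, I would apply Lemma~\ref{l:fixpoint} to the compact spaces $(X_{j},\gamma_{S^{j}})$ and the uniform contractions $\varphi_{j}=\Psi_{z,S^{j}}|_{X_{j+1}}$. The decisive step is the verification of the summability hypothesis; with the choice $\beta=1/2$, one has $(1-\sqrt{c_{j}})/\sqrt{c_{j}}$ comparable to $\eta^{2}/t_{j}$ and $\sqrt{x_{j}}$ comparable to $t_{j}/\eta^{2}$, so the required condition
\begin{equation*}
\limsup_{n\to\infty}\frac{1}{\sqrt{x_{n}}}\sum_{j=0}^{n-1}\frac{1-\sqrt{c_{j}}}{\sqrt{c_{j}}}=\infty
\end{equation*}
reduces precisely to the moderate off diagonal growth assumption. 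The main obstacle I foresee is the interplay between the Euclidean bounds (which are what the contraction estimate for $\sigma_{z}$ in Lemma~\ref{l:Psicontr}(2) feeds on) and the hyperbolic diameter bounds (which are what the limit point principle needs), together with checking that $X_{j}$ is genuinely compact in $\gamma_{S^{j}}$ and not merely bounded in the Euclidean sense; here Lemma~\ref{l:compactness} should bridge the gap.

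Lemma~\ref{l:fixpoint} then produces a unique limit point $h=(h_{j})\in\prod_{j}X_{j}$ obtainable as $h_{j}=\lim_{n\to\infty}\Psi_{z,S^{j}}\circ\cdots\circ\Psi_{z,S^{n}}(g_{S^{n+1}})$ for arbitrary $g\in\h^{\V}$. Since $\Gm(z,H)$ is a fixed point of $\Psi_{z}$ by Proposition~\ref{p:Gm}, the constant sequence $(\Gm_{S^{j}}(z,H))_{j}$ is a limit point of $(\varphi_{j})$ and thus must equal $(h_{j})$, giving both the identification and the limit formula. Finally, any further fixed point of $\Psi_{z}$ would yield an additional limit point, contradicting uniqueness; hence $\Gm(z,H)$ is the unique fixed point of $\Psi_{z}$ in $\h^{\V}$.
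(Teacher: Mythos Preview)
Your proposal is correct and follows essentially the same route as the paper: three preparatory claims (the $1/\eta$ bound on $|\Psi_{z,x}|$, the contraction coefficient $c_n=(1+\eta^2/t_n)^{-2}$ coming from $\sigma_z$, and the $\gamma$-diameter bound $x_n=O(t_n^2/\eta^4)$), followed by an application of Lemma~\ref{l:fixpoint} with $\beta=1/2$ to the spaces $X_j=\clos\bigl(\Psi_{z,S^{j}}\circ\Psi_{z,S^{j+1}}(\h^{S^{j+2}})\bigr)$, the verification reducing exactly to the moderate off diagonal growth hypothesis. One small remark: the compactness of $X_j$ does not need Lemma~\ref{l:compactness}; it follows directly because $X_j\subseteq U^{j}(1/\eta)$ gives a Euclidean upper bound, while the finite $\gamma$-diameter from your third claim forces the imaginary parts to stay uniformly away from the real axis, so the closure lies in a compact subset of $\h^{S^{j}}$.
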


Before providing some examples of operators which satisfy the assumption of the theorem above, we want to give two immediate corollaries. The first one follows directly from the definitions of Section~\ref{s:Recursion}.
\medskip

\begin{cor}\label{c:uniqueness}(Uniqueness of solutions.)
If $H$ has moderate off diagonal growth, then the recursion formulas \eqref{e:Gm} and the system of polynomial equations \eqref{e:Polynom} have a unique solution which is given by $\Gm(z,H)$ for each $z\in\h$.
\end{cor}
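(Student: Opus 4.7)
The plan is to observe that the three formulations of the recursion relations are literally equivalent statements about an element $\xi \in \h^\V$, so the uniqueness statement of Theorem~\ref{t:uniqueness} about fixed points of $\Psi_z$ transfers verbatim to the other two formulations.

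First I would rewrite the recursion formula \eqref{e:Gm} in the form
\begin{equation*}
\xi_x = -\frac{1}{z - w(x) + \sum_{y \in S_x} |t(x,y)|^2 \xi_y}, \qquad x \in \V,
\end{equation*}
which is exactly the statement $\xi_x = \Psi_{z,x}(\xi_{S_x})$ by definition \eqref{e:Psi} of the components of $\Psi_z$. Hence $\xi \in \h^\V$ solves \eqref{e:Gm} if and only if $\Psi_z(\xi) = \xi$, i.e. $\xi$ is a fixed point of $\Psi_z$ in $\h^\V$. By Theorem~\ref{t:uniqueness}, this fixed point exists and equals $\Gm(z,H)$.

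Next I would deal with the polynomial system. Multiplying both sides of the displayed equation above by the denominator (which is nonzero for $\xi \in \h^\V$ and $z \in \h$, since its imaginary part is strictly positive) gives
\begin{equation*}
\Bigl( z - w(x) + \sum_{y \in S_x} |t(x,y)|^2 \xi_y \Bigr)\xi_x + 1 = 0,
\end{equation*}
which is exactly $P_x(z,\xi) = 0$ for all $x \in \V$, cf.\ \eqref{e:Polynom}. Conversely, if $P(z,\xi) = 0$ with $\xi \in \h^\V$, then in particular each $\xi_x$ is nonzero, so the denominator $z - w(x) + \sum_{y} |t(x,y)|^2 \xi_y = -1/\xi_x$ is nonzero, and dividing recovers \eqref{e:Gm}. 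Thus the three formulations have the same solution set in $\h^\V$, and uniqueness follows from Theorem~\ref{t:uniqueness}.

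There is essentially no obstacle here; the only point requiring mild care is to justify that in passing from the polynomial form to the rational form one does not introduce or lose solutions, which is settled by the remark that any $\xi \in \h^\V$ satisfying the polynomial equations automatically has nonvanishing components (otherwise $P_x(z,\xi) = 1 \neq 0$). Everything else is a direct translation between the three viewpoints on the recursion.
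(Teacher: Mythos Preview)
Your proof is correct and is exactly the argument the paper has in mind: the text just says the corollary ``follows directly from the definitions of Section~\ref{s:Recursion}'', and what you have written is precisely the spelled-out verification that the three formulations \eqref{e:Gm}, \eqref{e:Polynom}, \eqref{e:Psi} describe the same subset of $\h^\V$, so that Theorem~\ref{t:uniqueness} settles uniqueness for all three at once.
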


Moreover, the theorem above gives a sufficient criterion for essential self adjointness of the operator $H$ on $c_{c}(\V)$.
\medskip

\begin{cor}\label{c:suffessSA}(Essential self adjointness.) If the operator  $H$ has moderate off~diagonal growth and $c_c(\V)\subseteq D(H)$, then  $H$ is essentially self adjoint on $c_{c}(\V)$.
\begin{proof} Suppose there are two self adjoint operators on $\ell^{2}(\V,\nu)$ acting as \eqref{e:H}. In this case they share the same recursion map. By the theorem above their truncated Green functions agree. Hence, by Corollary~\ref{c:suffessSA} the operators coincide.
\end{proof}
\end{cor}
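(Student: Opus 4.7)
The plan is to obtain essential self adjointness from uniqueness of self adjoint extensions. The restriction $H|_{c_{c}(\V)}$ is symmetric by (H1), and it is a standard fact that a symmetric operator which admits a self adjoint extension is essentially self adjoint on its domain if and only if that extension is unique. So the task reduces to showing that any two self adjoint operators $H_{1}, H_{2}$ on $\ell^{2}(\V,\nu)$ which contain $c_{c}(\V)$ in their domains and act as \eqref{e:H} with the same coefficients $t$ and $w$ must in fact coincide.

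Given such $H_{1}, H_{2}$, I first note that the recursion map $\Psi_{z}^{(H_{i})}$ defined in \eqref{e:Psi} depends only on $t$, $w$, and $z \in \h$, and is therefore the same for $H_{1}$ and $H_{2}$; call it simply $\Psi_{z}$. Next, Proposition~\ref{p:Gm} applied to each $H_{i}$ shows that the truncated Green function $\Gm(z, H_{i}) = (\Gm_{x}(z, H_{i}))_{x \in \V} \in \h^{\V}$ is a fixed point of $\Psi_{z}$ for every $z \in \h$. Since $H$ has moderate off diagonal growth, Theorem~\ref{t:uniqueness} asserts uniqueness of such a fixed point, whence $\Gm_{x}(z, H_{1}) = \Gm_{x}(z, H_{2})$ for every $x \in \V$ and every $z \in \h$.

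Finally, Corollary~\ref{c:essSA}, applied with the common off diagonals and the equality of truncated Green functions just established, yields $H_{1} = H_{2}$. This proves uniqueness of self adjoint extensions, and hence essential self adjointness on $c_{c}(\V)$.

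The argument has no real obstacle: the heavy lifting was already done in Theorem~\ref{t:uniqueness}, which was the genuinely nontrivial contraction/limit-point argument, and in Corollary~\ref{c:essSA}, which recovers an operator acting as \eqref{e:H} from its truncated Green functions via the formulas in Propositions~\ref{p:G} and~\ref{p:Goffdiag}. The present corollary is a clean assembly of those two inputs together with the standard characterization of essential self adjointness by uniqueness of extensions. The only minor care needed is to observe that Proposition~\ref{p:Gm} is applicable to the restrictions $H_{i,\T_{x}}$ because (H0), (H1) on $H_{i}$ descend to each forward tree, so $\Gm_{x}(z, H_{i})$ is well defined and genuinely satisfies \eqref{e:Gm}.
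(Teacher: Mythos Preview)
Your proof is correct and follows essentially the same approach as the paper: both argue that any two self adjoint operators acting as \eqref{e:H} share the same recursion map, invoke Theorem~\ref{t:uniqueness} to conclude their truncated Green functions coincide, and then appeal to Corollary~\ref{c:essSA} to identify the operators. Your write-up is simply more explicit about the standard reduction to uniqueness of self adjoint extensions and about why $\Gm(z,H_i)$ is a fixed point of $\Psi_z$; incidentally, the paper's own proof contains a typo, citing Corollary~\ref{c:suffessSA} where Corollary~\ref{c:essSA} is meant, which you correctly avoid.
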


The assumption of moderate off diagonal growth has some similarity to a criterion for  essential self adjointness of Jacobi matrices on $\ell^{2}(\N)$ or  $\ell^{2}(\Z)$  (see \cite[p.504, Theorem 1.3]{Be}. It would be interesting to give a unified treatment of these assumptions.\medskip

\begin{Ex}
(1.) Let the operator $H$ have moderate off diagonal growth. Then, any self adjoint operator $H+v$, where  $v$ is the multiplication by a potential, has moderate off diagonal growth.

(2.) Let  $t_n\leq C$ for some $C\geq0$ and all $n\in\N$. In this case, $H$ has moderate off diagonal growth. Indeed, it suffices that there exists a sequence $n_k$ such that $t_{n_k}\leq C$ for all $k\in\N$.
Consider, for example, the adjacency operator $A$ or the Laplace operator $\De$ defined on a tree whose vertex degree is a function of the distance to the root. If the tree satisfies
\begin{equation*}
\deg(x)=\left\{
\begin{array}{ll}
2 &: |x| \mbox{ prime}, \\
|x| &: \mbox{else,} \\
\end{array}
\right.
\end{equation*}
then $A$ and $\De$ have moderate off diagonal growth.

(3.) Assume that $t_n\sim n^\be$ for some $\be<1/2$ is satisfied. Since
$\sum_{j=1}^{n}j^{-\be}\sim\int_{0}^{n}x^{-\be}dx=(1-\be)^{-1}n^{1-\be}$ we get
\begin{equation*}
\frac{1}{t_{n+1}}\sum_{j=1}^{n}\frac{1}{t_j}\sim \frac{n}{(n(n+1))^{\be}}\sim n^{1-2\be}\to\infty, \quad n\to\infty.
\end{equation*}
It follows that $H$ has moderate off diagonal growth. For instance
the operators $A$ or $\De$ defined on a tree which vertex degree satisfies $\deg(\cdot)\leq(\cdot)^{\be}$ with  $\be<1/2$ have moderate off diagonal growth.
\end{Ex}
\chapter{Label invariant operators and label radial symmetric potentials}\label{c:freeoperator}
%%%%%%%%%%%%%%%%%%%%%%%%%%%%%%%%%%%%%%%%%%%%%%%%%%%%%%%%%%%%%%%%%%%%%%%%%%%%%%%%%%%%%%%%%%%%%%%%%%%%%%%%%%%%%%%%%%%%
%%%%%%%%%%%%%%%%%%%%%%%%%%%%%%%%%%%%%%%%%%%%%%%%%%%%%%%%%%%%%%%%%%%%%%%%%%%%%%%%%%%%%%%%%%%%%%%%%%%%%%%%%%%%%%%%%%%%
 \begin{quote}
\begin{flushright}
\scriptsize{He took our sins on himself, giving his body to be nailed on the \emph{tree}, so that we, being dead to sin, might have a new life in righteousness, and by his wounds we have been made well.} 1 Peter 2:24   \end{flushright}
\end{quote}

Our setting in this chapter is the following: Let $\A$ be a finite set and $M:\A\times\A\to \N_0$ a substitution matrix which excludes the one dimensional situation, has positive diagonal and is primitive, i.e., it satisfies (M0), (M1) and (M2). For each $j\in\A$ we get a tree $\T=\T(M,j)$ with vertex set $\V$ and labeling function $a:\V\to\A$. Choose a label invariant measure $\nu$ on $\V$ and let $T:\ell^{2}(\V,\nu)\to\ell^{2}(\V,\nu)$ be a label invariant operator acting as
\begin{align*}
(T\ph)(x)=\sum_{y\sim x}t(x,y)\ph(y) +w(x)\ph(x).
\end{align*}
Recall that label invariance means, apart from the tree compatibility (T0), i.e., $t(x,y)\neq0$ if and only if $x\sim y$ and the symmetry (T1), i.e., $t(x,y)\nu(x)=\ov{t(y,x)}\nu(y)$,  that there is a matrix $(m_{j,k})_{j,k\in\A}$ and a vector $(m_{j})_{j\in\A}$ such that
\begin{align*}
    m_{a(x),a(y)}=|t(x,y)|^{2}M_{a(x),a(y)}\qand m_{a(x)}=w(x),
\end{align*}
for all $x,y\in \V$, $x\sim y$, which we referred to as label invariance (T2).
Note that a label invariant operator is always bounded. In particular, they have moderate off~diagonal growth. So, everything which was proven in the previous section applies to our present setting.

The aim of the chapter is to prove Theorem~\ref{main1} and Theorem~\ref{main2}. Recall that Theorem~\ref{main1} claims that a label invariant operator $T$ has pure absolutely continuous spectrum on finitely many intervals. On the other hand, Theorem~\ref{main2} claims that the absolutely continuous spectrum of $T$ is stable under small perturbations of label radial symmetric potentials $v\in\W_{\mathrm{sym}}(\T)$, i.e.,  the operator $T+\lm v$ exhibits pure absolutely continuous spectrum on compact intervals included in an open subset of $\si(T)$ for sufficiently small $\lm\ge0$ whenever $T$ is a non regular tree operator.

To this end, we  study the Green functions $\Gm_x=\Gm_x(z,H)=\ip{\de_x,(H_{\T_x}-z)^{-1}\de_x}$  and $G_x=G_x(z,H)=\ip{\de_x,(H-z)^{-1}\de_x}$ in the  limit $\Im z\downarrow 0$ with $H$ as the operator $T$ or $T+\lm v$.
Let $\Sigma\subset\R$ be the largest open set, where the maps
\begin{align*}
\h\to\h,\quad z\mapsto \Gm_{x}(z,T),
\end{align*}
have unique continuous extensions to $\h\cup\Sigma \to\h$ for all $x\in\V$. In particular,  this entails that for $E\in\Sigma$  and $x\in \V$ the limits
\begin{align*}
 \Gm_{x}(E,T):=\lim_{\eta\downarrow0}\Gm_{x}(E+i\eta,T)
\end{align*}
exist, are continuous in $E$ and
\begin{align*}
\Im \Gm_{x}(E,T)>0.
\end{align*}

We will prove the following  theorem for $T$, which implies Theorem~\ref{main1}.
\medskip

\begin{thm}\label{t:LIG} Let $T$ be a label invariant operator. Then, the set $\Sigma$ consists of finitely many open intervals and
\begin{align*}
    \si(T)=\clos \Sigma.
\end{align*}
Moreover, for every $x\in\V$ the Green function
\begin{align*}
\h\to\h,\quad z\mapsto G_{x}(z,T),
\end{align*}
has a unique continuous extension to $\h\cup\Sigma \to\h$ and is uniformly bounded in $z$.
\end{thm}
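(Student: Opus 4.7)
The strategy exploits the algebraic structure that label invariance imposes on the truncated Green function. By Subsection~\ref{ss:application} and Corollary~\ref{c:uniqueness}, $\Gm(z,T)=(\Gm_j(z,T))_{j\in\A}$ is the unique solution in $\h^{\A}$ of the \emph{finite} polynomial system $P_j(z,\xi)=0$, $j\in\A$. Hence $z\mapsto \Gm(z,T)$ is a single Herglotz branch of a real algebraic variety in $\C\times\C^{\A}$, and the whole theorem reduces to an analysis of algebraic functions: discriminants, Puiseux expansions and real analytic continuation.

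The first step is to describe $\Sigma$. The projection of the variety onto the $z$-coordinate is ramified over a finite set $\mathcal{D}\subset\C$ with $\mathcal{D}\cap\R$ finite, and $\Gm$ continues holomorphically across every $E\in\R\setminus\mathcal{D}$. On each connected interval $I$ of $\R\setminus\mathcal{D}$ a reflection argument gives the sharp dichotomy: either $\Im \Gm_j(E)\equiv 0$ for all $j\in\A$, $E\in I$, or $\Im \Gm_j(E)>0$ for all $j\in\A$, $E\in I$. Indeed, taking imaginary parts in the recursion \eqref{e:Gm} yields $\Im\Gm_j/|\Gm_j|^{2}=\sum_{k}m_{j,k}\Im\Gm_k$ on $I$, so vanishing of a single $\Im\Gm_j$ at some $E_0\in I$ forces vanishing of every $\Im\Gm_k$ at $E_0$ via primitivity (M2); but then $\Gm(E_0)\in\R^{\A}$, so $\Gm$ agrees with its conjugate branch $z\mapsto\overline{\Gm(\bar z)}$ at $E_0$ and hence on a neighbourhood of $E_0$ by analytic continuation, forcing $\Im\Gm\equiv 0$ on all of $I$. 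Because $\Sigma\subseteq[-\aV{T},\aV{T}]$ (for $|E|>\aV{T}$ the truncated resolvents have real boundary values), only finitely many intervals belong to the positive alternative, identifying $\Sigma$ as a finite union of open intervals.

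The second step passes from $\Gm$ to $G$ and establishes uniform boundedness. The leading term $\xi_j\sum_{k}m_{j,k}\xi_k$ of $P_j$ dominates and prevents the solution set from escaping to infinity over any compact set of $z$; combined with Puiseux boundedness at the finitely many real branch points in $\partial\Sigma$ and the Herglotz decay $|\Gm_j(z)|=O(1/|z|)$ as $|z|\to\infty$, this yields uniform boundedness of each $\Gm_j$ on $\h\cup\clos\Sigma$. Label invariance then extends the bound to every $\Gm_x$, $x\in\V$. Applying Proposition~\ref{p:G2}(1) and~(3) shows that each $G_x$ extends continuously to $\h\cup\Sigma\to\h$ with $\Im G_x>0$ and is uniformly bounded in $z$; uniqueness of the continuous extension is automatic.

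Finally, $\si(T)=\clos\Sigma$ follows from the vague convergence Lemma~\ref{l:mu}. On $\Sigma$, $\Im G_x$ is strictly positive and continuous, so $\mu_x$ has a positive absolutely continuous density on $\Sigma$, giving $\clos\Sigma\subseteq\si(T)$. On any open interval of $\R\setminus\clos\Sigma$, iterating Proposition~\ref{p:G} along the tree (starting from $G_o=\Gm_o$) yields real-analytic extensions of every $G_x$ with $\Im G_x\equiv 0$, so $\mu_x$ vanishes there and $\si(T)\subseteq\clos\Sigma$. The main obstacle is the uniform boundedness in the second step: the crucial ingredients are the algebraic non-blow-up of the variety over compact $z$-sets together with the Herglotz structure, which jointly rule out divergence of $\Gm$ at the finitely many ramification points on $\partial\Sigma$.
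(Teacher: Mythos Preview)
Your approach is genuinely different from the paper's and, modulo one gap, is correct. The paper proves Theorem~\ref{t:LIG} via the hyperbolic contraction machinery: it shows that the reduced recursion map $\Phi_E$ is a uniform $\dist_{\h^\A}$-contraction on balls for $E\in\Sigma_1\setminus\Sigma_0$ (Theorem~\ref{t:Contraction}), deduces continuity and uniqueness of fixed points there (Theorem~\ref{t:continuity}), handles regular tree operators separately by explicit formula (Proposition~\ref{p:RegTreeOp}), proves $|\Sigma_0|<\infty$ by a hands-on coefficient comparison (Lemma~\ref{l:Sigma0}), and invokes Milnor's bound on Betti numbers for the finite-interval statement (Lemma~\ref{l:milnor}). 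Your route bypasses all of the contraction analysis of Section~\ref{s:LI_RecursionMaps}: you use only that $\Gm$ is an algebraic function of $z$, that the real ramification locus $\mathcal D\cap\R$ is finite, and the conjugate-branch argument. This gives a unified treatment of the regular and non-regular cases and replaces both Milnor's theorem and the $\Sigma_0$ analysis by elementary facts about discriminants. The price is that you obtain no quantitative contraction estimate; the paper needs that estimate for the perturbation theorems (Theorems~\ref{main2} and~\ref{main3}), so its machinery cannot be dispensed with globally even if it is redundant for Theorem~\ref{t:LIG} alone.

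The dichotomy argument is correct and clean: away from $\mathcal D$ the cover is unramified, the Schwarz-type reflection $z\mapsto\overline{\tilde\Gm(\bar z)}$ is another local branch because the coefficients of $P$ are real, and agreement at a non-ramified point forces equality of branches, hence $\Im\Gm\equiv 0$ on the whole interval. The primitivity step and the non-negativity $\Im\Gm_k(E_0)\ge 0$ (as a boundary value from $\h$) are exactly right.

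The gap is in your second step. Neither ``the leading term $\xi_j\sum_k m_{j,k}\xi_k$ dominates and prevents escape to infinity'' nor ``Puiseux boundedness at the branch points'' is justified as stated. A system of $N$ quadratic equations can certainly have solution branches going to infinity over a bounded $z$-set, and a Puiseux expansion $\sum_k a_k(z-E_0)^{k/n}$ is not automatically bounded---negative $k$ are allowed. What you actually need here is Lemma~\ref{l:Gmbounds}: taking imaginary parts in the recursion and using $m_{j,j}>0$ gives the uniform bound $|\Gm_j(z)|\le 1/\sqrt{m_{j,j}}$ for every $z\in\h$. This is a two-line argument that uses the Herglotz structure (positivity of $\Im\Gm_k$), not merely algebra. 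Once you have it, the Puiseux series at every real branch point has nonnegative exponents, so $\Gm_j$ extends continuously to all of $\R$; this also settles $\Im\Gm_j(E)=0$ at the finitely many $E\in(\mathcal D\cap\R)\setminus\clos\Sigma$, completing your proof of $\sigma(T)\subseteq\clos\Sigma$. Replace the properness and Puiseux claims by a citation of Lemma~\ref{l:Gmbounds} and the argument goes through.
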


The proof relies heavily  on the analysis of the recursion relation for the truncated Green function $\Gm_x(z)=\Gm_{x}(z,T)$.   We look at these recursion relations from the three view points discussed in Section~\ref{s:Recursion}. These view points  are given by the recursion formula   \eqref{e:Gm} itself, the system of polynomial equations \eqref{e:Polynom} and the recursion maps \eqref{e:Psi}. Every viewpoint contributes to the proof which is  given in Section~\ref{s:LIac}.

Afterwards, we use similar techniques to prove Theorem~\ref{main2} also in Section~\ref{s:LIac}.
Actually, a similar statement as Theorem~\ref{t:LIG} can be proven for the operators $T+\lm v$ with $v\in\W_{\mathrm{sym}}(\T)$ on compact subsets of $\Sigma$ and $\lm$ sufficiently small. This is done in Theorem~\ref{t:LabRadPotG} and a proof is sketched at the end of Section~\ref{s:LIac}.

%%%%%%%%%%%%%%%%%%%%%%%%%%%%%%%%%%%%%%%%%%%%%%%%%%%%%%%%%%%%%%%%%%%%%%%%%%%%%%%%%%%%%%%%%%%%%%%%%%%%%%%%%%%%%%%%%%%%
%%%%%%%%%%%%%%%%%%%%%%%%%%%%%%%%%%%%%%%%%%%%%%%%%%%%%%%%%%%%%%%%%%%%%%%%%%%%%%%%%%%%%%%%%%%%%%%%%%%%%%%%%%%%%%%%%%%%
%% Section
%%%%%%%%%%%%%%%%%%%%%%%%%%%%%%%%%%%%%%%%%%%%%%%%%%%%%%%%%%%%%%%%%%%%%%%%%%%%%%%%%%%%%%%%%%%%%%%%%%%%%%%%%%%%%%%%%%%%
%%%%%%%%%%%%%%%%%%%%%%%%%%%%%%%%%%%%%%%%%%%%%%%%%%%%%%%%%%%%%%%%%%%%%%%%%%%%%%%%%%%%%%%%%%%%%%%%%%%%%%%%%%%%%%%%%%%%

\section{Recursion formulas}

As already discussed in Subsection~\ref{ss:application}, the infinitely many recursion formulas \eqref{e:Gm} reduce in the present setting to finitely many  ones. These  are given by
\begin{align}\label{e:LIGm}
-\frac{1}{\Gm_{j}(z,T)}={z-m_{j}+\sum_{k\in\A}m_{j,k}\Gm_{k}(z,T)}, \quad j\in\A.
\end{align}
We denote for the rest of this chapter
\begin{align*}
\Gm(z):=(\Gm_{j}(z,T))_{j\in\A}, \quad z\in\h.
\end{align*}

We  derive  some immediate bounds from the recursion formulas.
\medskip

\begin{lemma}\label{l:Gmbounds}(Uniform bounds for the truncated Green functions.)
For all $z\in\h$ and $j\in \A$
\begin{align*}
    \frac{1}{|z|+m_{j}+\sum_{k\in\A}m_{j,k}/\sqrt{m_{j,j}}}\leq |\Gm_{j}(z)|\leq \frac{1}{\sqrt{m_{j,j}}}.
\end{align*}
\begin{proof}
By taking imaginary parts in \eqref{e:LIGm} we get
\begin{align*}
\Im \Gm_{j}(z)=\ap{\Im z+\sum_{k\in\A}m_{j,k}\Im\Gm_{k}(z)}    |\Gm_{j}(z)|^{2}.
\end{align*}
Since the $\Gm_{k}(z)$ are Herglotz functions, see Lemma~\ref{l:Herglotz}, we have $\Im \Gm_{k}(z)>0$ for all $k\in\A$. Dropping the positive terms $\Im z$ and $m_{j,k}\Im \Gm_{k}(z)$ for $k\neq j$ on the right hand side of the equation we obtain
\begin{align*}
\Im \Gm_{j}(z)\geq m_{j,j}\Im\Gm_{j}(z)|\Gm_{j}(z)|^{2}.
\end{align*}
We also know that $m_{j,j}>0$ by the axioms (M1) and (T0). Hence, we can divide by $\Im \Gm_{j}(z)$ to obtain the upper bound. Given the upper bound the lower bound follows immediately by taking the modulus in \eqref{e:LIGm}.
\end{proof}
\end{lemma}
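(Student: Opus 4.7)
The plan is to extract both bounds directly from the reduced recursion formula \eqref{e:LIGm}, using nothing beyond the Herglotz property of the truncated Green functions (Lemma~\ref{l:Herglotz}) and the strict positivity $m_{j,j}>0$ which is ensured by (M1) together with (T0).

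For the upper bound, I would take the imaginary part of both sides of \eqref{e:LIGm}. Using $\Im(-1/\xi) = \Im \xi/|\xi|^2$ for $\xi \in \h$ and the fact that the right-hand side has imaginary part $\Im z + \sum_{k \in \A} m_{j,k} \Im \Gm_k(z)$, one obtains the self-consistency identity
$$\Im \Gm_j(z) = \left(\Im z + \sum_{k \in \A} m_{j,k} \Im \Gm_k(z)\right)|\Gm_j(z)|^2.$$
By the Herglotz property each $\Im \Gm_k(z)$ is strictly positive, and so is $\Im z$; discarding all terms on the right except $m_{j,j}\Im \Gm_j(z)$ preserves the inequality, yielding $\Im \Gm_j(z) \geq m_{j,j} \Im \Gm_j(z)|\Gm_j(z)|^2$. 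Since $\Im \Gm_j(z) > 0$ and $m_{j,j} > 0$, division gives $|\Gm_j(z)|^2 \leq 1/m_{j,j}$, which is exactly the desired upper bound.

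For the lower bound, I would insert the upper bound back into the recursion formula. Taking moduli in \eqref{e:LIGm} and applying the triangle inequality gives
$$\frac{1}{|\Gm_j(z)|} \leq |z| + m_j + \sum_{k \in \A} m_{j,k}|\Gm_k(z)| \leq |z| + m_j + \sum_{k \in \A} \frac{m_{j,k}}{\sqrt{m_{j,j}}},$$
where in the last step I used the upper bound already established (applied to each $k \in \A$, noting that $\sqrt{m_{k,k}} \geq \sqrt{m_{j,j}}$ is \emph{not} what I want — actually the paper's stated bound uses $\sqrt{m_{j,j}}$ in the denominator, which corresponds to a slightly crude but uniform estimate; one may simply bound $|\Gm_k(z)| \leq 1/\sqrt{m_{j,j}}$ at the cost of weakening the constant, or the statement tacitly intends $\sqrt{m_{k,k}}$ replaced by the worst case). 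Reciprocating yields the claimed lower bound.

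No real obstacle arises: the only subtlety is the licit division by $\Im \Gm_j(z)$, which is legitimate precisely because the Herglotz property forbids this quantity from vanishing in $\h$. The role of assumption (M1) is exactly to make $m_{j,j}$ positive, so that the bound $1/\sqrt{m_{j,j}}$ is finite — without (M1) the argument still produces an inequality, but the conclusion becomes trivial.
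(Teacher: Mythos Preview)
Your argument is exactly the paper's own: take imaginary parts in \eqref{e:LIGm} to get the self-consistency identity, drop all positive terms on the right except $m_{j,j}\Im\Gm_j(z)$, divide by $\Im\Gm_j(z)>0$ to obtain the upper bound, and then feed the upper bound back into the modulus of \eqref{e:LIGm} for the lower bound.

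The confusion you flag in the lower bound is real but is an imprecision in the paper's stated formula, not a gap in your reasoning. Applying the upper bound to each $\Gm_k$ gives $|\Gm_k(z)|\le 1/\sqrt{m_{k,k}}$, so the honest lower bound is
\[
|\Gm_j(z)|\ge \frac{1}{|z|+m_j+\sum_{k\in\A} m_{j,k}/\sqrt{m_{k,k}}},
\]
with $m_{k,k}$ under the root rather than $m_{j,j}$. Your suggestion that one could ``simply bound $|\Gm_k(z)|\le 1/\sqrt{m_{j,j}}$'' is not justified without further assumptions, so do not write that; just record the bound with $\sqrt{m_{k,k}}$ (or, if a label-independent constant is preferred, with $\min_{l\in\A}\sqrt{m_{l,l}}$). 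The paper's own proof sketch (``the lower bound follows immediately by taking the modulus'') leads to the same conclusion, so this is a typographical slip in the statement rather than a different argument.
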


Note that this lemma already yields that the spectral measure $\mu_o$ of $T$ with respect to the characteristic function of the root $o$ is purely absolutely continuous. By the extension from $\Gm$ to $G$, Proposition~\ref{p:G2}, we can derive that the  spectrum of $T$ is purely absolutely continuous. However,  we need the continuity of the Green function stated in Theorem~\ref{t:LIG} in order to study perturbations of the operator.

There is a nice alternative argument to exclude the existence of eigenvalues.  Assume there exists an eigenfunction $\ph$ to $T$. As $\mu_o$ is purely absolutely continuous,  $\ph$  must vanish at the root $o$. This, however implies that we can decouple the tree $\T(M,a(o))$ at the root into a union of trees, viz $M(j,k)$ trees with root $o$ of label $k\in\A$ and $\ph$ will induce an eigenfunction on each of these trees as well. By the same reason as above, these induced eigenfunctions must vanish at the roots of the new trees. By induction we see that $\ph\equiv0$.

%%%%%%%%%%%%%%%%%%%%%%%%%%%%%%%%%%%%%%%%%%%%%%%%%%%%%%%%%%%%%%%%%%%%%%%%%%%%%%%%%%%%%%%%%%%%%%%%%%%%%%%%%%%%%%%%%%%%
%%%%%%%%%%%%%%%%%%%%%%%%%%%%%%%%%%%%%%%%%%%%%%%%%%%%%%%%%%%%%%%%%%%%%%%%%%%%%%%%%%%%%%%%%%%%%%%%%%%%%%%%%%%%%%%%%%%%
%%%%%%%%%%%%%%%%%%%%%%%%%%%%%%%%%%%%%%%%%%%%%%%%%%%%%%%%%%%%%%%%%%%%%%%%%%%%%%%%%%%%%%%%%%%%%%%%%%%%%%%%%%%%%%%%%%%%
%% Section
%%%%%%%%%%%%%%%%%%%%%%%%%%%%%%%%%%%%%%%%%%%%%%%%%%%%%%%%%%%%%%%%%%%%%%%%%%%%%%%%%%%%%%%%%%%%%%%%%%%%%%%%%%%%%%%%%%%%
%%%%%%%%%%%%%%%%%%%%%%%%%%%%%%%%%%%%%%%%%%%%%%%%%%%%%%%%%%%%%%%%%%%%%%%%%%%%%%%%%%%%%%%%%%%%%%%%%%%%%%%%%%%%%%%%%%%%
%%%%%%%%%%%%%%%%%%%%%%%%%%%%%%%%%%%%%%%%%%%%%%%%%%%%%%%%%%%%%%%%%%%%%%%%%%%%%%%%%%%%%%%%%%%%%%%%%%%%%%%%%%%%%%%%%%%%
\section{Polynomial equations}\label{s:LIPolynom}

Similar to the recursion formulas,  the system of infinitely many polynomial equations \eqref{e:Polynom} can be reduced in the present setting to a finite system $P(z,\xi)=0$ given by
\begin{align}\label{e:LIPolynom}
P_{j}(z,\xi)=\ap{z-m_{j}+\sum_{k\in\A}m_{j,k}\xi_{k}}\xi_{j}+1=0, \quad j\in\A,
\end{align}
for $z\in\R\cup\h$ and $\xi\in\h^{\A}$. We have $P(z,\Gm(z))=0$ for the truncated Green function $\Gm(z)$. For this section we set $N=|\A|$ and $\A=\{1,\ldots,N\}$.

The analysis of regular tree operators is very similar to what is done for the Laplace operator on regular trees. However the analysis of non regular tree operators is much more complicated.  To this end, the following sets will be important.
We let $\Sigma_1$ be the subset of $\R$ where \eqref{e:LIPolynom} has a solution in $\h^{\A}$, i.e.,
\begin{align*}
\Sigma_1:=\{E\in\R\mid P(E,\xi)=0\mbox{ for some } \xi\in\h^{\A} \}.
\end{align*}
We let $\Sigma_0$ be the subset of $\Sigma_1$ where there is a solution in $\h^{\A}$ such that all components are linear multiples of each other, i.e.,
\begin{align*}
\Sigma_0:=\{E\in\Sigma_1\mid P(E,\xi)=0 \mbox{ and } \arg\xi_j\ov{\xi_k}=0 \mbox{ for some }\xi\in\h^{\A} \mbox{ and all } j,k\in\A \}.
\end{align*}
For non regular tree operators  the sets $\Sigma_1$, $\Sigma_0$ stand in a close relation to the set $\Sigma$ defined prior to Theorem~\ref{t:LIG}. (This will be shown much later in Lemma~\ref{l:Sigma}.)

In this section we will show the following:
\begin{itemize}
 \item[(1.)] For regular tree operators the spectrum is an interval and we obtain an explicit formula for the Green function.
\item[(2.)] For non regular tree operators $\Sigma_0$ is finite.
\item[(3.)] The set $\Sigma_1$ consists of finitely many  intervals.
\end{itemize}

We now turn to the proof of (1.), (2.) and (3.). Each of the proofs is given in a separate subsection.

%%%%%%%%%%%%%%%%%%%%%%%%%%%%%%%%%%%%%%%%%%%%%%%%%%%%%%%%%%%%%%%%%%%%%%%%%%%%%%%%%%%%%%%%%%%%%%%%%%%%%%%%%%%%%%%%%%%%
%% Subsection
%%%%%%%%%%%%%%%%%%%%%%%%%%%%%%%%%%%%%%%%%%%%%%%%%%%%%%%%%%%%%%%%%%%%%%%%%%%%%%%%%%%%%%%%%%%%%%%%%%%%%%%%%%%%%%%%%%%%

\subsection{Regular tree operators}

We first take a closer look at regular tree operators.
Recall that a label invariant operator $T$ is called a regular tree operator if (R1) and (R2) are fulfilled, i.e., if there are $k\in(0,\infty)$, $w\in\R$ with
\begin{align*}
k=\sum_{l\in\A}m_{j,l} \qand w=m_{j}\quad\mbox { for all $j\in\A$.}
\end{align*}
These are only assumptions on the operator and not on the underlying geometry of the tree, see Example~\ref{ex:regTree}~(3.). However,  we will see that the truncated Green functions of these operators behave exactly as for operators on regular trees.

Let a regular tree operator $T$ and $z\in\h$ be given. One immediately  sees that $\xi=(\xi_1,\ldots,\xi_N)=(\zeta_0,\ldots,\zeta_0)$
solves  \eqref{e:LIPolynom}, i.e., $P(z,\xi)=0$, where
$\zeta_0\in\C$ is the unique root in $\h$ of the quadratic polynomial equation
\begin{align*}
    k\zeta^{2}+(z-w)\zeta+1=0.
\end{align*}
Corollary~\ref{c:uniqueness} tells us that for $z\in\h$  the system~\eqref{e:LIPolynom} has the truncated Green function as a unique solution  in $\h^{\A}$. Hence, $\zeta_0=\Gm_x(z,T)$ for all $x\in\V$. By the square root $\sqrt z$ of a complex number $z=re^{i\ph}$ with $r\geq0$ and $\ph\in[0,2\pi)$  we understand $\sqrt z=\sqrt re^{i\ph/2}$.  We get the following proposition as an immediate consequence, which   proves the statements of Theorem~\ref{t:LIG} for regular tree operators completely.
\medskip

\begin{prop}\label{p:RegTreeOp}
Let $T$ be a regular tree operator with $k\in(0,\infty)$ and $w\in\R$ given as above.
Then, for all $E\in\R$ and $x\in \V$, the limit $\Gm_x(E,T):=\lim_{\eta\downarrow 0}\Gm_x(E+i\eta,T)$ exists, is continuous in $E$ and given by
\begin{align*}
\Gm_x(E,T)&=   \frac{1}{2k}\ap{-E+w+\sqrt{{(E-w)^2-4k}}}.
\end{align*}
The spectrum of $T$ is purely absolutely continuous and satisfies
\begin{align*}
\si(T)=\left[-2\sqrt k+w,2\sqrt k+w\right].
\end{align*}
\begin{proof}The formula for the Green function follows directly from the consideration above by computing the value of $\zeta_0$ and taking the limit as $\eta\downarrow0$. Let $I:=(-2\sqrt k+w,2\sqrt k+w)$.
Clearly, $\Gm_x(E,T)\in\h$ if and only if $E\in I$. By the extension from $\Gm$ to $G$, Proposition~\ref{p:G2}, the function $G_x(\cdot,T)$ is uniformly bounded. Moreover, it maps $\h$ into $\h$ and $\Im G_{x}(E,T)$ vanishes for energies $E\in\R\setminus I$.
The equality concerning the spectrum follows from the vague convergence of the spectral measures, Lemma~\ref{l:mu}.
\end{proof}
\end{prop}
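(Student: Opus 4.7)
The plan is to exploit the full label-symmetry of a regular tree operator by making the ansatz that all truncated Green functions coincide, thereby reducing the finite polynomial system \eqref{e:LIPolynom} to a single scalar quadratic. First, I would substitute $\xi = (\zeta, \ldots, \zeta) \in \C^\A$ into \eqref{e:LIPolynom}. Using $\sum_{l \in \A} m_{j,l} = k$ from (R1) and $m_j = w$ from (R2), every equation $P_j(z, \xi) = 0$ collapses to the same quadratic $k\zeta^2 + (z-w)\zeta + 1 = 0$. For $z \in \h$, this quadratic admits a unique root $\zeta_0(z) \in \h$; this is easily read off from Vieta's formulas, since the two roots have product $1/k > 0$ and sum $(w-z)/k$ with negative imaginary part, so exactly one root lies in each half plane. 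Hence $(\zeta_0(z), \ldots, \zeta_0(z)) \in \h^\A$ solves \eqref{e:LIPolynom}, and by the uniqueness part of Corollary~\ref{c:uniqueness} this must be the truncated Green function vector, so $\Gm_x(z, T) = \zeta_0(z)$ for every $x \in \V$.

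Second, I would apply the quadratic formula with the branch of the square root chosen so that $\zeta_0$ remains in $\h$, yielding the explicit formula in the statement. The boundary limit $\eta \downarrow 0$ exists by continuity of this branch up to the real axis: the discriminant $(z-w)^2 - 4k$ vanishes only at the two real band edges $w \pm 2\sqrt{k}$, and across the cut the square root extends continuously with purely imaginary value, so $\zeta_0(E) = \frac{1}{2k}\ap{-E + w + \sqrt{(E-w)^2 - 4k}}$ makes sense for all $E \in \R$ and is continuous in $E$. In particular $\Im \zeta_0(E) > 0$ precisely on the open interval $I := (w - 2\sqrt{k}, w + 2\sqrt{k})$, and $\zeta_0(E)$ is real outside $\clos I$.

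Third, I would invoke the extension from $\Gm$ to $G$ (Proposition~\ref{p:G2}) to conclude that $G_x(E + i\eta, T)$ is uniformly bounded in $\eta > 0$ for every $x \in \V$, that $\Im G_x(E, T) > 0$ on $I$, and that it vanishes off $\clos I$. By the vague convergence of spectral measures (Lemma~\ref{l:mu}), $\mu_x$ then has density $\pi^{-1} \Im G_x(\cdot, T)$ with respect to Lebesgue measure and is supported in $\clos I$. This yields simultaneously that every $\mu_x$ is purely absolutely continuous and that $\si(T) = \clos I = [w - 2\sqrt{k}, w + 2\sqrt{k}]$.

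The only technical point requiring care is to fix the branch of the square root so that $\zeta_0(z) \in \h$ for every $z \in \h$ and extends continuously across the cut $[w - 2\sqrt{k}, w + 2\sqrt{k}]$; once this is verified every other step is a direct invocation of the machinery developed in Chapter~\ref{c:basicconcepts}.
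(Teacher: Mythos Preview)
Your proposal is correct and follows essentially the same route as the paper: reduce to the scalar quadratic via the diagonal ansatz, identify the $\h$-root as the truncated Green function through Corollary~\ref{c:uniqueness}, read off the explicit formula, and then pass to $G_x$ via Proposition~\ref{p:G2} and to the spectral measures via Lemma~\ref{l:mu}. You supply a bit more detail (Vieta for the location of the root, the branch of the square root), but the argument is the same.
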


%%%%%%%%%%%%%%%%%%%%%%%%%%%%%%%%%%%%%%%%%%%%%%%%%%%%%%%%%%%%%%%%%%%%%%%%%%%%%%%%%%%%%%%%%%%%%%%%%%%%%%%%%%%%%%%%%%%%
%% Subsection
%%%%%%%%%%%%%%%%%%%%%%%%%%%%%%%%%%%%%%%%%%%%%%%%%%%%%%%%%%%%%%%%%%%%%%%%%%%%%%%%%%%%%%%%%%%%%%%%%%%%%%%%%%%%%%%%%%%%
\subsection{Non regular tree operators}

Let us turn to the case of non regular tree operators. We consider solutions of \eqref{e:LIPolynom} for $z=E\in\R$.
We will show that, except for a finite set of energies $E$, the components of the solutions to \eqref{e:LIPolynom} in $\h^{\A}$ cannot be all linear multiples of each other, i.e., they are not in $\Sigma_0$. In other words, considering the components as vectors in the upper half plane, there is always a pair which has a non vanishing angle. These non vanishing angles will be the major ingredient to show contraction of the recursion map in the next section.
\medskip

\begin{lemma}\label{l:Sigma0}
Let $T$ be a non regular tree operator. Then, $\Sigma_0$ is finite and $|\Sigma_0|\leq|\A|-1$. Moreover, if $m_{1}=\ldots=m_{N}$ then $\Sigma_0\subseteq\{m_1\}$.
\begin{proof}
Let $E\in\Sigma_1$. Then there is $\xi\in\h^{\A}$ such that $P(E,\xi)=0$. We assume now that $\arg \xi_j\ov{\xi_k}=0$ for all $j,k\in\A$ which is equivalent to assuming that there are $r_{j}>0$ such that
\begin{align*}
    \xi_{j}=r_{j}\xi_1, \qquad j\in\A.
\end{align*}
In this case $E\in \Sigma_0$ by definition of $\Sigma_0$. With this assumption the polynomial equation \eqref{e:LIPolynom} becomes
\begin{align*}
0=\sum_{k\in\A}m_{j,k}r_{j}r_{k}\xi_1^{2}+(E-m_{j})r_j\xi_{1}+1, \quad j=1,\ldots,N.
\end{align*}
We denote the coefficients of $\xi_1$ by
\begin{align*}
    c_{2,j}=\sum_{k\in\A}m_{j,k}r_{j}r_{k}\qand c_{1,j}=(E-m_{j})r_j.
\end{align*}
Quadratic polynomials with real coefficients have either two complex conjugated roots or two real ones. Since we assumed $\xi\in\h^{\A}$ we already know one root is $\xi_1\in\h$ so the other one must be $\ov{\xi_{1}}$. The polynomials all have the same roots and therefore  must have the same coefficients, because they are normalized. Thus, $c_{1,j}$ and $c_{2,j}$ are independent of $j$.

We now  check that this can only happen for $(N-1)$ energies $E$.  We start with the linear coefficients $c_{1,j}$, $j\in\A$, and distinguish two cases.

\emph{Case 1: $c_{1,j}=0$ for some (all) $j\in\A$.} Since $r_j>0$ we get $E-m_{j}=0$. Therefore,  the $c_{1,k}$, $k\in\A$ can only be equal if $E=m_{1}=\ldots=m_{N}$. Hence, there is only one exceptional energy in this case.

\emph{Case 2: $c_{1,j}\neq0$ for all $j\in\A$.}
As $r_1=1$ we obtain
\begin{align*}
    r_{j}=\frac{E-m_1}{E-m_{j}}
\end{align*}
and conclude for the quadratic term $c_{2,j}$, $j\in\A$,
\begin{align*}
c_{2,j} =\sum_{l\in\A}m_{j,l}\frac{(E-m_1)^2}{(E-m_{j})(E-m_{l})} =\frac{(E-m_1)^2}{\prod\limits_{i\in\A}(E-m_{i})} \sum_{l\in\A}m_{j,l}\prod\limits_{n\neq j,l}(E-m_{n}).
\end{align*}
Dividing the equations $c_{2,j}-c_{2,k}=0$,  $j,k\in\A$, by ${(E-m_1)^2}/{\prod
_{i}(E-m_{i})}$ leads to
\begin{align*}
0&=\sum_{l\in\A}\ap{m_{j,l} \prod\limits_{n\neq j,l}(E-m_{n})-m_{k,l}\prod\limits_{n\neq k,l}(E-m_{n})}\\
&=\ap{\sum_{l\in\A}(m_{j,l}-m_{k,l})}E^{N-2} +\sum_{l\in\A}\ap{m_{j,l}\sum_{n\neq j,l}m_{n}-m_{k,l}\sum_{n\neq k,l}m_{n}}E^{N-3}+\ph_{j,k}(E),
\end{align*}
where $\ph_{j,k}$ is a polynomial in $E$ of order lower or equal to $(N-4)$.

Now, we bring the assumption that $T$ is a non regular tree operator into play. %We show that either the coefficient of $(N-2)$-th order or the one of $(N-3)$-th order does not vanish.

Suppose first that (R1) fails, i.e.,  $\sum_{l}m_{j,l}\neq\sum_{l}m_{k,l}$ for some $j,k\in\A$. This implies that the leading coefficient of the right hand side is not equal to zero. Hence, there can be at most $(N-2)$ values $E$ which satisfy the equation above.

Suppose, on the other hand, (R1) holds but (R2) fails, i.e.,  $m_{j}\neq m_{k}$ for some $j,k\in\A$. Then, the leading coefficient in the equation above vanishes and we calculate for the coefficient of $(N-3)$-th order using (R1) twice
\begin{align*}
\sum_{l\in\A}\ap{m_{j,l}\sum_{n\neq j,l}m_{n}-m_{k,l}\sum_{n\neq k,l}m_{n}}=\sum_{l\in\A}\ap{m_{j,l}m_{k}-m_{k,l}m_{j}} =\ap{m_{k}-m_{j}}\sum_{l\in\A}m_{j,l}.
\end{align*}
Note that $\sum_{l}m_{j,l}\geq m_{j,j}>0$ (by the axioms (M1) and (T0)). Hence, the coefficient of $(N-3)$-th order does not vanish and thus there can be at most $(N-3)$ exceptional energies in this case.

In summary, we have at most one exceptional energy in Case~1 and at most $(N-2)$ or $(N-3)$ in Case~2 which gives in total at most $(N-1)$ energies.

To show the last statement we take a closer look at the case  $m_1=\ldots=m_{N}$, which is (R2). Then, the linear coefficients $c_{1,j}$ are all equal if either $E=m_{1}$ or $r_{1}=\ldots=r_{N}$. The latter case could only happen if also (R1) holds as the equations $c_{2,j}=c_{2,k}$ then imply  $\sum_{l}m_{j,l}=\sum_{l}m_{k,l}$ for all $j,k\in\A$. Since $T$ is not a regular tree operator $E=m_1$ is the only possible exceptional energy.
\end{proof}
\end{lemma}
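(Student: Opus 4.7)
The plan is to suppose $E\in\Sigma_0$ and translate the collinearity condition into an algebraic system in one variable, which under non-regularity will force $E$ to be a root of a nonzero polynomial of controlled degree. By definition there exists $\xi\in\h^\A$ with $P(E,\xi)=0$ whose components are all positive real multiples of each other, so after normalizing $r_1=1$ I may write $\xi_j=r_j\xi_1$ with $r_j>0$. Substituting this into \eqref{e:LIPolynom} converts each equation $P_j(E,\xi)=0$ into a quadratic in $\xi_1$ with real coefficients
\[c_{2,j}\,\xi_1^2+c_{1,j}\,\xi_1+1=0,\qquad c_{2,j}=\sum_{k\in\A}m_{j,k}r_jr_k,\quad c_{1,j}=(E-m_j)r_j.\]
Since $\xi_1\in\h$, the two roots of each such quadratic must be $\xi_1$ and $\overline{\xi_1}$, so all $N$ quadratics coincide. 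Thus both $c_{1,j}$ and $c_{2,j}$ are independent of $j$.

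Next I would split into two cases based on the linear coefficient. If $c_{1,j}=0$ for all $j$, then $r_j>0$ forces $E=m_j$ for every $j\in\A$, which is only possible when (R2) holds and contributes at most the single point $m_1$. Otherwise $c_{1,j}\neq0$ for all $j$ and the identity $c_{1,j}=c_{1,1}$ gives the explicit formula $r_j=(E-m_1)/(E-m_j)$. Substituting this into $c_{2,j}=c_{2,k}$ and clearing the common factor $(E-m_1)^2/\prod_i(E-m_i)$ produces, for each pair $j,k$,
\[0=\sum_{l\in\A}\Bigl(m_{j,l}\prod_{n\neq j,l}(E-m_n)-m_{k,l}\prod_{n\neq k,l}(E-m_n)\Bigr),\]
a polynomial in $E$ of degree at most $N-2$.

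To finish I would identify the top two coefficients. The coefficient of $E^{N-2}$ is $\sum_l(m_{j,l}-m_{k,l})$, which is nonzero for some pair $j,k$ whenever (R1) fails; in that case the equation has at most $N-2$ real roots, so together with Case~1 we get $|\Sigma_0|\le N-1$. If (R1) holds but (R2) fails, the $E^{N-2}$ coefficient vanishes, and a direct expansion collapses the coefficient of $E^{N-3}$, using (R1) twice, to $(m_k-m_j)\sum_{l}m_{j,l}$; choosing $j,k$ with $m_j\neq m_k$ and recalling $\sum_l m_{j,l}\ge m_{j,j}>0$ by (M1) and (T0), this coefficient is nonzero, giving at most $N-3$ roots and hence again $|\Sigma_0|\le N-1$. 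In the special situation $m_1=\cdots=m_N$, the Case~2 analysis even forces $r_1=\cdots=r_N$, which by $c_{2,j}=c_{2,k}$ would imply (R1) and turn $T$ into a regular tree operator; thus only Case~1 survives and $\Sigma_0\subseteq\{m_1\}$.

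The main obstacle is the algebraic computation of the $E^{N-3}$ coefficient under (R1): it requires carefully expanding the elementary symmetric expressions $\prod_{n\neq j,l}(E-m_n)$ and recognizing the telescoping that occurs when $\sum_l m_{j,l}=\sum_l m_{k,l}$ is applied twice. All remaining steps are bookkeeping.
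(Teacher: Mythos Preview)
Your proposal is correct and follows essentially the same approach as the paper's own proof: the same substitution $\xi_j=r_j\xi_1$, the same reduction to identical real quadratics in $\xi_1$, the same case split on whether $c_{1,j}$ vanishes, the same formula $r_j=(E-m_1)/(E-m_j)$ in Case~2, and the same computation of the $E^{N-2}$ and $E^{N-3}$ coefficients of the polynomial obtained from $c_{2,j}=c_{2,k}$. The counting and the final argument for $\Sigma_0\subseteq\{m_1\}$ when $m_1=\cdots=m_N$ also match the paper's.
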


Later we will study the recursion maps and prove certain statements for energies in $\Sigma_1\setminus\Sigma_0$. The previous lemma tells us that for non regular tree operators we only exclude a finite set.

%%%%%%%%%%%%%%%%%%%%%%%%%%%%%%%%%%%%%%%%%%%%%%%%%%%%%%%%%%%%%%%%%%%%%%%%%%%%%%%%%%%%%%%%%%%%%%%%%%%%%%%%%%%%%%%%%%%%
%% Subsection
%%%%%%%%%%%%%%%%%%%%%%%%%%%%%%%%%%%%%%%%%%%%%%%%%%%%%%%%%%%%%%%%%%%%%%%%%%%%%%%%%%%%%%%%%%%%%%%%%%%%%%%%%%%%%%%%%%%%
\subsection{An application of a theorem of Milnor}
The following statement about the connected components of $\Sigma_1$ is a consequence of a theorem of Milnor~\cite{Mil}.
\medskip

\begin{lemma}\label{l:milnor}The set $\Sigma_1$ consists of finitely many intervals.
\begin{proof}
For $n\in\N$ let
\begin{align*}
    Z_n:=\{(E,u,v)\in\R\times\R^{\A}\times\R^{\A}\mid P_j(E,u+iv)=0\mbox{ and $v_{j}\geq 1/n$ for all $j\in\A$}\}.
\end{align*}
By a theorem of Milnor~\cite{Mil} the sum of the $\ell^{2}$-Betti numbers of the algebraic varieties $Z_n$ is uniformly bounded by a number $d\in\N$ depending only on the number of polynomial (in)equalities and their degrees.  The zero-th $\ell^{2}$-Betti number $\be_0(Z_n)$ is the number of connected components of $Z_n$. Since $\ell^{2}$-Betti numbers are dimensions, i.e.,  non negative integers, we have by $d$ an upper bound for the number of  connected components of  $Z_n$.
We claim that
\begin{align*}
\be_0(Z)\leq d\quad\mbox{where} \quad Z=\bigcup_{n\in\N}Z_n.
\end{align*}
Since $Z_{n}\subseteq Z_{n+1}$ we have that if $x,y\in Z$ are not in the same component of $Z$, then they cannot be in the same component for any  $Z_n$. Let $C_1,C_2,\ldots$ be the connected components of $Z$ and $x_1\in C_1,x_2\in C_2,\ldots$ be arbitrary points. For each $m\in\N$ there is an $n\in\N$ such that $x_1,\ldots,x_m\in Z_n$. This implies $x_1,\ldots,x_{m}$ are all in different connected components of $Z_n$. Since the number of connected components of $Z_n$ are bounded by $d$, the set $Z$ can have at most $d$ connected components. Since
\begin{align*}
\Sigma_1=\mathrm{Pr}_1 Z,\quad\mbox{ with }
\mathrm{Pr}_1:\R\times\R^{\A}\times\R^{\A}\to\R,\quad(E,u,v)\mapsto E,
\end{align*}
and $\mathrm{Pr}_1$ is continuous, $d$  bounds the number of connected components of $\Sigma_1$.
\end{proof}
\end{lemma}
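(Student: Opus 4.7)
The plan is to realize $\Sigma_1$ as the continuous projection of a semi-algebraic set in a finite-dimensional real space, and then invoke Milnor's bound on Betti numbers of real algebraic varieties to control the number of connected components. Since every connected subset of $\R$ is an interval, bounding $\beta_0(\Sigma_1)$ suffices.

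First I would approximate the open condition $\xi \in \h^\A$ by closed semi-algebraic conditions. For each $n \in \N$ define
\[
Z_n := \{(E,u,v) \in \R \times \R^\A \times \R^\A \mid P_j(E,u+iv) = 0 \text{ and } v_j \geq 1/n \text{ for all } j \in \A\}.
\]
Each $Z_n$ is cut out by finitely many polynomial equalities and inequalities whose cardinality and degrees are independent of $n$. Milnor's theorem then produces an integer $d$, depending only on these combinatorial data, such that the sum of the Betti numbers of $Z_n$ is at most $d$; in particular the number of connected components $\beta_0(Z_n) \leq d$ uniformly in $n$.

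Next I would pass to the ascending union $Z := \bigcup_{n\in\N} Z_n$. Because $Z_n \subseteq Z_{n+1}$, any finite collection of points lying in pairwise distinct connected components of $Z$ must already all sit inside some $Z_N$, where they then lie in pairwise distinct components of $Z_N$; hence $Z$ itself has at most $d$ connected components. Writing $\mathrm{Pr}_1(E,u,v) := E$ for the coordinate projection, one has $\Sigma_1 = \mathrm{Pr}_1(Z)$, so $\Sigma_1$ is the continuous image of a space with at most $d$ components and therefore has at most $d$ components. Since connected subsets of $\R$ are intervals, $\Sigma_1$ is a finite union of intervals.

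The step I expect to require the most care is the transition from the uniform bound $\beta_0(Z_n) \leq d$ to $\beta_0(Z) \leq d$: Betti numbers are not in general monotone under ascending unions, and the argument rests on the combination of uniformity of $d$ with the observation that any finite family of points in $Z$ is already captured at a finite stage. Everything else is essentially bookkeeping — checking that the defining data of $Z_n$ (number of polynomials, their degrees) do not depend on $n$ so that Milnor's theorem yields a single bound $d$, and that the coordinate projection preserves connectedness.
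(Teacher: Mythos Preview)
Your proposal is correct and follows essentially the same route as the paper: define the semi-algebraic sets $Z_n$ with the cutoff $v_j \geq 1/n$, apply Milnor's theorem for a uniform bound $d$ on $\beta_0(Z_n)$, pass to the ascending union $Z$ via the finite-stage argument, and project to $\Sigma_1$. The subtle point you flag about the ascending union is exactly the one the paper singles out, and your handling of it matches the paper's.
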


%%%%%%%%%%%%%%%%%%%%%%%%%%%%%%%%%%%%%%%%%%%%%%%%%%%%%%%%%%%%%%%%%%%%%%%%%%%%%%%%%%%%%%%%%%%%%%%%%%%%%%%%%%%%%%%%%%%%
%%%%%%%%%%%%%%%%%%%%%%%%%%%%%%%%%%%%%%%%%%%%%%%%%%%%%%%%%%%%%%%%%%%%%%%%%%%%%%%%%%%%%%%%%%%%%%%%%%%%%%%%%%%%%%%%%%%%
%%%%%%%%%%%%%%%%%%%%%%%%%%%%%%%%%%%%%%%%%%%%%%%%%%%%%%%%%%%%%%%%%%%%%%%%%%%%%%%%%%%%%%%%%%%%%%%%%%%%%%%%%%%%%%%%%%%%
%% Section
%%%%%%%%%%%%%%%%%%%%%%%%%%%%%%%%%%%%%%%%%%%%%%%%%%%%%%%%%%%%%%%%%%%%%%%%%%%%%%%%%%%%%%%%%%%%%%%%%%%%%%%%%%%%%%%%%%%%
%%%%%%%%%%%%%%%%%%%%%%%%%%%%%%%%%%%%%%%%%%%%%%%%%%%%%%%%%%%%%%%%%%%%%%%%%%%%%%%%%%%%%%%%%%%%%%%%%%%%%%%%%%%%%%%%%%%%
%%%%%%%%%%%%%%%%%%%%%%%%%%%%%%%%%%%%%%%%%%%%%%%%%%%%%%%%%%%%%%%%%%%%%%%%%%%%%%%%%%%%%%%%%%%%%%%%%%%%%%%%%%%%%%%%%%%%
\section{Recursion maps}\label{s:LI_RecursionMaps}

We next study the contraction properties of the recursion map $\Psi_{z}^{(H)}$ given by \eqref{e:Psi}. By the label invariance we can reduce $\Psi_{z}^{(H)}$, $z\in\h\cup\R$, to a map $\Phi_{z}:=\Phi_{z}^{(T)}:\h^{\A}\to\h^{\A}$ whose components are given by
\begin{align*}
\Phi_{z,j}(g) =-\frac{1}{z-m_{j}+\sum_{k\in\A}m_{j,k}g_{k}},\quad j\in\A.
\end{align*}
By the recursion relation \eqref{e:LIGm} we have that the truncated Green function satisfies $\Gm(z)=\Phi_z(\Gm(z))$ for all $z\in\h$.
Note that $h\in\h^{\A}$ is a fixed point of $\Phi_{z}$ if and only if it solves the system of polynomial equations $P(z,h)=0$. Therefore,  $\Sigma_1$ is exactly the set of energies $E\in\R$ for which $\Phi_{E}$ has a fixed point in $\h^{\A}$.

Before we come to the main goal of this section, let us note that a similar estimate as  Lemma~\ref{l:Gmbounds} holds for fixed points of $\Phi_z$. However,  since $\Phi_z$ is continuous in $z$ we can even give an estimate for $z\in\h\cup\R$ instead of only for $z\in\h$.
\medskip

\begin{lemma}\label{l:FPbounds}(Uniform bounds for fixed points.)
Let $z\in\h\cup\R$ and $h\in\h^{\A}$  a fixed point of $\Phi_{z}$ be given. Then, for all $j\in \A$,
\begin{align*}
    \frac{1}{|z|+m_{j}+\sum_{k\in\A}m_{j,k}/\sqrt{m_{j,j}}}\leq |h_{j}|\leq \frac{1}{\sqrt{m_{j,j}}}.
\end{align*}
\begin{proof}
Inverting the fixed point equation $\Phi_z(h)=h$ and multiplying by $-1$ allows us to follow the arguments of  Lemma~\ref{l:Gmbounds} line by line.
\end{proof}
\end{lemma}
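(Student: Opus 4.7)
The strategy is to follow the proof of Lemma~\ref{l:Gmbounds} essentially verbatim. The key observation is that the fixed-point equation $\Phi_z(h)=h$, after inversion and multiplication by $-1$, becomes
$$-\frac{1}{h_j} \;=\; z - m_j + \sum_{k\in\A} m_{j,k}\, h_k, \qquad j \in \A,$$
which is formally identical to the recursion identity \eqref{e:LIGm}, with $\Gm_j(z,T)$ replaced by $h_j$. The only structural change compared with Lemma~\ref{l:Gmbounds} is that $z$ is now allowed to lie on the real axis, but this will turn out to be harmless.

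For the upper bound I would take imaginary parts on both sides. Since $h \in \h^\A$ gives $\Im h_k > 0$ for every $k$, and since $z \in \h \cup \R$ still gives $\Im z \geq 0$, the identity
$$\frac{\Im h_j}{|h_j|^2} \;=\; \Im z + \sum_{k\in\A} m_{j,k}\,\Im h_k$$
implies, after dropping all non-negative summands on the right except the one with $k=j$,
$$\frac{\Im h_j}{|h_j|^2} \;\geq\; m_{j,j}\,\Im h_j.$$
By axioms (M1) and (T0) one has $m_{j,j} > 0$, so dividing through by $\Im h_j > 0$ yields $|h_j|^2 \leq 1/m_{j,j}$, which is the stated upper bound.

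The lower bound then follows immediately by taking moduli in the inverted fixed-point equation and invoking the upper bound just established:
$$\frac{1}{|h_j|} \;\leq\; |z| + m_j + \sum_{k\in\A} m_{j,k}\,|h_k|,$$
from which the denominator appearing in the statement of the lemma reads off. There is no conceptual obstacle: the proof is purely algebraic, and the only subtle point is that the analogous argument in Lemma~\ref{l:Gmbounds} was written for $z \in \h$, whereas here we allow $z \in \h\cup\R$. This makes no difference since $\Im z \geq 0$ is all that was ever used; consequently the argument transfers line by line, as the authors' one-sentence hint indicates.
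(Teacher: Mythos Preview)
Your proof is correct and follows exactly the approach indicated by the paper: invert the fixed-point equation to obtain the analogue of \eqref{e:LIGm}, take imaginary parts to get the upper bound (using $\Im z\geq 0$ and $\Im h_k>0$), then take moduli and plug in the upper bound for the lower bound. Your observation that only $\Im z\geq 0$ is used, so that the extension from $z\in\h$ to $z\in\h\cup\R$ is harmless, is precisely the point.
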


The main goal of this section is the following result which says that for all energies $E\in\Sigma_1\setminus\Sigma_0$ the map $\Phi_E^{n+2}$ is a uniform contraction on some hyperbolic ball. Note that by Lemma~\ref{l:Sigma0} the set $\Sigma_0$ is finite if $T$ is a non regular tree operator. Moreover, recall that the number $n=n(M)$ is the smallest integer such that $M_{j,k}^{n}\geq1$ for all $j,k\in\A$.
\medskip

\begin{thm}\label{t:Contraction}(Contraction in $(n+1)$ steps.) For arbitrary $E\in\Sigma_1\setminus\Sigma_0$ and a fixed point $h\in\h^{\A}$ of $\Phi_E$ there are $c\in[0,1)$ and $R>0$  such that for all $g\in B_{R}(h)$
\begin{align*}
\dist_{\h^{\A}}\ap{\Phi_{E}^{n+1}(g),\Phi_{E}^{n+1}(h)}\leq c\;\dist_{\h^{\A}}\ap{g,h},
\end{align*}
where $\Phi_{E}^{n+1}$ means that $\Phi_E$ is applied $(n+1)$ times.
\end{thm}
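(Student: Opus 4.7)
The idea is to exploit the decomposition $\Phi_E = \tau \circ \sigma_E \circ \rho$ (restricted to the labels) established in Subsection~\ref{s:Contraction}, combined with Lemma~\ref{l:Psicontr}. For $E\in\R$ the maps $\rho$ and $\sigma_E$ are $\gm$-isometries, so all contraction must come from $\tau$. By the formula in Lemma~\ref{l:Psicontr}(3), $\tau$ fails to be a strict contraction precisely when the differences $g_y - h_y$ are all positive real multiples of a common complex number, since this is the case when all contraction quantities $Q_{y,v}\cos\al_{y,v}$ achieve their maximal value $1$. The strategy is therefore to track the arguments of the differences $v^{(m)}_j := \Phi_E^m(g)_j - h_j$ through $n+1$ iterations and show that they cannot remain pairwise parallel. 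Once a single pair of components acquires a non-zero relative argument, the formula in Lemma~\ref{l:Psicontr}(3) provides strict contraction; uniformity on a ball $B_R(h)$ then follows by continuity, and the passage from $\gm_\A$-contraction to $\dist_{\h^\A}$-contraction is handled by Lemma~\ref{l:compactness}.

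The crucial phase-generation mechanism is the following. To leading order in $|g-h|$, one step of $\rho$ sends a small perturbation $v_j = g_j - h_j$ to $v_j / h_j^2$, so the argument of the $j$-th component shifts by $-2\arg h_j$. Hence if $v$ has all components of one common argument, $\rho(g) - \rho(h)$ has $j$-th component with argument shifted by $-2\arg h_j$, and these phase shifts differ across $j$ precisely because $E \in \Sigma_1 \setminus \Sigma_0$: by definition of $\Sigma_0$, the fixed point $h$ has at least two components $h_{j_0},h_{k_0}$ with $\arg h_{j_0}\ov{h_{k_0}} \neq 0$. Thus parallelism of the differences is destroyed by a single application of $\rho$ at those two labels. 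This gives a strictly positive lower bound on $1 - c_{x,y}(g,h)$ along some pair $y=j_0,\,v=k_0$ in the contraction sum for $\tau$.

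To propagate this partial contraction to a \emph{uniform} contraction in every coordinate, we iterate and invoke the primitivity assumption (M2). Since $M^n$ has all entries positive, after $n$ applications of $\Phi_E$ every label-coordinate depends on every other label-coordinate through the averaging $\tau$. Consequently, after $n$ further steps the favorable pair $(j_0,k_0)$ enters with a weight $p_{x,y}$ bounded from below (the weights are controlled by the uniform bounds on $h$ provided by Lemma~\ref{l:FPbounds}, since Lemma~\ref{l:FPbounds} forces the components of $h$ to stay in a fixed compact subset of $\h$). Combining this with one additional step $\tau\circ\sigma_E\circ\rho$ to generate the phase spread yields an overall contraction factor strictly less than one for $\gm_\A(\Phi_E^{n+1}(g), h) = \gm_\A(\Phi_E^{n+1}(g), \Phi_E^{n+1}(h))$, uniformly for $g$ in some $B_R(h)$.

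The main obstacle is bookkeeping. The nonlinearity of $\Phi_E$ means that after several iterations the differences $v^{(m)}$ are no longer well approximated by their linearization, so one must either work entirely at the linearized level (using the Jacobian $A_{j,k} = m_{j,k} h_j^2$ at the fixed point, whose entries carry the phases $2\arg h_j$) and then shrink $R$ so that the nonlinear remainder is negligible, or carefully track the contraction sums through $n+1$ iterations and extract a positive lower bound from the primitivity of $M$. Either way, one needs a quantitative lemma saying: whenever $h$ has two components with differing arguments, any non-trivial linear combination $(A^{n+1}v)$ at some coordinate must have a strictly smaller $\gm$-size than $v$, by a factor depending only on $E$ and $h$. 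This is where the combinatorial content of the primitivity condition $n = n(M)$ is used, and it determines why precisely $n+1$ steps appear in the statement.
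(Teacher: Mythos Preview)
Your plan identifies the same mechanism the paper uses: the phase shift coming from multiplication by $h_j^2$ (equivalently, the term $\arg(h_j\ov{h_k})$), the role of $E\notin\Sigma_0$ in making this shift non-trivial, the use of primitivity (M2) to propagate a local contraction to all coordinates in $n$ steps, and the passage from $\gm_\A$ to $\dist_{\h^\A}$ via Lemma~\ref{l:compactness}. So the strategy is essentially the paper's.

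Where your sketch differs is in the treatment of what you call ``the main obstacle''. You propose to work at the linearized level (Jacobian $A_{j,k}=m_{j,k}h_j^2$) and then shrink $R$ to absorb the nonlinear remainder. This can be made to work, but the paper avoids linearization entirely: Lemma~\ref{l:taual} gives an \emph{exact} identity
\[
\al_{j,k}(\Phi_E(g),h)=\arg\bigl(\tau_j(g-h)\ov{\tau_k(g-h)}\bigr)+\arg\bigl(\Phi_{E,j}(g)\ov{\Phi_{E,k}(g)}\bigr)+\arg(h_j\ov{h_k}),
\]
valid for all $g$, not just infinitesimally close ones. From this, Lemma~\ref{l:al} extracts a clean dichotomy: on a suitable ball $B_R(h)$, either $\max_{j,k}|\al_{j,k}(g,h)|_{\arg}\ge\de$ already, or one application of $\Phi_E$ makes it so. The ball is then split into $B_\ge(h)$ and $B_<(h)$ according to which alternative holds; Lemma~\ref{l:tau} (which packages the primitivity argument you describe) gives uniform $\gm_\A$-contraction of $\Phi_E^n$ on $B_\ge(h)$, and since $\Phi_E(B_<(h))\subseteq B_\ge(h)$, the composite $\Phi_E^{n+1}$ contracts uniformly on the whole ball.

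Two small corrections: the decomposition is $\Phi_E=\rho\circ\si_E\circ\tau$ (averaging first, reflection last), not $\tau\circ\si_E\circ\rho$; and your appeal to Lemma~\ref{l:FPbounds} to control the weights is not quite the right reference---the weights $p_{j,k}$ depend only on $\Im h_k$, which is fixed, while the $q_{j,k}$ depend on $\Im g_k$, controlled simply by $g\in B_R(h)\subset\h^\A$ compact.
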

The strategy of the proof is the following: We start by taking a look at the decomposition of $\Phi_{z}$ introduced in Section~\ref{s:Contraction} in the reduced picture. Then, we study how the recursion map alters the argument of its input. We use this to prove uniform contraction of $\Phi_E$ in at least every $(n+1)$-th step for $E\in\Sigma_1\setminus\Sigma_0$.
The proof is then given in Subsection~\ref{ss:LIcontraction}.

From the theorem we can derive the following consequence.
For $E\in\R$ let
\begin{align*}
U_{r}(E)&=\{z\in\h\mid |E-\Re z|<r,\; \Im z<r\},\\
\ov U_{r}(E)&=\{z\in\h\cup\R\mid |E-\Re z|\leq r,\; \Im z\leq r\}.
\end{align*}
\medskip

\begin{thm}\label{t:continuity}(Continuity and uniqueness of fixed points.) Let  $E\in\Sigma_1\setminus\Sigma_0$ and $h\in \h^{\A}$ be a  fixed point of $\Phi_E$. Then there is $r>0$ such that for all $z\in \ov U_r(E)$ the map $\Phi_{z}$ has a unique fixed point $h(z)$ which is continuous in $z$.
In particular,  the set $\Sigma_1\setminus\Sigma_0$ is open in $\R$.
\end{thm}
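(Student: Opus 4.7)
The plan is to upgrade the contraction estimate of Theorem \ref{t:Contraction} to a parametric fixed-point theorem via Lemma \ref{l:fixpoint2}, and then to reduce fixed points of $\Phi_z^{n+1}$ to fixed points of $\Phi_z$ itself by an iteration argument.

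Theorem \ref{t:Contraction} supplies $R>0$ and $c\in[0,1)$ such that $\Phi_E^{n+1}$ is a uniform $c$-contraction on the compact ball $B_R(h)\subset\h^\A$ in the hyperbolic metric $\dist_{\h^\A}$, with $h$ its unique fixed point there. Moreover, for every $z\in\h\cup\R$ Lemma \ref{l:Psicontr} makes $\Phi_z^{n+1}$ a quasi-contraction in $\gm_\A$, transferred to $\dist_{\h^\A}$ by Lemma \ref{l:compactness}(1); and for $\Im z>0$ that contraction is strict, so $\Phi_z$ admits neither two distinct fixed points nor non-trivial periodic orbits (periodic cycles contradict the strict decrease of $\dist$ along $\Phi_z$). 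Thus Corollary \ref{c:uniqueness} makes $\Gm(z)$ the unique fixed point in $\h^\A$ of both $\Phi_z$ and $\Phi_z^{n+1}$ for $z \in \h$. I then apply Lemma \ref{l:fixpoint2} to the constant sequences $X_j=\h^\A$, $d_j=\dist_{\h^\A}$, $\ph_j=\Phi_E^{n+1}$, taking as the dense subset $U_0=\{\Phi_z^{n+1}:z\in U_r(E)\cap\h\}$. The $\h^\A$-wide uniqueness just obtained, combined with the Herglotz continuity of $z\mapsto\Gm(z)$ on $\h$ (Lemma \ref{l:Herglotz}) and joint continuity of $(z,g)\mapsto\Phi_z^{n+1}(g)$ on the compact $\ov U_r(E)\times B_R(h)$, verifies all the hypotheses of Lemma \ref{l:fixpoint2}(2), producing (after possibly shrinking $r$) a continuous map $h:\ov U_r(E)\to B_R(h)$ whose value $h(z)$ is the unique fixed point of $\Phi_z^{n+1}$ in $B_R(h)$ and satisfies $h(E)=h$.

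To conclude that $h(z)$ is a fixed point of $\Phi_z$ itself, note that $\Phi_z$ commutes with $\Phi_z^{n+1}$, so $\Phi_z(h(z))$ is another fixed point of $\Phi_z^{n+1}$; continuity gives $\Phi_z(h(z))\to\Phi_E(h)=h$ as $z\to E$, so $\Phi_z(h(z))\in B_R(h)$ after further shrinking $r$, and uniqueness in $B_R(h)$ forces $\Phi_z(h(z))=h(z)$. For openness of $\Sigma_1\setminus\Sigma_0$, the existence of $h(z)$ places $(E-r,E+r)\subseteq\Sigma_1$, and since Lemma \ref{l:Sigma0} makes $\Sigma_0$ finite (the regular-tree-operator case being vacuous, as $\Sigma_1\setminus\Sigma_0=\emptyset$ there), a final shrinking of $r$ excludes the remaining points of $\Sigma_0$. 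The main obstacle is the $\h^\A$-wide uniqueness hypothesis in Lemma \ref{l:fixpoint2}(2): Corollary \ref{c:uniqueness} gives uniqueness only for $\Phi_z$, not for $\Phi_z^{n+1}$, so one must additionally rule out non-trivial $\Phi_z$-periodic orbits in $\h^\A$ for $\Im z>0$, which is accomplished by the strict hyperbolic contraction of $\Phi_z$ at interior $z$. The iteration step promoting the $(n+1)$-fixed point to a $1$-fixed point, and the openness conclusion, are then routine.
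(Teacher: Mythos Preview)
Your proof is correct and follows the paper's approach closely: both set up Lemma~\ref{l:fixpoint2} with the constant sequence $\ph_j=\Phi_E^{n+1}$, use Theorem~\ref{t:Contraction} for the uniform contraction on $B_R(h)$, and take $U_0=\{\Phi_z^{n+1}:z\in U_r(E)\}$ with the required uniqueness and continuity on $U_0$ coming from Theorem~\ref{t:uniqueness} and the analyticity of $\Gm(z)$. Your promotion of the $(n{+}1)$-fixed point $h(z)$ to a $\Phi_z$-fixed point via commutativity and uniqueness in $B_R(h)$ is a clean variant of the paper's direct contraction trick $\dist_{\h^\A}(\Phi_E(h(E)),h(E))=\dist_{\h^\A}(\Phi_E^{n+2}(h(E)),\Phi_E^{n+1}(h(E)))\le c\,\dist_{\h^\A}(\Phi_E(h(E)),h(E))$, and your explicit periodic-orbit argument for $\Im z>0$ fills in a step the paper leaves implicit.
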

We will prove this theorem also in  Subsection~\ref{ss:LIcontraction} right after the proof of Theorem~\ref{t:Contraction} .

%%%%%%%%%%%%%%%%%%%%%%%%%%%%%%%%%%%%%%%%%%%%%%%%%%%%%%%%%%%%%%%%%%%%%%%%%%%%%%%%%%%%%%%%%%%%%%%%%%%%%%%%%%%%%%%%%%%%
%% Subsection
%%%%%%%%%%%%%%%%%%%%%%%%%%%%%%%%%%%%%%%%%%%%%%%%%%%%%%%%%%%%%%%%%%%%%%%%%%%%%%%%%%%%%%%%%%%%%%%%%%%%%%%%%%%%%%%%%%%%

\subsection{The decomposition revisited} \label{ss:decompositionrevisited}
The reduced recursion map $\Phi_{z}:\h^{\A}\to\h^{\A}$  can be decomposed into $\Phi_{z}=\rho\circ\si_{z}\circ\tau$ with
\begin{align*}
    \rho_{j}(g)=-\frac{1}{g_j},\qquad \si_{z,j}(g)=z-m_{j}+g_{j}\qqand \tau_j(g)=\sum_{k\in\A}m_{j,k}g_{k},
\end{align*}
for $j\in\A$. We want to give an analogue of Lemma~\ref{l:Psicontr} in the reduced setting. To this end, we define the reduced versions of the quantities introduced in Subsection~\ref{s:Contraction}
\begin{align*}
p_{j,k}&:=p_{j,k}(h):=\frac{m_{j,k}\Im
h_{k}}{\sum_{i\in\A}m_{j,i}\Im h_{i}},\\
c_{j,k}&:=c_{j,k}(g,h):=\sum_{l\in\A}\frac{m_{j,l}\Im g_{l}}{\sum_{i\in\A}m_{j,i}\Im g_{i}}Q_{k,l}(g,h)\cos\al_{k,l}(g,h),\\
Q_{j,k}&:=Q_{j,k}(g,h):=\frac{\ap{\Im g_{j}\Im g_{k} \Im h_{k}\Im h_{j}\gm(g_{j},h_{j})\gm(g_{k},h_{k}) }^{\frac{1}{2}}}{\frac{1}{2}\ap{\Im g_{j}\Im h_{k}\gm(g_{k},h_{k}) +\Im g_{k}\Im h_{j}\gm(g_{j},h_{j})}},\\
\al_{j,k}&:=\al_{j,k}(g,h):=\arg\ap{\ap{g_{j}-h_{j}}\ov{\ap{g_{k}-h_{k}}}},
\end{align*}
for $j,k\in\A$. As in Subsection~\ref{s:Contraction} we
assume  $g_j\neq h_{j}$ and $g_k\neq h_{k}$ in the definition of $\al_{j,k}$ and set $Q_{j,k}=0$ in the complementary case.
Recall that $\sum_{k}p_{j,k}=1$ for all $j\in \A$. Furthermore, $c_{j,k}\leq1$ since $\cos\al_{j,k}\leq1$ and $Q_{j,k}\leq 1$ as a quotient of a geometric and an arithmetic mean. Moreover, let us recall that the argument $\arg:\C\setminus\{0\}\to\Sp^{1}$ is a continuous group homomorphism and $\ma{\cdot}=d_{\Sp^{1}}(\cdot,1)$ is a modulus function in $\Sp^{1}$.

Using this notation, we extract from  Lemma~\ref{l:Psicontr} the following statements:\medskip

\begin{cor}\label{c:Psicontr} Let $z\in\h\cup\R$.
\begin{itemize}
  \item [(1.)] $\rho:(\h^{\A},\gm_{\A})\to(\h^{\A},\gm_{\A})$ is an isometry.
  \item [(2.)] $\si_{z}:(\h^{\A},\gm_{\A})\to(\h^{\A},\gm_{\A})$ is an isometry for $\Im z=0$ and a contraction for $\Im z>0$.
  \item [(3.)] $\tau:(\h^{\A},\gm_{\A})\to(\h^{\A},\gm_{\A})$ is a quasi contraction with
      \begin{align*}
        \gm(\tau_j(g),\tau_j(h)) = \sum_{k\in\A}p_{j,k}(h)c_{j,k}(g,h) {\gm(g_{k},h_{k})}, \quad j\in\A.
      \end{align*}
\end{itemize}
\end{cor}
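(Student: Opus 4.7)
The plan is to deduce each of the three statements directly from the componentwise content of Lemma~\ref{l:Psicontr} by exploiting label invariance to collect terms by label. Fix $z \in \h \cup \R$ and any vertex $x \in \V$ with $a(x) = j$; once the three formulas are established at such an $x$ and are shown to depend only on labels, taking maxima over $\A$ immediately yields the $\gm_\A$-estimates.

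For (1.) and (2.) this is almost immediate. Both $\rho$ and $\si_z$ act componentwise on $\h^\A$: $\rho_j(g) = -1/g_j$ depends only on $g_j$, and $\si_{z,j}(g) = z - m_j + g_j$ depends on $g_j$ and on the label $j$ through the constant $m_j$. Applying Lemma~\ref{l:Psicontr}(1.), (2.) to each coordinate gives $\gm(\rho_j(g),\rho_j(h)) = \gm(g_j,h_j)$ and, for $\Im z > 0$, the strictly contracting factor from Lemma~\ref{l:Psicontr}(2.). Taking the maximum over $j \in \A$ preserves the identity (resp.\ the strict inequality), which is exactly what (1.) and (2.) assert.

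The heart of the argument is (3.). I apply Lemma~\ref{l:Psicontr}(3.) to $\tau_x$ and lift the given $g, h \in \h^\A$ to label-invariant elements of $\h^\V$ via $g_y := g_{a(y)}$, $h_y := h_{a(y)}$. Label invariance together with (T0) and (M1) forces $|t(x,y)|^2 = m_{j,k}/M_{j,k}$ for $y \in S_x$ with $a(y) = k$, so
\begin{align*}
\tau_x(g) = \sum_{y \in S_x} |t(x,y)|^2 g_y = \sum_{k \in \A} m_{j,k}\, g_k = \tau_j(g).
\end{align*}
In the formula from Lemma~\ref{l:Psicontr}(3.), split $\sum_{y \in S_x}$ as $\sum_{k \in \A} \sum_{y \in S_x,\, a(y) = k}$. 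Since $g_y, h_y$ as well as the quantities $Q_{y,v}$ and $\al_{y,v}$ depend only on the labels of their arguments (because they are built from the differences $g_y - h_y$), the inner sum over forward neighbors of the same label just multiplies $p_{x,y}(h)$ by $M_{j,k}$; a direct computation gives $\sum_{y \in S_x,\, a(y)=k} p_{x,y}(h) = p_{j,k}(h)$, and an analogous collapse inside the weight $q_{\cdot,v}$ defining $c_{x,y}$ reduces it to $c_{j,k}(g,h)$. This yields the claimed identity; the quasi-contraction bound $\gm_\A(\tau(g), \tau(h)) \leq \gm_\A(g, h)$ then follows from $\sum_k p_{j,k} = 1$ and $|c_{j,k}| \leq 1$, exactly as in Lemma~\ref{l:Psicontr}(3.).

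The only real obstacle is bookkeeping: one must verify that the conventions making $Q_{y,v}\cos\al_{y,v}$ well-defined (set to zero when $g_y = h_y$ or $g_v = h_v$) transport consistently to the reduced quantities $Q_{k,l}\cos\al_{k,l}$. This is automatic because under label invariance $g_y = h_y$ is equivalent to $g_{a(y)} = h_{a(y)}$, so the tree-level and label-level conventions trigger simultaneously, and the reduction of the sum is unambiguous.
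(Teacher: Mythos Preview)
Your proof is correct and follows exactly the approach the paper intends: the paper gives no explicit proof, merely prefacing the corollary with ``we extract from Lemma~\ref{l:Psicontr} the following statements'', and your argument supplies precisely those details---lifting $g,h\in\h^{\A}$ to label-invariant elements of $\h^{\V}$, invoking Lemma~\ref{l:Psicontr} componentwise, and collapsing the sphere sums by label via (T2). The bookkeeping (including the edge case $g_k=h_k$) is handled correctly.
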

Obviously, in the limit $\Im z\downarrow0$ contraction can only come from $\tau$. We next give a sufficient criterion for uniform contraction coming from $\tau$. Note that the assumptions of the next lemma can never be  satisfied for balls. However, the solution is a suitable decomposition of balls, see the Proof of Theorem~\ref{t:Contraction} in Subsection~\ref{ss:LIcontraction}.
\medskip

\begin{lemma}\label{l:tau} (Sufficient criterion for uniform contraction.) Let  $K\subset \h^{\A}$ be compact. Suppose there is $h\in K$ such that
\begin{align*}
    \min_{g\in K'}\max_{j,k\in\A} \ma{\al_{j,k}(g,h)}>0,
\end{align*}
where $K':=\{g\in K\mid g_{j}\neq h_{j}\mbox{ for all }j\in\A\}$.
Then there is $\de>0$ such that for all $g\in K$ and $z\in\h\cup\R$
\begin{align*}
\gm_{\A}\ap{\Phi_z^{n}(g),\Phi_z^{n}(h)}\leq (1-\de)\gm_{\A}(g,h).
\end{align*}
\begin{proof}
We start with a claim.

\emph{Claim~1: For $g\in K\setminus K'$ there are $k,l\in\A$ such that $m_{k,l}>0$ and $Q_{k,l}(g,h)=0$.}
Proof of Claim~1. If $g\in K\setminus K'$ there is $k\in\A$ such that $g_{k}=h_{k}$ which readily gives $Q_{k,l}=0$ for all $l\in\A$ by definition.

\emph{Claim~2: There is $\de'>0$ such that for every $g\in K'$ there are $k,l\in\A$ such that $m_{k,l}>0$ and $\ma{\al_{k,l}(g,h)}\geq\de'$.}\\
Proof of Claim~2. By assumption, there is $\eps>0$ such that for all $g\in K'$ and suitable $i,j\in\A$ (depending on $g$) we have $\ma{\al_{i,j}(g,h)}\geq\eps$.
By the primitivity assumption (M2) there are $l(1),\ldots, l({n+1})\in\A$ with ${l(1)}=i$, ${l(n+1)}=j$ and $m_{l({s}),l({s+1})}>0$ for all $s=1,\ldots, n$. We calculate, using the definition of $\al_{j,k}$ and the triangle inequality,
\begin{align*}
\eps\leq \ma{\al_{i,j}(g,h)}= \na{\sum_{s=1}^{n}\al_{l(s),l({s+1})}(g,h)} \leq \sum_{s=1}^{n}\na{\al_{l(s),l({s+1})}(g,h)}
\end{align*}
and infer the claim by letting $\de'=\eps/n$.

\emph{Claim~3: There is $\de''>0$ such that for every $g\in K$ there exists $k\in\A$ such that}
\begin{align*}
\gm(\tau_k(g),\tau_k(h))
&\leq (1-\de'')\gm_{\A}(g,h).
\end{align*}
Proof of Claim~3. Let $\de'>0$ be taken from Claim~2 and set
\begin{align*}
\de_0&:=1-\cos {\de'},\quad
\de_{1}:=\min_{g\in K}\min_{j,k\in\A}\frac{\Im g_{k}}{\sum_{l\in\A}m_{j,l}\Im g_{l}}, \quad
\de_{2}:=\min_{g\in K}\min_{j\in\A}\frac{\Im h_{j}}{\sum_{k\in\A}m_{j,k}\Im h_{k}}.
\end{align*}
Notice that $\de_1,\de_2>0$ since $K$ is compact. For given $g\in K$, we let $k,l\in\A$ be taken from Claim~1 if $g\in K\setminus K'$ and from Claim~2 if $g\in K'$. Hence, $Q_{k,l}\cos\al_{k,l}\leq 1-\de_0$.
We estimate $Q_{k,i}\leq1$  and $\cos\al_{k,i}\leq1$ for $i\neq l$ to obtain
\begin{align*}
c_{k,k}&\leq\sum_{i\neq l}\frac{m_{k,i}\Im g_{i}}{\sum_{s}m_{k,s}\Im g_{s}}+\frac{m_{k,l}\Im g_{l}}{\sum_{s}m_{k,s}\Im g_{s}}Q_{k,l}\cos\al_{k,l}
\\
&= 1-\frac{m_{k,l}\Im g_{l}}{\sum_{s}m_{k,s}\Im g_{s}}(1-Q_{k,l}\cos\al_{k,l})\leq 1-m_{k,l}\de_1\de_0,
\end{align*}
Invoking Corollary~\ref{c:Psicontr}~(3.) and using $\gm(g_l,h_l)\leq\gm_{\A}(g,h)$, estimating $c_{k,l}\leq1$ for $l\neq k$ and $\sum_{l}p_{k,l}=1$, we compute
\begin{align*}
\gm(\tau_k(g),\tau_k(h))&=\ap{ \sum_{l\in\A}p_{k,l}c_{k,l} \frac{\gm(g_{l},h_{l})}{\gm_{\A}(g,h)}}\gm_{\A}(g,h)\leq\ap{ \sum_{l\in\A, l\neq k}p_{k,l}+p_{k,k}c_{k,k}}\gm_{\A}(g,h)
\\&\leq
(1-p_{k,k}(1-c_{k,k}))\gm_{\A}(g,h)\leq (1-m_{k,k}m_{k,l}\de_2\de_1\de_0)\gm_{\A}(g,h).
\end{align*}
We have $m_{k,l}>0$ by the  choice of $k,l\in\A$ from Claim~2 and $m_{k,k}>0$ by the axioms (M1) and (T0). Hence, we infer Claim~3 by letting $\de''$ be the minimum of $m_{k,k}m_{k,l}\de_2\de_1\de_0$ over all $k,l\in\A$ such that $m_{k,l}>0$.

We now prove the statement of the lemma. Let $g\in K$ and $k\in\A$ be taken from Claim~3.
By the primitivity of $M$, we have for all $j\in\A$ the existence of $j({1}),\ldots, j({n})\in\A$ such that $j(1)=j$, $j(n)=k$ and
$m_{j({s}),j({s+1})}>0$ for $s=1,\ldots,n$. We compute by iteration, using that $\rho\circ\si_z$ is an isometry and employing the formula for $\tau$ in Corollary~\ref{c:Psicontr}~(3.),
\begin{align*}
\gm(\Phi_{z,j}^{n}(g),\Phi_{z,j}^{n}(h))&\leq \sum_{i(1),\ldots,i(n)\in\A,i(1)=j} \ap{\prod_{s=1}^{n}c_{i(s),i(s+1)}p_{i(s),i(s+1)}} \gm(\tau_{i(n)}(g),\tau_{i(n)}(h)).
\end{align*}
Let $J=\{(i(1),\ldots,i(n))\in\A^{n}\mid i(1)=j\}\setminus\{(j(1),\ldots,j(n))\}$.
We factor out $\gm_{\A}(g,h)$ and get, since $c_{i(s),i(s+1)}\leq 1$ and $\gm(\tau_{i(n)}(g),\tau_{i(n)}(h))\leq\gm_{\A}(g,h)$,
\begin{align*}
\ldots&\leq \ab{\sum_{(i(1),\ldots,i(n))\in J} {\prod_{s=1}^{n}p_{i(s),i(s+1)}} +\ap{\prod_{s=1}^{n}p_{j(s),j(s+1)}} \frac{\gm(\tau_{j(n)}(g),\tau_{j(n)}(h))}{\gm_{\A}(g,h)}}\gm_{\A}(g,h).
\end{align*}
As $\sum_{i(1),\ldots,i(n)} \prod_{s=1}^{n}p_{i(s),i(s+1)}=1$ and $j(n)=k$, we get
\begin{align*}
\ldots&\leq\ap{1-\ap{\prod_{s=1}^{n}p_{j(s),j(s+1)}}\ap{1- \frac{\gm(\tau_k(g),\tau_k(h))}{\gm_{\A}(g,h)}}}\gm_{\A}(g,h)\\ &\leq \ap{1-\ap{\prod_{s=1}^{n}p_{j(s),j(s+1)}}
\de''}\gm_{\A}(g,h),
\end{align*}
where we used Claim~3 in the second estimate.

By our choice of $j(1),\ldots,j(n)$ the product over the $p_{j(s),j(s+1)}$'s is positive.  We take the minimum over all such positive products to obtain the desired constant $\de>0$.
\end{proof}
\end{lemma}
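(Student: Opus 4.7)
My overall plan is to decompose $\Phi_z=\rho\circ\si_z\circ\tau$, extract a single-coordinate contraction of $\tau$ from the angle hypothesis, and then propagate it across all coordinates through $n$ applications of $\Phi_z$ using primitivity~(M2). By Corollary~\ref{c:Psicontr}, $\rho$ is a $\gm_{\A}$-isometry and $\si_z$ is an isometry for $z\in\R$ (and a contraction for $\Im z>0$), so $\gm(\Phi_{z,j}(u),\Phi_{z,j}(v))\leq\gm(\tau_j(u),\tau_j(v))$ componentwise for all $u,v\in\h^{\A}$ and all $z\in\h\cup\R$. Combined with the explicit expansion $\gm(\tau_j(u),\tau_j(v))=\sum_k p_{j,k}(v)\,c_{j,k}(u,v)\,\gm(u_k,v_k)$, iterating $n$ times produces a bound on $\gm(\Phi_{z,j}^n(g),\Phi_{z,j}^n(h))$ by a sum over sequences $j=i_0,i_1,\ldots$ in $\A$, whose terms are products of weights $p\in(0,1)$ (whose path-products sum to $1$) and contraction quantities $c\in[-1,1]$, times a final factor $\gm(\tau_{i_{n-1}}(g),\tau_{i_{n-1}}(h))$ evaluated at the \emph{original} $(g,h)$.

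Next I establish the single-coordinate contraction: there is $\de''>0$ such that for every $g\in K$ some coordinate $k=k(g)\in\A$ satisfies $\gm(\tau_k(g),\tau_k(h))\leq(1-\de'')\gm_{\A}(g,h)$. If $g\in K\setminus K'$, pick $k$ with $g_k=h_k$, so $Q_{k,l}(g,h)=0$ for every $l$ by the convention, and then any $l$ with $m_{k,l}>0$ works (such an $l$ exists by~(M1) or~(M2)). If $g\in K'$, the hypothesis combined with compactness of $K'$ provides $\eps>0$ and $i,j\in\A$ with $\na{\al_{i,j}(g,h)}\geq\eps$; by primitivity there is an $\A$-path $i=l_1,\ldots,l_{n+1}=j$ with $m_{l_s,l_{s+1}}>0$, and the cocycle identity $\al_{i,j}=\sum_s\al_{l_s,l_{s+1}}$ together with the triangle inequality of $\na{\cdot}$ on $\Sp^1$ forces $\na{\al_{l_s,l_{s+1}}}\geq\eps/n$ for at least one $s$, hence $\cos\al_{l_s,l_{s+1}}\leq 1-\de_0$ with $\de_0:=1-\cos(\eps/n)>0$. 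In either case the selected pair $(k,l)$ satisfies $m_{k,l}>0$ and $Q_{k,l}\cos\al_{k,l}\leq 1-\de_0$; plugging this into the definition of $c_{k,k}$ (bounding $Q\cos\al\leq 1$ in the remaining terms) and then into the $\tau$-formula at $j=k$ yields the claim, using uniform lower bounds on $q_{k,l}$ and $p_{k,k}$ from compactness of $K$ together with $m_{k,k}>0$ coming from~(M1),~(T0).

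To propagate from $k$ to every coordinate, primitivity supplies, for any target $j\in\A$ and the $k=k(g)$ above, a path $j=j(1),\ldots,j(n)=k$ in the directed $M$-graph with all $m_{j(s),j(s+1)}>0$. In the path-sum bound from the first step, crudely bound $c\leq 1$ on every sequence except the distinguished path ending at $k$, apply the single-coordinate contraction to the final $\tau_k$-factor there (which is evaluated at the original $(g,h)$, so the bound $(1-\de'')$ is valid), and use $\sum_{\text{paths}}\prod p=1$ to extract $\gm(\Phi_{z,j}^n(g),\Phi_{z,j}^n(h))\leq(1-\pi\de'')\gm_{\A}(g,h)$, where $\pi>0$ is the product of the $p$-weights along the distinguished path. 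Minimizing $\pi$ over the finitely many $j$'s and paths produces the required $\de$. The main technical obstacle is that these intermediate $p$-weights are evaluated at iterates $\Phi_z^s(h)$ rather than at $h$, so one must verify that they remain bounded below uniformly in $z\in\h\cup\R$; a case split on $|z|$ (iterates stay in a compact subset of $\h^{\A}$ for bounded $|z|$, while for large $|z|$ all coordinates of the iterates scale uniformly in $\Im z/|z|^2$ so the \emph{ratio}-form weights $p$ stay controlled) resolves this.
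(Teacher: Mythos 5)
Your proposal is correct and follows essentially the same route as the paper's proof: the same decomposition $\Phi_z=\rho\circ\si_z\circ\tau$ with the path-sum expansion of Corollary~\ref{c:Psicontr}~(3.), the same three-step core (degenerate coordinate giving $Q_{k,l}=0$, the cocycle/triangle-inequality argument along an (M2)-path giving a step with $\ma{\al}\geq\eps/n$, and the resulting single-coordinate contraction of $\tau$), and the same propagation to all components via a distinguished positive-weight path. Your closing remark that the intermediate weights are evaluated at the iterates $\Phi_z^{s}(h)$ and must be bounded below uniformly in $z\in\h\cup\R$ addresses a point the paper passes over silently, and your sketched case split (compactness of the iterates for bounded $|z|$, comparability of the components' imaginary parts for large $|z|$) is a sound way to close it.
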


From the previous lemma we can learn how to prove uniform contraction in $n$ steps. It suffices to ensure the existence of $j,k\in\A$ such that $\al_{j,k}(g,h)\not\in[-\de,\de]$ for all $g$ contained in a ball about a fixed $h\in\h^{\A}$. However,  for a ball about $h\in\h^{\A}$ this always fails as discussed in Section~\ref{s:Contraction}. Nevertheless, we will show in the next subsection that $\Phi_{z}$ maps the set of $g$ where $\al_{j,k}(g,h)$ is small to the set where $\al_{j,k}(\cdot,h)$ is large. This will allow us to show uniform contraction in every $(n+1)$ steps.

%%%%%%%%%%%%%%%%%%%%%%%%%%%%%%%%%%%%%%%%%%%%%%%%%%%%%%%%%%%%%%
%% Subsection
%%%%%%%%%%%%%%%%%%%%%%%%%%%%%%%%%%%%%%%%%%%%%%%%%%%%%%%%%%%%%%

\subsection{The recursion map on the relative arguments}\label{ss:relative_arguments}
Note that $\tau:\h^{\A}\to\h^{\A}$ extends to a linear map $\C^{\A}\to\C^{\A}$. Moreover, it is easy to check that
$\Phi_{z,j}(g)\neq h_{j}$ implies $\tau_{j}(g-h)\neq 0$ for $j\in\A$.
\medskip

\begin{lemma}\label{l:taual}
Let $z\in\h\cup\R$ and $h\in\h^{\A}$ be a fixed point of $\Phi_z$. Then, for all $g\in\h^{\A}$, $j,k\in\A$ with $\Phi_{z,j}(g)\neq h_{j}$, $\Phi_{z,k}(g)\neq h_{k}$, we have
\begin{align*}
\al_{j,k}\ap{\Phi_{z}(g),\Phi_{z}(h)} =\arg\ap{\tau_j(g-h)\ov{\tau_k(g-h)}} +\arg\ap{\Phi_{z,j}(g)\ov{\Phi_{z,k}(g)}}
+\arg\ap{h_{j}\ov{h_{k}}}.
\end{align*}
\begin{proof}
We calculate directly using the decomposition $\Phi_z=\rho\circ\si_z\circ\tau$
\begin{align*}
\al_{j,k}\ap{\Phi_{z}(g),\Phi_{z}(h)} &=\arg\ap{\frac{-1}{\si_{z,j}(\tau(g))}-\frac{-1}{\si_{z,j}(\tau(h))}} \ov{\ap{\frac{-1}{\si_{z,k}(\tau(g))}-\frac{-1}{\si_{z,k}(\tau(h))}}}\\
&=\arg\ap{\frac{\tau_j(g-h)}{\si_{z,j}(\tau(g))\si_{z,j}(\tau(h))}} \ov{\ap{\frac{\tau_k(g-h)}{\si_{z,k}(\tau(g))\si_{z,k}(\tau(h))}}}\\
&=\arg\ap{\tau_j(g-h) \ov{\tau_k(g-h)}}\ap{\Phi_{z,j}(g)\ov{\Phi_{z,k}(g)}} \ap{h_{j}\ov{h_{k}}},
\end{align*}
where we used $\Phi_{z,l}(\cdot)=-1/\si_{z,l}(\tau(\cdot))$, $l\in\A$, and the assumption $\Phi_{z}(h)=h$.
\end{proof}
\end{lemma}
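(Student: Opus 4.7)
The plan is to verify this formula by a direct computation using the decomposition $\Phi_z=\rho\circ\si_z\circ\tau$ and the fact that $\arg$ is a group homomorphism from $\C\setminus\{0\}$ to $\Sp^{1}$. The main observation is that although $\rho$ (inversion) and $\si_z$ (translation) are nonlinear on $\C$, their contribution to the difference $\Phi_{z,j}(g)-\Phi_{z,j}(h)$ factors cleanly once we exploit that $h$ is a fixed point.

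First I would write out, for each $j\in\A$,
\begin{align*}
\Phi_{z,j}(g)-\Phi_{z,j}(h)
&=-\frac{1}{\si_{z,j}(\tau(g))}+\frac{1}{\si_{z,j}(\tau(h))}
=\frac{\si_{z,j}(\tau(g))-\si_{z,j}(\tau(h))}{\si_{z,j}(\tau(g))\,\si_{z,j}(\tau(h))}.
\end{align*}
Since $\si_{z,j}$ adds the scalar $z-m_j$ to the $j$-th component and $\tau$ is $\C$-linear, the numerator equals $\tau_j(g)-\tau_j(h)=\tau_j(g-h)$. For the denominator, the identity $\Phi_{z,l}(\cdot)=-1/\si_{z,l}(\tau(\cdot))$ together with the fixed-point assumption $\Phi_{z,l}(h)=h_l$ gives $\si_{z,j}(\tau(g))=-1/\Phi_{z,j}(g)$ and $\si_{z,j}(\tau(h))=-1/h_j$. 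Therefore
\begin{align*}
\Phi_{z,j}(g)-\Phi_{z,j}(h)=\tau_j(g-h)\,\Phi_{z,j}(g)\,h_j.
\end{align*}

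The analogous identity holds for $k$; conjugating the latter and multiplying the two, I obtain
\begin{align*}
\bigl(\Phi_{z,j}(g)-h_j\bigr)\overline{\bigl(\Phi_{z,k}(g)-h_k\bigr)}
=\tau_j(g-h)\,\overline{\tau_k(g-h)}\cdot\Phi_{z,j}(g)\,\overline{\Phi_{z,k}(g)}\cdot h_j\,\overline{h_k}.
\end{align*}
Under the assumptions $\Phi_{z,j}(g)\neq h_j$ and $\Phi_{z,k}(g)\neq h_k$ both sides are nonzero, so I may apply $\arg$. Using that $\arg$ is a group homomorphism into $\Sp^{1}$ and the definition $\al_{j,k}(\Phi_z(g),\Phi_z(h))=\arg((\Phi_{z,j}(g)-h_j)\overline{(\Phi_{z,k}(g)-h_k)})$, this yields precisely
\begin{align*}
\al_{j,k}(\Phi_z(g),\Phi_z(h))
=\arg\bigl(\tau_j(g-h)\overline{\tau_k(g-h)}\bigr)
+\arg\bigl(\Phi_{z,j}(g)\overline{\Phi_{z,k}(g)}\bigr)
+\arg\bigl(h_j\overline{h_k}\bigr).
\end{align*}

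There is no real obstacle here; the only subtlety is being careful about the nonvanishing of each factor in order to apply $\arg$, which is guaranteed by the hypotheses $\Phi_{z,j}(g)\neq h_j$ and $\Phi_{z,k}(g)\neq h_k$ (these force $\tau_j(g-h)\neq 0$ and $\tau_k(g-h)\neq 0$ via the formula above, and $h_j,\Phi_{z,j}(g)\in\h$ are automatically nonzero). The reason this decomposition is useful downstream is that $\arg(h_j\overline{h_k})$ is fixed, while $\arg(\Phi_{z,j}(g)\overline{\Phi_{z,k}(g)})$ stays close to $\arg(h_j\overline{h_k})$ when $g$ is close to $h$, so the only genuinely new angular contribution in one step of $\Phi_z$ comes from $\arg(\tau_j(g-h)\overline{\tau_k(g-h)})$, setting the stage for the contraction argument in the next subsection.
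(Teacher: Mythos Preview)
Your proof is correct and follows essentially the same approach as the paper: a direct computation via the decomposition $\Phi_z=\rho\circ\si_z\circ\tau$, simplifying the difference $\Phi_{z,j}(g)-\Phi_{z,j}(h)$ to $\tau_j(g-h)\,\Phi_{z,j}(g)\,h_j$ using linearity of $\tau$ and the fixed-point identity $\si_{z,j}(\tau(h))=-1/h_j$, then applying $\arg$ as a group homomorphism. Your version is in fact slightly more explicit about the nonvanishing of each factor needed to apply $\arg$.
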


Let us discuss the idea of how we will use the formula of the lemma above.
If $\al_{j,k}(g,h)$ is large for some $j,k$ we can apply Lemma~\ref{l:tau} directly. Otherwise, we appeal to Lemma~\ref{l:taual} in the following way:
Suppose $\al_{j,k}(g,h)$ is small for all $j,k$. Then, $\arg(\tau_{j}(g-h)\ov{\tau(g_{k}-h_{k})})$ is small by a geometric argument. Moreover, if $g$ is very close to $h$, then the last two terms in the formula of the lemma are equal up to a small error. As we know from Lemma~\ref{l:Sigma0}, the last term is non zero except for a finite set of energies, Lemma~\ref{l:taual} then proves that $\al_{j,k}(\Phi_E(g),h)$ is large. Therefore,  we can apply  Lemma~\ref{l:tau} either for $g$ or $\Phi_{z}(g)$.

Let us make these arguments precise. We start with some geometric observations.\medskip

\begin{lemma}\label{l:taugeometry}
Let $z\in\h\cup\R$ and $h\in\h^{\A}$ be a fixed point of $\Phi_z$.
\begin{itemize}
  \item [(1.)] For all $g\in\h^{\A}$ with $g_{j}\neq h_{j}$, $\Phi_{z,j}(g)\neq h_{j}$ for all $j\in\A$,
\begin{align*}
\max_{j,k\in\A}\na{\arg \ap{\tau_{j}(g-h)\ov{\tau_k(g-h)}}}\leq\max_{j,k\in\A}\na{\arg \ap{(g_{j}-h_{j})\ov{(g_{k}-h_{k})}}}.
\end{align*}
  \item [(2.)] For all $\de>0$ there exists $R>0$ such that
\begin{align*}
\max_{g\in B_{R}(h)}\max_{j\in\A} \na{\arg( g_{j}\ov{h_{j}})}\leq\de.
\end{align*}
\end{itemize}
\begin{proof}
The first statement follows since $\tau_j$, $j\in \A$, maps the cone in $\C$ which is spanned by the vectors $\{g_{j}-h_{j}\mid{j\in\A}\}$ into itself.\\
For the second statement we choose $R$ so small that the ball $B_R(h)$ is included in the cone spanned by all vectors $g$ which satisfy  $\arg (g_{j}\ov h_{j})\in[-\de,\de]$ for all $j\in\A$.
\end{proof}
\end{lemma}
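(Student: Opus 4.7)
The plan is to handle (1.) and (2.) essentially independently, since (1.) is a convex-geometry fact about the linear map $\tau$ and (2.) is a continuity statement for the argument on $\h$.

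For (1.), I would first observe that $\tau_j$ extends linearly to $\C^\A$ with non-negative coefficients: $\tau_j(g-h)=\sum_{k\in\A}m_{j,k}(g_k-h_k)$, with $m_{j,k}\geq0$. Hence $\tau_j(g-h)$ lies in the convex cone in $\C$ generated by the non-zero vectors $\{g_k-h_k:k\in\A\}$. The key geometric input is then a standard fact: if $v_1,\dots,v_N\in\C\setminus\{0\}$ span a convex cone of angular aperture
\[
\Theta:=\max_{k,l}\na{\arg(v_k\ov{v_l})},
\]
then every non-zero, non-negative combination $\sum_k\lm_k v_k$ has its argument within the angular hull of the $\arg v_k$'s, so that any two such combinations $w_j,w_k$ satisfy $\na{\arg(w_j\ov{w_k})}\leq\Theta$. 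Applying this to $w_j=\tau_j(g-h)$ (all non-zero by the hypothesis $\Phi_{z,j}(g)\neq h_j$, since $\Phi_{z,j}(g)-h_j$ is a non-zero multiple of $\tau_j(g-h)$ via the computation in Lemma~\ref{l:taual}) yields the inequality. The only point requiring a bit of care is verifying $\Theta<\pi$, which holds because all $g_k-h_k$ lie in the slab $\{\zeta\in\C:|\Im\zeta|<\infty\}$ and in particular lie in a common open half-plane whenever the right-hand side is well-defined and finite; if the right-hand side equals $\pi$ then the inequality is trivial.

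For (2.), the argument should go through Euclidean closeness. By Lemma~\ref{l:compactness} the ball $B_R(h)$ is contained in the hyperbolic ball $\{g\in\h^\A:\dist_{\h^\A}(g,h)\leq\cosh^{-1}(R/2+1)\}$, which is compact in the Euclidean topology on $\h^\A$, and its diameter in the Euclidean sense tends to $0$ as $R\downarrow0$ by the explicit form of the hyperbolic distance and the fact that $h\in\h^\A$ has strictly positive imaginary parts. Consequently, for every $\eta>0$ there exists $R>0$ such that $|g_j-h_j|\leq\eta$ for all $g\in B_R(h)$ and $j\in\A$. Since $h_j\in\h$, the map $\zeta\mapsto\arg(\zeta\ov{h_j})$ is continuous at $\zeta=h_j$ with value $0$, so choosing $\eta$ small enough we obtain $\na{\arg(g_j\ov{h_j})}\leq\de$ uniformly in $j\in\A$; taking the maximum over the finite set $\A$ preserves the bound.

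The main obstacle I anticipate is cleanly proving the cone statement used in (1.), in particular checking the ``open half-plane'' hypothesis so that arguments can be added/averaged without crossing the branch cut of $\arg$. The cleanest way I see is to reduce modulo $2\pi$, pick a representative $\te_1,\dots,\te_N\in[-\pi,\pi)$ for the $\arg v_k$ minimizing the spread, observe this spread equals the right-hand side of (1.) (since $|\cdot|_{\arg}$ is the canonical $\Sp^1$-distance), and argue that if the spread is strictly less than $\pi$ then all $v_k$ lie in a rotated half-plane, which is closed under non-negative combinations; the case spread $=\pi$ makes (1.) trivially true as the bound becomes $\pi$, the maximum possible value of $|\cdot|_{\arg}$.
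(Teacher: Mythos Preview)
Your proposal is correct and follows essentially the same approach as the paper: for (1.) the cone argument (that $\tau_j$ is a non-negative linear combination and hence maps the cone spanned by $\{g_k-h_k\}$ into itself), and for (2.) a continuity/smallness-of-$R$ argument. The paper's proof is a terse two-sentence version of exactly this; your version adds the careful handling of the branch-cut issue (the half-plane reduction and the trivial case $\Theta=\pi$), which the paper passes over silently but which is indeed needed to make the cone argument rigorous.
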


Recall that $\Sigma_1$ is the set of energies $E\in\R$ where $\Phi_{E}$ has a fixed point in $\h^{\A}$ and $\Sigma_0$ is the  subset of $\Sigma_1$ where the components of a fixed point to a given $E$ are linear multiples of each other. Note that for non regular tree operators the set  $\Sigma_0$ is finite (see Lemma~\ref{l:Sigma0}).\medskip

\begin{lemma}\label{l:al}
For all $E\in\Sigma_1\setminus\Sigma_0$ and  a fixed point $h\in\h^{\A}$ of $\Phi_{E}$ there are $R>0$ and $\de>0$ such that for all $g\in B_{R}(h)$ with  $g_{j}\neq h_{j}$ and $\Phi_{z,j}(g)\neq h_{j}$ for all $j\in\A$,
\begin{align*}
\max_{j,k\in\A}\na{\al_{j,k}(g,h)}\geq \de \quad \mbox{ or }\quad \max_{j,k\in\A}\na{\al_{j,k}(\Phi_{E}(g),\Phi_{E}(h))}\geq \de.
\end{align*}
\begin{proof}
By definition,  there are $j,k\in\A$ for each $E\in \Sigma_1\setminus\Sigma_0$ such that $\de':=\ma{\arg(h_{j}\ov{h_{k}})}>0$. As $h_{j},h_{k}\in\h$, we have $\de'\in(0,\pi)$. We fix $j$, $k$ and $\de'$ for the rest of the proof and set
\begin{align*}
    \de:=\frac{1}{2}\min\set{\de',\pi-\de'}.
\end{align*}
Let $R>0$ be chosen according to Lemma~\ref{l:taugeometry}~(2.) with respect to $\de>0$. For $g\in B_{R}(h)$ we get by the triangle inequality of $\ma{\cdot}$
\begin{align*}
\na{\arg\ap{g_{j}\ov{g_{k}}h_{j}\ov{h_{k}}}}
\geq \na{2\arg\ab{h_{j}\ov{h_{k}}}}\hspace{-.2cm} -\na{\arg\ab{g_{j}\ov{h_{j}}}}\hspace{-.2cm}-\na{\arg\ab{\ov{g_{k}}{h_{k}}}}\geq
4\de-2\de=2\de,
\end{align*}
for all $g\in B_{R}(h)$. Since $\Phi_{E}$ is a quasi contraction and $h$ is a fixed point we have $\Phi_E(B_R(h))\subseteq B_R(h)$. Therefore,  we directly have by the previous inequality
\begin{align*}
\na{\arg\ap{\Phi_{E,j}(g)\ov{\Phi_{E,k}(g)}} +\arg\ap{h_{j}\ov{h_{k}}}}\geq  2\de.
\end{align*}
Now, combining Lemma~\ref{l:taual}, Lemma~\ref{l:taugeometry}~(1.) and the inequality above yields that $\na{\al_{j,k}(\Phi_{E}(g),\Phi_{E}(h))}\geq2\de-\na{\al_{j,k}(g,h)}\geq\de$ whenever  $g\in B_{R}(h)$ is such that $\ma{\al_{l,m}(g,h)}<\de$  for all $l,m\in\A$.
\end{proof}
\end{lemma}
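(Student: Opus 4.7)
The plan is to convert the hypothesis $E \in \Sigma_1 \setminus \Sigma_0$ into quantitative angular information on the fixed point $h$, and then to use the decomposition formula of Lemma~\ref{l:taual} to propagate a bit of that angular information to $\Phi_E(g)$ whenever $g$ itself does not carry any. First I would unwind the definition of $\Sigma_0$: since $h$ is a fixed point of $\Phi_E$ with $E \notin \Sigma_0$, there must exist a pair of labels $j,k \in \A$ for which $h_j$ and $h_k$ are \emph{not} positive real multiples of each other, i.e.\ $\de' := |\arg(h_j \ov{h_k})|_{\arg} > 0$. Because $h_j, h_k \in \h$, the angle $\de'$ automatically lies in $(0,\pi)$. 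I fix such a pair $(j,k)$ once and for all and choose $\de := \tfrac{1}{2}\min\{\de',\, \pi - \de'\}$, so that any quantity within $2\de$ of $2\arg(h_j\ov{h_k})$ on the circle still has $|\cdot|_{\arg} \geq 2\de$.

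Next I would choose the radius $R$. Lemma~\ref{l:taugeometry}(2) gives, for the chosen $\de$, an $R>0$ such that $|\arg(g_l \ov{h_l})|_{\arg} \leq \de$ for all $g \in B_R(h)$ and all $l \in \A$. Since $h = \Phi_E(h)$ and $\Phi_E$ is a quasi contraction on $(\h^\A,\gm_\A)$ by Corollary~\ref{c:Psicontr}, we have $\Phi_E(B_R(h)) \subseteq B_R(h)$, so the same estimate applies to $\Phi_E(g)$. Then, by the triangle inequality in $\Sp^1$,
\begin{align*}
\na{\arg\bigl(\Phi_{E,j}(g)\,\ov{\Phi_{E,k}(g)}\bigr) + \arg(h_j \ov{h_k})}
&\geq 2\na{\arg(h_j \ov{h_k})} - \na{\arg(\Phi_{E,j}(g)\ov{h_j})} - \na{\arg(\Phi_{E,k}(g)\ov{h_k})} \\
&\geq 4\de - 2\de = 2\de.
\end{align*}

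To conclude, I carry out the dichotomy: if already $\max_{l,m}|\al_{l,m}(g,h)|_{\arg} \geq \de$ there is nothing to prove. Otherwise all $|\al_{l,m}(g,h)|_{\arg} < \de$, which by Lemma~\ref{l:taugeometry}(1) forces $|\arg(\tau_j(g-h)\ov{\tau_k(g-h)})|_{\arg} < \de$, since $\tau_j, \tau_k$ send the cone spanned by the $g_l - h_l$ into itself. Plugging this together with the previous display into the identity of Lemma~\ref{l:taual} and applying the triangle inequality in $\Sp^1$ one more time gives $|\al_{j,k}(\Phi_E(g), \Phi_E(h))|_{\arg} \geq 2\de - \de = \de$, which is exactly the second alternative.

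The delicate point, and the main obstacle, is that $\arg$ takes values in $\Sp^1$, so the relation ``twice a nonzero angle is nonzero'' is false in general (e.g.\ $2 \cdot \pi = 0$). This is why $\de$ must be chosen below both $\de'/2$ and $(\pi - \de')/2$: it guarantees that $2\arg(h_j\ov{h_k})$ is bounded away from $0$ in $\Sp^1$ by $2\de$ and that the perturbations coming from $g$ being close to $h$ (rather than exactly equal) cannot kill this lower bound through wrap-around on the circle. Keeping careful track of the $|\cdot|_{\arg}$ triangle inequalities on $\Sp^1$, rather than on $\R$, is the only technical subtlety.
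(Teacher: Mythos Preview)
Your proof is correct and follows the paper's argument essentially step for step: the same choice of $\de = \tfrac12\min\{\de',\pi-\de'\}$, the same use of Lemma~\ref{l:taugeometry}(2) to pick $R$, the same appeal to $\Phi_E(B_R(h))\subseteq B_R(h)$, and the same combination of Lemma~\ref{l:taual} with Lemma~\ref{l:taugeometry}(1) for the dichotomy. One small slip in your displayed chain: the reverse triangle inequality on $\Sp^1$ gives $\na{2\arg(h_j\ov{h_k})}$, not $2\na{\arg(h_j\ov{h_k})}$ (the latter is generally larger and not what follows from the triangle inequality), but since your choice of $\de$ makes $\na{2\arg(h_j\ov{h_k})} = 2\min\{\de',\pi-\de'\} = 4\de$, the final bound $\geq 4\de - 2\de = 2\de$ is unaffected --- indeed, this is precisely the wrap-around point you yourself explain in your last paragraph.
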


%%%%%%%%%%%%%%%%%%%%%%%%%%%%%%%%%%%%%%%%%%%%%%%%%%%%%%%%%%%%%%%%%%%%%%%%%%%%%%%%%%%%%%%%%%%%%%%%%%%%%%%%%%%%%%%%%%%%
%% Subsection
%%%%%%%%%%%%%%%%%%%%%%%%%%%%%%%%%%%%%%%%%%%%%%%%%%%%%%%%%%%%%%%%%%%%%%%%%%%%%%%%%%%%%%%%%%%%%%%%%%%%%%%%%%%%%%%%%%%%
\subsection{Uniform contraction on balls}\label{ss:LIcontraction}

We have now all the ingredients to prove Theorem~\ref{t:Contraction}.

\begin{proof}[Proof of Theorem~\ref{t:Contraction}]
Let $E\in\Sigma_1\setminus\Sigma_0$ and $h\in\h^{\A}$ be a fixed point of $\Phi_{E}$. Let $R>0$ and $\de>0$ be taken from Lemma~\ref{l:al}. We divide the set $B_R(h)$ into two subsets via
\begin{align*}
B_{\geq}(h)&:=\set{g\in B_{R}(h)\mid g_{j}=h_{j} \mbox{ for some } j\in\A\mbox{ or } \max_{j,k\in\A}\na{\al_{j,k}(g,h)}\geq\de},\\
B_{<}(h)&:=\set{g\in B_{R}(h)\mid g_{j}\neq h_{j} \mbox{ for all } j\in\A\mbox{ and }  \max_{j,k\in\A}\na{\al_{j,k}(g,h)}< \de},
\end{align*}
We first apply Lemma~\ref{l:tau} with $K=B_{\geq}(h)$: As $\Phi_{E}$ is a quasi contraction, we obtain for $g\in B_{\geq}(h)$
\begin{align*}
\gm_{\A}\ap{\Phi_E^{n+1}(g),\Phi_E^{n+1}(h)}\leq \gm_{\A}\ap{\Phi_E^{n}(g),\Phi_E^{n}(h)} \leq (1-\de')\gm_{\A}(g,h),
\end{align*}
with some $\de'>0$ which is independent of $g$.
For $g\in B_{<}(h)$ we have by Lemma~\ref{l:al}, as $\Phi_{E}$ is a quasi contraction and $h$ is a fixed point,
\begin{align*}
    \Phi_{E}(B_{<}(h))\subseteq B_{\geq}(h).
\end{align*}
Therefore,   Lemma~\ref{l:tau} with $K= \Phi_{E}(B_{<}(h))$ applied to $\Phi_E(g)$ for $g\in B_{<}(h)$ yields
\begin{align*}
\gm_{\A}\ap{\Phi_E^{n+1}(g),\Phi_E^{n+1}(h)}= \gm_{\A}\ap{\Phi_E^{n}(\Phi_{E}(g)),\Phi_E^{n}(\Phi_{E}(h))} \leq (1-\de')\gm_{\A}(g,h),
\end{align*}
since $\Phi_{E}(g)\in B_{\geq}(h)$. By Lemma~\ref{l:compactness},
we get the existence of $c\in[0,1)$ such that
$$\dist_{\h^{\A}}\ap{\Phi_E^{n+1}(g),\Phi_E^{n+1}(h)} \leq c\, \dist_{\h^{\A}}(g,h),$$
for $g\in B_R(h)$ as $\gm_{\A}\ap{\Phi_E^{n+1}(g),\Phi_E^{n+1}(h)} \leq (1-\de')\gm_{\A}(g,h)$ for $g\in B_R(h)$.
\end{proof}

We next prove Theorem~\ref{t:continuity}.\medskip

\begin{proof}[Proof of Theorem~\ref{t:continuity}]
Let $E\in\Sigma_1\setminus\Sigma_0$, $h\in \h^{\A}$ be a fixed point of $\Phi_E$ and $R>0$ be taken from Theorem~\ref{t:Contraction}.

We want to use the stability of limit points, Lemma~\ref{l:fixpoint2} with $(X_j,d_j)=(\h^{\A},\dist_{\h^{\A}})$, $B_j(R)=B_R(h)$ and $\ph_j=\Phi^{n+1}_E$ for all $j\in\N_0$. By Theorem~\ref{t:Contraction}, the maps $\Phi^{n+1}_E$ are uniform contractions on $B_{R}(h)$. Let $\eps>0$ and let $\de(\eps)>0$ be given by Lemma~\ref{l:fixpoint2}~(1.).

Since the map $\h\cup\R \to \Lip(B_R(h),\h^{\A})$, $z\mapsto \Phi_{z}^{n+1}$ is continuous  there  is  $r>0$ such that for all $z\in\ov U_r(E)$ we have $d_{B_R(h),\h^{\A}}(\Phi_{E}^{n+1},\Phi_{z}^{n+1})\leq \de(\eps)$. The first part of Lemma~\ref{l:fixpoint2} now yields the existence of limit points of $\Phi_{z}^{n+1}$ in $B_R(h)$ for all $z\in\ov U_r(E)$.

By Theorem~\ref{t:uniqueness} we know that $\Phi_z$, and thus $\Phi_{z}^{n+1}$, have  unique fixed points in $\h^{\A}$ whenever $z\in \h$. Since these fixed points are the truncated Green functions of the operator $T$, they are analytic in $z$, (see Lemma~\ref{l:Herglotz}). Furthermore, the set $U_0:=\{\Phi_{z}^{n+1}\mid z\in U_r(E)\}$ is dense in $U:=\{\Phi_{z}^{n+1}\mid z\in \ov U_r(E)\}$  as the map $\ov U_r(E)\to \Lip(B_R(h),\h_{\A})$, $z\mapsto \Phi_{z}^{n+1}$ is continuous. Hence,  by the stability of limit points, Lemma~\ref{l:fixpoint2}~(2.), the maps $\Phi_z^{n+1}$ have unique limit points $h(z)=(h_{j}(z))$ such that $h_{j}(z)\in B_R(h)$, $j\in\N_0$, which depend continuously on $z\in \ov U_r(E)$.  Indeed, since the limit points are unique the elements $h_{j}(z)$ are equal for all $j\in\N_0$. Hence, they are fixed points of $\Phi_{z}^{n+1}$.

As Theorem~\ref{t:Contraction} applies to every fixed point, we get by Lemma~\ref{l:fixpoint2}~(3.) that the map $\Phi_{E}^{n+1}$ has a unique limit point. By the preceding considerations,  this limit point is a fixed point and has elements in $\h^{\A}$ for all $E\in\Sigma_1\setminus \Sigma_0$.

It remains to check that, if $h(z)$ is a fixed point of  $\Phi_{z}^{n+1}$, then it is also a fixed point of $\Phi_{z}$, $z\in \ov U_{r(E)}$. For $z\in\h$, this is clear  by Theorem~\ref{t:uniqueness}. For $E\in \Sigma_1\setminus\Sigma_0$ let $h(E)$ be a fixed point of $\Phi^{n+1}_E$.  As $\Phi_{E}^{n+1}$ is a uniform contraction with contraction coefficient $c\in[0,1)$ according to Theorem~\ref{t:Contraction}, we obtain
\begin{align*}
\dist_{\h^{\A}}\ap{\Phi_{E}(h(E)),h(E)} &=\dist_{\h^{\A}}\ap{\Phi_{E}^{n+2}(h(E)),\Phi_{E}^{n+1}(h(E))}\\ &\leq c\,\dist_{\h^{\A}}\ap{\Phi_{E}(h(E)),h(E)}.
\end{align*}
This is only possible if $\Phi_E(h(E))=h(E)$.
\end{proof}

\section{Spectral analysis}\label{s:LIac}

In this section we will prove Theorem~\ref{main1} and Theorem~\ref{main2}.
Recall the definition of the sets $\Sigma_1$ and $\Sigma_0$ in Section~\ref{s:LIPolynom}, i.e.,
\begin{align*}
\Sigma_1&=\{E\in\R\mid P(E,\xi)=0\mbox{ for some } \xi\in\h^{\A} \},\\
\Sigma_0&=\{E\in\Sigma_1\mid P(E,\xi)=0 \mbox{ and } \arg\xi_j\ov{\xi_k}=0 \mbox{ for some }\xi\in\h^{\A} \mbox{ and all } j,k\in\A \}.
\end{align*}

%%%%%%%%%%%%%%%%%%%%%%%%%%%%%%%%%%%%%%%%%%%%%%%%%%%%%%%%%%%%%%%%%%%%%%%%%%%%%%%%%%%%%%%%%%%%%%%%%%%%%%%%%%%%%%%%%%%%
%% Subsection
%%%%%%%%%%%%%%%%%%%%%%%%%%%%%%%%%%%%%%%%%%%%%%%%%%%%%%%%%%%%%%%%%%%%%%%%%%%%%%%%%%%%%%%%%%%%%%%%%%%%%%%%%%%%%%%%%%%%
\subsection{Continuity of the Green function}

We start by proving some statements for the truncated Green function for non regular tree operators. For regular tree operators the statements are already covered by Proposition~\ref{p:RegTreeOp}.\medskip

\begin{lemma}\label{l:Gmlimits}  Let $T$ be a non regular tree operator and $E\in\R\setminus\Sigma_0$. Then, \\
(1.) the limit
\begin{align*}
\Im    \Gm(E):=\lim_{\eta\downarrow 0}\Im \Gm(E+i\eta)
\end{align*}
exists as a vector in $[0,\infty)^{\A}$ in the topology of pointwise convergence and it is continuous in $E$ on $\R\setminus \Sigma_0$,\\
(2.)  $E\in\Sigma_1$ is equivalent to $\Im \Gm_j(E)>0$ for some $j\in\A$. In this case, the limit
\begin{align*}
\Gm(E):=\lim_{\eta\downarrow 0}\Gm(E+i\eta)
\end{align*}
exists in $\h^{\A}$ (in particular, $\Im \Gm_j(E)>0$ for all $j\in\A$) and it is continuous in a neighborhood of $E$.
\begin{proof}
We prove the lemma in two steps. We first show the statement of the lemma for fixed points of $\Phi_E$ and then derive the statement for the truncated Green function.
Note that the map $\Phi_z$ has a unique fixed point whenever $z\in\h\cup(\Sigma_1\setminus\Sigma_0)$ by Theorem~\ref{t:uniqueness} and Theorem~\ref{t:continuity}.

Let $E\in\R$. By the uniform bounds of Lemma~\ref{l:FPbounds} every
sequence of fixed points $h_n\in\h^{\A}$ of $\Phi_{z_{n}}$ with  $z_n\in\h\cup\R$, $\Re z_n\to E$ and $\Im z_{n}\to 0$ has a converging subsequence. We denote the set of accumulation points of all such sequences by $A_E\subseteq(\h\cup\R)^{\A}$. By continuity of $\Phi_z$ in $z$,  we also have $\Phi_{E}(h)=h$ for all $h\in A_E$ and  $E\in\R$. We proceed with a claim.

\emph{Claim. If $h\in A_E$ for some $E\in\R$, then either $h\in \R^{\A}$ or $h\in\h^{\A}$.}\\
Proof of the claim. Assume there is $j\in\A$ such that $\Im h_j>0$. Taking reciprocals and imaginary parts in the fixed point equation $\Phi_E(h)=h$, we get $\Im h_{k}\geq m_{j,k}\Im h_{j}|h_{k}|^2$ (compare proof of Lemma~\ref{l:Gmbounds}). The lower bounds on $|h_{k}|$ of Lemma~\ref{l:FPbounds} imply $\Im h_{k}>0$ for all $k\in\A$ with $m_{j,k}>0$. By the primitivity assumption (M2), this can be iterated for all $k\in\A$. This proves the claim.

If $E\in \R\setminus\Sigma_0$ is such that there is $h\in A_E$ with $h\in\h^{\A}$, then $E\in\Sigma_1$ by the definition of $\Sigma_1$. For $E\in\Sigma_1\setminus\Sigma_0$, we know by uniqueness and continuity of fixed points, Theorem~\ref{t:continuity}, this $h$ is the unique fixed point of $\Phi_E$.
We denote $\Gm(E):=h$.  The continuity follows also by Theorem~\ref{t:continuity}.

If, on the other hand, for some $E\in \R\setminus\Sigma_0$ all elements of $A_E$ are in $\R^{\A}$, then $E\in\R\setminus\Sigma_1$. In this case, we have $\Im \Gm(E):=\lim_{z_n\to E}\Im \Gm(z_n)=0$ for all $z_n\to E$.

By the considerations above we have proven statements (1.) and (2.).
\end{proof}
\end{lemma}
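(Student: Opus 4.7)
The plan is to combine the uniform bounds on fixed points from Lemma~\ref{l:FPbounds} with the uniqueness and continuity result for fixed points of $\Phi_z$ from Theorem~\ref{t:continuity} and a dichotomy argument driven by primitivity. Throughout, $\Gm(z)\in\h^\A$ is for $z\in\h$ the unique fixed point of $\Phi_z$ (Theorem~\ref{t:uniqueness}). First I would prove the following dichotomy: for every $E\in\R$, any accumulation point $h\in(\h\cup\R)^\A$ of $\{\Gm(z_n)\}$ with $z_n\in\h$, $\Re z_n\to E$, $\Im z_n\downarrow0$, is necessarily a fixed point of $\Phi_E$ (by continuity of $\Phi_z$ in $z$) and satisfies either $h\in\h^\A$ or $h\in\R^\A$. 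Indeed, assuming $\Im h_j>0$ for some $j$, taking imaginary parts of $\Phi_E(h)=h$ yields $\Im h_k\geq m_{j,k}\Im h_j|h_k|^2$, and combined with the uniform lower bound on $|h_k|$ from Lemma~\ref{l:FPbounds}, this forces $\Im h_k>0$ for every $k$ with $m_{j,k}>0$. Primitivity (M2) then propagates positivity of the imaginary part to all of $\A$.

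Next I would separate the cases $E\in\Sigma_1\setminus\Sigma_0$ and $E\in\R\setminus\Sigma_1$, which together exhaust $\R\setminus\Sigma_0$ since $\Sigma_0\subseteq\Sigma_1$. If there exists one accumulation point $h$ with $h\in\h^\A$, then by definition $E\in\Sigma_1$, and Theorem~\ref{t:continuity} tells us that in a neighborhood $\ov U_r(E)$ of $E$ the map $\Phi_z$ has a \emph{unique} fixed point $h(z)\in\h^\A$ depending continuously on $z$; since for $z\in U_r(E)\cap\h$ this unique fixed point must coincide with $\Gm(z)$, the full limit $\Gm(E)=\lim_{\eta\downarrow0}\Gm(E+i\eta)$ exists in $\h^\A$ and is continuous in $E$ on a neighborhood of $E$. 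Conversely, if all accumulation points lie in $\R^\A$, then $E\notin\Sigma_1$ (otherwise a fixed point in $\h^\A$ would give, via Theorem~\ref{t:continuity}, accumulation points in $\h^\A$ too, contradicting the dichotomy applied to any convergent subsequence), and every accumulation point of $\Im\Gm(z_n)$ is the zero vector, so $\lim_{\eta\downarrow 0}\Im\Gm(E+i\eta)=0$. This establishes both (2) and the existence part of (1).

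For the continuity in (1) on $\R\setminus\Sigma_0$, I would argue pointwise: on the open subset $\Sigma_1\setminus\Sigma_0$, continuity of $\Im\Gm$ follows from the continuity of $\Gm$ itself just obtained; on $\R\setminus\Sigma_1$ I use the compactness from Lemma~\ref{l:FPbounds} once more. If $E_n\to E$ in $\R\setminus\Sigma_0$ with $E\in\R\setminus\Sigma_1$, pick $\eta_n\downarrow 0$ rapidly enough that $\Gm(E_n+i\eta_n)-\Gm(E_n)\to 0$ (using that for $E_n\in\Sigma_1\setminus\Sigma_0$ the limit $\Gm(E_n)$ exists, and trivially for $E_n\in\R\setminus\Sigma_1$). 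Any limit of a subsequence of $\Gm(E_n+i\eta_n)$ is a fixed point of $\Phi_E$, so by the dichotomy either in $\h^\A$ (impossible, as it would force $E\in\Sigma_1$) or in $\R^\A$; hence $\Im\Gm(E_n)\to 0=\Im\Gm(E)$.

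The technical heart of the argument is the dichotomy step — it is what lets us glue the two regimes $\Sigma_1\setminus\Sigma_0$ and $\R\setminus\Sigma_1$ together and rule out the ``mixed'' situation in which some components of a limiting fixed point lie on the real line while others stay in $\h$. Everything else is a routine application of compactness and of the already-established uniqueness and continuity of fixed points of $\Phi_z$.
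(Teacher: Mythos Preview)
Your proof is correct and follows essentially the same approach as the paper: compactness from the uniform bounds of Lemma~\ref{l:FPbounds}, the dichotomy ``all components in $\h$ or all in $\R$'' via the fixed point equation and primitivity, and then the appeal to Theorem~\ref{t:continuity} for uniqueness and continuity on $\Sigma_1\setminus\Sigma_0$. You actually spell out two points the paper leaves implicit --- why all accumulation points being real forces $E\notin\Sigma_1$ (via Theorem~\ref{t:continuity}), and the continuity of $\Im\Gm$ at points of $\R\setminus\Sigma_1$ --- which is fine; just be careful that for $E_n\in\R\setminus\Sigma_1$ only $\Im\Gm(E_n)$ is defined, not $\Gm(E_n)$, so your diagonal argument should be phrased for the imaginary parts.
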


Recall that $\Sigma\subset\R$ was defined as the largest open set, where the maps
\begin{align*}
\h\to\h,\quad z\mapsto \Gm_{x}(z,T),
\end{align*}
have unique continuous extensions to $\h\cup\Sigma \to\h$ for all $x\in\V$.\medskip

\begin{lemma}\label{l:Sigma} The set $\Sigma$ consists of finitely  many open intervals. Moreover, for non regular tree operators we have
\begin{align*}
\Sigma_1\setminus\Sigma_0\subseteq\Sigma\subseteq\Sigma_1.
\end{align*}
\begin{proof}
For regular tree operators the statement follows from Proposition~\ref{p:RegTreeOp}. For non regular tree operators we first check the inclusions.
The first inclusion follows from Theorem~\ref{t:continuity} and the fact that $\Gm_{x}(z,T)$ is label invariant.
The second inclusion is due to the fact that the truncated Green functions solve the polynomial equations.
We know that $\Sigma_1$ has finitely many connected components by Lemma~\ref{l:milnor} and  the set $\Sigma_0$ is finite by Lemma~\ref{l:Sigma0}. Hence, $\Sigma$ consists of finitely many intervals and it is open by definition.
\end{proof}\end{lemma}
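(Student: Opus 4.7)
The plan is to split into the regular and non-regular cases and use the preceding machinery essentially as plug-ins, with the real work being the verification of the two inclusions.

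For regular tree operators, Proposition~\ref{p:RegTreeOp} provides an explicit formula for $\Gm_x(E,T)$ as $\frac{1}{2k}(-E+w+\sqrt{(E-w)^2-4k})$ on the whole real line, and the principal branch of the square root gives a continuous extension into $\h$ precisely on the open interval where $(E-w)^2-4k<0$. So $\Sigma$ equals that single interval and the statement is immediate (no claim about $\Sigma_0,\Sigma_1$ is needed here).

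For non-regular tree operators I would first establish the inclusion $\Sigma_1\setminus\Sigma_0\subseteq\Sigma$. Fix $E\in\Sigma_1\setminus\Sigma_0$. Theorem~\ref{t:continuity} gives $r>0$ and a continuous map $z\mapsto h(z)\in\h^\A$ on $\ov U_r(E)$ such that $h(z)$ is the unique fixed point of $\Phi_z$ in $\h^\A$. For $z\in\h$ uniqueness forces $h(z)=\Gm(z)$ (the reduced truncated Green function). Since $\Gm_x(z,T)$ depends only on $a(x)\in\A$, this produces the required continuous extension $\h\cup U_r(E)\cap\R\to\h$ for every $x\in\V$, so $E$ lies in the open set $\Sigma$. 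For the opposite inclusion $\Sigma\subseteq\Sigma_1$, pick $E\in\Sigma$; by definition the componentwise limit $\Gm(E)$ exists in $\h^\A$. Passing to the limit $\eta\downarrow0$ in the recursion formula \eqref{e:LIGm} (which is allowed because all denominators stay in $\h$) gives $P(E,\Gm(E))=0$ with $\Gm(E)\in\h^\A$, so $E\in\Sigma_1$.

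To finish, I combine the two inclusions with the cardinality results. Lemma~\ref{l:milnor} says $\Sigma_1$ has at most $d$ connected components, each of which is therefore an interval; Lemma~\ref{l:Sigma0} says $\Sigma_0$ is finite. Hence $\Sigma_1\setminus\Sigma_0$ is a finite disjoint union of (open, half-open, or closed) intervals, and $\Sigma$, sandwiched between this set and $\Sigma_1$, inherits the same finite-many-intervals structure; since $\Sigma$ is open by definition, each interval is open. This concludes both assertions.

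The only mildly delicate step is the inclusion $\Sigma\subseteq\Sigma_1$: one must be sure that taking limits $\eta\downarrow0$ in the recursion preserves membership in $\h^\A$ rather than merely in $\clos\h^\A$. This is guaranteed because the hypothesis defining $\Sigma$ asks the extension to map into $\h$, not into $\h\cup\R$; otherwise no genuine obstruction appears and the rest of the argument is a clean assembly of the previous results.
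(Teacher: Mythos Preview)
Your proof is correct and follows essentially the same route as the paper: split into the regular case (via Proposition~\ref{p:RegTreeOp}) and the non-regular case, derive $\Sigma_1\setminus\Sigma_0\subseteq\Sigma$ from Theorem~\ref{t:continuity} plus label invariance, derive $\Sigma\subseteq\Sigma_1$ from the fact that the limiting Green functions still solve the polynomial equations, and conclude finiteness from Lemmas~\ref{l:milnor} and~\ref{l:Sigma0}. Your added remark about why the limit stays in $\h^\A$ (rather than merely $\clos\h^\A$) is a useful clarification that the paper leaves implicit.
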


We are now prepared to prove Theorem~\ref{t:LIG} and Theorem~\ref{main1}.

\begin{proof}[Proof of Theorem~\ref{t:LIG}]
By the previous lemma, the set $\Sigma$ consists of finitely many intervals. By Proposition~\ref{p:G} the maps $G_{x}:\h\mapsto\h$ have continuous extensions to $\h\cup\Sigma\to\h$ for $x\in\V$. By the vague convergence of the spectral measures, Lemma~\ref{l:mu}, this yields  $\si(T)\supseteq\clos (\Sigma)$.

For regular tree operators,  $\si(T)\subseteq \clos (\Sigma)$  follows directly from Proposition~\ref{p:RegTreeOp}, the extension from $\Gm$ to $G$, Proposition~\ref{p:G2}~(2.), and the vague convergence of the spectral measures, Lemma~\ref{l:mu}.

For non regular tree operators, we get by Lemma~\ref{l:Gmlimits} that $\Im\Gm_{j}(E)=0$ for  $E\in\R\setminus\Sigma_1$ and $j\in\A$. Moreover,   Lemma~\ref{l:Gmbounds} gives the uniform boundedness of $\Gm_j(E)$ in $E$. Thus, by  the extension from $\Gm$ to $G$, Proposition~\ref{p:G2}~(2.), we conclude that $\Im G_{x}(E)=0$ for all $E\in\R\setminus\Sigma_1$ and $x\in\V$. Hence, $\si(T)\subseteq\clos(\Sigma_1)$.
As $\Sigma_0$ is finite by Lemma~\ref{l:Sigma0}, the sets $\clos(\Sigma_1\setminus\Sigma_0)$ and $\clos(\Sigma_1)$ can differ by only finitely many isolated points. The only spectrum which can occur on isolated points are eigenvalues. However, they can be excluded as the uniform boundedness of $\Gm_{x}(z,T)$, guaranteed by Lemma~\ref{l:Gmbounds}, extends to $ G_{x}(z,T)$  by Proposition~\ref{p:G2}~(1.). Hence, there is no spectrum at these points and we get $\si(T)=\clos(\Sigma_1\setminus\Sigma_0)$. By the previous lemma we obtain $\si(T)=\clos(\Sigma_1\setminus\Sigma_0)\subseteq\clos(\Sigma)$ which finishes the proof.
\end{proof}

\begin{proof}[Proof of Theorem~\ref{main1}]
The statement now follows from Theorem~\ref{t:LIG} and the criterion for the absence of singular spectrum, Theorem~\ref{t:Klein}.
\end{proof}

%%%%%%%%%%%%%%%%%%%%%%%%%%%%%%%%%%%%%%%%%%%%%%%%%%%%%%%%%%%%%%%%%%%%%%%%%%%%%%%%%%%%%%%%%%%%%%%%%%%%%%%%%%%%%%%%%%%%
%% Subection
%%%%%%%%%%%%%%%%%%%%%%%%%%%%%%%%%%%%%%%%%%%%%%%%%%%%%%%%%%%%%%%%%%%%%%%%%%%%%%%%%%%%%%%%%%%%%%%%%%%%%%%%%%%%%%%%%%%%
\subsection{Stability of absolutely continuous spectrum}

By the invariance properties of $T$ and the potentials  $v\in\W_{\mathrm{sym}}(\T)$, the truncated Green function $\Gm_{x}(z,T+\lm v)$, $x\in\V$ of the operators $T+\lm v$, $\lm\geq0$ can be reduced to a vector $\Gm_{s,j}(z)=\Gm_{s,j}(z,T+\lm v)$ on the strip
$\N_0\times\A$ via
\begin{align*}
\Gm_{s}(z)=(\Gm_{s,j}(z))_{j\in\A}
\qand\Gm_{s,j}(z)=\Gm_{|x|,a(x)}(z,T+\lm v).
\end{align*}
We identify functions $v:\N_0\times \A\to[-1,1]$ with potentials $v\in\W_{\mathrm{sym}}(\T)$ via $v(x)=v_{|x|,a(x)}$ and vice versa. (See Subsection~\ref{ss:application} for discussion how to define the values of $\Gm_{s,j}$ and $v_{s,j}$ for $s\leq n(M)$.)

Moreover, the recursion map $\Psi_{z}^{(T+\lm v)}$ can be reduced to a map on $\h^{\N_0\times\A}$ into itself. For the restrictions  $\Psi^{(T+\lm v)}_{z,S^s}$ of $\Psi_{z}^{(T+\lm v)}$ to the spheres $S^{s}$, $s\in\N_0$ we define the reduced maps $\Psi_{z,\lm v,s}:\h^{\A}\to\h^{\A}$  via the components in $j\in\A$ by $\Psi_{z,\lm v,s,j}(g)=\Phi_{(z-\lm_{s,j}),j}$, i.e.,
\begin{align*}
\Psi_{z,\lm v,s,j}(g)=-\frac{1}{z-m_{j}-\lm v_{s,j}+\sum_{k\in\A}m_{j,k}g_k}.
\end{align*}
We have $\Gm_{s}(z)=\Psi_{z,\lm v,s}(\Gm_{s+1}(z))$ for all $s\in\N_0$.

The proof of the following perturbation result is a variant of the proof of Theorem~\ref{t:continuity}. However, as it only deals with existence of the limits, we only need the first part of the stability of limit points, Lemma~\ref{l:fixpoint2}.
\medskip

\begin{prop}\label{p:LabRadPotGm}
For all $E\in \Sigma_1\setminus \Sigma_0$ and all $R>0$ there exist $\lm_0>0$ and $r>0$ such that for all $z\in U_r(E)$, $\lm\in[0,\lm_0]$, $v\in\W_{\mathrm{sym}}(\T)$ and $x\in\V$
\begin{align*}
\Gm_{x}(z,T+\lm v)\in B_{R}\ap{\Gm_{a(x)}(E,T)}.
\end{align*}
\begin{proof}
Let $E\in\Sigma_1\setminus\Sigma_0$ and assume without loss of generality that $R>0$ is smaller than the $R$ from Theorem~\ref{t:Contraction}.

We want to use the stability of limit points, Lemma~\ref{l:fixpoint2},  with $(X_j,d_j)=(\h^{\A},\dist_{\h^{\A}})$, $B_j(R)=B_R(\Gm(E,T))$ and $\ph_j=\Phi^{n+1}_E$ for all $j\in\N_0$. By Theorem~\ref{t:continuity} the map $\Phi^{n+1}_E$ has a unique fixed point which is $\Gm(E,T)\in\h^{\A}$ and it is a contraction on $B_{R}\ap{\Gm(E,T)}$ according to Theorem~\ref{t:Contraction}. Let $\de=\de(R)>0$ be given by Lemma~\ref{l:fixpoint2} for $\eps=R$.

%We make the following observation: For $s_0,s_1\in\N_0$ and a function $u:\{0,\ldots,n+1\}\times \A\to[-1,1]$ let $u^{(0)}$ and $u^{(1)}$ be functions $\N_0\times\A\to[-1,1]$ such that $u^{(0)}(s_{0}+s,j)=u^{(1)}(s_{1}+s,j)=u(s,j)$ for $s=0,\ldots,n+1$ and $j\in\A$. We observe that $\Psi_{z,\lm u^{(0)},s_{0}}=\Psi_{z,\lm u^{(1)},s_{1}}$.
Fix $z\in\h\cup\R$, $\lm\in[0,\infty)$ and let $\Psi_{z,\lm v,s}^{(n+1)}:=\Psi_{z,\lm v,s}\circ\ldots\circ\Psi_{z,\lm v, s+n}$ for $v\in \W_{\mathrm{sym}}(\T)$ and $s\in\N_0$.
In order to control the potentials, we need to introduce some notation:
Let $K:=[-1,1]^{\{0,\ldots,n\}\times\A}$ and $\pi:\W_{\mathrm{sym}}(\T)\times \N_0\to K$ be given such that $\oh v_{k,j}:=\pi(v,s)(k,j)=v_{s+k,j}$ for $k=0,\ldots,n$, $j\in\A$. Note that for each $\oh v\in K$ we have  $\Psi_{z,\lm v,s}^{(n+1)}=\Psi_{z,\lm v',s'}^{(n+1)}$  for all $(v,s),(v',s')\in\pi^{{-1}}(\{\oh v\})$. Therefore,  we can define $\Psi_{z,\lm \oh v}^{(n+1)}:=\Psi_{z,\lm v,s}^{(n+1)}$ for $\oh v\in K$ and arbitrary $(v,s)\in\pi^{-1}(\{\oh v\})$.

The map $\h\cup\R\times [0,\infty)\to \Lip(B_R(h),\h^{\A})$, $(z,\lm)\mapsto \Psi_{z,\lm \oh v}^{(n+1)}$  is continuous for arbitrary $\oh v\in K$. Hence, there are $r>0$ and $\lm_0>0$ such that for all $z\in U_r(E)$ and $\lm\in [0,\lm_0]$ we have by the definition of $\Psi_{z,\lm \oh v}^{(n+1)}$
\begin{align*}
\sup_{s\in \N_0}\sup_{v\in \W_{\mathrm{sym}}(\T)} d_{B_R(h),\h^{\A}}\ap{\Psi_{z,\lm v,s}^{(n+1)},\Phi_{z}^{n+1}}=\max_{\oh v\in K}d_{B_R(h),\h^{\A}}\ap{\Psi_{z,\lm \oh v}^{(n+1)},\Phi_{z}^{n+1}}\leq \de(R).
\end{align*}

The stability of limit points, Lemma~\ref{l:fixpoint2}~(1.), now yields the existence of limit points of $\Psi_{z,\lm v, s}^{n+1}$ in $B_{R}(\Gm(E,T))$ for all $z\in U_r(E)$, $\lm\in[0,\lm_0]$, $v\in \W_{\mathrm{sym}}(\T)$ and $s\in\N_0$. By Theorem~\ref{t:uniqueness},  this limit point is unique and is $\Gm_{s}(z,T+\lm v)$ for $z\in \h$.
\end{proof}
\end{prop}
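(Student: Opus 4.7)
The plan is to apply the stability of limit points, Lemma~\ref{l:fixpoint2}~(1.), to the iterated recursion maps
\[
\Psi_{z,\lm v,s}^{(n+1)}:=\Psi_{z,\lm v,s}\circ\cdots\circ\Psi_{z,\lm v,s+n},
\]
using the unperturbed iterate $\Phi_E^{n+1}$ as a reference to which these perturbed iterates are close.

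First, fix $E\in\Sigma_1\setminus\Sigma_0$ and set $h:=\Gm(E,T)\in\h^{\A}$. Theorem~\ref{t:continuity} identifies $h$ as the unique fixed point of $\Phi_E$, and Theorem~\ref{t:Contraction} provides a radius on which $\Phi_E^{n+1}$ is a uniform contraction; shrinking $R$ if necessary, I may assume the given $R$ lies below this radius, which only strengthens the conclusion sought. Apply Lemma~\ref{l:fixpoint2}~(1.) with $(X_j,d_j)=(\h^{\A},\dist_{\h^{\A}})$, $B_j(R)=B_R(h)$, and $\ph_j=\Phi_E^{n+1}$ for every $j\in\N_0$ to extract a threshold $\de(R)>0$ with the property that any $\psi\in\Lip(X)$ satisfying $\sup_j d_{B_R(h),\h^{\A}}(\ph_j,\psi_j)\leq \de(R)$ admits a limit point in $B_R(h)$.

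Second, the key structural observation is that $\Psi_{z,\lm v,s}^{(n+1)}$ depends on $v$ only through the finite window $\hat v:=(v_{s+k,j})_{0\leq k\leq n,\,j\in\A}$, an element of the compact cube $K:=[-1,1]^{\{0,\ldots,n\}\times\A}$. Writing $\Psi_{z,\lm\hat v}^{(n+1)}$ for this common value, the map $(z,\lm,\hat v)\mapsto \Psi_{z,\lm\hat v}^{(n+1)}$ is jointly continuous from $(\h\cup\R)\times[0,\infty)\times K$ into $\Lip(B_R(h),\h^{\A})$, and specializes at $\lm=0$ to $\Phi_z^{n+1}$, independently of $\hat v$. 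Compactness of $K$ and continuity of $z\mapsto\Phi_z^{n+1}$ then produce $r>0$ and $\lm_0>0$ such that
\[
\sup_{s\in\N_0}\sup_{v\in\W_{\mathrm{sym}}(\T)} d_{B_R(h),\h^{\A}}\bigl(\Psi_{z,\lm v,s}^{(n+1)},\Phi_E^{n+1}\bigr)=\max_{\hat v\in K} d_{B_R(h),\h^{\A}}\bigl(\Psi_{z,\lm\hat v}^{(n+1)},\Phi_E^{n+1}\bigr)\leq\de(R),
\]
for all $z\in U_r(E)$ and $\lm\in[0,\lm_0]$. This converts uniformity across all spheres $s$ and all potentials $v$ into a compactness argument on the finite cube $K$.

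Third, with this closeness in hand, Lemma~\ref{l:fixpoint2}~(1.) yields a limit point of the sequence $(\Psi_{z,\lm v,s+j(n+1)}^{(n+1)})_{j\in\N_0}$ inside $B_R(h)$, for arbitrary starting index $s\in\N_0$. For $z\in\h$ the operator $T+\lm v$ still has moderate off~diagonal growth as a bounded perturbation of $T$, so Theorem~\ref{t:uniqueness} forces this limit point to coincide with the reduced truncated Green function $\Gm_{s}(z,T+\lm v)$. Undoing the reduction yields $\Gm_{x}(z,T+\lm v)=\Gm_{|x|,a(x)}(z,T+\lm v)\in B_R(h)=B_R(\Gm_{a(x)}(E,T))$, as desired. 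The main obstacle is precisely the demand for simultaneous uniformity across the infinitely many spheres $s\in\N_0$ and the infinite-dimensional potential space $\W_{\mathrm{sym}}(\T)$; the essential idea beyond routine application of Lemma~\ref{l:fixpoint2} is the \emph{locality} of the $(n+1)$-step iterate in the potential, which reduces the relevant $v$-dependence to the compact cube $K$ and closes the argument.
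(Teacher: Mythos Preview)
Your proof is correct and follows essentially the same approach as the paper: set up Lemma~\ref{l:fixpoint2}(1.) with the uniform contraction $\Phi_E^{n+1}$ from Theorem~\ref{t:Contraction}, reduce the dependence of $\Psi_{z,\lm v,s}^{(n+1)}$ on $(v,s)$ to the compact cube $K=[-1,1]^{\{0,\ldots,n\}\times\A}$, use joint continuity to obtain the $\de(R)$-closeness uniformly, and conclude via Theorem~\ref{t:uniqueness}. Your presentation is arguably slightly cleaner in that you compare directly to $\Phi_E^{n+1}$ using joint continuity in $(z,\lm,\hat v)$, whereas the paper's displayed inequality compares to $\Phi_z^{n+1}$ and leaves the passage to $\Phi_E^{n+1}$ implicit; this is a cosmetic difference only.
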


\begin{proof}[Proof of Theorem~\ref{main2}]
We let the finite set in the statement of the theorem be defined as $\Sigma_0'=\Sigma_0\cup(\si(T)\setminus \Sigma_1)$. The set $\Sigma_1\setminus\Sigma_0$ is a finite union of open intervals (see Lemma~\ref{l:milnor} and Theorem~\ref{t:continuity}) and therefore $\Sigma_0'$ is finite. Moreover, we have $\si(T)\setminus\Sigma_0'=\Sigma_1\setminus\Sigma_0\subseteq \Sigma$. Let $I,I'\subset\si(T)\setminus \Sigma_0'$ be compact and  $I\subset\inn I'$. As $I'$ is compact, we can pick finitely many $E_1, \ldots,E_k$ to cover $I$ by the intervals $[E_1-r(E_1),E_1+r(E_1)],\ldots,[E_k-r(E_k),E_k+r(E_k)]$, where $r_1,\ldots,r_k$ are chosen according to Proposition~\ref{p:LabRadPotGm} for $E_1,\ldots,E_k$. By Proposition~\ref{p:LabRadPotGm} there are $c,C>0$ and $\lm_0>0$ such that for $x\in\V$, $E\in I'$, $v\in \W_{\mathrm{sym}}(\T)$, $\lm\leq\lm_0$ and  sufficiently small $\eta>0$
\begin{align*}
\Im \Gm_{x}(E+i\eta,T+\lm v)>c\qand   \mo{\Gm_{x}(E+i\eta,T+\lm v)}\leq C.
\end{align*}
By the extension from $\Gm$ to $G$, Proposition~\ref{p:G2}, we get for each $x\in\V$ the existence of $c,C>0$ and $\lm_0>0$ such that
\begin{align*}
\Im G_{x}(E+i\eta,T+\lm v)>c\qand   \mo{G_{x}(E+i\eta,T+\lm v)}\leq C,
\end{align*}
for all $E\in I'$, $v\in \W_{\mathrm{sym}}(\T)$, $\lm\leq\lm_0$ and  sufficiently small $\eta>0$. Now, the criterion for the absence of singular spectrum, Theorem~\ref{t:Klein}, and the vague convergence of the spectral measures, Lemma~\ref{l:mu}, yield the result for $I$ as $I\subset\inn I'$.
\end{proof}

Indeed, we can prove by our methods a stronger statement. We will only sketch the proof, as it is not much different from what we have done so far.\medskip

\begin{thm}\label{t:LabRadPotG} Let $T$ be a non regular tree operator and $v\in\W_{\mathrm{sym}}(\T)$. Then, for every compact interval $I\subset\Sigma_1\setminus\Sigma_0$,  there exists $\lm_0>0$ such that the functions
\begin{align*}
\h\times[0,\lm_0]\to\h,&\;\quad (z,\lm)\mapsto \Gm_{x}(z,T+\lm v), \\ \h\times[0,\lm_0]\to\h,&\;\quad (z,\lm)\mapsto G_{x}(z,T+\lm v),
\end{align*}
have unique continuous extension to $(\h\cup I)\times[0,\lm_0] \to\h$, which are uniformly bounded in $z$ for every $x\in\V$.
\begin{proof}[Sketch of the proof] We choose $I\subset\Sigma_1\setminus\Sigma_0$ to be compact. Therefore,  for all $E\in I$ there are $j,k\in\A$ and $\de>0$ such that $\arg\ap{\Gm_{j}(E,T)\ov{\Gm_{k}(E,T)}}\not\in[-\de,\de]$ by the continuity of $\Gm(E,T)$ in $E\in\Sigma_1\setminus\Sigma_0$. Choose $R>0$ according to Theorem~\ref{t:Contraction}, (i.e., Lemma~\ref{l:al}).

By Proposition~\ref{p:LabRadPotGm}, the truncated Green functions $\Gm_{s,j}(z,T+\lm v)$ are in $B_R(\Gm_{j}(E,T))$ for small $\lm\ge0$ and all $s\in\N_0$. By the stability of limit points, Lemma~\ref{l:fixpoint2}, the limit point  $\Gm_{s,j}(E,T+\lm v)$ is unique in $B_R(\Gm(E,T))$. Moreover, by continuity of $\Psi_{z,\lm v,s}$ in $z$, we have $\Psi_{E,\lm v,s}(\Gm_{s+1}(E,T+\lm v))=\Gm_s(E,T+\lm v)$. Since $\Gm_{s,j}(z,T+\lm v)$ belongs to $B_R\ap{\Gm_{j}(E,T)}$, there is $\de>0$ such that
\begin{align*}
\arg\ap{\Gm_{s,j}(z,T+\lm v) \ov{\Gm_{s,k}(z,T+\lm v)}} \not\in[-\de,\de],
\end{align*}
for all $z\in\ov U_r(E)$ and $s\in\N_0$. (Indeed, the distance $R$ was chosen in Lemma~\ref{l:al} such that this holds.) Therefore,  by the same arguments as in Lemma~\ref{l:al}, there is $\de>0$ such that for $j,k\in\A$ from above
\begin{align*}
\max_{j,k\in\A}\na{\al_{j,k}(g,\Gm_{s})}\geq \de \quad \mbox{ or }\quad \max_{j,k\in\A}\na{\al_{j,k}(\Psi_{E,\lm v,s}(g),\Psi_{E,\lm v,s}(\Gm_{s+1}))} \geq \de,
\end{align*}
for $g$ close to $\Gm_s(E)=(\Gm_{s,j}(E,T+\lm v))_{j\in\A}$.
Now, following the arguments in the proofs of Lemma~\ref{l:tau} and Theorem~\ref{t:Contraction} we infer that there is $c<1$ such that for $\Psi_{E,\lm v,s}^{n+1}:=\Psi_{E,\lm v,s}\circ\ldots\circ\Psi_{E,\lm v,s+n}$
\begin{align*}
\dist_{\h^\A}\ap{\Psi_{E,\lm v,s}^{(n+1)}(g), \Gm_{s}(E)}&=\dist_{\h^\A}\ap{\Psi_{E,\lm v,s}^{(n+1)}(g), \Psi_{E,\lm v,s}^{(n+1)}(\Gm_{s+n+1}(E))}  \\
&\leq c\,\dist_{\h^\A}\ap{g,\Gm_{s+n+1}(E)},
\end{align*}
for all $g $ close to $\Gm_{s+n+1}(E)$. By similar arguments as in the proof of Theorem~\ref{t:continuity}, namely the stability of limit points, Lemma~\ref{l:fixpoint2}, we conclude that  $\Gm_{s}(E,T+\lm v)$  has a unique continuous extension in a neighborhood of $E$.
\end{proof}
\end{thm}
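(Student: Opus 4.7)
The plan is to combine the localization of the perturbed truncated Green functions in small hyperbolic balls around the unperturbed ones, provided by Proposition~\ref{p:LabRadPotGm}, with a uniform version of the $(n+1)$-step contraction of Theorem~\ref{t:Contraction}, and then conclude via the stability of limit points, Lemma~\ref{l:fixpoint2}. The transfer from $\Gm_x$ to $G_x$ is then routine via Proposition~\ref{p:G2}.

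First, since $I\subset \Sigma_1\setminus\Sigma_0$ is compact and $E\mapsto \Gm(E,T)$ is continuous on $\Sigma_1\setminus\Sigma_0$ by Theorem~\ref{t:continuity}, the very definition of $\Sigma_0$ together with compactness yields a finite cover of $I$ by closed subintervals on each of which a fixed pair $j,k\in\A$ satisfies $\na{\arg(\Gm_j(E,T)\ov{\Gm_k(E,T)})}\geq 2\de_0$ uniformly, for some $\de_0>0$. Picking $R>0$ small enough that the arguments of all components of points in $B_R(\Gm(E,T))$ lie within $\de_0/2$ of those of $\Gm_j(E,T)$ and $\Gm_k(E,T)$ (possible by Lemma~\ref{l:taugeometry}~(2.) since $\Gm_l(E,T)\in\h$ for all $l\in\A$), Proposition~\ref{p:LabRadPotGm} supplies $\lm_0>0$ and $r>0$ such that $\Gm_{s,l}(z,T+\lm v)\in B_R(\Gm_l(E,T))$ for all $z\in U_r(E)$, $\lm\in[0,\lm_0]$, $v\in\W_{\mathrm{sym}}(\T)$, $s\in\N_0$, $l\in\A$, and $E\in I$.

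Inside $B_R(\Gm(E,T))$ the relative-argument condition driving Lemma~\ref{l:al} then survives \emph{uniformly} in all these parameters. The perturbed map $\Psi_{z,\lm v,s}$ differs from the unperturbed one $\Phi_z$ only through the translation $\si_{z-m-\lm v_s}$, which is still a hyperbolic isometry when $z=E\in\R$, while $\rho$ and $\tau$ (and hence the relative-argument calculus of Lemma~\ref{l:taual}) are untouched. Consequently the proofs of Lemma~\ref{l:tau} and Theorem~\ref{t:Contraction} go through essentially verbatim and yield a uniform contraction coefficient $c<1$ for the $(n+1)$-fold composition $\Psi_{E,\lm v,s}^{(n+1)}:=\Psi_{E,\lm v,s}\circ\ldots\circ \Psi_{E,\lm v,s+n}$ on $B_R(\Gm(E,T))$, independent of $s\in\N_0$, $v\in\W_{\mathrm{sym}}(\T)$, $\lm\in[0,\lm_0]$, and $E\in I$. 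Applying Lemma~\ref{l:fixpoint2} with $\ph_j=\Psi_{z,\lm v,j}^{(n+1)}$ and with dense subset corresponding to $z\in\h$ (where uniqueness and analyticity of $\Gm_s(z,T+\lm v)$ are granted by Theorem~\ref{t:uniqueness}) gives a unique continuous extension of $\Gm_s(z,T+\lm v)$ to $(\h\cup I)\times[0,\lm_0]\to B_R(\Gm(E,T))$. Uniform boundedness in $z$ is automatic from confinement in $B_R$, and the corresponding statement for $G_x$ is transferred by Proposition~\ref{p:G2}.

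The main obstacle lies in the \emph{uniformity} of the contraction. Because the potential depends on the sphere index $s$, one must control a varying sequence of maps rather than iterate a single one, and the angle lower bound fueling the contraction must hold simultaneously over $s\in\N_0$, $v\in\W_{\mathrm{sym}}(\T)$, $\lm\in[0,\lm_0]$, and $E\in I$. This is precisely the setting Lemma~\ref{l:fixpoint2} is designed for --- the sequential analogue of the fixed-point principle underlying Theorem~\ref{t:continuity} --- but extracting the required uniformity from Proposition~\ref{p:LabRadPotGm} and from the angle-preservation of $\tau$ is the delicate point, and it is what forces the passage through Lemma~\ref{l:taugeometry} and the uniform angle bound on $I$ noted above.
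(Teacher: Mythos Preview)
Your proposal is correct and follows essentially the same route as the paper's sketch: localize the perturbed truncated Green functions near $\Gm(E,T)$ via Proposition~\ref{p:LabRadPotGm}, transport the angle-based contraction machinery of Lemma~\ref{l:al} and Theorem~\ref{t:Contraction} to the perturbed recursion maps, and conclude continuity by the stability principle Lemma~\ref{l:fixpoint2}, with the passage to $G_x$ handled by Proposition~\ref{p:G2}.

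One point on which the paper is slightly more explicit and which you compress: the contraction estimate coming out of Lemma~\ref{l:taual} and Lemma~\ref{l:al} is a statement about pairs $(g,h)$ where $h$ satisfies the recursion, i.e.\ $\Phi_E(h)=h$ in the unperturbed case or $\Psi_{E,\lm v,s}(h_{s+1})=h_s$ in the perturbed one. The unperturbed center $\Gm(E,T)$ does \emph{not} satisfy the perturbed recursion, so one cannot directly claim that $\Psi_{E,\lm v,s}^{(n+1)}$ contracts toward $\Gm(E,T)$. The paper therefore proceeds in two stages: first it uses Proposition~\ref{p:LabRadPotGm} (which already rests on the contraction of the \emph{unperturbed} $\Phi_E^{n+1}$ around its fixed point) to produce and locate the perturbed limit points $\Gm_s(E,T+\lm v)$ inside $B_R(\Gm(E,T))$; then, because these perturbed limit points inherit the angle separation $\arg(\Gm_{s,j}\ov{\Gm_{s,k}})\not\in[-\de,\de]$ from their proximity to $\Gm(E,T)$, it reruns the Lemma~\ref{l:al}/Theorem~\ref{t:Contraction} argument with the perturbed $\Gm_s$ as the centers, and only then invokes Lemma~\ref{l:fixpoint2} with $\ph_j=\Psi_{E,\lm v,j}^{(n+1)}$ to get the continuous extension. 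Your sketch collapses these two steps into one, which is harmless at the level of a sketch since all the ingredients are present, but it is worth being aware that the contraction you need for Lemma~\ref{l:fixpoint2} is around the \emph{perturbed} limit points, not around $\Gm(E,T)$ itself.
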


%%%%%%%%%%%%%%%%%%%%%%%%%%%%%%%%%%%%%%%%%%%%%%%%%%%%%%%%%%%%%%%%%%%%%%%%%%%%%%%%%%%%%%%%%%%%%%%%%%%%%%%%%%%%%%%%%%%%
%%%%%%%%%%%%%%%%%%%%%%%%%%%%%%%%%%%%%%%%%%%%%%%%%%%%%%%%%%%%%%%%%%%%%%%%%%%%%%%%%%%%%%%%%%%%%%%%%%%%%%%%%%%%%%%%%%%%
%% Section
%%%%%%%%%%%%%%%%%%%%%%%%%%%%%%%%%%%%%%%%%%%%%%%%%%%%%%%%%%%%%%%%%%%%%%%%%%%%%%%%%%%%%%%%%%%%%%%%%%%%%%%%%%%%%%%%%%%%
%%%%%%%%%%%%%%%%%%%%%%%%%%%%%%%%%%%%%%%%%%%%%%%%%%%%%%%%%%%%%%%%%%%%%%%%%%%%%%%%%%%%%%%%%%%%%%%%%%%%%%%%%%%%%%%%%%%%

\section{Open problems and remarks}\label{s:problemsLI}

In this chapter we considered label invariant operators and certain deterministic perturbations. We have proven that the spectrum of  label invariant operators consists of finitely many intervals of pure absolutely continuous spectrum. Moreover, we have shown stability of the absolutely continuous spectrum under sufficiently small perturbations by radial label symmetric potentials.

The first ingredient are the uniform bounds for the truncated Green function which are obtained from the recursion formula.
Another vital ingredient is that $\Phi_{E+i\eta}$ stays a contraction for energies $E$ in certain intervals  as $\eta\downarrow0$.  This allowed us to show that the limits of the Green function have positive imaginary part and are continuous on these intervals. However,  we had to exclude a finite set $\Sigma_0\subset\R$.

This gives rise to the following question:
\medskip

\begin{question}
Are $\Gm_{x}(E,T)$ and $G_{x}(E,T)$ continuous in $E$ on $\R$?
\end{question}

Note that the Green function is analytic for energies outside the spectrum of the operator $T$. So the question is essentially about continuity of the Green functions on the set $\Sigma_0\cup(\si(T)\setminus \Sigma_1)$. %In Chapter~\ref{c:computations} we see by numerical computations that this is the case for particular examples.

We found that the spectrum of $T$ consists of finitely many intervals using a result of Milnor \cite{Mil} from algebraic geometry. However,  as already noted in \cite{Mil}, the estimate on the particular number of connected components is very rough. Therefore,  one might ask:\medskip

\begin{question}
How many connected components does $\Sigma_{1}$ have?
\end{question}

A closely related question is whether the distance between the intervals of $\Sigma_1$ is positive. An equivalent formulation is whether it can happen that $\Im \Gm_{x}(E,T)=0$ for some $E\in\R$  but $\Im \Gm_{x}(E',T)>0$ for $E'$ close to $E$. This leads to the questions:
\medskip

\begin{question}
How many connected components does $\si(T)$ have?
\end{question}
and\medskip

\begin{question}
Can the number of connected components of $\Sigma_1$ and $\si(T)$ differ for certain operators $T$?
\end{question}

We take a look at these questions for particular examples which we can compute numerically.\medskip

\begin{Ex} Let $\A=\{1,2\}$. We want to study the adjacency matrix, see Example~\ref{ex:operators},
\begin{align*}
    (A\ph)(x)=\sum_{y\sim x}\ph(y)
\end{align*}
on $\ell^2(\V)$ of the trees $\T(M,1)$ and $\T(M,2)$ generated by a substitution matrix $M\in\N^{2\times 2}$.

(1.) Let us first consider the substitution matrix $M_1$ given by
\begin{align*}
    M_1=\left(
        \begin{array}{cc}
          1 & 2 \\
          1 & 1 \\
        \end{array}
      \right).
\end{align*}
The corresponding polynomial equations \eqref{e:Polynom}
\begin{align*}
\xi_1^{2}+2\xi_1\xi_2+E\xi_1+1=0,\quad
\xi_1\xi_2+\xi_2^{2}+E\xi_2+1=0,
\end{align*}
can be reduced by Gröbner bases  to the system
\begin{align*}
\xi_1^4+2E\xi_1^3+(E^{2}-4)\xi_1^2-1=0,\quad
\xi_1^3+2E\xi_1^2+(E^{2}-3)\xi_1+2\xi_2+E=0.
\end{align*}
Solving these equations numerically, one finds that for each $E\in\R$ there is at most one solution $(E,\xi)\in\R\times\h^{2}$ to the equations above. Recall that $\Sigma_1$ is the set of energies where the polynomial equation \eqref{e:Polynom} has a solution $(E,\xi)$ in $\R\times\h^{2}$.
Hence, there is a function $\Sigma_1\to\h^{2}$, $E\mapsto\xi(E)=(\xi_1(E),\xi_2(E))$.
The functions $E\mapsto\Im \xi_1(E)$, $E\mapsto \Im\xi_2(E)$  are  plotted in Figure~\ref{f:example2}.
\begin{figure}[!h]
\centering
\scalebox{.6}{\includegraphics{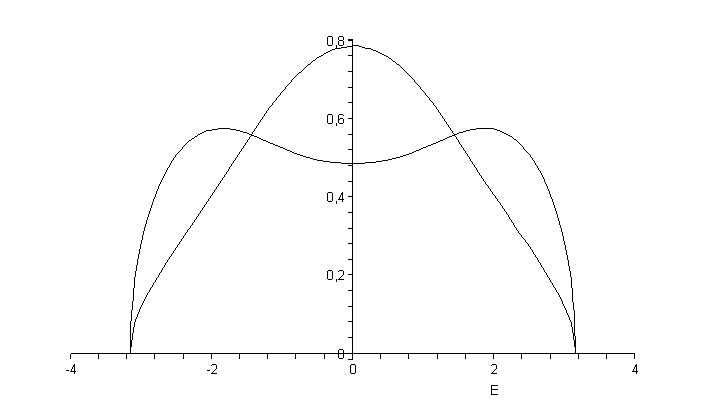}}
\caption{The imaginary parts of the truncated Green functions to $M_1$.}\label{f:example2}
\end{figure}
\\Now, one can read the set $\Sigma_1$ from Figure~\ref{f:example2} as the subset of the horizontal axis where the plotted functions do not vanish.

Recall that the spectrum of a label invariant operator does not depend on the choice of the label of the root (see Lemma~\ref{l:Gmlimits}). Moreover, by the statement and the proof of Theorem~\ref{t:LIG} we have
$\si(A)=\clos(\Sigma)=\clos(\Sigma_1\setminus\Sigma_0).$
Since $\Sigma_0$ is finite (see Lemma~\ref{l:Sigma0}) and $\Sigma_1$ does not have isolated points in our example (see Figure~\ref{f:example2}) the closure of $\Sigma_1$ is the spectrum of $A$ on $\T(M_1,1)$ and $\T(M_1,2)$. Therefore,  the spectrum consists of one single interval in this example. \\
Indeed, we can say more about $\Sigma_0$. One can calculate numerically that $\Re\xi_1(0)=\Re\xi_2(0)$ which implies $\arg \xi_1(0,A)\ov{\xi_2(0)}=0$. Therefore,  $\Sigma_0=\{0\}$, (as $\Sigma_0\subseteq \{0\}$ by Lemma~\ref{l:Sigma0}). By plotting the real parts one finds that $\xi_{1}$ and $\xi_2$  are continuous in $E=0$. This suggests that $\Gm_x(\cdot,A)$ and thus $G_x(\cdot,A)$ are continuous on $\R$.

Let us turn to another example.

(2.) Let  another
substitution matrix be given by
\begin{align*}
    M_2=\left(
        \begin{array}{cc}
          1 & 42 \\
          1 & 1 \\
        \end{array}
      \right).
\end{align*}
The polynomial equations \eqref{e:Polynom} for  the adjacency matrix $A$ on $\T(M_2,1)$ or $\T(M_2,2)$  can be reduced  with the help of Gröbner bases to
\begin{align*}
41\xi_1^4+82E\xi_1^3+(41E^{2}-1724)\xi_1^2+40E\xi_1-1&=0\\
41\xi_1^3+82E\xi_1^2+(41E^{2}-1724)\xi_1+42\xi_2+41E&=0.
\end{align*}
Again the solutions $(E,\xi)\in\R\times\h^{2}$ are unique for $E$. We  plot the imaginary parts of $E\mapsto\xi_1(E)$ and $E\mapsto\xi_2(E)$ in
 Figure~\ref{f:example}.
\begin{figure}[!h]
\centering
\scalebox{.6}{\includegraphics{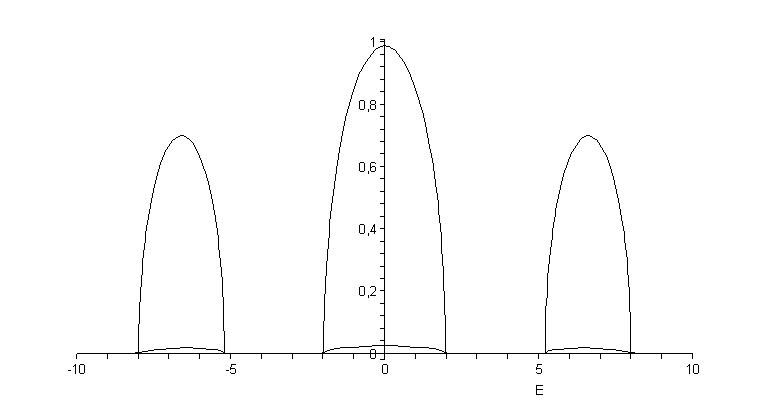}}
\caption{The imaginary parts of the truncated Green functions to $M_2$.}\label{f:example}
\end{figure}
In particular,  $\Im\xi_1\geq\Im\xi_2$ on the intervals on the left and the right and $\Im\xi_1\leq\Im\xi_2$ on the interval in the middle.\\
The spectrum consists of exactly the three intervals where $\Im \xi_1$ and $\Im \xi_2$ are positive. We infer that the spectrum of $A$ can  indeed be more than one interval.
\end{Ex}

Let us turn to another topic. The assumptions about positive diagonal (M1) and primitivity (M2) of the substitution matrix can be relaxed if one is only interested in partial results.

It would be interesting to study the theory of trees of finite cone type discussed in Example~\ref{ex:M2}. On the other hand, in \cite{Ao}, Aomoto gave a criterion for absence of eigenvalues for operators on trees which are invariant under a certain group action on the tree. So, one might ask the following question about the absolutely continuous spectrum.\medskip

\begin{question}
Consider label invariant operators on trees of finite cone type or trees invariant under a certain group action. Do they have pure absolutely continuous spectrum consisting of finitely many intervals under suitable assumptions similar to $\mathrm{(M1)}$ and $\mathrm{(M2)}$?
\end{question}

We also studied stability of the absolutely continuous spectrum under perturbations by radial label invariant potentials. There we imposed the assumption that $T$ is a non regular tree operator. This assumption is indeed necessary as the operator can be reduced to the one dimensional case otherwise.
In  \cite[Appendix A]{ASW1} it is shown that the absolutely continuous spectrum of a regular tree can be completely destroyed by arbitrary small radial symmetric  random  potentials. There,  two arguments are given. The first one uses a decomposition of the Hilbert space as it was later done in more detail by \cite{Br}. The  second argument concerns the reduction of the recursion relation and it is only sketched. We present it here  in more detail for the sake of completeness.\medskip

\begin{Ex}\label{ex:radsymPot} Let $\T=(\V,\E)$ be a $k$-regular tree, $A_\T$ the adjacency matrix on $\ell^2(\V)$, (see Example~\ref{ex:operators}~(3)), $v:\V\to[-1,1]$ given by $v(x)=f(|x|-1)$ where $f:\N\to[-1,1]$, $\lm>0$ and $H_\T=A_\T+\lm v$. For the truncated Green functions $\Gm_x=\Gm_x(z,H_\T)$ we have $\Gm_x=\Gm_y$ if $|x|=|y|$. Therefore, we can define the vector $(\Gm_n)_{n\in\N}$ by $\Gm_{|x|-1}=\Gm_x$. The recursion formula \eqref{e:Gm} thus reads
\begin{equation*}
-\frac{1}{\Gm_n(z,H_\T)}=z-\lm f(n)+k\Gm_{n+1}(z,H_\T).
\end{equation*}
Moreover, for the adjacency matrix $A_\N$ on $\ell^2(\N)$ the operator $H_\N=A_\N+\frac{\lm f}{\sqrt{k}}$ is given by
\begin{equation*}
(H_\N\ph)(n)=\sum_{j\in\N,|j-n|=1}\ph(j) +\frac{\lm{f(n)}}{\sqrt{k}}\ph(n).
\end{equation*}
The recursion formulas \eqref{e:Gm} for the truncated Green functions $\Gm_{n}(\cdot,H_{\N})$ of $H_{\N}$ and the energy $z/\sqrt{k}$  read  as
\begin{equation*}
-\frac{1}{\Gm_n(z/\sqrt{k},H_\N)}=\frac{z}{\sqrt{k}}- \frac{\lm f(n)}{\sqrt{k}}+\Gm_{n+1}(z/\sqrt{k},H_\N).
\end{equation*}
Since the recursion formula has a unique solution, (see Corollary~\ref{c:uniqueness}), for all $n\geq0$, we get
\begin{equation*}
\sqrt{k}\Gm_n(z,H_\T)=\Gm_n(z/\sqrt{k},H_\N).
\end{equation*}
This yields by the extension from $\Gm$ to $G$, Proposition~\ref{p:G2},
\begin{equation*}
\si_{\mathrm{ac}}(H_\T)=\sqrt{k}\si_{\mathrm{ac}}(H_\N).
\end{equation*}
Of course, there are many potentials  known to destroy the absolutely continuous spectrum of a one-dimensional operator completely. By Kotani theory this is the case for any non deterministic ergodic potential, see \cite{CFKS}. In fact, as shown recently in \cite[Theorem 1.1]{Re}, a potential $f:\N\to\R$ which takes only finitely many values and which is not eventually periodic yields $\si_{\mathrm{ac}}(A_\N+f)=\emptyset$. Hence, there are plenty of examples at hand for which the absolutely continuous spectrum of $H_\T$ can be completely destroyed.
\end{Ex}

The argument in this example fails whenever $T$ is a non regular tree operator. So, one might ask if there is a class of potentials which destroy the absolutely continuous spectrum of a non regular tree operator for arbitrary small $\lm>0$.
\medskip

\begin{question}
Let $T$ be a non regular tree operator. Is there a potential $v:\V\to[-1,1]$ such that $\si_{\mathrm{ac}}(T+\lm v)=\emptyset$ for all $\lm>0$?
\end{question}

%%%%%%%%%%%%%%%%%%%%%%%%%%%%%%%%%%%%%%%%%%%%%%%%%%%%%%%%%%%%%%%%%%%%%%%%%%%%%%%%%%%%%%%%%%%
%%%%%%%%%%%%%%%%%%%%%%%%%%%%%%%%%%%%%%%%%%%%%%%%%%%%%%%%%%%%%%%%%%%%%%%%%%%%%%%%%%%%%%%%%%%
%%%%%%%%%%%%%%%%%%%%%%%%%%%%%%%%%%%%%%%%%%%%%%%%%%%%%%%%%%%%%%%%%%%%%%%%%%%%%%%%%%%%%%%%%%%
%% Chapter
%%%%%%%%%%%%%%%%%%%%%%%%%%%%%%%%%%%%%%%%%%%%%%%%%%%%%%%%%%%%%%%%%%%%%%%%%%%%%%%%%%%%%%%%%%%
%%%%%%%%%%%%%%%%%%%%%%%%%%%%%%%%%%%%%%%%%%%%%%%%%%%%%%%%%%%%%%%%%%%%%%%%%%%%%%%%%%%%%%%%%%%
%%%%%%%%%%%%%%%%%%%%%%%%%%%%%%%%%%%%%%%%%%%%%%%%%%%%%%%%%%%%%%%%%%%%%%%%%%%%%%%%%%%%%%%%%%%

\chapter{Random potentials}\label{c:main3}

\begin{quote}
\begin{flushright}
\scriptsize{A blessing on those whose robes are washed, so that they may have a right to the \emph{tree} of life, and may go in by the doors into the town.} Revelations 22:14   \end{flushright}
\end{quote}

We turn now to perturbations of label invariant operators $T$ by random potentials taken from $\W_{\mathrm{rand}}(\Om,\T)$. These potentials are independently distributed on non intersecting forward trees and identically distributed on isomorphic forward trees.
The techniques introduced in this chapter also allow for the treatment of regular trees. In any case,  as we deal with random quantities, the stability of absolutely continuous spectrum is only proven almost surely.

There are some similarities to the analysis in the previous chapter. However,  by the randomness of the potential $v\in\W_{\mathrm{rand}}(\Om,\T)$ we loose all symmetry of the operators $H^{\lm,\om}=T+\lm v^{\om}$. Nevertheless, by the assumptions on the random potential we still have some symmetry in distribution. This will be used to prove contraction and consequently stability of absolutely continuous spectrum. In contrast to the analysis for radial label symmetric potentials we do not apply a fixed point principle for iterates of $\Psi_{z}$.
Instead, we define a function $\ka$  via twofold application of $\Psi_{z}$ and interpret it as an averaged contraction coefficient. We show that $\ka\leq1-\de$ for some $\de>0$ which we refer to as uniform contraction. This leads us to an inequality which gives bounds for the mean value of the Green function.

An inequality of the same type was proven in  \cite{FHS2} for the binary tree (and later in \cite{Hal1} for regular trees). There the authors compactify the domain of the averaged contraction coefficient by blow ups. Then, they show contraction on the boundary to deduce contraction close to the boundary by a compactness argument.

Our situation is essentially more complex. Therefore,  we develop a new scheme.
Instead of one inequality (as in \cite{FHS2,Hal1}) we need one for each label. This gives a vector inequality indexed by the label set $\A$, see Proposition~\ref{p:vector}. Moreover, we do not compactify the domain of the averaged contraction coefficient.  We prove uniform contraction in the whole domain  excluding only an arbitrary small compact set, see Proposition~\ref{p:ka}. This has two additional benefits. Firstly, all estimates are explicit and would,  for instance,  allow for estimates on  the coupling parameter $\lm$. Secondly, we  prove continuity of the mean value of the Green function as $\lm\downarrow0$. % for fixed energies in the interior of the spectrum.

The chapter is organized as follows. In the next section we discuss the strategy of the proof and state the theorems which  imply Theorem~\ref{main3}. In Section~\ref{s:ingredients} these theorems are proven except for the uniform contraction. This is the hard part of the analysis and it will be done in Section~\ref{s:kaproof}. At the end of this chapter, in Section~\ref{s:offdiagonal}, we discuss how our methods can be applied to small off diagonal perturbations.

%%%%%%%%%%%%%%%%%%%%%%%%%%%%%%%%%%%%%%%%%%%%%%%%%%%%%%%%%%%%%%%%%%%%%%%%%%%%%%%%%%%%%%%%%%%%%%%%%%%%%%%%%%%%%%%%%%%%
%%%%%%%%%%%%%%%%%%%%%%%%%%%%%%%%%%%%%%%%%%%%%%%%%%%%%%%%%%%%%%%%%%%%%%%%%%%%%%%%%%%%%%%%%%%%%%%%%%%%%%%%%%%%%%%%%%%%
%% Section
%%%%%%%%%%%%%%%%%%%%%%%%%%%%%%%%%%%%%%%%%%%%%%%%%%%%%%%%%%%%%%%%%%%%%%%%%%%%%%%%%%%%%%%%%%%%%%%%%%%%%%%%%%%%%%%%%%%%
%%%%%%%%%%%%%%%%%%%%%%%%%%%%%%%%%%%%%%%%%%%%%%%%%%%%%%%%%%%%%%%%%%%%%%%%%%%%%%%%%%%%%%%%%%%%%%%%%%%%%%%%%%%%%%%%%%%%

\section{Mean value bounds for the Green function}\label{s:meanvalue}

We consider the family of random operators $H^{\lm,\om}$ given by $T+\lm v^{\om}$, $\om\in\Om$. Here, $T$ is a label invariant operator on a tree  $\T(M,j)=(\V,\E)$ given by a substitution matrix $M$ over a finite label set $\A$ and $j\in\A$. We assume that the matrix $M$ does  not yield the one dimensional case (M0), has positive diagonal (M1) and is primitive (M2). The random potentials $v:\Om \times\V\to[-1,1] $ in $\W_{\mathrm{rand}}(\Om,\T)$ are defined on some probability space $(\Om,\PP)$ and satisfy the independence assumption (P1) and the identical distribution assumption (P2).
More precisely, they are given by
\begin{itemize}
\item [(P1)] For all $x,y\in\V$ the random variables $v_x$ and $v_y$ are independently distributed if $ \V_x\cap\V_y=\emptyset$.
\item [(P2)] For all $x,y\in\V$ with $a(x)=a(y)$ the random variables $v\vert_{\V_{x}}$ and $v\vert_{\V_{y}}$ are identically distributed.
\end{itemize}
Recall that we call two random variables $X$ and $Y$ on two isomorphic trees $\T_X$ and $\T_Y$ {identically distributed} if for every graph isomorphism $\psi:\T_X\to\T_Y$ the random variables $X$ and $Y\circ\psi$ are identically distributed.

For a measurable function $f:\Om\to [0,\infty)$ we denote the \emph{mean value} of $f$ by
\begin{align*}
    \EE(f):=\int_{\Om} f(\om)d\PP(\om).
\end{align*}

In Theorem~\ref{t:LIG} we have shown that the spectrum $\si(T)$ of $T$ is given by the closure of the set
$\Sigma$ defined as the largest open subset of $\R$, where the maps
\begin{align*}
\h\to\h,\quad z\mapsto \Gm_{x}(z,T),
\end{align*}
have unique continuous extensions to $\h\cup\Sigma \to\h$ for all $x\in\V$. Moreover, also by Theorem~\ref{t:LIG}, the functions $\Gm_{x}(\cdot,T)$ stay uniformly bounded on $\h\cup\Sigma$.

Our goal is to prove Theorem~\ref{main3}. The following theorem is a much stronger statement and Theorem~\ref{main3} is a consequence of the second and third part. The  forth part tells us that perturbed Green functions converge in $L^p(\Om,P)$ towards the unperturbed one as the perturbation parameter $\lm$ tends to zero.
\medskip

\begin{thm}\label{t:KleinG}
Let $I\subset\Sigma$ be compact, $p>1$ and $x\in \V$. There exists $\lm_0>0$ such that for $\lm\in[0,\lm_0)$
\begin{itemize}
\item [(1.)] $
\sup\limits_{E\in I}\sup\limits_{\eta\in(0,1]} \int_\Om\mo{G_x(E+i\eta,H^{\lm,\om})}^{p}d\PP(\om)<\infty
$,
\item [(2.)]
$\liminf\limits_{\eta\to0} \int_I\mo{G_x(E+i\eta,H^{\lm,\om})}^{p}dE<\infty$ for almost every $\om\in\Om$,
\item [(3.)] $\liminf\limits_{\eta\downarrow0}
\Im G_x(E+i\eta,H^{\lm,\om})>0$ for
almost every $(\om,E)\in\Om\times I$,
\item [(4.)]
$\lim\limits_{\lm\downarrow0}\sup\limits_{\eta\in(0,1]} \int_{\Om}{\mo{G_x(E+i\eta,H^{\lm})-G_{x}(E,L)}^{p}} d\PP(\om)=0 $ for all $E\in \Sigma$
and the convergence is uniform on $I$.
\end{itemize}
\end{thm}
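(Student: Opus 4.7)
The strategy is to establish a uniform $L^p$ bound on the truncated Green functions and then deduce the pointwise almost sure statements (2)--(4) by standard measure-theoretic arguments. By the label invariance (P2) and the independence (P1), the quantities
\begin{align*}
\gamma_j(z,\lm):=\EE\bigl(|\Gm_x(z,H^{\lm,\om})|^p\bigr),\qquad a(x)=j,
\end{align*}
depend on $x$ only through its label $j\in\A$. The plan is to derive from the recursion formula \eqref{e:Gm} a finite-dimensional \emph{vector inequality} for $(\gamma_j)_{j\in\A}$ roughly of the form $\gamma_j\leq\kappa_j\cdot(\text{average of }\gamma_k\text{'s})+C_j$, with a remainder $C_j$ bounded uniformly in $(E,\eta,\lm)$ and an averaged contraction coefficient $\kappa_j$ obtained from two successive applications of $\Psi_{z,\lm v,\cdot}$. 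Once $\kappa_j\leq 1-\de$ is shown uniformly on a compact $I\subset\Sigma$ for $\lm$ small, iterating the inequality produces a uniform bound on $\gamma_j$, and Proposition~\ref{p:G} transfers this to a uniform bound on $\EE(|G_x(\cdot,H^{\lm})|^p)$, which is exactly~(1).

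The vector inequality is set up as follows. Writing the recursion formula as $\Gm_x=\Psi_{z,\lm v^\om,x}((\Gm_y)_{y\in S_x})$ and using that $v(x)$ is independent of $(\Gm_y)_{y\in S_x}$ by (P1), I would first integrate out $v(x)$ with the help of the substitute triangle inequality for the shift $\si_z$ (Lemma~\ref{l:ti}); the average of $|\tau_x(\cdot)|^p$ is then controlled by $\sum_{y\in S_x}p_{x,y}\,\gamma_{a(y)}$ through the $\tau$-estimate of Lemma~\ref{l:Psicontr} and Jensen's inequality, and the independence among the $\Gm_y$'s allows a genuine factorisation. The contraction coefficient must be defined via two steps of $\Psi_z$ rather than one, because on vectors whose components are real multiples of each other $\tau$ acts as a hyperbolic isometry (cf.\ the discussion at the end of Subsection~\ref{s:Contraction}); after two steps the relative-argument transport formula (Lemma~\ref{l:taual}) produces a non-vanishing angle on compact subsets of $\Sigma_1\setminus\Sigma_0$, exactly as in Lemma~\ref{l:al}.

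The main obstacle is showing $\kappa\leq 1-\de$ uniformly. Unlike \cite{FHS2,Hal1}, which compactify the phase space by blow-ups and argue by a boundary/compactness pair, the plan here is direct: exclude only an arbitrarily small neighbourhood of the fixed point $\Gm(E,T)\in\h^\A$ and prove the bound on the complement by geometric means. The key quantitative inputs are Lemma~\ref{l:eta}, particularly the identity $\eps_1(r)^2=r\,\eps_0\,\eps_2(r)$, which translates the non-vanishing relative argument into a definite amount of contraction of $\gm$ per step; Theorem~\ref{t:LIG} and Lemma~\ref{l:Gmbounds}, which provide uniform upper bounds on $|\Gm_j(E,T)|$ and uniform lower bounds on $\Im\Gm_j(E,T)$ for $E\in I$; and the continuity of the recursion map in $(z,\lm)$, which ensures that the contraction survives a perturbation by $\lm v$ provided $\lm$ is chosen small enough. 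The output of this step is Proposition~\ref{p:ka}, which combined with the vector inequality (Proposition~\ref{p:vector}) yields~(1).

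Parts (2)--(4) are then essentially soft consequences of~(1). Statement~(2) follows from Fubini applied to $\int_\Om\int_I|G_x(E+i\eta,H^{\lm,\om})|^p\,dE\,d\PP(\om)$, which is uniformly bounded in $\eta\in(0,1]$ by~(1), together with Fatou's lemma. Statement~(3) combines~(2) pointwise in $\om$ with the criterion of Theorem~\ref{t:Klein}: for a.e.\ $\om$ the measure $\mu_x^\om$ restricted to $I$ is absolutely continuous with density in $L^q$, and Lemma~\ref{l:Gpointwise} plus Lemma~\ref{l:mu} then give $\lim_{\eta\downarrow 0}\Im G_x(E+i\eta,H^{\lm,\om})=\pi\,d\mu_x^\om/dE$, which is positive for a.e.\ $E\in I$ because $I\subset\si_{\mathrm{ac}}(H^{\lm,\om})$ (itself a consequence of Theorem~\ref{t:LIG} for the unperturbed operator, transported to the perturbed one through Proposition~\ref{p:LabRadPotGm} in its random-perturbation analogue). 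Finally~(4) follows by combining the $\lm$-uniform bound in~(1) (which gives uniform integrability of $|G_x(E+i\eta,H^\lm)-G_x(E,T)|^p$) with convergence in probability of $G_x(E+i\eta,H^\lm)\to G_x(E,T)$ as $\lm\downarrow 0$; the latter comes from continuity of the recursion maps $\Psi_{z,\lm v,s}$ in $(z,\lm)$ exactly as in the proof of Proposition~\ref{p:LabRadPotGm}, applied on the hyperbolic ball around $\Gm(E,T)$ where the $(n+1)$-fold recursion is a uniform contraction.
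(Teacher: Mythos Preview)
Your architecture is right — a two-step expansion giving a vector inequality for finitely many expectations, a Perron--Frobenius reduction, then soft analysis for (2)--(4) — but two steps would fail as written. First, the quantity in your vector inequality is the wrong one: you set $\gamma_j=\EE(|\Gm_x|^p)$, yet every tool you invoke (Lemma~\ref{l:Psicontr}, Lemma~\ref{l:ti}, the $\tau$-formula) concerns the hyperbolic semi-metric $\gm(\cdot,\cdot)$, not $|\cdot|$. The paper's inequality (Proposition~\ref{p:vector}) is for $\EE\bigl(\gm(\Gm_x(z,H^{\lm}),\Gm_x(z,T))^p\bigr)$, the mean hyperbolic distance to the \emph{unperturbed} truncated Green function; only after Theorem~\ref{t:EGm} bounds this by some $c(\lm)\to 0$ does one recover a bound on $\EE(|\Gm_x|^p)$, via the elementary inequality $|\xi|\le 4\gm(\xi,\zeta)\Im\zeta+2|\zeta|$ with $\zeta=\Gm_x(z,T)$. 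Second, your contraction step imports Lemma~\ref{l:al}, which requires $T$ to be a \emph{non-regular} tree operator (through Lemma~\ref{l:Sigma0}); Theorem~\ref{t:KleinG} must cover regular trees as well. What makes the random case work is averaging over the group $\Pi$ of label-preserving permutations of $S_{o,o'}$: since the $\Gm_y$, $y\in S_{o,o'}$, are i.i.d.\ within each label class, $\EE(Z_0^p)$ equals its $\Pi$-average, and it is the \emph{averaged} coefficient $\ka_o^{(p)}$ that is shown to be $\le 1-\de$ (Proposition~\ref{p:ka}) via a three-case analysis — a Jensen gap when some $\gm_x/\max_y\gm_y$ is small, the arithmetic/geometric-mean gap for $Q_{x,y}$ when some $\Im g_x/\max_y\Im g_y$ is small, and a two-sphere relative-argument bound (Proposition~\ref{p:al>0}) in the remaining ``visible'' regime. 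This permutation device is precisely the substitute for the non-regularity hypothesis, and your proposal does not mention it.

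Your (2) matches the paper. Your (3) is circular: you invoke $I\subset\si_{\mathrm{ac}}(H^{\lm,\om})$, which is the conclusion of Theorem~\ref{main3} and is deduced \emph{from} Theorem~\ref{t:KleinG}; and Proposition~\ref{p:LabRadPotGm} concerns deterministic radial potentials only, with no random analogue available at this stage. The paper's argument for (3) is direct: if $\Im\Gm_x(E+i\eta)\to 0$ then $\gm(\Gm_x,h_x)\to\infty$, which by Theorem~\ref{t:EGm} can occur only on a $\PP$-null set for each $E\in\Sigma$; Lemma~\ref{l:Gpointwise} and Proposition~\ref{p:G2}(3) then transfer positivity from $\Gm_x$ to $G_x$. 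For (4) the paper bypasses your uniform-integrability detour with a single Cauchy--Schwarz, $\EE(|g-h|^{p})^2\le\EE(\gm(g,h)^p)\,\EE((\Im g\,\Im h)^p)\le c(\lm)\cdot\mathrm{const}$, which tends to zero with $\lm$ directly from Theorem~\ref{t:EGm} and part~(1).
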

The previous theorem is stronger than the results about the Green functions which are obtained in \cite{ASW1} and \cite{FHS2} for regular trees. In particular,  we have uniform bounds on the mean values of the Green functions (compare to \cite{ASW1}) and continuity for $\lm\downarrow0$ (compare to \cite{FHS2}). For regular trees, there is a stronger statement  than ours  found in \cite[Theorem~1.4]{Kl1}. In particular,  there it  is proven that the mean value of the Green function is continuous for all energies in a compact interval in the interior of the spectrum and all $\lm$ sufficiently small.

We will derive Theorem~\ref{t:KleinG} from the following statement. It basically bounds the mean deviation of the perturbed Green function from the unperturbed one.
\medskip

\begin{thm}\label{t:EGm}Let $I\subset\Sigma$ be compact and $p> 1$. Then there exist $\lm_0=\lm_0(I,p)>0$ and a decreasing $c:[0,\lm_0)\to[0,\infty)$ with $c(\lm)\to0$ for $\lm\to0$ such that for $\lm\in[0,\lm_0]$
\begin{equation*}
\sup_{x\in\V}\sup_{E\in I}\sup_{\eta\in(0,1]}\int_{\Om}{\gm\ap{\Gm_x(E+i\eta, H^{\lm,\om}),\Gm_x(E+i\eta, T)}^p}d\PP(\om)\leq c(\lm).
\end{equation*}
\end{thm}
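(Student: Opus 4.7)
The plan is to convert the deterministic recursion for the truncated Green functions into a probabilistic vector inequality on the finite label set $\A$, which closes thanks to a uniform contraction estimate for an averaged two-step contraction coefficient $\ka$. Fix $I\subset\Sigma$ compact and $p>1$. By Theorem~\ref{t:LIG} together with compactness, there is an open neighborhood $U\subset\h\cup\Sigma$ of $\ac{E+i\eta:E\in I,\,\eta\in[0,1]}$ on which $z\mapsto\Gm_x(z,T)$ is continuous, label invariant, uniformly bounded, and satisfies $\Im\Gm_x(z,T)\geq c_1>0$ and $\Im\tau_x(\Gm_{S_x}(z,T))\geq c_2>0$ uniformly in $x\in\V$ (the lower bound at $\eta=0$ follows since $I\subset\Sigma$; it extends to a neighborhood by continuity). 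Writing $\oh\Gm_x:=\Gm_x(z,H^{\lm,\om})$ and $\Gm_x:=\Gm_x(z,T)$, the recursion formula \eqref{e:Gm}, the decomposition $\Psi_{z,x}=\rho\circ\si_z\circ\tau_x$, and Lemma~\ref{l:Psicontr}(1),(2) give $\gm(\oh\Gm_x,\Gm_x)\leq \gm\ap{\tau_x(\oh\Gm_{S_x})-\lm v^\om(x),\,\tau_x(\Gm_{S_x})}$. The substitute triangle inequality Lemma~\ref{l:ti} peels off the shift $\lm v^\om(x)$ with multiplicative cost $c_0(\lm v^\om(x))\leq 1+C\lm$ and additive cost $\leq C\lm$, and the exact identity for $\gm\circ\tau$ in Lemma~\ref{l:Psicontr}(3) then yields
\begin{align*}
\gm(\oh\Gm_x,\Gm_x)\leq (1+C\lm)\sum_{y\in S_x}p_{x,y}(\Gm_{S_x})\,c_{x,y}(\oh\Gm_{S_x},\Gm_{S_x})\,\gm(\oh\Gm_y,\Gm_y)+C\lm.
\end{align*}

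Raising both sides to the $p$-th power (using Jensen's inequality on the convex combination $\sum_y p_{x,y}=1$, together with $|c_{x,y}|\leq 1$) and taking expectation, the independence property (P1) decouples the $\gm(\oh\Gm_y,\Gm_y)$ for distinct $y\in S_x$ and from $v^\om(x)$, while the identical distribution property (P2) makes $\EE[\gm(\oh\Gm_x,\Gm_x)^p]$ depend on $x$ only through the label $a(x)$. The quantity
\begin{align*}
F_j(z,\lm):=\EE\ab{\gm\ap{\Gm_x(z,H^{\lm,\om}),\Gm_x(z,T)}^p},\qquad j\in\A,\ a(x)=j,
\end{align*}
is therefore well defined. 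Since a single iteration of $\tau$ need not be strictly contractive (as observed at the end of Subsection~\ref{s:Contraction}), I iterate the above bound twice in order to extract a genuine contraction from the interaction of $\tau$ with the randomness. This produces a vector inequality
\begin{align*}
F_j(z,\lm)\leq \ka_j(z,\lm)\,\max_{k\in\A}F_k(z,\lm)+\eps(\lm),\qquad \eps(\lm)\to 0\text{ as }\lm\to 0,
\end{align*}
where $\ka_j(z,\lm)$ is the two-step averaged contraction coefficient built from the weights $p_{\cdot,\cdot}$ and from $\EE[|c_{\cdot,\cdot}|^p]$, and the additive term $\eps(\lm)$ collects all $O(\lm)$ contributions from the repeated use of Lemma~\ref{l:ti}.

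The decisive step, which I expect to be the main obstacle, is the uniform bound $\sup_{j\in\A}\sup_{z\in U}\ka_j(z,\lm)\leq 1-\de$ for $\lm\in[0,\lm_0]$ and some $\de>0$, $\lm_0>0$; this will be proven as Proposition~\ref{p:ka} in Section~\ref{s:kaproof}. At $\lm=0$ it is the probabilistic counterpart of the uniform contraction of $\Phi_E^{n+1}$ near its fixed point $\Gm(E,T)$ from Theorem~\ref{t:Contraction}, exploiting that $\arg\ap{\Gm_j(E,T)\ov{\Gm_k(E,T)}}\neq 0$ for some $j,k\in\A$ on $\Sigma_1\setminus\Sigma_0$ (Lemma~\ref{l:Sigma0}) and the $\tau$-contraction mechanism of Lemma~\ref{l:tau}; the extension to small $\lm>0$ will proceed by continuity of $\ka_j$ in $\lm$. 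The difficulty is that the estimate must be produced globally on the unbounded set $U$, without compactifying the hyperbolic domain, and therefore requires a quantitative interplay between the contraction of $\gm\circ\tau$, the substitute triangle inequality, and the ball geometry of Lemma~\ref{l:eta}. Once this is in hand, the vector inequality immediately gives $\max_{j\in\A}F_j(z,\lm)\leq \eps(\lm)/\de=:c(\lm)$ uniformly on $U\times[0,\lm_0]$; since $\eps(\lm)\to 0$ and $c$ can be arranged decreasing on $[0,\lm_0)$, this is precisely the conclusion of Theorem~\ref{t:EGm}.
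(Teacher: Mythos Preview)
Your outline has the right architecture (two-step expansion, peel off the potential via Lemma~\ref{l:ti}, close a recursion on $F_j=\EE[\gm^p]$ indexed by labels), but the step that is supposed to produce the vector inequality contains a genuine gap.

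The problematic sentence is ``the independence property (P1) decouples the $\gm(\oh\Gm_y,\Gm_y)$ for distinct $y\in S_x$'' followed by defining $\ka_j$ as built from $\EE[|c_{\cdot,\cdot}|^p]$. This factoring is not available: the contraction quantity $c_{x,y}(\oh\Gm_{S_x},\Gm_{S_x})$ depends on \emph{all} components $\oh\Gm_z$, $z\in S_x$ (through $q_z$, $Q_{y,z}$, $\al_{y,z}$), so $c_{x,y}$ and $\gm(\oh\Gm_y,\Gm_y)$ are correlated and $\EE[c_{x,y}^p\gm_y^p]\neq\EE[c_{x,y}^p]\,F_{a(y)}$. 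Nothing in (P1)--(P2) lets you separate the contraction coefficient from the thing it is contracting. The paper's cure is precisely to avoid taking expectations of $c_{x,y}$: one introduces the two-sphere set $S_{o,o'}$ with $a(o')=a(o)$, the group $\Pi$ of label-preserving permutations of $S_{o,o'}$, and the \emph{pointwise} ratio $\ka_o^{(p)}(g)=\sum_\pi Z_0(g\circ\pi)^p/\sum_\pi Z_1^{(p)}(g\circ\pi)$. Because the $\oh\Gm_x$ for $x\in S_{o,o'}$ are i.i.d.\ within each label class, $\EE[Z_0(g)^p]=\EE[Z_0(g\circ\pi)^p]$ for every $\pi$, and one can insert the permutation average for free; the contraction $\ka_o^{(p)}\leq 1-\de$ is then used \emph{inside} the expectation before any factoring, and only afterwards does one use that $Z_1^{(p)}$ is linear in $\gm_x^p$ with deterministic weights to arrive at $\sum_k P_{j,k}F_k$. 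The further subtlety you do not address is that $\ka_o^{(p)}\leq 1-\de$ only holds outside a small ball $B_{R(\lm)}(h)$; the complementary contribution is controlled by $R(\lm)^p\to 0$, and this is where the ball geometry of Lemma~\ref{l:eta} enters.

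A second, more conceptual issue: you propose to derive the uniform contraction from Theorem~\ref{t:Contraction} and Lemma~\ref{l:Sigma0}. Those results rely on $T$ being a \emph{non} regular tree operator (so that $\arg(\Gm_j\ov{\Gm_k})\neq 0$ for some $j,k$), whereas Theorem~\ref{t:EGm} must cover regular tree operators as well --- for those, all components of $\Gm(z,T)$ coincide and the angle mechanism of Chapter~\ref{c:freeoperator} is vacuous. The contraction in Proposition~\ref{p:ka} comes from a different source: either Jensen's strict inequality for $p>1$ (Case~1), or the geometric/arithmetic mean gap in $Q_{x,y}$ (Case~2), or an angle between $g_{o'}^{(\pi)}-h_{o'}$ and $g_x-h_x$ produced by the two-step structure and the fact that $h_{o'}\in\h$ is bounded away from the real axis (Case~3). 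None of these requires non-regularity. Your sketch would therefore not prove the theorem for regular trees, which is one of its main points.
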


Let us sketch the idea of the proof of Theorem~\ref{t:EGm}. We first note that by (P2) the random variables $\om\mapsto \Gm_x(z, H^{\lm,\om})$ are identically distributed for all $x\in\V$ with label $a(x)$.
For $\lm\geq0$ we define the vector $\EE{\gm}:=\ap{\EE\gm_j}_{j\in\A}$ by
\begin{align*}
\ap{\EE\gm_{a(x)}}:=\EE\ap{\gm\ap{\Gm_x(z, H^{\lm}),\Gm_x(z, T)}^{p}}=\int_{\Om}\gm{\ap{\Gm_x(z, H^{\om,\lm}),\Gm_x(z, T)}}^{p}d\PP(\om).
\end{align*}
We  show that there exist a non-random stochastic matrix $P:{\A\times\A}\to[0,\infty)$ and $\de>0$ such that for $\lm$ sufficiently small we have the vector inequality
\begin{align*}
\EE{\gm}\leq (1-\de) P\,\EE{\gm}+c(\lm)\mathds{1},
\end{align*}
where $P$ depends continuously on $z$, the constant $\de$ is independent of $z\in I+i[0,1]$ and $\mathds{1}$ is the vector in $\R^{\A}$ which has ones in all components. The inequality is of course understood componentwise. By the Perron-Frobenius theorem
there is a positive normalized left eigenvector $u\in\R^{\A}$ such that $P^{\top}u=u$. We deduce
\begin{equation*}
\as{u,\EE\gm}\leq (1-\de) \as{u,\EE\gm}+c(\lm),
\end{equation*}
where $\as{\cdot,\cdot}$ denotes the standard scalar product in $\R^{\A}$. Since $P$ depends continuously on $z\in I+i[0,1]$, so does $u$ and we infer Theorem~\ref{t:EGm}.

%%%%%%%%%%%%%%%%%%%%%%%%%%%%%%%%%%%%%%%%%%%%%%%%%%%%%%%%%%%%%%%%%%%%%%%%%%%%%%%%%%%%%%%%%%%%%%%%%%%%%%%%%%%%%%%%%%%%%%%%%%%%%%%%
%%%%%%%%%%%%%%%%%%%%%%%%%%%%%%%%%%%%%%%%%%%%%%%%%%%%%%%%%%%%%%%%%%%%%%%%%%%%%%%%%%%%%%%%%%%%%%%%%%%%%%%%%%%%%%%%%%%%%%%%%%%%%%%%
%%% Section
%%%%%%%%%%%%%%%%%%%%%%%%%%%%%%%%%%%%%%%%%%%%%%%%%%%%%%%%%%%%%%%%%%%%%%%%%%%%%%%%%%%%%%%%%%%%%%%%%%%%%%%%%%%%%%%%%%%%%%%%%%%%%%%%
%%%%%%%%%%%%%%%%%%%%%%%%%%%%%%%%%%%%%%%%%%%%%%%%%%%%%%%%%%%%%%%%%%%%%%%%%%%%%%%%%%%%%%%%%%%%%%%%%%%%%%%%%%%%%%%%%%%%%%%%%%%%%%%%
\section{The ingredients of the proof}\label{s:ingredients}

There are two propositions which are vital for the proof of the vector inequality. The first one,  Proposition~\ref{p:expansion}, which is proven in the next subsection, is a two step expansion formula. The second one, Proposition~\ref{p:ka}, which is presented in Subsection~\ref{s:ka} and proven in Section~\ref{s:kaproof}, states that an averaged contraction coefficient is uniformly smaller than one. With these two propositions at hand the proof of the vector inequality is mainly an algebraic manipulation.

The graph in the figure below illustrates how this section and also the proof of Theorem~\ref{main3} is structured.
\begin{figure}[!ht]
\centering \scalebox{.8}{\includegraphics{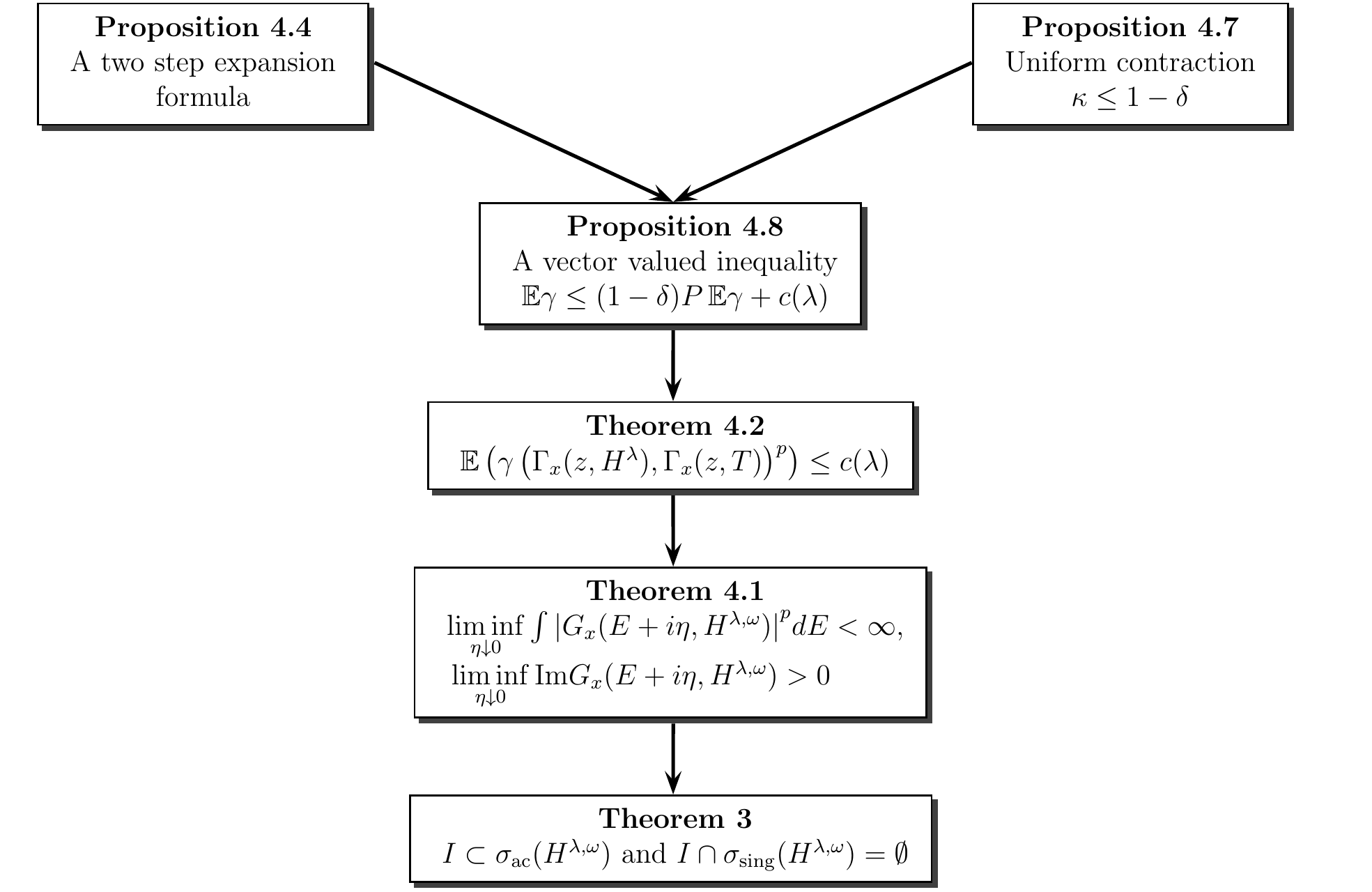}}
\caption{The logical structure of the proof of Theorem~\ref{main3}. }\label{f:structure}
\end{figure}

%%%%%%%%%%%%%%%%%%%%%%%%%%%%%%%%%%%%%%%%%%%%%%%%%%%%%%%%%%%%%%%%%%%%%%%%%%%%%%%%%%%%%%%%%%%%%%%%%%%%%%%%%%%%%%%%%%%%%%%%%%%%%%%%
%%% Subsection
%%%%%%%%%%%%%%%%%%%%%%%%%%%%%%%%%%%%%%%%%%%%%%%%%%%%%%%%%%%%%%%%%%%%%%%%%%%%%%%%%%%%%%%%%%%%%%%%%%%%%%%%%%%%%%%%%%%%%%%%%%%%%%%%
\subsection{A two step expansion formula}\label{s:expansion}
Before we present the two step expansion formula we introduce some notation.
Let $o\in\V$ be fixed. It can be thought of as  the root of the tree (but it does not have to be the root). Let $o'\in S_o$ with $a(o')=a(o)$ be fixed and define
\begin{align*}
S_{o,o'}&:=S_{o'}\cup S_{o}\setminus\{o'\}.
\end{align*}
In Figure~\ref{f:tree_Soo} this definition is illustrated for the tree $\T(M,1)$ of Example~\ref{ex:M}~(2.).
\begin{figure}[!ht]
\centering \scalebox{.3}{\includegraphics{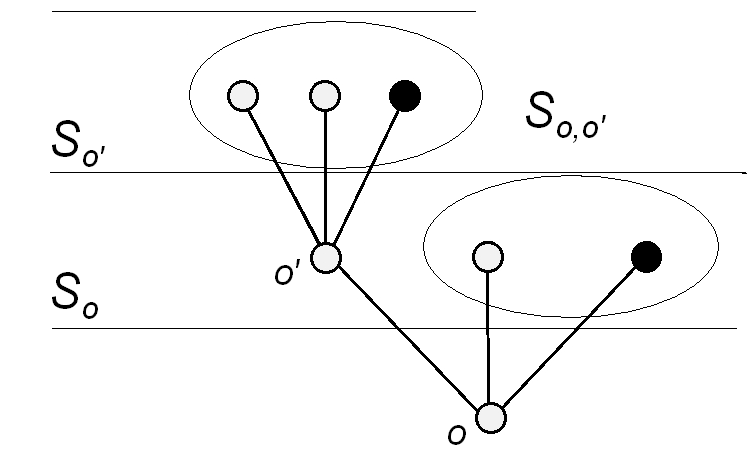}}
\caption{The upper and the lower sphere of $S_{o,o'}$.} \label{f:tree_Soo}
\end{figure}
We speak about $S_{o}\setminus\{o'\}$ as the \emph{lower sphere} and about $S_{o'}$ as the \emph{upper sphere}.
Note that $o'$ with $a(o')=a(o)$ always exists by (M1). As discussed in Section~\ref{s:LIOp}, the degree of each vertex is always larger or equal than two and therefore
\begin{align*}
S_{o,o'}\setminus S_{o'}\neq \emptyset.
\end{align*}
Moreover, since $a(o)=a(o')$ we also have
\begin{align*}
a(S_{o}\setminus \{o'\})\subseteq a(S_{o'}).
\end{align*}
This means that every label which is found in the lower sphere is also found in the upper sphere. (The opposite inclusion is true if and only if there are two forward neighbors of $o$ with label $a(o)$, i.e.,  $M_{a(o),a(o)}\geq2$.) We will often use these facts without explicitly mentioning them.

We recall the definition of the
quantities $\gm_x$, $p_{x}$, $c_{x}$, $q_x$, $Q_{x,y}$ and $\al_{x,y}$ which are originally defined in Section~\ref{s:Contraction}.
Let $z\in\h$ and $h=(\Gm_x(z,T))_{x\in S_{o}\cup S_{o'} }$,
$i\in\{o,o'\}$, $x\in S_{i}$ and $g\in\h^{ S_{o}\cup S_{o'} }$ be given.
We have
\begin{align*}
\gm_x&=\gm(g_x,h_x)
\end{align*}
and
\begin{align*}
p_{x}&=p_{x}(h)= \frac{|t(i,x)|^{2}\Im h_x}{\sum_{y\in S_i}|t(i,y)|^{2}\Im h_y}.
\end{align*}
Moreover,
\begin{align*}
c_{x}&=c_{x}(g,h)={\sum_{y\in S_i}q_{y}(g)Q_{x,y}(g,h)\cos\al_{x,y}(g,h)},
\end{align*}
where for $y\in S_{i}$
\begin{align*}
q_{y}&=q_{y}(g)=\frac{|t(i,y)|^{2}\Im g_y}{\sum_{v\in S_i}|t(i,v)|^{2}\Im g_v} \\
\end{align*}
and for arbitrary $x,y\in S_{o}\cup S_{o'}$ with $g_{x}\neq h_{x}$, $g_{y}\neq h_{y}$
\begin{align*}
Q_{x,y}&=Q_{x,y}(g,h)=\frac{\ab{\Im g_x\Im g_y\Im h_x\Im h_y\gm_x\gm_y}^\frac{1}{2}}{\frac{1}{2}\ab{\Im g_x\Im h_y\gm_y+\Im g_y\Im h_x\gm_x}},\\
\al_{x,y}&=\al_{x,y}(g,h)=\arg{(g_x-h_x)\ov{(g_y-h_y)}}.
\end{align*}
Moreover, we defined $Q_{x,y}=0$ for $g_{x}=h_{x}$ or $g_{y}= h_{y}$.
Note that, in contrast to Section~\ref{s:Contraction}, the quantities $p$ and $c$ carry only one index. This is possible as $a(o')=a(o)$. Recall that $\sum_{x\in S_i}  p_x=\sum_{y\in S_i}  q_y=1$ and $c_x\in[-1,1]$ as $Q_{x,y}\in[0,1]$ and $\cos\al_{x,y}\in[-1,1]$.

For given $W\subseteq W' \subseteq\V$ and $g\in \h^{W'}$, we denote by $g_{W}\in \h^{W}$ the restriction of $g$ to $W$. Moreover, we sometimes write a vector $g\in\h^{\{x\}\times W}$ as $g=(g_x,g_{W})$ for $x\not\in W$.

The  recursion map is defined in \eqref{e:Psi}  for $g\in \h^{S_x}$, $x\in\V$, $v:\V\to \R$ and $z\in\h$ and is given by
\begin{align*}
\Psi_{z,x}^{(T+v)}(g) =-\frac{1}{z-v(x)-m_{a(x)} +\sum\limits_{y\in\pi(S_{x})}|t(x,y)|^{2}g_{y}} =\Psi_{z-v(x),x}^{(T)}(g).
\end{align*}
For  $g\in\h^{S_{o,o'}}$ and $v,v'\in\R$ we define
\begin{align*}
g_{o'}:=g_{o'}(z,v')&:=\Psi_{z-v',o'}^{(T)}\ap{g_{S_{o'}}},\\
g_{o}:=g_{o}(z,v,v')&:= \Psi_{z-v,o}^{(T)}\ap{\ap{g_{o'}(z,v'),g_{S_{o}\setminus\{o'\}}}}.
\end{align*}

Next, we define the quantities which come up in the two step expansion formula in Proposition~\ref{p:expansion} below. For $p\geq1$, $z\in \h$, $v\in\R$ , $h=(\Gm_{x}(z,T))_{x\in S_{o,o'}}$ and $g\in\h^{S_{o,o'}}$ let
\begin{align*}
Z_0:=Z_0(z,v,g)&:=
{\sum_{y\in S_{o'}}p_{o'} p_{y}c_{o'}c_{y}\gm_{y} + \sum_{x\in S_{o}\setminus\{o'\}}p_{x}c_{x}\gm_{x}},\\
Z_1^{(p)}:=Z_1^{(p)}(z,v,g)&:=
{\sum_{y\in S_{o'}}p_{o'} p_{y}\gm_{y}^{p} + \sum_{x\in S_{o}\setminus\{o'\}}p_{x}\gm_{x}^{p}},
\end{align*}
where $g_{o'}=g_{o'}(z,v)=\Psi^{(T)}_{z-v,o'}(g_{S_{o'}})$ and $h_{o'}=h_{o'}(z,0)=\Psi^{(T)}_{z,o'}(h_{S_{o'}})$ in the definition of the quantities $c_{y}=c_{y}(g,h)$, $y\in S_{o}$.

Let us mention some simple, but important, facts about $Z_0$ and $Z_1$.\medskip

\begin{lemma}\label{l:Z} Let $p\geq1$, $z\in\h$, $v\in\R$ and $g\in\h^{S_{o,o'}}$.
Then
\begin{align*}
(Z_0(z,v,g))^{p}\leq Z_1^{(p)}(z,v,g)\leq \gm_{S_{o,o'}}(g,h)^p,
\end{align*}
where
$\gm_{S_{o,o'}}(g,h)=\max_{x\in S_{o,o'}}\gm(g_{x},h_{x})$.
Moreover, the following are equivalent
\begin{itemize}
\item [(i.)]   $Z_0(z,v,g)=Z_1^{(1)}(z,v,g)$.
\item [(ii.)]  $c_{x}=1$ for all $x\in S_{i}$ and $i\in\{o,o'\}$.
\item [(iii.)]  $Q_{x,y}=1$ and $\al_{x,y}=0$ for all $x,y\in S_{i}$ and $i\in\{o,o'\}$.
\end{itemize}
\begin{proof}
The statements follow from the Jensen inequality and the basic properties of the quantities such as the fact that the $p_{x}$'s sum up to one and the $c_{y}$'s are bounded by one.
\end{proof}
\end{lemma}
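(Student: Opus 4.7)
The plan is to treat the inequalities and the equivalence as essentially two Jensen-type computations plus a sign analysis. First I note the crucial bookkeeping fact: the weights $\{p_{o'}p_y\}_{y\in S_{o'}}\cup\{p_x\}_{x\in S_o\setminus\{o'\}}$ form a convex combination on $S_{o,o'}$, since $\sum_{y\in S_{o'}}p_y=1$ gives $\sum_{y\in S_{o'}} p_{o'}p_y = p_{o'}$, and $\sum_{x\in S_o\setminus\{o'\}} p_x = 1-p_{o'}$, so the total mass equals $1$. This is the only structural ingredient beyond what is already recorded in the preceding paragraphs ($|c_x|\le1$, $Q_{x,y}\in[0,1]$, $\cos\al_{x,y}\in[-1,1]$).

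For the double inequality $Z_0^p\le Z_1^{(p)}\le \gm_{S_{o,o'}}(g,h)^p$ I would argue as follows. Using $|c_{o'}c_y|\le1$ and $|c_x|\le1$ together with $\gm_y,\gm_x\ge0$, the termwise bound gives $Z_0\le Z_1^{(1)}$. Applying Jensen's inequality to the convex function $t\mapsto t^p$ on $[0,\infty)$ with respect to the probability weights described above yields $(Z_1^{(1)})^p\le Z_1^{(p)}$, and combining the two gives the first inequality (one takes $(Z_0)_+$ if signs are an issue, which is the only sensible interpretation of $Z_0^p$ for general $p\ge1$). The second inequality is immediate since $Z_1^{(p)}$ is a convex combination of the $\gm_x^p$ with $x\in S_{o,o'}$, each of which is bounded by $\gm_{S_{o,o'}}(g,h)^p$.

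For the equivalence, the easy directions are (iii)$\Rightarrow$(ii) and (ii)$\Rightarrow$(i): substituting $Q_{x,y}=1,\al_{x,y}=0$ into the defining sum for $c_x$ gives $c_x=\sum_{y\in S_i}q_y=1$, and plugging $c_x=c_{o'}=c_y=1$ into the defining sum for $Z_0$ gives exactly $Z_1^{(1)}$. The substantive direction is (i)$\Rightarrow$(iii). Here I would write
\[
Z_1^{(1)}-Z_0=\sum_{y\in S_{o'}}p_{o'}p_y(1-c_{o'}c_y)\gm_y+\sum_{x\in S_o\setminus\{o'\}}p_x(1-c_x)\gm_x,
\]
observe that every summand is nonnegative (using $c_{o'}c_y\le1$ and $c_x\le1$), and conclude from the vanishing of the left-hand side that each summand vanishes. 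On the set where $\gm_x>0$ this forces $c_x=1$, and, since the weights $q_v$ appearing in the definition of $c_x$ are strictly positive (because $t(i,v)\neq 0$ and $\Im g_v>0$ for $g\in\h^{S_{o,o'}}$), the identity $\sum_v q_v\,Q_{x,v}\cos\al_{x,v}=1$ combined with $Q_{x,v}\cos\al_{x,v}\le Q_{x,v}\le 1$ forces $Q_{x,v}=1$ and $\al_{x,v}=0$ for every $v\in S_i$; the degenerate case where some $\gm_x=0$ is handled by the convention $Q_{\cdot,x}=0$, which is already compatible with the claim.

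The only subtle point, and hence the only place that needs care, is this last sign/nondegeneracy analysis: I want to make sure that ``all $c_x$ equal $1$'' is really the correct reading in the presence of potentially vanishing $\gm_x$. I expect this to be straightforward once one uses the convention $Q_{x,y}=0$ whenever $g_x=h_x$ or $g_y=h_y$ introduced in Section~\ref{s:Contraction}; otherwise the argument is purely algebraic and short.
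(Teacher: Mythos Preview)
Your proof is correct and follows essentially the same approach as the paper's: Jensen's inequality applied to the convex combination with weights $\{p_{o'}p_y\}_{y\in S_{o'}}\cup\{p_x\}_{x\in S_o\setminus\{o'\}}$, together with the bounds $c_x\le 1$, $Q_{x,y}\le 1$, $\cos\al_{x,y}\le 1$ and the strict positivity of the $q_v$. The paper's own proof is in fact a one-line sketch invoking precisely these ingredients, so your write-up is a faithful expansion of what the paper leaves implicit (including your flagging of the degenerate case $\gm_x=0$, which the paper does not discuss).
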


The following two step expansion is essentially a consequence of the recursion relation applied twice.\medskip

\begin{prop}\label{p:expansion}(Two step expansion.)
Let $I\subset \Sigma$ be compact. Then there exist $c,C:[0,\infty)\to[0,\infty)$ with $c(\lm), C(\lm)\to 0$ for $\lm\to0$ such that for all $z\in I+i[0,1]$, $\lm\in[0,\infty)$, $v_o, v_{o'}\in [-\lm,\lm]$, $g\in \h^{S_{o,o'}}$ and $h=\Gm_{S_{o,o'}}(z,T)$ we have
\begin{align*}
{\gm\ap{g_{o}(z,v_{o},v_{o'}),h_{o}(z,0,0)}}
&\leq(1+c(\lm))Z_{0}(z,v_{o'},g)+C(\lm),
\end{align*}
where, of course, $h_{o}(z,0,0)=\Gm_o(z,T)$.
\begin{proof}
Recall the decomposition of the recursion map $\Psi_{z-v}^{(T)}=\rho\circ\si_{z-v}\circ\tau$,  $z\in\h$, $v\in \R$, from Section~\ref{s:Contraction}. By Lemma~\ref{l:Psicontr} the maps $\rho_x:\xi\mapsto-1/\xi$ and $\si_{z-v,x}:\xi\mapsto z-v+\xi$ are quasi contractions on the semi metric space $(\h,\gm)$ into itself.

We use this to estimate after employing the definition of $g_o=g_{o}(z,v_{o},v_{o'})$, $g_{o'}=g_{o'}(z,v_{o'})$, $h_o=h_{o}(z,0,0)$, $h_{o'}=h_{o'}(z,0)$ to obtain for $i\in\{o,o'\}$
\begin{align*}
{\gm\ap{g_{i},h_{i}}}
&=\gm\ap{\Psi^{(T)}_{z-v_{i},i}\ap{g_{S_{i}}},\Psi^{(T)}_{z,i} \ap{h_{S_{i}}}}\leq \gm\ap{-v_i+\tau_i\ap{g_{S_{i}}},\tau_i \ap{h_{S_{i}}}}.
\end{align*}
To get rid of the $v_i$ we apply the substitute of the triangle inequality, the first statement of Lemma~\ref{l:ti}, and the formula for $\tau_{x}$, Lemma~\ref{l:Psicontr}~(3.),
\begin{align*}
...&\leq (1+c_0(\lm))\gm\ap{\tau_i\ap{g_{S_{i}}},\tau_i \ap{h_{S_{i}}}}+C_0(\lm)= (1+c_0(\lm))\sum_{x\in S_i}p_{x}\gm_{x}c_{x}+C_0(\lm),
\end{align*}
where we introduce $c_0(\lm):=(1+2\lm/\eps_0(I))^{2}-1$ and $C_0(\lm):=2\lm(\lm+1)/\eps_0(I)^2$ with $\eps_0(I):=\min_{z\in I+i[0,1]}\inf_{x\in \V}\Im \Gm_x(z,T)$. Note that $\eps_{0}(I)$ is positive by the definition of $\Sigma$.
Using this estimate we calculate directly
\begin{align*}
\lefteqn{\gm\ap{g_{o}(z,v_{o},v_{o'}),h_{o}(z,0,0)}}\\
&=\gm\ap{\Psi^{(T)}_{z-v_{o},o}\ap{\ap{\Psi^{(T)}_{z-v_{o'},o'}(g_{S_{o'}}), g_{S_{o}\setminus\{o'\}}}},\Psi^{(T)}_{z,o} \ap{\ap{\Psi^{(T)}_{z,o'}(h_{S_{o'}}),h_{S_{o}\setminus\{o'\}}}}}\\
&\leq(1+c_0) \sum_{x\in S_{o}}p_{x}c_{x}\gm_{x}
+C_0\\
&=(1+c_0)\ap{p_{o'} c_{o'} \gm\ap{\Psi^{(T)}_{z-v_{o'},o'}(g_{S_{o'}}),\Psi^{(T)}_{z,o'}(h_{S_{o'}})}+ \sum_{x\in S_{o}\setminus \{o'\}}p_{x}c_{x}\gm_{x}}
+C_0\\
&\leq (1+c_0)^{2} \ap{\sum_{y\in S_{o'}}p_{o'} p_{y}c_{o'}c_{y}\gm_{y} + \sum_{x\in S_{o}\setminus \{o'\}}p_{x}c_{x}\gm_{x}}+(2+c_0)C_0.
\end{align*}
\end{proof}
\end{prop}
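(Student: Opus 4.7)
The natural starting point is the decomposition $\Psi_{z-v,x}^{(T)} = \rho_x \circ \sigma_{z-v,x} \circ \tau_x$ recalled from Section~\ref{s:Contraction}. Since $\rho_x$ is a hyperbolic isometry by Lemma~\ref{l:Psicontr}(1), all the action happens in $\sigma_{z-v,x}\circ\tau_x$. The plan is therefore to apply the recursion twice, once at the vertex $o$ and once at $o'$, and to absorb each appearance of the perturbation parameter $v$ via the substitute triangle inequality of Lemma~\ref{l:ti}, which allows one to replace $\gm(\xi + v, \zeta)$ by $(1 + c_0(\lambda))\gm(\xi,\zeta) + C_0(\lambda)$ with constants vanishing as $\lambda \to 0$.

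Concretely, I would first apply the isometry of $\rho_o$, followed by Lemma~\ref{l:ti} applied to $\sigma_{z-v_o,o}$, to estimate
\[
\gm(g_o, h_o) \;\leq\; (1+c_0(\lambda))\,\gm\bigl(\tau_o(g_{S_o}^\ast), \tau_o(h_{S_o})\bigr) + C_0(\lambda),
\]
where $g_{S_o}^\ast$ denotes the vector on $S_o$ whose $o'$-component is the inner recursion output $\Psi^{(T)}_{z-v_{o'},o'}(g_{S_{o'}})$ and whose other components are the original $g_x$'s. Next I would expand this $\tau$-distance via the formula of Lemma~\ref{l:Psicontr}(3) into the contraction sum $\sum_{x\in S_o} p_x c_x \gm_x$. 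The $x=o'$ term is not yet in its final form because $\gm_{o'}$ measures the distance between two outputs of a further recursion; I would unfold it by a second application of the same argument (isometry of $\rho_{o'}$, Lemma~\ref{l:ti} for the $-v_{o'}$ shift, and Lemma~\ref{l:Psicontr}(3) for $\tau_{o'}$) to produce the inner sum $\sum_{y \in S_{o'}} p_y c_y \gm_y$. Multiplying by the outer weight $p_{o'}c_{o'}$ and merging the lower-sphere terms gives exactly $Z_0(z,v_{o'},g)$ as defined.

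The only non-routine issue is ensuring that the constants produced by the two invocations of Lemma~\ref{l:ti} can be chosen uniformly over $z\in I + i[0,1]$. This is where compactness of $I \subset \Sigma$ and Theorem~\ref{t:LIG} enter: they guarantee $\eps_0(I) := \inf_{z\in I+i[0,1]}\inf_{x\in\V}\Im \Gm_x(z,T) > 0$, so the constants from Lemma~\ref{l:ti} (which depend only on $\lambda$ and a lower bound on $\Im h$) can be chosen as $c_0(\lambda) = (1+2\lambda/\eps_0(I))^2 - 1$ and $C_0(\lambda) = 2\lambda(\lambda+1)/\eps_0(I)^2$, both vanishing as $\lambda\downarrow 0$.

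Stacking the two applications yields an overall multiplicative factor $(1+c_0(\lambda))^2$ and an additive correction $(2 + c_0(\lambda))C_0(\lambda)$ (since the second $C_0$ enters inside the first $(1+c_0)$ factor and the first $C_0$ is added outside). Setting $c(\lambda) := (1+c_0(\lambda))^2 - 1$ and $C(\lambda) := (2+c_0(\lambda))C_0(\lambda)$ gives functions of the required form, and the inequality of the proposition follows. I do not expect any serious obstacle: the whole argument is algebraically just \emph{two applications of the recursion}, and the key subtlety — that the perturbative corrections are controlled uniformly in $z$ — is already pre-packaged by the earlier Theorem~\ref{t:LIG} on the unperturbed Green functions.
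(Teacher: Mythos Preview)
Your proposal is correct and follows essentially the same approach as the paper: decompose $\Psi_{z-v}^{(T)}=\rho\circ\sigma_{z-v}\circ\tau$, use the isometry of $\rho$ and the quasi-contraction of $\sigma$ to reduce to $\gm(-v_i+\tau_i(g_{S_i}),\tau_i(h_{S_i}))$, remove the $-v_i$ via Lemma~\ref{l:ti}, expand via Lemma~\ref{l:Psicontr}(3), and iterate once at $o'$. You even reproduce the paper's explicit constants $c_0(\lm)=(1+2\lm/\eps_0(I))^2-1$, $C_0(\lm)=2\lm(\lm+1)/\eps_0(I)^2$ and the final $c(\lm)=(1+c_0)^2-1$, $C(\lm)=(2+c_0)C_0$, and you correctly identify that the uniformity in $z$ comes from $\eps_0(I)>0$ on the compact set $I\subset\Sigma$.
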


%%%%%%%%%%%%%%%%%%%%%%%%%%%%%%%%%%%%%%%%%%%%%%%%%%%%%%%%%%%%%%%%%%%%%%%%%%%%%%%%%%%%%%%%%%%%%%%%%%%%%%%%%%%%%%%%%%%%%%%%%%%%%%%%
%%% Subsection
%%%%%%%%%%%%%%%%%%%%%%%%%%%%%%%%%%%%%%%%%%%%%%%%%%%%%%%%%%%%%%%%%%%%%%%%%%%%%%%%%%%%%%%%%%%%%%%%%%%%%%%%%%%%%%%%%%%%%%%%%%%%%%%%
\subsection{Permutations and an averaged contraction coefficient}\label{s:ka}

The considerations of this section make essential use of the following two facts implied by the assumptions (P1) and (P2) on the random potential. As the random variables $\om\mapsto\Gm_x(z,H^{\lm,\om})$ depend only on the values of the potential on the forward tree $\T_x$, we conclude that they
\begin{itemize}
  \item [(1.)] are independent for all $x$ with non intersecting forward trees,
  \item [(2.)] are identically distributed for all $x$ which carry the same label $a(x)$.
\end{itemize}
Let again $o\in\V$ and $o'\in S_o$ with $a(o)=a(o')$ be fixed.
We introduce permutations of $S_{o,o'}$, $o\in\V$, which leave the label invariant.
\medskip

\begin{Def}\label{d:Pi}(Label invariant permutations.) For $o\in \V$ we define the set of \emph{label invariant permutations } $\Pi:=\Pi(S_{o,o'})$ of $S_{o,o'}$ by
\begin{align*}
\Pi:=\{\pi:S_{o,o'}\to S_{o,o'}\mid \mbox{bijective and $a(\pi(x))=a(x)$ for all $x\in S_{o,o'}$ }\}.
\end{align*}
A permutation $\pi\in\Pi$ can be extended to $o'$ via $\pi(o')=o'$.
\end{Def}

For a given $g\in\h^{S_{o,o'}}$  the composition of $g$ and $\pi$ is of course given as $g\circ\pi=(g_{\pi(x)})_{x\in S_{o,o'}}$.
This gives rise to the definition of the averaged contraction coefficient using the quantities $Z_{0}$ and $Z_{1}$ defined in the previous section.
\medskip

\begin{Def}(Averaged contraction coefficient.) Let $p\geq1$, $z\in\h$, $v\in\R$ and $h=(\Gm_x(z,T))_{x\in S_{o,o'}}$.
We define the function $\ka_o^{(p)}:=\ka_{o}^{(p)}(z,v,\cdot):\h^{S_{o,o'}}\to[0,1]$, called the \emph{averaged contraction coefficient} by
\begin{align*}
\ka_o^{(p)}(g)&:=\frac{\sum_{\pi\in\Pi} Z_{0}(z,v,g\circ\pi)^p}{\sum_{\pi\in\Pi} Z_{1}^{(p)}(z,v,g\circ \pi)}.
\end{align*}
\end{Def}

By Lemma~\ref{l:Z} we have $\ka_o^{(p)}\leq 1$. In Section~\ref{s:kaproof} we  prove that
$\ka_o^{(p)}\leq 1-\de$ under suitable conditions. This is stated in the proposition below.\medskip

\begin{prop}\label{p:ka}(Uniform contraction.)
Let $I\subset \Sigma$ be compact, $p>1$ and $h=(\Gm_x(z,T))_{x\in S_{o,o'}}$ for $z\in I+i[0,1]$. There exist $\de_o=\de_o(I,p)>0$, $\lm_o=\lm_o(I)>0$ and $R_{o}:[0,\lm_o]\to[0,\infty)$ with $R_{o}(\lm)\to0$ for $\lm\to 0$ such that for all $z\in I+i[0,1]$ and $\lm\in[0,\lm_o]$
\begin{align*}
\sup_{v\in[-\lm,\lm]}\sup_{g\in \h^{S_{o,o'}}\setminus B_{R_o(\lm)}(h)}\ka_{o}^{(p)}(z,v,g)\leq 1-\de_o.
\end{align*}
\end{prop}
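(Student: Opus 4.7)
The goal is to quantify the estimate $Z_0 \leq Z_1^{(1)} \leq \gm_{S_{o,o'}}(g,h)^{p}$ from Lemma~\ref{l:Z} with a strict gap that is uniform in $z \in I + i[0,1]$ and $v \in [-\lm,\lm]$, as long as $g$ stays outside a small hyperbolic ball $B_{R_o(\lm)}(h)$. The averaging over label invariant permutations $\Pi$ is the mechanism that will convert a configuration-dependent pointwise gap into a uniform one. First I would treat the unperturbed case $v = 0$ and identify the degenerate set $D \subset \h^{S_{o,o'}}$ where $Z_0(z,0,g) = Z_1^{(1)}(z,0,g)$. By Lemma~\ref{l:Z} this happens exactly when $c_x(g,h) = 1$ for every $x \in S_o \cup S_{o'}$, equivalently when $Q_{x,y}(g,h) = 1$ and $\al_{x,y}(g,h) = 0$ for all $x,y$ lying in a common sphere. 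The condition $\al_{x,y} = 0$ forces the differences $g_x - h_x$ to share a common argument within each sphere, while $Q_{x,y} = 1$ equates a geometric and arithmetic mean which, via the boundary analysis of hyperbolic balls in Subsection~\ref{ss:eps1} (in particular Lemma~\ref{l:eta}), pins down the hyperbolic displacements $g_x - h_x$ to a common shape. Thus $D$ is a low dimensional subset of $\h^{S_{o,o'}}$ characterized by simultaneous alignment on both spheres.

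Next I would exploit the observation noted before Definition~\ref{d:Pi}: $a(S_o \setminus \{o'\}) \subseteq a(S_{o'})$ and $S_{o,o'} \setminus S_{o'} \neq \emptyset$. Consequently there exist non trivial transpositions $\pi \in \Pi$ that swap a vertex of the lower sphere $S_o \setminus \{o'\}$ with a vertex of equal label in the upper sphere $S_{o'}$. By the label invariance of $T$, the vector $h$ is constant on vertices of a given label, so $h \circ \pi = h$ while $g \circ \pi$ redistributes aligned blocks across the two spheres. I would show that if $g \in D$ and $\gm_{S_{o,o'}}(g,h)$ is not too small, then $g \circ \pi \notin D$ for at least one such swap: alignment on the upper sphere and alignment on the lower sphere are incompatible when the two aligned "directions" are moved across each other, unless $g$ is already essentially equal to $h$. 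Quantifying the incompatibility uses the explicit identities $\eps_1(r)^2 = r\eps_0\eps_2(r)$ and $\eps_1(r) + \eps_2(r) = \eps_0$ from Lemma~\ref{l:eta}, which determine how an aligned displacement of $\gm$-radius $r$ lies inside $\h^{S_{o,o'}}$.

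With these two ingredients in place the contraction bound is obtained by compactness. For $g$ outside a fixed open neighborhood of $D$, continuity of $c_x(g \circ \pi, h)$ in $g$ and in $z \in I+i[0,1]$, together with compactness of the latter, yields a uniform lower bound on $1 - c_x(g \circ \pi, h)$ for at least one $\pi \in \Pi$ and at least one $x$. Because $p > 1$, a strict Jensen inequality applied to the convex function $t \mapsto t^p$ turns this into a uniform lower bound on $Z_1^{(p)}(g \circ \pi) - Z_0(g \circ \pi)^p$, while the other summands in the numerator and denominator of $\ka_o^{(p)}$ remain controlled by $Z_1^{(p)} \geq Z_0^p$. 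Adding over $\pi \in \Pi$ gives
\[
\sum_{\pi \in \Pi} Z_0(g \circ \pi)^p \leq (1 - \de_o) \sum_{\pi \in \Pi} Z_1^{(p)}(g \circ \pi),
\]
which is exactly $\ka_o^{(p)}(z,0,g) \leq 1 - \de_o$. The size of the removed neighborhood of $D$ is what enters as the ball radius $R_o(\lm)$. Finally, for $v \in [-\lm,\lm]$, the only change is that $c_y(g,h)$ for $y \in S_o$ now uses $g_{o'}(z,v) = \Psi^{(T)}_{z-v,o'}(g_{S_{o'}})$ in place of $\Psi^{(T)}_{z,o'}(g_{S_{o'}})$, a perturbation controlled by $O(\lm)$ in the hyperbolic metric via the substitute triangle inequality Lemma~\ref{l:ti}. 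Choosing $R_o(\lm) \to 0$ slowly enough and $\lm_o$ small enough absorbs this perturbation at the cost of replacing $\de_o$ by, say, $\de_o/2$.

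\textbf{Main obstacle.} The hard step is the third one. One must show that the contraction gap produced by a single broken alignment under some permutation $\pi$ does not degenerate as $z$ approaches the real axis or as $g$ approaches the boundary of $\h^{S_{o,o'}}$. At $\eta = 0$ the map $\si_z$ loses its own contractive contribution, so all uniformity must come from the $\tau$-part through the alignment-breaking permutation, and the standard device of compactifying by a blow-up (as in \cite{FHS2,Hal1}) is deliberately avoided here. The purpose of the explicit boundary-ball geometry of Lemma~\ref{l:eta} is exactly to keep the estimates quantitative in this limit, and stitching that geometry together with the combinatorics of $\Pi$, the compactness in $z \in I + i[0,1]$, and the perturbation by $v$ is the substantial technical work of Section~\ref{s:kaproof}.
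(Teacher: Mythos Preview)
Your proposal has a genuine gap: the compactness step in your third paragraph does not go through. You write that ``for $g$ outside a fixed open neighborhood of $D$, continuity of $c_x(g\circ\pi,h)$ in $g$ and in $z\in I+i[0,1]$, together with compactness of the latter, yields a uniform lower bound on $1-c_x$''. But the $g$-domain $\h^{S_{o,o'}}$ is not compact, and neither is the degenerate set $D$: both extend to the boundary (imaginary parts tending to $0$ or $\infty$, real parts unbounded). Being outside a neighborhood of $D$ therefore gives no uniform bound on $1-c_x$ as $g$ approaches the boundary of $\h^{S_{o,o'}}$. You yourself flag this in your ``Main obstacle'' paragraph, noting that the blow-up compactification of \cite{FHS2,Hal1} is deliberately avoided; but your argument then still relies on the very compactness you say is unavailable. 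No amount of ``stitching'' with Lemma~\ref{l:eta} repairs this without a new idea.

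The paper's approach is structurally different. It does not identify a degenerate set and argue by compactness around it; instead it introduces \emph{visibility} sets $\Vis_\gm(g,\de)$ and $\Vis_{\Im}^{i}(g,\pi,\de)$ (Definition~\ref{d:Vis}) and splits $\h^{S_{o,o'}}$ into three explicit regimes, each with its own quantitative source of contraction. In Case~1 (some $\gm_x/\max_y\gm_y$ is small) a refined Jensen inequality for $t\mapsto t^p$ produces the gap directly, with no reference to the $c_x$'s. In Case~2 (the $\gm_x$ are comparable but some $\Im g_x/\max_y\Im g_y$ is small) the ratio $Q_{x,y}$ of geometric to arithmetic means is bounded away from $1$ explicitly. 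Only in Case~3 (all ratios bounded below) does the argument use the relative angles: there the key point is not that a permutation moves $g$ out of $D$, but that the \emph{two-step} structure forces $\arg(g_{o'}^{(\pi)}h_{o'})$ to be large because $h_{o'}\in\h$ has argument bounded away from $0$ and $\pi$, which makes $|\al_{o',x}^{(\pi)}|\geq\de_0$ for a suitable $\pi$ and all $x\in\pi(S_o)$. The ball $B_{R_o(\lm)}(h)$ enters only in Case~3, and for a reason different from the one you give: it is excluded so that $|\sum_y|t(o',y)|^2(g_y-h_y)|$ dominates the perturbation $|v|\leq\lm$; when $\lm=0$ one has $R_o(0)=0$ and no exclusion at all. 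Your identification of $R_o(\lm)$ with the size of a neighborhood of $D$ is therefore incorrect.
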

The proof of the proposition involves an analysis of the quantities which enter in the definition of $\ka_o^{(p)}$. We postpone it to Section~\ref{s:kaproof}. This statement is also true for $p=1$, but the proof becomes essentially harder in this case. Therefore, we do not give a proof in this context.

%%%%%%%%%%%%%%%%%%%%%%%%%%%%%%%%%%%%%%%%%%%%%%%%%%%%%%%%%%%%%%%%%%%%%%%%%%%%%%%%%%%%%%%%%%%%%%%%%%%%%%%%%%%%%%%%%%%%%%%%%%%%%%%%
%%% Subsection
%%%%%%%%%%%%%%%%%%%%%%%%%%%%%%%%%%%%%%%%%%%%%%%%%%%%%%%%%%%%%%%%%%%%%%%%%%%%%%%%%%%%%%%%%%%%%%%%%%%%%%%%%%%%%%%%%%%%%%%%%%%%%%%%
\subsection{A vector inequality}\label{s:vector}
For $j\in\A$ denote the tree $\T(M,j)$ by $\T_{j}=(\V_j,\E_j)$ and its by $o(j)$.
Let  $\T=(\V,\E)$ be the union of the disjoint trees $\T_j=\T(M,j)$, $j\in\A$, i.e.,
\begin{align*}
\V={\bigcup}_{j\in\A}V_j,\quad \E={\bigcup}_{j\in\A}\E_{j}.
\end{align*}
Moreover, let the labeling function $a:\V\to\A$ be the labeling function on the union of trees.
Let $\nu$ be a label invariant measure on $\V$, i.e., for $\nu(x)=\nu(y)$ for all $x,y\in\V$ with $a(x)=a(y)$. Let $T:\ell^2(\V,\nu)\to\ell^2(\V,\nu)$ be a labeling invariant operator on the union of trees $\T$, i.e., it satisfies the axioms (M0), (M1) and (M2).

For a vertex $x\in \V$ with $x\in \V_j$, $j\in\A$
we denote by $\T_x=(\V_x,\E_x)$ the forward tree of $x$ in $\T_j=(\V_j,\E_j)$. Given a probability space $(\Om,\PP)$ we denote by $\W_{\mathrm{rand}}(\Om,\T)$ the set of random potentials $v^{\om}:\V\to[-1,1]$ such that the restrictions $v\vert_{\V_j}$, $j\in\A$,  belong to $\W_{\mathrm{rand}}(\Om,\T_{j})$. This means that they fulfill the following two properties:
\begin{itemize}
\item[(P1')] For  $x,y\in\V$ with $\V_x\cup\V_{y}=\emptyset$ the random variables $v_{x}$ and $v_{y}$ are independent.
\item[(P2')] If $a(x)=a(y)$ the random variables $v\vert_{\V_x}$ and $v\vert_{\V_y}$ are identically distributed.
\end{itemize}
The assumptions (P1'), (P2') differ from the assumptions (P1), (P2) only by the fact that $v$ is defined on a disjoint union of trees instead of only on one tree.

To $v\in\W_{\mathrm{rand}}(\Om,\T)$ and $\lm\geq0$ we associate a family of random operators $H^{\lm,\om}:\ell^{2}(\V,\nu)\to\ell^{2}(\V,\nu)$ on the union of trees $\T$ via
\begin{align*}
H^{\lm,\om}:=T+ \lm v^{\om}.
\end{align*}
Moreover, for $z\in\h$ and $p> 1$ we define
\begin{align*}
\EE\gm&:=\ap{\EE\ap{\gm_j^{p}}}_{j\in\A}=\ap{\int_{\Om} \gm\ap{\Gm_{o(j)}(z,H^{\lm,\om}), \Gm_{o(j)}(z,T)}^{p}d\PP(\om)}_{j\in\A}.
\end{align*}

For $z\in \h\cup\Sigma$ let
the stochastic matrix $P=P(z):{\A\times\A}\to[0,\infty)$  be given by
\begin{align*}
P_{j,k}=p_{o(j),o(j)'}(h)\sum_{{y\in S_{o(j)'},}\atop{a(y)=k}}p_{o(j)',y}(h)+\sum_{{x\in S_{o(j)}\setminus\{o(j)'\},}\atop{a(x)=k}}p_{o(j),x}(h),\qquad j,k\in\A,
\end{align*}
where $h_x=\Gm_{x}(z,T)$,${x\in\V}$. By this choice the matrix $P$ depends continuously on $z$.

The following proposition is the vector inequality discussed in Subsection~\ref{s:meanvalue}.
\medskip

\begin{prop}\label{p:vector}(Vector inequality.)
Let $I\subset \Sigma$ be compact, $p>1$, $\T=(\V,\E)={\bigcup}_{j}(\V_j,\E_j)$ and $(\Om,\PP)$ as introduced above. Then there are constants $\de=\de(I,p)>0$, $\lm_0=\lm_0(I,p)>0$ and a function $C:[0,\lm_0]\to[0,\infty)$ with $C(\lm)\to0$ for $\lm\to0$ such that for all $z\in I+i[0,1]$, $v\in\W_{\mathrm{rand}}(\Om,\T)$ and $\lm\in[0,\lm_0)$
\begin{align*}
\EE\gm\leq (1-\de)P\;\EE\gm+C(\lm).
\end{align*}
\end{prop}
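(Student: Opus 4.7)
Plan: Fix $j\in\A$, set $o=o(j)$, and pick $o'\in S_o$ with $a(o')=j$, which exists by (M1). Writing $g^\om=(\Gm_x(z,H^{\lm,\om}))_{x\in S_{o,o'}}$ and $h=(\Gm_x(z,T))_{x\in S_{o,o'}}$, the recursion formula \eqref{e:Gm} applied first at $o'$ and then at $o$ yields
\[\Gm_{o(j)}(z,H^{\lm,\om})=g_o(z,\lm v_o^\om,\lm v_{o'}^\om),\qquad \Gm_{o(j)}(z,T)=h_o(z,0,0).\]
The two-step expansion (Proposition~\ref{p:expansion}) therefore gives
\[\gm(\Gm_{o(j)}(z,H^{\lm,\om}),\Gm_{o(j)}(z,T))\le (1+c(\lm))Z_0(z,\lm v_{o'}^\om,g^\om)+C(\lm).\]
Raising to the $p$-th power via the elementary inequality $(a+b)^p\le(1+c(\lm))a^p+C_{p}(\lm)b^p$, which can be arranged so that $C_p(\lm)\to 0$ since $c(\lm)\to 0$, and then taking expectation, one obtains
\[(\EE\gm)_j\le (1+\tilde c(\lm))\,\EE\!\left[Z_0(z,\lm v_{o'}^\om,g^\om)^p\right]+\tilde C(\lm).\]

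Now exploit the label symmetry. For every $\pi\in\Pi=\Pi(S_{o,o'})$, assumptions (P1') and (P2') imply that the random pair $(g^\om\circ\pi,v_{o'}^\om)$ has the same joint distribution as $(g^\om,v_{o'}^\om)$; note that $o'\notin S_{o,o'}$, so $\pi$ leaves $v_{o'}^\om$ untouched. Consequently, by the definition of $\ka_o^{(p)}$,
\[\EE\!\left[Z_0(z,\lm v_{o'}^\om,g^\om)^p\right]=\tfrac{1}{|\Pi|}\EE\!\sum_{\pi\in\Pi}Z_0(z,\lm v_{o'}^\om,g^\om\circ\pi)^p=\tfrac{1}{|\Pi|}\EE\!\left[\ka_o^{(p)}(g^\om)\sum_{\pi\in\Pi}Z_1^{(p)}(g^\om\circ\pi)\right].\]
Proposition~\ref{p:ka} yields $\ka_o^{(p)}\le 1-\de_o$ on the event $\{g^\om\notin B_{R_o(\lm)}(h)\}$, while on the complementary event $Z_1^{(p)}\le R_o(\lm)^p$ pointwise, since $\gm(g_x,h_x)\le R_o(\lm)$ for every $x\in S_{o,o'}$ and the weights inside $Z_1^{(p)}$ sum to one. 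In either case
\[\ka_o^{(p)}(g^\om)\sum_{\pi\in\Pi}Z_1^{(p)}(g^\om\circ\pi)\le(1-\de_o)\sum_{\pi\in\Pi}Z_1^{(p)}(g^\om\circ\pi)+|\Pi|\,\de_o R_o(\lm)^p,\]
and taking expectation, together with $\EE Z_1^{(p)}(g^\om\circ\pi)=\EE Z_1^{(p)}(g^\om)$ (permutation invariance again), produces $\EE Z_0^p\le(1-\de_o)\EE Z_1^{(p)}+\de_o R_o(\lm)^p$.

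Finally, by (P2') one has $\EE\gm_x^p=(\EE\gm)_{a(x)}$, so inspecting the definition of $Z_1^{(p)}$ — the weights $p_{o,o'}(h),\,p_{o',y}(h),\,p_{o,x}(h)$ depend only on $h$ and are therefore label-invariant — directly gives $\EE Z_1^{(p)}=(P\,\EE\gm)_j$, where $P$ is the matrix in the statement. Combining,
\[(\EE\gm)_j\le(1+\tilde c(\lm))(1-\de_o)(P\,\EE\gm)_j+C'(\lm).\]
Choosing $\lm_0$ so small that $(1+\tilde c(\lm_0))(1-\de_o)\le 1-\de$ for some $\de=\de(I,p)>0$ finishes the proof, with $\de$ and $\lm_0$ chosen uniformly in $j\in\A$ (possible since $\A$ is finite) and $C(\lm):=\max_{j\in\A}C'(\lm)\to 0$ as $\lm\to 0$. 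The one genuinely delicate point is the interplay between the $p$-th power and the contraction: the factor $(1+c(\lm))^{\mathrm{O}(1)}$ arising from the substitute triangle inequality must not eat up the gap $1-\de_o$ furnished by Proposition~\ref{p:ka}, which is the reason the final inequality is useful only for $\lm$ small and the constant $\de$ is strictly smaller than $\de_o$.
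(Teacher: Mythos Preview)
Your proof follows essentially the same route as the paper's: two-step expansion, averaging over label-invariant permutations to bring in $\ka_o^{(p)}$, the ball/complement split to invoke Proposition~\ref{p:ka}, and identification of $\EE Z_1^{(p)}$ with $(P\,\EE\gm)_j$. The overall architecture and the role of each ingredient match the paper's argument.

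One small point deserves correction: the elementary inequality you quote, $(a+b)^p\le(1+c(\lm))a^p+C_p(\lm)b^p$ with $C_p(\lm)\to 0$ as $c(\lm)\to 0$, is false in general---if the multiplicative loss on $a^p$ tends to $1$, the coefficient of $b^p$ must blow up. What you actually need (and what the paper uses as Lemma~\ref{l:Jensen}) is $(x+y)^p\le(1+y)^{p-1}x^p+(1+y)^{p-1}y$, applied with $x=(1+c(\lm))Z_0$ and $y=C(\lm)$; this gives a multiplicative factor $(1+C(\lm))^{p-1}(1+c(\lm))^p=1+\tilde c(\lm)$ on $Z_0^p$ and an \emph{additive} remainder $(1+C(\lm))^{p-1}C(\lm)=\tilde C(\lm)$, both tending to zero. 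With that fix your conclusion $(\EE\gm)_j\le(1+\tilde c(\lm))\EE[Z_0^p]+\tilde C(\lm)$ stands, and the rest of your argument goes through unchanged.
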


Before we prove the vector inequality above, we need an auxiliary lemma. It is an easy  consequence of Jensen's inequality.\medskip

\begin{lemma}\label{l:Jensen} For $p\geq 1$ and $x,y\geq0$ we have
\begin{align*}
(x+y)^{p}\leq(1+y)^{p-1} x^p+\ap{1+y}^{p-1}y.
\end{align*}
\begin{proof}
Writing
\begin{align*}
(x+y)^{p}=\ap{\frac{1}{1+y}\ap{1+y} x+\ap{\frac{y}{1+y}}\ap{\frac{1+y}{y}}y}^{p},
\end{align*}
we obtain  the statement directly by Jensen's inequality.
\end{proof}
\end{lemma}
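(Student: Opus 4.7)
The plan is to recognize the inequality as a direct consequence of Jensen's inequality applied to the convex function $t \mapsto t^p$ (which is convex precisely because $p \geq 1$). The key step is to rewrite $x+y$ as a convex combination of two nonnegative quantities in exactly the right way, namely
\[
x+y \;=\; \frac{1}{1+y}\cdot(1+y)x \;+\; \frac{y}{1+y}\cdot\frac{1+y}{y}\cdot y,
\]
with weights $\frac{1}{1+y}$ and $\frac{y}{1+y}$ which are nonnegative and sum to one. This requires $y>0$; the boundary case $y=0$ is handled separately, since in that case both sides of the stated inequality collapse to $x^p$ and the inequality is trivial.

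For $y>0$, I would then apply Jensen's inequality to the displayed convex combination, obtaining
\[
(x+y)^p \;\leq\; \frac{1}{1+y}\bigl((1+y)x\bigr)^p \;+\; \frac{y}{1+y}\left(\frac{1+y}{y}\cdot y\right)^p.
\]
A short simplification turns the first summand into $(1+y)^{p-1}x^p$ and the second into $(1+y)^{p-1}y$, which is exactly the claimed bound.

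There is no real obstacle here: once one has spotted the correct convex combination, the argument is a one-line application of Jensen. The choice of weights is essentially forced by demanding a common factor of $(1+y)^{p-1}$ in front of both $x^p$ and $y$ on the right-hand side; this dictates that the factors $(1+y)$ and $\frac{1+y}{y}$ must appear inside the respective $p$-th powers, as they are precisely what remains after extracting the common $(1+y)^{p-1}$.
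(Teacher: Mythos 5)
Your proof is correct and uses exactly the same decomposition as the paper: writing $x+y$ as the convex combination with weights $\frac{1}{1+y}$ and $\frac{y}{1+y}$ and applying Jensen's inequality to $t\mapsto t^p$. Your separate treatment of the boundary case $y=0$ is a small tidying-up that the paper leaves implicit, but the argument is otherwise identical.
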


\begin{proof}[Proof of Proposition~\ref{p:vector}.]
%We start with $p=1$ since the arguments of the first part of the proof do not depend on the choice of $p$.
Let $j\in\A$, $o=o(j)$,
$h=\Gm_{S_{o,o'}}(z,T)$ and $g=\Gm_{S_{o,o'}}(z,H^{\lm,\om})$. Note that, by definition, $g_{o}(z,v_{o}^{\om},v_{o'}^{\om})=\Gm_{o}(z,H^{\lm,\om})$ and $g_{o'}(z,v_{o'}^{\om})=\Gm_{o'}(z,H^{\lm,\om})$.

Moreover, $g_x$ and $g_{y}$ are identically distributed for $a(x)=a(y)$ and independent for all $x,y\in S_{o,o'}$. This gives in particular $\EE(Z_0(z,v_{o'},g)^p)=\EE(Z_0(z,v_{o'},g\circ\pi)^p)$ for all $\pi\in\Pi$. We use this to compute
\begin{align*}
\EE\ap{Z_0(z,v_{o'},g)^p} &=\frac{1}{|\Pi|}\EE\ap{\sum_{\pi\in\Pi}Z_0(z,v_{o'},g\circ\pi)^p} \\
& =\frac{1}{|\Pi|}\EE\ap{\ka_{o}^{(p)}(z,v_{o'},g) \sum_{\pi\in\Pi}Z_1^{(p)}(z,v_{o'},g\circ\pi)} .
\end{align*}
In order to apply the uniform contraction, Proposition~\ref{p:ka}, we split up the expectation value. Let $R_j:=R_{o(j)}:[0,\lm_j]\to[0,\infty)$ and $\lm_j:=\lm_{o(j)}(I)>0$ be given by Proposition~\ref{p:ka}.
For $\lm\in[0,\lm_j]$ let $\mathds{1}_{R_j}$ be the characteristic function of the set $B_{R_j(\lm)}(h)=\{g\in \h^{ S_{o,o'}}\mid \gm_{S_{o,o'}}(g,h)\geq R_j(\lm)\}$ and $\mathds{1}_{R_j}^{c}$ be the characteristic function of its complement $B_{R_j(\lm)}(h)^{c}=\h^{S_{o,o'}}\setminus B_{R_j(\lm)}(h)$. We proceed by  using the definition of $\ka_o^{(p)}=\ka_{o}^{(p)}(z,v_{o'},g)$ and estimating $\ka_{o}^{(p)}\leq 1$ in the second term
\begin{align*}
\ldots&\leq\frac{1}{|\Pi|}\EE\ap{\ka_{o}^{(p)} \sum_{\pi\in\Pi}Z_1^{(p)}(z,v_{o'},g\circ\pi)\mathds{1}_{R_j}^{c}(g)} +\frac{1}{|\Pi|} \EE\ap{\sum_{\pi\in\Pi}Z_1^{(p)}(z,v_{o'},g\circ\pi) \mathds{1}_{R_j}(g)},\\
&=\frac{1}{|\Pi|}\EE\ap{\ka_{o}^{(p)} \sum_{\pi\in\Pi}Z_1^{(p)}(z,v_{o'},g\circ\pi)\mathds{1}_{R_j}^{c}(g)} +\EE\ap{Z_1^{(p)}(z,v_{o'},g)\mathds{1}_{R_j}(g)},
\end{align*}
where we used $\EE\ap{\sum_\pi Z_1^{(p)}(z,v_{o'},g\circ\pi)\mathds{1}_{R_j}(g)} =|\Pi|\EE\ap{Z_1^{(p)}(z,v_{o'},g)\mathds{1}_{R_j}(g)}$ for the second term, as $B_{R_j(\lm)}(h)$ is $\pi$ invariant. We now apply the uniform contraction, Proposition~\ref{p:ka}, to the first term with $\de_j:=\de_{o(j)}(I,p)$ taken from the proposition. For the second term, note that $Z_1(z,v_{o'},g)\leq \gm_{S_{o,o'}}(g,h)\leq R_j(\lm)$ by Lemma~\ref{l:Z}. This gives
\begin{align*}
\ldots&\leq (1-\de_j)\frac{1}{|\Pi|}\EE\ap{ \sum_{\pi\in\Pi}Z_1^{(p)}(z,v_{o'},g\circ\pi) \mathds{1}_{R_j}^{c}(g)}+R_j(\lm)^p\\
&=(1-\de_j)\EE\ap{Z_1^{(p)}(z,v_{o'},g) \mathds{1}_{R_j}^{c}(g)}+R_j(\lm)^p,
\end{align*}
as also $B_{R_j(\lm)}(h)^{c}$ is $\pi$ invariant.
Since $g_x$ is equal to $\Gm_{o(k)}(z,H^{\lm,\om})$ in distribution whenever $a(x)=k$, we get
 by the definition of $Z_{1}^{(p)}$ and $P$ and the estimate $\EE({\gm_{o(k)}^{p}\mathds{1}_{R_j}^{c}})\leq \EE({\gm_{o(k)}^{p}})$
\begin{align*}
\ldots\leq(1-\de_{j}) \sum_{k\in\A}P_{j,k}\EE\ap{\gm_{o(k)}^{p}}+R_{j}(\lm)^p
\end{align*}
with $\gm_{o(k)}=\gm\ap{\Gm_{o(k)}(z,H^{\lm,\om}),\Gm_{o(k)}(z,T)}$.
In summary, this yields
\begin{align*}
\EE\ap{Z_0(z,v_{o'},g)^p} =  (1-\de_{j}) \sum_{k\in\A}P_{j,k}\EE\ap{\gm_{o(k)}^{p}}+R_{j}(\lm)^p.
\end{align*}
We want to combine this with the two step expansion, Proposition~\ref{p:expansion}. For each $j\in\A$, we apply Proposition~\ref{p:expansion}, Lemma~\ref{l:Jensen} and the inequality above to obtain
\begin{align*}
\EE\ap{\gm_{o(j)}^p}&\leq\EE\ap{\ap{ (1+c(\lm))Z_0(z,v_{o'},g) +C(\lm)}^p}\\
&\leq(1+C(\lm))^{p-1}(1+c(\lm))^p\EE\ap{Z_0(z,v_{o'},g)^p} +(1+C(\lm))^{p-1}C(\lm)\\
&\leq (1+\ow c(\lm))(1-\de_j)\sum_{k\in\A}P_{j,k}\EE\ap{\gm_{o(k)}^{p}} +\ow C_{j}(\lm),
\end{align*}
where we set
\begin{align*}
\ow c(\lm)&:=(1+C(\lm))^{p-1}(1+c(\lm))^p-1,\\ \ow C_j(\lm)&:=(1+C(\lm))^{p-1}C(\lm)+(1+\ow c(\lm))^{p}R_j(\lm)^p,\quad j\in \A.
\end{align*}
Let us define the  constants $\lm_0>0$, $\de>0$ and the new function $C:[0,\lm_0)\to\R$ to finish the proof.
As $c(\lm), C(\lm)\to0$ for $\lm\to0$, by the two step expansion, Proposition~\ref{p:expansion}, we can let
\begin{align*}
\lm_0&:=\sup\set{t\in(0,\max_{j\in\A}\lm_j]\mid \ow c(t)<\de_j\mbox{ for all } j\in\A}
\end{align*}
and for an arbitrary $t\in[0,\lm_0)$ we let
\begin{align*}
\de&:=1-\max_{j\in\A}(1+\ow c(t))^{p-1}(1-\de_j)^{p-1}.
\end{align*}
Note that by this choice $\de>0$ as $\ow c(t)<\de_{j}$.
Moreover, let the new function $C$ be given by
$\lm\mapsto \max_{j\in\A}\ow C_j(\lm)$.
It tends to zero as $\lm\to 0$ since
 $\ow C_j(\lm),R_j(\lm)\to0$ for $\lm\to 0$.
\end{proof}

%%%%%%%%%%%%%%%%%%%%%%%%%%%%%%%%%%%%%%%%%%%%%%%%%%%%%%%%%%%%%%%%%%%%%%%%%%%%%%%%%%%%%%%%%%%%%%%%%%%%%%%%%%%%%%%%%%%%%%%%%%%%%%%%
%%% Subsection
%%%%%%%%%%%%%%%%%%%%%%%%%%%%%%%%%%%%%%%%%%%%%%%%%%%%%%%%%%%%%%%%%%%%%%%%%%%%%%%%%%%%%%%%%%%%%%%%%%%%%%%%%%%%%%%%%%%%%%%%%%%%%%%%
\subsection{Proof of the mean value bounds}
Having the vector inequality, the proof of Theorem~\ref{t:EGm} is essentially an application of the Perron-Frobenius theorem.

\begin{proof}[Proof of Theorem~\ref{t:EGm}] Let $p>1$, $I\subset\Sigma$ be compact and $z\in I+i(0,1]$.
Let $P=P(z)$ be the stochastic matrix defined right before Proposition~\ref{p:vector}.
Note that the entries $P_{j,k}$ are positive whenever the corresponding entries $M_{j,k}$ of the substitution matrix are positive.  Hence, by the Perron-Frobenius theorem, there is a normalized left eigenvector $u=u(z)$ of $P$ to the eigenvalue one, i.e.,  $P^{\top}u=u$, which is positive and depends continuously on $z$ in $I+i[0,1]$.
Let $\de>0$, $\lm_0>0$ and $C:[0,\lm_{0}]\to[0,\infty)$ be taken from the vector inequality, i.e., Proposition~\ref{p:vector}. Then, this proposition gives for all $\lm\in[0,\lm_0)$
\begin{align*}
\as{u,\EE\gm}\leq (1-\de)\as{u,P\EE\gm}+ C(\lm)=(1-\de)\as{u,\EE\gm}+ C(\lm),
\end{align*}
where $\as{\cdot,\cdot}$ is the standard scalar product in $\R^{\A}$. The previous inequality yields
\begin{align*}
\as{u,\EE\gm}\leq \frac{C(\lm)}{\de}.
\end{align*}
Since $\Gm_{x}(z,H^{\lm})$, $x\in\V$ is identically distributed for all $x$ carrying the same label, we get
 for every $x\in\V$ and $\lm\in[0,\lm_0]$
\begin{align*}
\EE\ap{\gm\ap{\Gm_{x}(z,H^{\lm}),\Gm_{x}(z,T)}^p}\leq c(\lm),
\end{align*}
with the function $c:[0,\lm_0)\to[0,\infty)$ given by $c(\lm):=C(\lm)/(\de\eps)$ and $\eps:=\min_{j\in\A}\min_{z\in I+i[0,1]} u_j(z)$. As $C(\lm)\to0$ for $\lm\to0$ by the vector inequality,  Proposition~\ref{p:vector}, so does $c$.
\end{proof}

Let us now turn to the proof of Theorem~\ref{t:KleinG}.

\begin{proof}[Proof of Theorem~\ref{t:KleinG}]
In contrast to the beginning of the section, we let $\T=(\V,\E)$ be one single tree $\T(M,j)$, $j\in\A$,  and let $o=o(j)\in\V$ be the root of $\T$. Let $I\subset\Sigma$ be compact. We recall formula \eqref{e:G} from Proposition~\ref{p:G}, which reads for $x\in\V$, $y\in S_{x}$ and $z\in\h$ as
$$G_{y}(z,H^{\lm,\om})=\Gm_y(z,H^{\lm,\om})+ |t(x,y)|^{2}\Gm_{y}(z,H^{\lm,\om})^{2}G_{x}(z,H^{\lm,\om}).$$
We set $g=\Gm_{x}(E+i\eta,H^{\om,\lm})$, $h=\Gm_{x}(E+i\eta,T)$ for $E\in I$, $\eta\in (0,1]$ and arbitrary $x\in\V$.
The strategy is to check the statements first for $x=o$ and then derive the general case using  formula \eqref{e:G} above.

(1.) We employ the inequality
\begin{align*}
\mo{\xi}\leq 4\gm(\xi,\zeta)\Im \zeta+2|\zeta|,\qquad \xi,\zeta\in\h,
\end{align*}
from \cite{FHS2}. (This inequality is obvious for $|\xi|\leq 2| \zeta|$ and follows from
$|\xi|\Im \xi\leq|\xi|^{2}\leq 2|\xi-\zeta|^2+2|\zeta|^2\leq 4|\xi-\zeta|^2$ for $|\xi|\geq2|\zeta|$.)\\
 Let $\lm_0>0$ be strictly less than the constant $\lm_0(I,p)$ from Theorem~\ref{t:EGm}. Then, by Theorem~\ref{t:EGm} and the inequality above, it follows for all $p>1$
\begin{align*}
\sup_{E\in{I}}\sup_{\eta\in(0,1]} \sup_{\lm\in[0,\lm_0]}\EE\ap{|g|^{p}}<\infty.
\end{align*}
This implies (1.) for $x=o$. For general $x\in\V$ we get the statement inductively using \eqref{e:G}  and  Hölder's inequality. (Note that $f\cdot g$ belongs to all $L^{p}$, $p>1$, if $f,g$  belong to all $L^{p}$, $p>1$.)

(2.) We apply Fatou's lemma and  Fubini's theorem to get by (1.)
\begin{align*}
\EE\ap{\liminf_{\eta\downarrow0}\int_{I}|g|^{p}dE}&\leq \liminf_{\eta\downarrow0}\int_{I}\EE\ap{|g|^{p}}dE\\
&\leq\sup_{E\in{I}}\sup_{\eta\in(0,1]} \sup_{\lm\in[0,\lm_0]}\EE\ap{|g|^{p}} \Leb(I)\\
& <\infty.
\end{align*}
We conclude
\begin{align*}
\liminf_{\eta\downarrow0}\int_{I}|g|^{p}dE<\infty
\end{align*}
for almost all $\om\in\Om$ which is (2.) for $x=o$.  In order to prove the statement for general $x$ we employ the same  calculation with $g=G_x$. The statement then follows by using (1.).

(3.) Note that $\Im g\to0$ as $\eta\downarrow 0$ implies $\gm(g,h) \to\infty$. Now, for every $E\in\Sigma$ and $\lm\in[0,\lm_0(E)]$ the case $\gm(g,h) \to\infty$ can only occur on a set of $\PP$-measure zero by  Theorem~\ref{t:EGm}.
Moreover, by the pointwise converges of the Green function, Lemma~\ref{l:Gpointwise} we have that the limit $\lim_{\eta\downarrow0}g=\lim_{\eta\downarrow0}\Gm_{x}(E+i\eta, H^{\lm,\om})$ exists for almost every $E\in\R$. Thus, the statement follows from the extension from $\Gm$ to $G$, Proposition~\ref{p:G2}~(3.).

(4.) By the Cauchy-Schwarz inequality and Theorem~\ref{t:EGm} we get
\begin{align*}
\EE\ap{|g-h|^{p}}^2\leq\EE\ap{\gm(g,h)^{p}} \EE\ap{\ap{\Im{g}\Im{h}}^{p}}\leq c(\lm)\EE\ap{\mo{g}^{p}}|h|^{p}.
\end{align*}
Note that $h=\Gm_{o}(E+i\eta,T)$ is uniformly bounded in $\eta $ by Lemma~\ref{l:Gmbounds}. Hence, the right hand side is bounded by (1.) and tends to zero as $c(\lm)\to0$ for $\lm\to0$. These quantities can be chosen uniformly for $E$ in a compact interval $I\subset\Sigma$ which gives (4.) for $x=o$. In order to prove the convergence for general $x$ we again use \eqref{e:G}  inductively. More precisely, we obtain the statement as by the Hölder inequality $f_n+f_n^2g_n\to f+f^2g$ in $L^p(I,dE)$ for all $p\in(1,\infty)$ whenever $f_n\to f$ and $g_n\to g$ in $L^p(I,dE)$ for all $p\in(1,\infty)$.
%(4.) Let $E\in\Sigma$ and let $I\subset\Sigma$ be compact with $E\in I$. We again apply  Fatou's lemma and Fubini's theorem to obtain
%\begin{align*} \EE\ap{\liminf_{\eta\downarrow0}\int_{I}|g-h|dE}\leq \liminf_{\eta\downarrow0}\int_{I}\EE\ap{|g-h|}dE. \end{align*}
%Now, by (3.) the right hand side tends to zero as $\lm\to0$. This yields for almost all $\om\in\Om$
%\begin{align*} \lim_{\lm\downarrow0}\liminf_{\eta\downarrow0}\int_{I}|g-h|dE=0. \end{align*}
%For every $\om\in \Om$ the limit of $\Gm_o(E,H^{\lm,\om}) =\lim_{\eta\downarrow0}\Gm_o(E+i\eta,H^{\lm,\om})$ exists for almost every $E\in\R$, see Lemma~\ref{l:mu}. Thus, the subset of $I$ where the limit does not equal $\Gm_o(E,H^{\lm,\om})$ must have Lebesgue measure zero. This proves (4.).\\
\end{proof}

Finally,  we prove  Theorem~\ref{main3}.\medskip

\begin{proof}[Proof of Theorem~\ref{main3}] Let $I,I'\subset\Sigma$ be compact such that $I\subset \inn I'$. Theorem~\ref{t:KleinG}~(2.) implies the existence of $\lm_0=\lm_{0}(I')>0$ such that for all $\lm\in[0,\lm_0]$, $v\in\W_{\mathrm{rand}}(\Om,\T)$ and almost all $\om\in\Om$ the assumptions of the criterion for the absence of singular spectrum in Theorem~\ref{t:Klein} hold.  Therefore,  we get $\si_{\mathrm{sing}}(H^{\lm,\om})\cap I=\emptyset$ as $I\subset\inn I'$. Moreover, by Theorem~\ref{t:KleinG}~(3.) and the vague convergence of the spectral measures, Lemma~\ref{l:mu}, we have $I\subset I'\subset\si(H^{\lm,\om})$ for a set of full $\PP$-measure. This finishes the proof.
\end{proof}

%%%%%%%%%%%%%%%%%%%%%%%%%%%%%%%%%%%%%%%%%%%%%%%%%%%%%%%%%%%%%%%%%%%%%%%%%%%%%%%%%%%%%%%%%%%%%%%%%%%%%%%%%%%%%%%%%%%%%%%%%%%%%%%%
%%%%%%%%%%%%%%%%%%%%%%%%%%%%%%%%%%%%%%%%%%%%%%%%%%%%%%%%%%%%%%%%%%%%%%%%%%%%%%%%%%%%%%%%%%%%%%%%%%%%%%%%%%%%%%%%%%%%%%%%%%%%%%%%
%%% Section
%%%%%%%%%%%%%%%%%%%%%%%%%%%%%%%%%%%%%%%%%%%%%%%%%%%%%%%%%%%%%%%%%%%%%%%%%%%%%%%%%%%%%%%%%%%%%%%%%%%%%%%%%%%%%%%%%%%%%%%%%%%%%%%%
%%%%%%%%%%%%%%%%%%%%%%%%%%%%%%%%%%%%%%%%%%%%%%%%%%%%%%%%%%%%%%%%%%%%%%%%%%%%%%%%%%%%%%%%%%%%%%%%%%%%%%%%%%%%%%%%%%%%%%%%%%%%%%%%
\section{Proof of uniform contraction}\label{s:kaproof}
In this section we prove the uniform contraction claimed in Proposition~\ref{p:ka}, i.e.,
\begin{align*}
 \ka_o^{(p)}(g)\leq 1-\de,
\end{align*}
for $g$ in a sufficiently large subset of $\h^{S_{o,o'}}$ and $p>1$. In the following subsection we derive a formula for $\ka_{o}^{(p)}$ and discuss the strategy of the proof.

For the rest of the section we fix the following quantities. Let $I\subset\Sigma$ be compact and $p>1$. Moreover, let $o\in\V$ and $o'\in S_o$ with $a(o)=a(o')$ be  fixed. For $z\in I+i[0,1]$ we denote
\begin{align*}
h=\Gm_{S_{o,o'}}(z,T)\qand h_{o'}=\Gm_{o'}(z,T).
\end{align*}
We keep the dependence on $z$ suppressed since we are dealing with a continuous function on a compact set.

%%%%%%%%%%%%%%%%%%%%%%%%%%%%%%%%%%%%%%%%%%%%%%%%%%%%%%%%%%%%%%%%%%%%%%%%%%%%%%%%%%%%%%%%%%%%%%%%%%%%%%%%%%%%%%%%%%%%%%%%%%%%%%%%
%%% Subsection
%%%%%%%%%%%%%%%%%%%%%%%%%%%%%%%%%%%%%%%%%%%%%%%%%%%%%%%%%%%%%%%%%%%%%%%%%%%%%%%%%%%%%%%%%%%%%%%%%%%%%%%%%%%%%%%%%%%%%%%%%%%%%%%%
\subsection{A formula for the averaged contraction coefficient}\label{ss:kaformula}
We want to represent $\ka_o^{(p)}$ as a contraction sum. We discuss how uniform contraction is proven and what kind of problems we have to deal with.

Recall that $\Pi=\Pi(S_{o,o'})$ is the set of permutations of $S_{o,o'}$ which leave the labels invariant. We extend $\pi\in\Pi$ to the vertex $o'$ by $\pi(o')=o'$. For given
$g\in \h^{S_{o,o'}}$ the recursion map determines the value  $g_{o'}$.
Let $\pi\in\Pi$, $v\in\R$, $z\in\h$ and $g\in\h^{S_{o,o'}}$. We define $g^{(\pi)}:=g^{(\pi)}(z,v) \in\h^{S_{o}\cup S_{o'}}$ by
\begin{align*}
g_{o'}^{(\pi)}:=\Psi^{(T)}_{z-v,o'}\ap{(g_{\pi(x)})_{x\in S_{o'}}}\qand g_{x}^{(\pi)}:=g_x,\quad x\in S_{o,o'}.
\end{align*}
We can think of $g^{(\pi)}$ as taking $g\circ\pi$ to define $g_{o'}^{(\pi)}$ and permuting back by $\pi^{-1}$ afterwards.
The reason for this definition is that we want to consider the values of $g$ at the indices $x$ in the permuted spheres $\pi(S_{i})$, $i\in\{o,o'\}$. This is more direct than choosing $y\in S_{i}$ first in order to consider $x=\pi(y)$.

We adapt the quantities $p_{x}$, $c_{x}$, $q_x$, $Q_{x,y}$, $\al_{x,y}$, $\gm_{x}$, $i\in\{o,o'\}$, $x,y\in S_{i}$ from Section~\ref{s:Contraction} and Section~\ref{s:expansion} to the application of a permutation from $\Pi$.  We define for
$\pi\in\Pi$  and $x\in S_{o,o'}$
\begin{align*}
p_{x}^{(\pi)}
:=\left\{\begin{array}{ll}
p_{x}(h) &: x\in\pi(S_{o})\setminus\{o'\},  \\
p_{o'}(h)p_{x}(h) &: x\in\pi(S_{o'}), \\
\end{array}\right.\;
\mbox{ where }p_{x}(h)=\frac{|t(i,x)|^2\Im h_{x}}{\sum_{y\in S_{i}}|t(i,y)|^2\Im h_{y}},
\end{align*}
with $i\in\{o,o'\}$ such that $x\in\pi(S_{i})$. For $g\in \h^{S_{o,o'}}$, $x\in\pi(S_{i})$, $i\in\{o,o'\}$ denote
\begin{align*}
q_{x}^{(\pi)}:=q_{x}(g^{(\pi)}):=\frac{|t(i,x)|^2\Im g_{x}^{(\pi)}}{\sum_{y\in \pi(S_{i})}|t(i,y)|^2\Im g_{y}^{(\pi)}}
\end{align*}
and
\begin{align*}
\gm^{(\pi)}_x&:=\gm(g_{x}^{(\pi)},h_{x}).
\end{align*}
For $x,y\in \pi(S_{i})$ with $g^{(\pi)}_x\neq h_{x}$  and $g^{(\pi)}_y\neq h_{y}$ denote
\begin{align*} Q_{x,y}^{(\pi)}&:=Q_{x,y}(g^{(\pi)},h)=\frac{\ab{\Im g_x^{(\pi)}\Im g_y^{(\pi)}\Im h_x\Im h_y\gm_x^{(\pi)}\gm_y^{(\pi)}}^\frac{1}{2}}{\frac{1}{2}\ab{\Im g_x^{(\pi)}\Im h_y\gm_y^{(\pi)}+\Im g_y^{(\pi)}\Im h_x\gm_x^{(\pi)}}},\\ \al_{x,y}^{(\pi)}& :=\al_{x,y}(g^{(\pi)},h) =\arg\ab{(g_x^{(\pi)}-h_x)\ov{(g_y^{(\pi)}-h_y)}}
\end{align*}
and $Q_{x,y}^{(\pi)}=0$ for $g^{(\pi)}_x= h_{x}$ or $g^{(\pi)}_y= h_{y}$.
Note that in the case where the index $x$ is not $o'$ we have by definition $g_x=g_x^{(\pi)}$. So, we omit the superscript `${(\pi)}$' for $x,y\neq o'$, i.e., we write $q_{x}=q^{(\pi)}_{x}$, $\gm_{x}=\gm_{x}^{(\pi)}$, $Q_{x,y}=Q_{x,y}^{(\pi)}$ and $Q_{x,y}=\al_{x,y}^{(\pi)}$ if $x,y\neq o'$.
Furthermore, we denote $c_{x}^{(\pi)}:=c_{\pi^{-1}(x)}(g^{(\pi)}\circ\pi,h)$ with $x\in\pi(S_{i})$, $i\in\{o,o'\}$ which gives
\begin{align*}
c_{x}^{(\pi)}={\sum_{v\in \pi(S_i)}q_{v}^{(\pi)}(g) Q_{x,v}^{(\pi)}(g,h)\cos\al_{x,v}^{(\pi)}(g,h)}.%={\sum_{v\in \pi(S_i)}q_{v}(g^{(\pi)}) Q_{x,v}(g^{(\pi)},h)\cos\al_{x,v}(g^{(\pi)},h)}.
\end{align*}
A direct calculation gives the following formula for $\ka_o^{(p)}$
\begin{align}\label{e:ka}
\ka_o^{(p)}=\frac{\sum\limits_{\pi\in\Pi} \ap{\sum\limits_{y\in\pi(S_{o'})} p_{y}^{(\pi)}c_{o'}^{(\pi)}c_{y}^{(\pi)}\gm_{y}+ \sum\limits_{x\in\pi(S_o\setminus\{o'\})} p_{x}^{(\pi)}c_{x}^{(\pi)}\gm_{x}}^p}{\sum\limits_{\pi\in\Pi} \ap{\sum\limits_{x\in S_{o,o'}} p_{x}^{(\pi)}\gm_{x}^p}}.
\end{align}

Our aim is to show that there is $\de>0$ such that $\ka_o^{(p)}\leq 1-\de$ outside of a compact set.
By the basic properties of $Z_0$ and $Z_1^{(p)}$ in Lemma~\ref{l:Z} two ways to prove contraction are indicated.
The first one is to estimate
$$Z_0(z,v,g)^p\leq Z^{(1)}_1(z,v,g)^p\leq (1-\de) Z^{(p)}_1(z,v,g),$$
where the $(1-\de)$ is squeezed out of the error term of the Jensen inequality. Secondly, one can try to find suitable $x$, $y$ and  $\pi$ such that the contraction quantities $Q_{x,y}^{(\pi)}$ or $\cos\al_{x,y}^{(\pi)}$ are less than one. Then, the corresponding $c_{x}^{(\pi)}$ is smaller than one which implies $\ka_o^{(p)}<1$.
However,  for uniform contraction we need more. Even if we manage to show uniform bounds for the contraction quantities $\cos\al_{x,y}^{(\pi)}$ or $Q_{x,y}^{(\pi)}$, (i.e., that one of them is less or equal $c$ for some $c<1$), two problems can occur which make this contraction `invisible'. The first problem is that the weight $q_{y}^{(\pi)}$ might  not be bounded from below and thus the contraction quantity $c_{x}^{(\pi)}$ can become arbitrary close to one. The second problem is that even if one has $c_{x}^{(\pi)}\leq c$ for some $c<1$ the quantity $p^{(\pi)}_x\gm_{x}/\sum_{\pi}Z_1^{(p)}(z,v,g\circ\pi)$ might become so small such that $\ka_{o}^{(p)}$ becomes arbitrary close to one. Our strategy to quantify these problems is to introduce the notion of visibility in Subsection~\ref{s:Vis}.

We then prove Proposition~\ref{p:ka} by distinguishing three cases for $g\in\h^{S_{o,o'}}$.

In Case~1, Subsection~\ref{ss:Case1}, we look at the case where there is $x\in S_{o,o'}$ such that $\gm_{x}/\max_{y\in S_{o,o'}}\gm_y$ is very small. This implies that $p^{(\pi)}_x\gm_{x}/\sum_{\pi}Z_1^{(p)}(z,v,g\circ\pi)$ is very small. In this case we get uniform contraction from the error term of Jensen's inequality.

In Case~2, Subsection~\ref{ss:Case2}, we assume that we are not in Case~1, but that there is $\pi\in\Pi$ and $x\in\pi(S_{o'})$ such that $\Im g_{x}/\max_{y\in\pi(S_{o'})}\Im g_{y}$ is very small. Hence, $q_{x}^{(\pi)}$ is very small. Then, we exploit that $Q_{x,y}$ is a quotient of a geometric and an arithmetic mean.

For Case~3, Subsection~\ref{ss:Case3}, we prove that there always are $\pi$, $x,y$ such that $\cos\al_{x,y}^{(\pi)}$ is uniformly smaller than one. Assuming we are not in the Case~1 or~2 none of the problems mentioned above occurs and we conclude uniform contraction.

Finally,  in Subsection~\ref{ss:kaproof}, we put the pieces together.

%%%%%%%%%%%%%%%%%%%%%%%%%%%%%%%%%%%%%%%%%%%%%%%%%%%%%%%%%%%%%%%%%%%%%%%%%%%%%%%%%%%%%%%%%%%%%%%%%%%%%%%%%%%%%%%%%%%%
%% Section
%%%%%%%%%%%%%%%%%%%%%%%%%%%%%%%%%%%%%%%%%%%%%%%%%%%%%%%%%%%%%%%%%%%%%%%%%%%%%%%%%%%%%%%%%%%%%%%%%%%%%%%%%%%%%%%%%%%%

\subsection{Visibility}\label{s:Vis}

As discussed above, it makes only sense to look for $\al_{x,y}^{(\pi)}$ and $Q_{x,y}^{(\pi)}$ where the corresponding weights are uniformly bounded from below. Otherwise the contraction might become `invisible'. This is quantified below by the sets of visible indices.
\medskip

\begin{Def}\label{d:Vis}
For $g\in \h^{S_{o,o'}}$ and $\de>0$ we define the set of indices in $S_{o,o'}$ \emph{visible with respect to $\gm$ }by
\begin{align*}
\Vis_{\gm}(g,\de)&:=\set{x\in S_{o,o'}\mid \min_{y\in S_{o,o'}}\frac{\gm_x}{\gm_y}>\de}
\end{align*}
and the  set of indices in $\pi(S_{i})$, $\pi\in\Pi$,  $i\in\{o,o'\}$, \emph{visible with respect to the imaginary parts} by
\begin{align*}
\Vis_{\Im}^{i}(g,\pi,\de)&:=\set{x\in \pi(S_{i})\mid \min_{y\in \pi(S_{i})}\frac{\Im g^{(\pi)}_x}{\Im g^{(\pi)}_y}>\de}.
\end{align*}
\end{Def}

Let us remark two simple facts. Firstly,  for $0<\de'\leq\de$ we have
$\Vis_{\gm}(g,\de)\subseteq\Vis_{\gm}(g,\de')$ and $\Vis_{\Im}^{i}(g,\pi,\de)\subseteq\Vis_{\Im}^{i}(g,\pi,\de')$, for $i\in\{o,o'\}$.
Secondly, we have $\Vis_{\gm}(g,\de)=\emptyset$ or $\Vis_{\Im}^{i}(g,\pi,\de)=\emptyset$, $i\in\{o,o'\}$ if and only if $\de\geq1$.

We now prove two lemmas which demonstrate how to put the concept of visibility into action.
We start by visibility with respect to the imaginary parts.

 Let
$z\in I+i[0,1]$, $v\in\R$ and $g\in \h^{S_{o,o'}}$ be fixed for this subsection.
\medskip

\begin{lemma}\label{l:using_alQ<1}
Let $\eps>0$, $c\in[0,1)$. There is $\de:=\de(\eps,c)>0$ such that for all $\pi\in\Pi$, $i\in\{o,o'\}$, $\ow x\in\pi(S_{i})$ and  $\ow y\in \Vis_{\Im}^{i}(\pi,g,\eps)$ with
\begin{align*}
Q_{\ow x,\ow y}^{(\pi)}\leq c\qquad\mbox{ or } \qquad\cos\al_{\ow x,\ow y}^{(\pi)}\leq c,
\end{align*}
we have
\begin{align*}
c_{\ow x}^{(\pi)}\leq 1-\de.
\end{align*}
\begin{proof}
Note that, for all $\pi\in\Pi$, $i\in\{o,o'\}$ and $x\in\Vis_{\Im}^{i}(\pi,g,\de_2)$, we have
\begin{align*}
q_{x}^{(\pi)}
=\frac{|t(i,x)|^2}{\sum_{y\in \pi(S_{i})}|t(i,y)|^2\Im g_{y}^{(\pi)}/\Im g_{x}^{(\pi)}}
\geq{\eps}\frac{\min_{y\in S_{o}}|t(o,y)|^2}{\sum_{w\in S_{o}}|t(o,w)|^2}=:\eps'
\end{align*}
and $\eps'>0$.
Let $\ow x,\ow y$ be chosen according to the assumption.
We get by estimating $Q_{\ow x,y}^{(\pi)}\cos\al_{\ow x,y}^{(\pi)} \leq 1$ for all $y\neq \ow y$ that
\begin{align*}
c_{\ow x}^{(\pi)}\leq \sum_{y\in\pi(S_{i})\setminus\{\ow y\}}q_{y}^{(\pi)}+q_{\ow y}^{(\pi)}Q^{(\pi)}_{\ow x, \ow y}\cos\al^{(\pi)}_{\ow x, \ow y}=1-q_{\ow y}^{(\pi)}(1-Q^{(\pi)}_{\ow x, \ow y}\cos\al^{(\pi)}_{\ow x,\ow y})\leq1-\de,
\end{align*}
where we set $\de:=\eps'(1-c)$.
\end{proof}
\end{lemma}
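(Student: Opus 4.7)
The proof will combine two simple facts: the visibility hypothesis gives a uniform lower bound on the weight $q_{\ow y}^{(\pi)}$, while the upper bound on $Q^{(\pi)}_{\ow x,\ow y}\cos\al^{(\pi)}_{\ow x,\ow y}$ keeps the $\ow y$-summand of $c_{\ow x}^{(\pi)}$ away from $1$. Everything else is handled by trivial estimates.

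First, I would fix $\pi\in\Pi$ and $i\in\{o,o'\}$ and use the definition of the visible set together with the label-invariance built into $t$: since the coefficients $|t(i,y)|^{2}$ are bounded uniformly above and below (they depend only on the labels, and there are finitely many of them), for any $\ow y\in \Vis_{\Im}^{i}(\pi,g,\eps)$ we have
\begin{equation*}
q_{\ow y}^{(\pi)}=\frac{|t(i,\ow y)|^{2}\Im g_{\ow y}^{(\pi)}}{\sum_{v\in\pi(S_i)}|t(i,v)|^{2}\Im g_{v}^{(\pi)}}\ge \eps\,\frac{\min_{v}|t(i,v)|^{2}}{\sum_{v}|t(i,v)|^{2}}=:\eps'>0,
\end{equation*}
and $\eps'$ depends only on $\eps$ and on the (fixed) operator $T$, not on $g$ or $\pi$.

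Next, I would use the formula $c_{\ow x}^{(\pi)}=\sum_{v\in\pi(S_i)}q_{v}^{(\pi)}Q_{\ow x,v}^{(\pi)}\cos\al_{\ow x,v}^{(\pi)}$, noting that each factor $Q_{\ow x,v}^{(\pi)}\cos\al_{\ow x,v}^{(\pi)}$ is bounded above by $1$ (since $Q\in[0,1]$ and $\cos\al\le 1$). Estimating every term with $v\neq\ow y$ by $q_v^{(\pi)}$ and using $\sum_v q_v^{(\pi)}=1$, I obtain
\begin{equation*}
c_{\ow x}^{(\pi)}\le \bigl(1-q_{\ow y}^{(\pi)}\bigr)+q_{\ow y}^{(\pi)}\,Q_{\ow x,\ow y}^{(\pi)}\cos\al_{\ow x,\ow y}^{(\pi)}=1-q_{\ow y}^{(\pi)}\bigl(1-Q_{\ow x,\ow y}^{(\pi)}\cos\al_{\ow x,\ow y}^{(\pi)}\bigr).
\end{equation*}
Under either hypothesis of the lemma, the bracket is at least $1-c$, so $c_{\ow x}^{(\pi)}\le 1-\eps'(1-c)$. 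Setting $\de:=\eps'(1-c)$ finishes the proof.

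There is no real obstacle here — the only thing to check is that the lower bound $\eps'$ is genuinely uniform in $\pi$, $i$ and $g$, which is immediate from the label-invariance of $t$ (so the coefficients $|t(i,v)|$ take only finitely many values) and from the definition of $\Vis_{\Im}^{i}(\pi,g,\eps)$ (which bounds the ratios $\Im g_{v}^{(\pi)}/\Im g_{\ow y}^{(\pi)}$ independently of $g$). The resulting $\de$ then depends only on $\eps$, $c$ and $T$, as claimed.
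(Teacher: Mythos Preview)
Your proof is correct and follows essentially the same approach as the paper: bound $q_{\ow y}^{(\pi)}$ below by $\eps'$ using the visibility hypothesis and label-invariance of $t$, then estimate all terms $v\neq\ow y$ in $c_{\ow x}^{(\pi)}$ trivially to obtain $c_{\ow x}^{(\pi)}\le 1-q_{\ow y}^{(\pi)}(1-Q^{(\pi)}_{\ow x,\ow y}\cos\al^{(\pi)}_{\ow x,\ow y})\le 1-\eps'(1-c)$. The final constant $\de=\eps'(1-c)$ is exactly the one the paper obtains.
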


The next lemma shows how to use visibility with respect to $\gm$.\medskip

\begin{lemma}\label{l:using_c<1}
Let  $\eps>0$, $c\in[0,1)$. There is $\de:=\de(\eps,c)>0$ such that if
\begin{align*}
    \Vis_{\gm}(g,\eps)=S_{o,o'}
\qqand
c_{x}^{(\pi)}\leq c,
\end{align*}
for some $\pi\in\Pi$, $i\in\{o,o'\}$, $x\in\pi(S_{i})$, we have
\begin{align*}
\ka_o^{(p)}(z,v,g)\leq 1-\de.
\end{align*}
\begin{proof}
As the $p_{x}=p_{x}(h)$, $x\in S_{o,o'}$, are uniformly larger than zero for all $z\in I+i[0,1]$ we conclude the existence of $\eps'>0$ such that for all $\pi\in\Pi$ and $x\in\Vis_{\gm}(g,\eps)$
\begin{align*}
\frac{p^{(\pi)}_{x}\gm_{x}^p}{\sum_{\pi} Z_{1}^{(p)}(z,v,g\circ\pi)} =\frac{1}{\sum_{\pi} \sum_{y\in S_{o,o'}}(p_{y}^{(\pi)}/p^{(\pi)}_{x})(\gm_{y}/\gm_{x})^p}
\geq\frac{\eps^p}{\sum_{\pi} \sum_{y\in S_{o,o'}}p_{y}^{(\pi)}/p^{(\pi)}_{x} }\geq \eps'.
\end{align*}
Let $\pi\in\Pi$, $i\in\{o,o'\}$, $x\in\pi(S_{i})$ be chosen according to the assumption. Choose
$\oh x\in S_{o,o'}$ such that  $\oh x=x$ if $x\neq o'$ and arbitrary in $\pi(S_{o'})$ otherwise.
We get by estimating $c_{y}^{(\pi)}\leq1$ for $y\neq  x$ and $c_{x}^{(\pi)}\leq c$, Jensen's inequality and the previous estimate
\begin{align*}
\ap{Z_{0}(z,v,g\circ\pi)}^p&\leq \ap{\sum_{y\in S_{o,o'}\setminus\{\oh{x}\}} p_{y}^{(\pi)}\gm_{y}+p_{\oh{x}}^{(\pi)}\gm_{\oh{x}}c_{x}^{(\pi)}}^{p}\\ &\leq \sum_{y\in S_{o,o'}\setminus\{\oh{x}\}}p_{y}^{(\pi)}\gm_{y}^p +p_{\oh{x}}^{(\pi)}\gm_{\oh{x}}^{p}c^p\\
&=Z_{1}^{(p)}(z,v,g\circ\pi)-p_{\oh{x}}^{(\pi)}\gm_{\oh{x}}^{p}(1-c^p)\\
&\leq Z_{1}^{(p)}(z,v,g\circ\pi)
-(1-c^p)\eps'\sum_{\pi'\in\Pi}Z_{1}^{(p)}(z,v,g\circ\pi').
\end{align*}
We set $\de:=\eps'(1-c^p)$. Applying the basic  estimate  $Z_0$ with $Z_1^{(p)}$, Lemma~\ref{l:Z}, to all $\ow \pi\in\Pi$, $\ow \pi\neq\pi$ and using the inequality above, we get
\begin{align*}
\ka_{o}^{(p)}(g) \leq\frac{1}{\sum_{\pi'\in\Pi}Z_{1}^{(p)}(z,v,g\circ\pi')} \ap{\sum_{\ow\pi\neq\pi}Z_{1}^{(p)}(z,v,g\circ\ow\pi) +\ap{Z_{0}(z,v,g\circ\pi)}^p} \leq 1-\de.
\end{align*}
\end{proof}
\end{lemma}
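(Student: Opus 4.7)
My plan is to exploit the visibility assumption $\Vis_\gm(g,\eps)=S_{o,o'}$ to turn the single ``$c$-shrinkage'' in one term of one $Z_0(z,v,g\circ\pi)$ into a uniform reduction of the ratio $\ka_o^{(p)}$. The first step is to observe that, since the weights $p_y=p_y(h)$ depend continuously on $z\in I+i[0,1]$ (a compact set) and are uniformly positive there by $\mathrm{(M1)}$ and $\mathrm{(T0)}$, the transplanted weights $p_y^{(\pi)}$ are bounded below by some $p_{\min}>0$ uniformly in $\pi$, $y$. Combined with the visibility bound $\gm_y/\gm_{y'}>\eps$ for all $y,y'\in S_{o,o'}$, we get that for every $\pi\in\Pi$ and every $\hat x\in S_{o,o'}$,
\begin{equation*}
\frac{p_{\hat x}^{(\pi)}\gm_{\hat x}^p}{\sum_{\pi'\in\Pi} Z_1^{(p)}(z,v,g\circ\pi')} \geq \eps',
\end{equation*}
for some $\eps'=\eps'(\eps,p)>0$ independent of $z$, $v$, $g$, $\pi$, $\hat x$.

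Next I pick the distinguished pair $(\pi,x)$ granted by the hypothesis. If $x\in\pi(S_o)\setminus\{o'\}$, set $\hat x:=x$, so that the contribution $p_{\hat x}^{(\pi)}c_{\hat x}^{(\pi)}\gm_{\hat x}$ appears directly in $Z_0(z,v,g\circ\pi)$. If instead $x=o'$, the factor $c_{o'}^{(\pi)}\leq c$ multiplies the whole upper-sphere block $\sum_{y\in\pi(S_{o'})}p_{o'}^{(\pi)}p_y^{(\pi)}c_{o'}^{(\pi)}c_y^{(\pi)}\gm_y$ of $Z_0$, so I may fix an arbitrary $\hat x\in\pi(S_{o'})$ and, since $c_y^{(\pi)}\leq 1$ for $y\neq o'$, still get $c_{o'}^{(\pi)}c_{\hat x}^{(\pi)}\leq c$ on that term. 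In either case, estimating $c_y^{(\pi)}\leq 1$ for $y\neq\hat x$ yields
\begin{equation*}
Z_0(z,v,g\circ\pi)\leq \sum_{y\in S_{o,o'}\setminus\{\hat x\}} p_y^{(\pi)}\gm_y + c\, p_{\hat x}^{(\pi)}\gm_{\hat x},
\end{equation*}
so Jensen's inequality (applied to the $p^{(\pi)}$-convex combination) gives
\begin{equation*}
Z_0(z,v,g\circ\pi)^p\leq Z_1^{(p)}(z,v,g\circ\pi)-(1-c^p)\,p_{\hat x}^{(\pi)}\gm_{\hat x}^p.
\end{equation*}

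Finally, for every other permutation $\pi'\neq\pi$ I use the crude bound $Z_0(z,v,g\circ\pi')^p\leq Z_1^{(p)}(z,v,g\circ\pi')$ from Lemma~\ref{l:Z}, sum over all $\pi'\in\Pi$, and insert the lower bound on $p_{\hat x}^{(\pi)}\gm_{\hat x}^p$ from the first step. This yields
\begin{equation*}
\sum_{\pi'\in\Pi}Z_0(z,v,g\circ\pi')^p\leq \bigl(1-(1-c^p)\eps'\bigr)\sum_{\pi'\in\Pi}Z_1^{(p)}(z,v,g\circ\pi'),
\end{equation*}
which, by the defining formula for $\ka_o^{(p)}$, gives $\ka_o^{(p)}(z,v,g)\leq 1-\de$ with $\de:=(1-c^p)\eps'$. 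The main technical point I expect to need care about is the case $x=o'$: one must notice that $c_{o'}^{(\pi)}$ acts as an overall factor on the upper-sphere block of $Z_0$, which allows the argument to proceed with any auxiliary $\hat x\in\pi(S_{o'})$; the rest is an essentially algebraic manipulation.
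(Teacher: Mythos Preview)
Your proof is correct and follows essentially the same approach as the paper's: lower-bound the weighted fraction $p_{\hat x}^{(\pi)}\gm_{\hat x}^p/\sum_{\pi'}Z_1^{(p)}$ via visibility and the uniform positivity of the $p_y^{(\pi)}$, apply Jensen to the one distinguished permutation to extract the $(1-c^p)p_{\hat x}^{(\pi)}\gm_{\hat x}^p$ deficit, and use the crude bound $Z_0^p\leq Z_1^{(p)}$ on all other $\pi'$. One minor omission: your case split for choosing $\hat x$ covers $x\in\pi(S_o)\setminus\{o'\}$ and $x=o'$ but not $x\in\pi(S_{o'})$; in that missing case just set $\hat x=x$ as well (the extra factor $c_{o'}^{(\pi)}\leq 1$ is harmless), which is exactly the paper's dichotomy ``$x\neq o'$ versus $x=o'$''.
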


%%%%%%%%%%%%%%%%%%%%%%%%%%%%%%%%%%%%%%%%%%%%%%%%%%%%%%%%%%%%%%%%%%%%%%%%%%%%%%%%%%%%%%%%%%%%%%%%%%%%%%%%%%%%%%%%%%%%%%%%%%%%%%%%
%%% Subsection
%%%%%%%%%%%%%%%%%%%%%%%%%%%%%%%%%%%%%%%%%%%%%%%%%%%%%%%%%%%%%%%%%%%%%%%%%%%%%%%%%%%%%%%%%%%%%%%%%%%%%%%%%%%%%%%%%%%%%%%%%%%%%%%%

\subsection{Case~1: A Jensen type inequality}\label{ss:Case1}
In this section we will prove the statement which was discussed as Case~1 in Subsection~\ref{ss:kaformula}.
We will need the following auxiliary constant
\begin{equation*}\label{e:c1}
c_1:=c_1(I):=\ap{\max_{z\in I+i[0,1]}\max_{\pi\in\Pi}\max_{x,y\in S_{o,o'}}\frac{(1-p^{(\pi)}_{x})}{p^{(\pi)}_{y}}}^{-1}. \end{equation*}
As $h_x=\Gm_{x}(z,T)$, $z\in I$ and $I\subset \Sigma$ is chosen compact we have
\begin{align*}
0<\min_{z\in I+i[0,1]}\min_{x\in S_{o,o'}}\Im h_{x}\leq \max_{z\in I+i[0,1]}\max_{x\in S_{o,o'}}\Im h_{x}<\infty.
\end{align*}
Therefore,  $p_{x}^{(\pi)}\in(0,1)$, $x\in S_{o,o'}$. Let
$z\in I+i[0,1]$ and $v\in\R$ be fixed for this subsection.

\medskip

\begin{prop}\label{p:invisgm}
Let $p>1$, $\eps\in(0,c_1)$.  There is $\de=\de(\eps)>0$  such that for all  $g\in \h^{S_{o,o'}}$ with
\begin{align*}
\Vis_{\gm}(g,\eps)\neq S_{o,o'},
\end{align*}
we have
\begin{align*}
\ka_o^{(p)}(z,v,g)\leq 1-\de.
\end{align*}
\end{prop}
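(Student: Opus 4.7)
The goal is to squeeze a uniform defect out of Jensen's inequality $(Z_1^{(1)})^p\le Z_1^{(p)}$: whenever one coordinate of $\gm$ is much smaller than the largest, the two sides are separated by a gap independent of $\pi$, $z$, $v$. First I would reduce the bound on $\ka_o^{(p)}$ to a fibrewise bound. Since $\ka_o^{(p)}=(\sum_\pi Z_0(g\circ\pi)^p)/(\sum_\pi Z_1^{(p)}(g\circ\pi))$, it is enough to show $Z_0(g\circ\pi)^p\le(1-\de)Z_1^{(p)}(g\circ\pi)$ for every $\pi\in\Pi$, with $\de$ independent of $\pi$, $g$, $z$, $v$. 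Using $|c^{(\pi)}_{o'}|,|c^{(\pi)}_y|,|c^{(\pi)}_x|\le 1$ in the formula for $Z_0$ gives, in view of the change-of-variables computation in Subsection~\ref{ss:kaformula},
\begin{align*}
|Z_0(g\circ\pi)|\;\le\;\sum_{x\in S_{o,o'}}p_x^{(\pi)}\gm_x\;=\;Z_1^{(1)}(g\circ\pi),
\end{align*}
so it suffices to prove $(Z_1^{(1)}(g\circ\pi))^p\le(1-\de)Z_1^{(p)}(g\circ\pi)$.

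Next I would rescale and invoke compactness. Set $M=\max_{x\in S_{o,o'}}\gm_x$; the case $M=0$ is trivial, so assume $M>0$ and put $\hat\gm_x:=\gm_x/M\in[0,1]$. The hypothesis $\Vis_\gm(g,\eps)\neq S_{o,o'}$ yields $\ow x\in S_{o,o'}$ with $\hat\gm_{\ow x}\le\eps$, and the definition of $M$ yields $\ow y\in S_{o,o'}$ with $\hat\gm_{\ow y}=1$. For each ordered pair $(a,b)\in S_{o,o'}\times S_{o,o'}$ with $a\ne b$, each $\pi\in\Pi$, consider on the compact set
\begin{align*}
K_{a,b}\;:=\;\big\{(z,\hat\gm)\in(I+i[0,1])\times[0,1]^{S_{o,o'}}\;:\;\hat\gm_a\le\eps,\;\hat\gm_b=1\big\}
\end{align*}
the continuous function $F_\pi(z,\hat\gm):=(\sum_x p_x^{(\pi)}(z)\hat\gm_x)^p/\sum_x p_x^{(\pi)}(z)\hat\gm_x^p$. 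Continuity holds because, by the definition of $c_1$, $p_x^{(\pi)}(z)\ge c_1/(1+c_1)>0$ uniformly on $I+i[0,1]$ (using Theorem~\ref{t:LIG} to see that $h$ depends continuously on $z$ with imaginary parts bounded away from zero), so the denominator is bounded below by $p_b^{(\pi)}(z)\cdot 1>0$. Since $\eps<c_1\le 1$, the values $\hat\gm$ in $K_{a,b}$ are not all equal; strict convexity of $t\mapsto t^p$ ($p>1$) combined with positivity of the weights then gives $F_\pi(z,\hat\gm)<1$ pointwise on $K_{a,b}$. Compactness of $K_{a,b}$ forces $\sup_{K_{a,b}}F_\pi<1$.

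Finally, set $\de:=1-\max_{\pi\in\Pi}\max_{(a,b),a\ne b}\sup_{K_{a,b}}F_\pi$; the maximum is taken over a \emph{finite} index set, so $\de>0$. For any admissible $g$, choosing $a=\ow x$ and $b=\ow y$ places $(z,\hat\gm)\in K_{\ow x,\ow y}$, whence $(\sum_x p_x^{(\pi)}\gm_x)^p=M^p F_\pi(z,\hat\gm)\sum_x p_x^{(\pi)}\hat\gm_x^p\le(1-\de)Z_1^{(p)}(g\circ\pi)$, so $Z_0(g\circ\pi)^p\le(1-\de)Z_1^{(p)}(g\circ\pi)$ for every $\pi$, and summing over $\pi$ yields the claim. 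Note that $v\in\R$ enters neither $Z_1^{(p)}$ (which only involves $\gm_x$ for $x\in S_{o,o'}$) nor the upper bound $\sum_x p_x^{(\pi)}\gm_x$, so the estimate is automatically uniform in $v$. The main obstacle is obtaining uniformity of the gap over the unbounded, non-compact set of admissible $g$; this is resolved by passing to the rescaled variables $\hat\gm\in[0,1]^{S_{o,o'}}$, on which the argument becomes a compactness/continuity statement on a finite product of compact sets, independent of the magnitude $M$.
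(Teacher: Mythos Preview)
Your proof is correct. The reduction to $(Z_1^{(1)})^p\le(1-\de)Z_1^{(p)}$ via $|c^{(\pi)}_\cdot|\le 1$, the rescaling $\hat\gm_x=\gm_x/M$, and the compactness argument on $K_{a,b}\subset(I+i[0,1])\times[0,1]^{S_{o,o'}}$ all go through; the observation that $\eps<c_1\le 1$ forces $\hat\gm_a\ne\hat\gm_b$, so strict convexity applies, and the finite maximum over $(\pi,a,b)$ yields $\de>0$.

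The paper takes a different, constructive route. Instead of compactness, it proves a quantitative refinement of Jensen's inequality (Lemma~\ref{l:Jensen2}): for $a\ge b\ge 0$ and $\lm\in[0,1]$,
\[
(\lm a+(1-\lm)b)^p\le(1-\de_p)\big(\lm a^p+(1-\lm)b^p\big),
\]
with an explicit $\de_p=\de_p(\lm,b/a)$ obtained from the integral remainder in Taylor's formula. Applying this with $\lm=1-p_{x_0}^{(\pi)}$, where $x_0$ realizes $\min_x\gm_x$, and then bounding $b/a\le\eps/c_1$ via the definition of $c_1$, the paper gets the explicit lower bound $\de_p\ge c_2(1-\eps/c_1)^2$, where $c_2$ is another constant built from the weights. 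Your compactness argument is shorter and conceptually cleaner, but it gives no formula for $\de$; the paper emphasizes elsewhere that all its estimates are explicit (in principle allowing quantitative bounds on the coupling $\lm_0$), and Lemma~\ref{l:Jensen2} is what makes that possible here.
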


For the proof we employ a refinement of Jensen's inequality for monomials. To this end, we take a closer look at the error term in Jensen's inequality.
\medskip

\begin{lemma}Let $f:\R\to\R$ be twice continuously differentiable, $\lm\in[0,1]$ and $x,y\in\R$. Then
\begin{align*}
f(\lm x+(1-\lm)y)=\lm f(x)+(1-\lm)f(y)-E_f(\lm,x,y),
\end{align*}
where
\begin{align*}
E_f(\lm,x,y)=(x-y)^2\!\int_0^1\!\! \ap{\lm t \mathds{1}_{[0,1-\lm]}(t)+(1-\lm)(1-t)\mathds{1}_{[1-\lm,1]}(t)}f''((1-t)x+ty)dt    \end{align*}
and $\mathds{1}_A$ is the characteristic function of  a set $A$.
\begin{proof} We take the Taylor expansion of $f$ in $x_0=\lm x+(1-\lm)y$  at the points $x$ and $y$ with integral error term. Inserting this into $\lm f(x)+(1-\lm)f(y)$ yields the statement.
\end{proof}
\end{lemma}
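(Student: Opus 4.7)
The overall strategy is precisely what the author sketches: expand $f$ around $x_0 := \lambda x + (1-\lambda) y$ with Taylor's theorem in integral remainder form at both endpoints $x$ and $y$, then take the convex combination $\lambda f(x)+(1-\lambda)f(y)$. Concretely, for each $a \in \{x,y\}$ I would write
\begin{align*}
f(a) = f(x_0) + f'(x_0)(a-x_0) + (a-x_0)^2 \int_0^1 (1-s)\, f''\!\bigl(x_0 + s(a-x_0)\bigr)\, ds.
\end{align*}
The linear terms are annihilated by the very definition of $x_0$, since $\lambda(x-x_0) + (1-\lambda)(y-x_0) = 0$. Thus the only content of the identity lies in converting the two quadratic remainder integrals (one for $a=x$, one for $a=y$) into the single integral with the piecewise weight appearing in the statement.

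The main (purely mechanical) step is then a change of variables to force the argument of $f''$ to appear as $(1-t)x + ty$ in both remainders. Using $x - x_0 = (1-\lambda)(x-y)$ in the $a=x$ remainder, the substitution $t = (1-\lambda)(1-s)$ sends $[0,1]_s$ to $[0,1-\lambda]_t$, transforms $(1-s)\,ds$ into $t\,dt/(1-\lambda)^2$, and combines with the factor $(x-x_0)^2 = (1-\lambda)^2(x-y)^2$ to yield
\begin{align*}
(x-y)^2 \int_0^{1-\lambda} t\, f''\!\bigl((1-t)x+ty\bigr)\, dt.
\end{align*}
Symmetrically, using $y-x_0 = -\lambda(x-y)$ in the $a=y$ remainder, the substitution $t = 1 - \lambda(1-s)$ sends $[0,1]_s$ to $[1-\lambda,1]_t$ and produces
\begin{align*}
(x-y)^2 \int_{1-\lambda}^1 (1-t)\, f''\!\bigl((1-t)x+ty\bigr)\, dt.
\end{align*}

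Finally, I multiply the first remainder by $\lambda$, the second by $(1-\lambda)$, sum, and write the resulting integrals as one integral over $[0,1]$ with indicator functions attached to the $t$-weights. This reproduces exactly $E_f(\lambda, x,y)$ in the stated form, and rearranging gives $f(x_0) = \lambda f(x) + (1-\lambda)f(y) - E_f(\lambda,x,y)$. There is no real obstacle here; the only thing to watch is that the two substitutions have opposite orientations, so the Jacobian signs are absorbed correctly when the integration limits are flipped to match the indicator intervals $[0,1-\lambda]$ and $[1-\lambda,1]$.
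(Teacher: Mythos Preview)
Your proposal is correct and follows exactly the approach the paper sketches: Taylor-expand $f$ about $x_0=\lm x+(1-\lm)y$ at both $x$ and $y$ with integral remainder, note that the linear terms cancel in the convex combination, and change variables in each remainder to put $f''$ in the form $f''((1-t)x+ty)$. The substitutions and the handling of orientations are right, and the resulting piecewise weight reproduces $E_f$ verbatim.
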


Jensen's inequality for twice continuously differentiable functions is a direct corollary. However,  if we know more about the function $f$ we can use the explicit error term to get finer estimates.\medskip

\begin{lemma}\label{l:Jensen2}Let $p\geq1$, $\lm\in[0,1]$ and $a,b\in[0,\infty)$, $a\geq b$. Then
\begin{align*}
(\lm a+(1-\lm)b)^p\leq(1-\de_p)\ap{\lm a^p+(1-\lm)b^p},
\end{align*}
where
\begin{align*}
\de_p:=\de_p(\lm,b/a):=\ap{1-{b}/{a}}^2\left\{\begin{array}{ll}
p(p-1)\lm(1-\lm)/2&: p\in[1,2), \\
\lm(1-\lm^{p-1})&: p\geq2
\end{array}\right.
\end{align*}
\begin{proof}
The statement is trivial for $p=1$, $a=b$ or $\lm\in\{0,1\}$. Therefore,  we only treat the case where $p>1$, $a\neq b$ and $\lm\not\in\{0,1\}$.

Assume first that $a=1$ and $b\in[0,1)$. Then, the error term $E_p:=E_{(\cdot)^p}$ from the previous lemma can be estimated for $p\geq2$, using $b\geq0$,
\begin{align*}
\frac{E_p(\lm,1,b)}{(1-b)^2}&=p(p-1)\int_0^1 \ap{\lm t \mathds{1}_{[0,1-\lm]} +(1-\lm)(1-t)\mathds{1}_{[1-\lm,1]}}((1-t)+tb)^{p-2}dt\\
&\geq p(p-1)\ap{\lm\int_{0}^{1-\lm}t(1-t)^{p-2}dt+ (1-\lm) \int_{1-\lm}^{1}(1-t)^{p-1}dt}\\
&=\lm\ap{1-\lm^{p-1}}.
\end{align*}
On the other hand, for $p\in(1,2)$ we get by estimating $((1-t)+tb)^{(p-2)}\geq1$,
\begin{align*}
\frac{E_p(\lm,1,b)}{(1-b)^2}&\geq p(p-1)\int_0^1 \ap{\lm t \mathds{1}_{[0,1-\lm]}+(1-\lm)(1-t)\mathds{1}_{[1-\lm,1]}}dt =p(p-1)\frac{\lm(1-\lm)}{2}.
\end{align*}
Employing the previous lemma  for $a=1$, $b\in[0,1]$, we get since $\lm +(1-\lm) b^p\leq 1$,
\begin{align*}
(\lm +(1-\lm)b)^p&\leq{\lm +(1-\lm)b^p}-\de_p\leq(1-\de_p)\ap{\lm +(1-\lm)b^p}.
\end{align*}
Now, let $a\geq b\geq0$ be arbitrary. We get, employing the previous inequality, that
\begin{align*}
(\lm a+(1-\lm)b)^p&=a^p\ap{\lm +(1-\lm)\frac{b}{a}}^p\leq (1-\de_p)\ap{\lm a^p+(1-\lm)b^p}.
\end{align*}
\end{proof}
\end{lemma}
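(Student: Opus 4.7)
\emph{Proof plan for Lemma~\ref{l:Jensen2}.}

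The plan is to feed $f(x)=x^{p}$ into the Taylor-remainder identity from the preceding (unnumbered) lemma and then to lower-bound the resulting error integral $E_{p}(\lambda,a,b)$ by the claimed quantity $\delta_{p}(a-b)^{2}$. First, I would dispose of the trivial cases $p=1$, $a=b$, or $\lambda\in\{0,1\}$, where $\delta_{p}=0$ and the inequality is immediate. Then by homogeneity both sides scale as $a^{p}$, so it suffices to treat $a=1$ and $b\in[0,1)$, i.e.\ to prove
\begin{equation*}
(\lambda+(1-\lambda)b)^{p}\leq(1-\delta_{p})\bigl(\lambda+(1-\lambda)b^{p}\bigr).
\end{equation*}

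With $f(x)=x^{p}$ we have $f''(x)=p(p-1)x^{p-2}$, and the preceding lemma yields
\begin{equation*}
\lambda+(1-\lambda)b^{p}-(\lambda+(1-\lambda)b)^{p}=E_{p}(\lambda,1,b),
\end{equation*}
where $E_{p}(\lambda,1,b)=(1-b)^{2}p(p-1)\int_{0}^{1}\omega(t)\bigl((1-t)+tb\bigr)^{p-2}dt$ and $\omega(t)=\lambda t\mathds{1}_{[0,1-\lambda]}(t)+(1-\lambda)(1-t)\mathds{1}_{[1-\lambda,1]}(t)$. I would then split on the sign of $p-2$. For $p\geq 2$ the function $x\mapsto x^{p-2}$ is nondecreasing and $b\geq 0$, so $((1-t)+tb)^{p-2}\geq(1-t)^{p-2}$; computing $\int_{0}^{1-\lambda}t(1-t)^{p-2}dt$ and $\int_{1-\lambda}^{1}(1-t)^{p-1}dt$ elementarily and adding should give exactly $\lambda(1-\lambda^{p-1})/[p(p-1)]$, producing the $p\geq 2$ branch of $\delta_{p}$. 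For $p\in(1,2)$, the exponent $p-2$ is negative and the base $(1-t)+tb$ lies in $(0,1]$, so $((1-t)+tb)^{p-2}\geq 1$; integrating only the weight $\omega$ gives the triangle area $\lambda(1-\lambda)/2$, which after multiplication by $p(p-1)$ produces the other branch of $\delta_{p}$.

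Finally, to convert the additive inequality $\lambda+(1-\lambda)b^{p}-(\lambda+(1-\lambda)b)^{p}\geq\delta_{p}(1-b)^{2}$ into the claimed multiplicative form, I would invoke $\lambda+(1-\lambda)b^{p}\leq 1$ (which holds since $b\leq 1$), so that $\delta_{p}\geq\delta_{p}\bigl(\lambda+(1-\lambda)b^{p}\bigr)$, giving
\begin{equation*}
(\lambda+(1-\lambda)b)^{p}\leq(1-\delta_{p})\bigl(\lambda+(1-\lambda)b^{p}\bigr).
\end{equation*}
Unscaling back to general $a\geq b\geq 0$ via $(\lambda a+(1-\lambda)b)^{p}=a^{p}(\lambda+(1-\lambda)(b/a))^{p}$ completes the proof.

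The only real obstacle is the bookkeeping in the two integral estimates, in particular making sure that the lower bound $((1-t)+tb)^{p-2}\geq(1-t)^{p-2}$ for $p\geq 2$ really delivers the clean closed form $\lambda(1-\lambda^{p-1})$ after both integrals are summed and the factor $p(p-1)$ is cancelled — a short but careful one-variable calculation. Everything else is essentially forced by the Taylor-remainder identity and homogeneity.
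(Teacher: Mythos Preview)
Your proposal is correct and follows essentially the same route as the paper's own proof: dispose of the trivial cases, reduce by homogeneity to $a=1$, apply the Taylor-remainder identity from the preceding lemma, split on the sign of $p-2$ using the lower bounds $((1-t)+tb)^{p-2}\geq(1-t)^{p-2}$ for $p\geq2$ and $((1-t)+tb)^{p-2}\geq1$ for $p\in(1,2)$, and finally convert the additive estimate to the multiplicative one via $\lambda+(1-\lambda)b^{p}\leq1$. The only thing to tidy up is your notation: in the statement $\delta_{p}$ already contains the factor $(1-b/a)^{2}$, so the additive inequality should read $E_{p}(\lambda,1,b)\geq\delta_{p}$ rather than $\geq\delta_{p}(1-b)^{2}$; your argument is internally consistent once you identify your ``$\delta_{p}$'' with the bracket factor alone.
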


With these preparations we are ready to prove Proposition~\ref{p:invisgm}.\medskip

\begin{proof}[Proof of Proposition~\ref{p:invisgm}]
Let  first $x_0\in S_{o,o'}$ be chosen arbitrary. By the previous lemma we have
\begin{align*}
\ap{\sum_{x\in S_{o,o'}}p^{(\pi)}_{x}\gm_{x}}^p &=\ap{\ap{1-p^{(\pi)}_{x_0}}\ap{\sum_{x\in S_{o,o'}\setminus\{x_0\}}\frac{p^{(\pi)}_{x}}{(1-p^{(\pi)}_{x_0})}\gm_{x}} +p^{(\pi)}_{x_0}\gm_{x_0}}^p\\
&\leq (1-\de_{p})\ap{(1-p^{(\pi)}_{x_0}) \ap{\sum_{x\in S_{o,o'}\setminus\{x_0\}}\frac{p_{x}^{(\pi)}} {(1-p^{(\pi)}_{x_0})}\gm_{x}}^p+p^{(\pi)}_{x_0}\gm_{x_0}^p}\\
&\leq(1-\de_p)\sum_{x\in S_{o,o'}}p^{(\pi)}_{x}\gm_{x}^p =(1-\de_p) Z_{1}^{(p)}(z,v,g\circ \pi),
\end{align*}
where we also applied the Jensen inequality in the third line. Note that we do not have control over $\de_p$ so far. This will be done later by suitable choice of $x_0$. We get by estimating $c_{x}^{(\pi)}\leq1$ in \eqref{e:ka} for all $x\in \pi(S_{o,o'})$
\begin{align*}
\ka_o^{(p)}(g) \leq\frac{\sum_{\pi\in\Pi}\ap{\sum_{x\in S_{o,o'}} p_{x}^{(\pi)}\gm_{x}}^p}{\sum_{\pi\in\Pi} Z_{1}^{(p)}(z,v,g\circ\pi)}\leq 1-\de_p.
\end{align*}
Let us give an estimate for $\de_p$.
For the application of Lemma~\ref{l:Jensen2} we chose $\lm=1-p^{(\pi)}_{x_0}$.
We estimate the term in $\de_p$ involving $\lm$ and $p$ by the quantity $c_2:=c_2(I)$
\begin{align*}
c_{2}&:=\min_{z\in I+i[0,1]}\min_{\pi\in\Pi}\min_{x\in S_{o,o'}}\min\set{\frac{p(p-1)}{2}p^{(\pi)}_{x}(1-p^{(\pi)}_{x}), (1-p^{(\pi)}_{x})\ap{1-(1-p^{(\pi)}_{x})^{p-1}}},
\end{align*}
As the $p_x$'s are uniformly larger than zero, we have $c_2>0$.
Let now $x_0,\oh x\in S_{o,o'}$ be such that $\gm_{x_0 }=\min_{x\in S_{o,o'}}\gm_{x}$, $\gm_{\oh x }=\max_{x\in S_{o,o'}}\gm_{x}$. This is the part where the assumption $ \Vis_{\gm}(g,\eps)\neq S_{o,o'}$ comes into play. Namely, by assumption  $x_0$ must be in $ S_{o,o'}\setminus \Vis_{\gm}(g,\eps)$ and, hence, $\gm_{x_0}/\gm_{\oh x}\leq \eps$.
For the application of Lemma~\ref{l:Jensen2} we chose $a=\sum_{x}\frac{p_{x}}{(1-p^{(\pi)}_{x_0})}\gm_{x}$ and $b=\gm_{x_0}$.
We obtain by the definition of $c_1$
\begin{align*}
\frac{a}{b}=\frac{(1- p^{(\pi)}_{x_0})\gm_{x_0}} {\sum_{x\in S_{o,o'}} p^{(\pi)}_{x}\gm_{x}}
= \frac{\gm_{x_0}}{\gm_{\oh x}}\frac{(1- p^{(\pi)}_{x_0})} {\sum_{x\in S_{o,o'}} p^{(\pi)}_{x}\gm_{x}/\gm_{\oh x} }
\leq \eps\frac{(1-p^{(\pi)}_{x_{0}})}{p^{(\pi)}_{\oh x}}\leq \frac{\eps}{c_1}.
\end{align*}
Hence, $
\de_{p}\geq c_2\ap{1-{\eps}/c_1}^2>0.$
\end{proof}

%%%%%%%%%%%%%%%%%%%%%%%%%%%%%%%%%%%%%%%%%%%%%%%%%%%%%%%%%%%%%%%%%%%%%%%%%%%%%%%%%%%%%%%%%%%%%%%%%%%%%%%%%%%%%%%%%%%%%%%%%%%%%%%%
%%% Subsection
%%%%%%%%%%%%%%%%%%%%%%%%%%%%%%%%%%%%%%%%%%%%%%%%%%%%%%%%%%%%%%%%%%%%%%%%%%%%%%%%%%%%%%%%%%%%%%%%%%%%%%%%%%%%%%%%%%%%%%%%%%%%%%%%
\subsection{Case 2: Geometric and arithmetic means}\label{ss:Case2}

This subsection deals with Case~2 as discussed in Subsection~\ref{ss:kaformula}.  Define
\begin{equation*}\label{e:eps0}
\eps_0:=\min_{z\in I+i[0,1]}\min_{x,y\in S_{o,o'}}\frac{\Im h_x}{\Im h_{y}}.
\end{equation*}
By the definition of $\Sigma$ and $I\subset\Sigma$ compact we have $\eps_0>0$.  Let again
$z\in I+i[0,1]$ and $v\in\R$ be fixed for this subsection.

\medskip

\begin{prop}\label{p:invisIm}
Let $\eps>0$, $\eps'\in(0,\eps \eps_0)$. There is $\de=\de(\eps,\eps')>0$ such that for all $\pi\in\Pi$ and $g\in \h^{S_{o,o'}}$ with
\begin{align*}
    \Vis_{\gm}(g,\eps)=S_{o,o'}
\qqand
\Vis_{\Im}^{o'}(\pi,g,\eps')\neq \pi(S_{o'}),
\end{align*}
we have
\begin{align*}
c_{x}^{(\pi)}\leq (1-\de)\quad\mbox{for some $x\in\pi(S_{o'})$}.
\end{align*}
\begin{proof}
Since $Q_{x,y}$ is the ratio of a geometric and an arithmetic mean it is one if and only if the averaged quantities are equal. For a finer analysis we introduce,
for  $\pi\in\Pi$, $x,y\in\pi(S_{o'})$,
\begin{align*}
\te_{x,y} :=\frac{\Im g_x\Im h_{y}\gm_y}{\Im g_y\Im h_{x}\gm_x}
\end{align*}
and note that $\te_{x,y}=1/\te_{y,x}$. Then
\begin{align*}
Q_{x,y}= \frac{2}{ \ap{\sqrt{\te_{x,y}}+\sqrt{\te_{y,x}}}}.
\end{align*}
We have $Q_{x,y}<1$ if and only if $\te_{x,y}\neq 1$.\\
Let $\pi\in\Pi$ be such that there is $\Vis_{\Im}^{o'}(\pi,g,\eps')\neq \pi(S_{o'})$, which implies the existence of $x,y\in\pi(S_{o'})$ such that ${\Im g_{y}}/{\Im g_{x}}\leq\eps'$.
Since, by assumption, $x, y\in \Vis_{\gm}(g,\eps)$ we have $\gm_{y}/\gm_{x}>\eps$.
Hence,
\begin{align*}
\te_{x,y}>\frac{\eps_{0}\eps}{\eps'}.
\end{align*}
By the choice of $\eps'\in(0,\eps_0\eps)$, we have $\te_{x,y}>1$. Moreover, employing the inequality $a/(1+a^2)\leq b/(1+b^2)$ for $a\geq b\geq1$ we get
\begin{align*}
Q_{x,y}=2\frac{\sqrt{\te_{x,y}}}{1+\te_{x,y}}\leq \frac{2\sqrt{\eps_0\eps \eps'}}{\eps_0 \eps+\eps'}<1.
\end{align*}
The statement now follows from Lemma~\ref{l:using_alQ<1}.
\end{proof} \end{prop}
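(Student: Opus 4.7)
The plan is to produce a pair $\ow x,\ow y\in\pi(S_{o'})$ that meets the hypotheses of Lemma~\ref{l:using_alQ<1}, namely $\ow y\in\Vis_{\Im}^{o'}(\pi,g,\eta)$ for some fixed $\eta>0$ and $Q_{\ow x,\ow y}^{(\pi)}\leq c$ for some $c<1$ that is uniform in $g$ and in $z\in I+i[0,1]$. The starting point is the observation that, by its very definition, $Q_{x,y}^{(\pi)}$ is the ratio of a geometric mean to an arithmetic mean of the two positive quantities $A=\Im g_x^{(\pi)}\Im h_y\gm_y^{(\pi)}$ and $B=\Im g_y^{(\pi)}\Im h_x\gm_x^{(\pi)}$. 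Introducing $\te_{x,y}=A/B$, one has the identity $Q_{x,y}^{(\pi)}=2\sqrt{\te_{x,y}}/(1+\te_{x,y})$, a unimodal function of $\te$ that attains the value $1$ only at $\te=1$ and decreases monotonically as $\te$ drifts away from $1$. Hence it suffices to locate a pair for which $\te_{x,y}$ is bounded away from $1$ by a constant depending only on $\eps,\eps',\eps_0$.

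Such a pair will come from combining the three available hypotheses multiplicatively. The definition of $\eps_0$ together with compactness of $I\subset\Sigma$ gives $\Im h_x/\Im h_y\geq\eps_0$ for every pair; the full $\gm$-visibility $\Vis_\gm(g,\eps)=S_{o,o'}$ gives $\gm_x^{(\pi)}/\gm_y^{(\pi)}>\eps$ for every pair; and the failure of $\Im$-visibility produces some $\ow x\in\pi(S_{o'})\setminus\Vis_{\Im}^{o'}(\pi,g,\eps')$, which by definition comes with at least one $z_0\in\pi(S_{o'})$ such that $\Im g_{\ow x}^{(\pi)}/\Im g_{z_0}^{(\pi)}\leq\eps'$. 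I would then take $\ow y$ to be any vertex on which $\Im g^{(\pi)}$ attains its maximum over $\pi(S_{o'})$; since $\Im g_{\ow y}^{(\pi)}\geq\Im g_{z_0}^{(\pi)}$, a fortiori $\Im g_{\ow x}^{(\pi)}/\Im g_{\ow y}^{(\pi)}\leq\eps'$. Multiplying the three ratio bounds gives $\te_{\ow y,\ow x}\geq\eps_0\eps/\eps'$, which by the standing hypothesis $\eps'<\eps_0\eps$ is strictly larger than $1$, and substituting back into the formula for $Q$ yields the explicit, uniform bound
\[
Q_{\ow x,\ow y}^{(\pi)}=Q_{\ow y,\ow x}^{(\pi)}\leq\frac{2\sqrt{\eps_0\eps\eps'}}{\eps_0\eps+\eps'}=:c<1,
\]
where $c<1$ follows from AM--GM and the strict inequality $\eps'<\eps_0\eps$.

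Finally I would confirm that $\ow y$ sits in a visibility set with a fixed threshold: by maximality of $\Im g_{\ow y}^{(\pi)}$ one has $\Im g_{\ow y}^{(\pi)}/\Im g_z^{(\pi)}\geq 1$ for all $z\in\pi(S_{o'})$, so $\ow y\in\Vis_{\Im}^{o'}(\pi,g,\eta)$ for any $\eta\in(0,1)$; feeding the pair $(\ow x,\ow y)$ and the constant $c$ into Lemma~\ref{l:using_alQ<1} then delivers the claimed $\de=\de(\eps,\eps')>0$ with $c_{\ow x}^{(\pi)}\leq 1-\de$. The only real subtlety I anticipate is the matching between the two visibility notions in the selection of the pair: $\ow x$ must witness the failure of $\Im$-visibility while $\ow y$ must be genuinely visible, and it is the symmetry of $Q$ in its two arguments, together with the maximality of $\Im g_{\ow y}^{(\pi)}$, that lets both requirements be satisfied at once. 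Once the arithmetic--geometric mean interpretation of $Q$ is in place, however, the remaining computation is entirely explicit and all constants track through directly.
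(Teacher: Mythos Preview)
Your proposal is correct and follows essentially the same route as the paper: rewrite $Q_{x,y}^{(\pi)}$ via the ratio $\te_{x,y}$, use the three hypotheses to push $\te$ uniformly away from $1$, extract the explicit bound $Q\le 2\sqrt{\eps_0\eps\eps'}/(\eps_0\eps+\eps')<1$, and feed this into Lemma~\ref{l:using_alQ<1}.

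The one place where you are actually \emph{more} careful than the paper is the selection of $\ow y$. The paper simply picks $x,y\in\pi(S_{o'})$ with $\Im g_y/\Im g_x\le\eps'$ from the failure of $\Im$-visibility and then invokes Lemma~\ref{l:using_alQ<1} without further comment; but that lemma requires the second index to lie in $\Vis_{\Im}^{o'}(\pi,g,\eta)$ for some fixed $\eta$, and an arbitrary witness $x$ to $\Im g_y/\Im g_x\le\eps'$ need not be visible. Your choice of $\ow y$ as the maximiser of $\Im g^{(\pi)}$ over $\pi(S_{o'})$ closes this small gap cleanly (and of course only improves the ratio $\Im g_{\ow x}/\Im g_{\ow y}$), so your version is a mild sharpening of the exposition rather than a different argument.
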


%%%%%%%%%%%%%%%%%%%%%%%%%%%%%%%%%%%%%%%%%%%%%%%%%%%%%%%%%%%%%%%%%%%%%%%%%%%%%%%%%%%%%%%%%%%%%%%%%%%%%%%%%%%%%%%%%%%%%%%%%%%%%%%%
%%% Subsection
%%%%%%%%%%%%%%%%%%%%%%%%%%%%%%%%%%%%%%%%%%%%%%%%%%%%%%%%%%%%%%%%%%%%%%%%%%%%%%%%%%%%%%%%%%%%%%%%%%%%%%%%%%%%%%%%%%%%%%%%%%%%%%%%
\subsection{Case 3: A general bound on the relative arguments}\label{ss:Case3}

In this subsection we show that there are always $\pi\in\Pi$ and $x,y\in S_{o,o'}$ such that $\cos\al_{x,y}^{(\pi)}$ is uniformly smaller than one. We define a quantity, related to the `minimal angle with the real axis', by
\begin{align*}
\de_0&:=\frac{1}{4}\min_{z\in I+i[0,1]}\min_{x\in S_{o,o'}}\min\set{\ma{\arg h_x},\ma{\pi-\arg h_x}}.
\end{align*}
By the definition of $\Sigma$ and $I\subset\Sigma$ compact we have $\de_0>0$.  Let again
$z\in I+i[0,1]$ be fixed for this subsection.
\medskip

\begin{prop}\label{p:al>0}
There is $c=c(\de_0)<1$, $\lm_0=\lm_0(\de_0,\eps_0)>0$ and $R:[0,\lm_0)\to[0,\infty)$ with $R(\lm)\to0$ as $\lm\to0$ such that for all $\lm\in[0,\lm_0)$, $v\in[-\lm,\lm]$, $g\in \h^{S_{o,o'}}\setminus B_{R(\lm)}(h)$ there is $\pi\in \Pi$ with
\begin{align*}
Q_{x,y}^{(\pi)}\leq c\qquad\mbox{or}\qquad
\cos    \al_{x,y}^{(\pi)}\leq c,
\end{align*}
either for  some  $x,y\in \pi(S_{o'})$ or for $x=o'$ and all $y\in \pi(S_{o})$.
\end{prop}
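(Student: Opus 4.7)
The idea is a dichotomy based on whether the displacement vectors $\{g_x - h_x : x \in S_{o'}\}$ in the upper sphere are approximately positive-real multiples of a common direction, and to exploit the rotation by $2\arg h_{o'}$ that the recursion at $o'$ introduces in the alignment case.

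\emph{Step 1 (Setup and key identity.)} The hypothesis $g \notin B_{R(\lm)}(h)$ means $\gm(g_{x_*}, h_{x_*}) > R(\lm)$ for some $x_* \in S_{o,o'}$. Lemma~\ref{l:eta} then yields a lower bound on $|g_{x_*} - h_{x_*}|$ in terms of $R(\lm)$ and $\eps_0$. Applying the recursion formula \eqref{e:Gm} to both $g^{(\pi)}_{o'} = \Psi^{(T)}_{z-v,o'}((g_{\pi(k)})_k)$ and $h_{o'} = \Gm_{o'}(z,T)$, subtracting, and using the label-invariance $h_{\pi(k)} = h_k$ gives the key identity
\[
g^{(\pi)}_{o'} - h_{o'} \;=\; g^{(\pi)}_{o'}\, h_{o'} \biggl( v + \sum_{k \in S_{o'}} |t(o', k)|^2 \bigl(g_{\pi(k)} - h_k\bigr) \biggr).
\]

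\emph{Step 2 (Dichotomy.)} Fix a threshold $c_* \in (c,1)$ to be chosen. In the first case, some $\pi \in \Pi$ and distinct $x, y \in \pi(S_{o'})$ satisfy $Q^{(\pi)}_{x,y} \le c_*$ or $\cos \al^{(\pi)}_{x,y} \le c_*$; then taking $c \geq c_*$ gives the first alternative of the conclusion. In the complementary ``alignment'' case, for every $\pi \in \Pi$ and every $x, y \in \pi(S_{o'})$ both $Q^{(\pi)}_{x,y}, \cos \al^{(\pi)}_{x,y} > c_*$. Since $Q$ is the quotient of a geometric by an arithmetic mean and $\cos \al \le 1$, these constraints force the vectors $\{g_x - h_x : x \in S_{o'}\}$ (whenever nonzero) to be positive real multiples of a common direction $\xi \in \C \setminus \{0\}$ with tightly commensurate imaginary parts, with the error of the alignment controlled by $1 - c_*$.

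\emph{Step 3 (The rotation by $2\arg h_{o'}$.)} In the alignment case, substitute the approximate collinearity into the identity of Step~1. The bracket becomes a positive real multiple of $\xi$ modulo a $v$-contribution of size $\lm$ and terms of size $1 - c_*$. The prefactor $g^{(\pi)}_{o'} h_{o'}$ has argument close to $2\arg h_{o'}$, because $g^{(\pi)}_{o'}$ is close to $h_{o'}$ once $R(\lm)$ and $\lm$ are small (by continuity of $\Psi_{z-v,o'}^{(T)}$ on a neighborhood of the fixed point $h$). Hence
\[
\arg\bigl(g^{(\pi)}_{o'} - h_{o'}\bigr) \;=\; 2\arg h_{o'} + \arg \xi + O\bigl(\lm + (1 - c_*)\bigr).
\]
The definition of $\de_0$ forces $\arg h_{o'} \in (4\de_0,\, \pi - 4\de_0)$ modulo $2\pi$, so $2\arg h_{o'}$ has $\Sp^1$-distance at least $8\de_0$ from $0$. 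Consequently, for every $y \in \pi(S_o)$ with $g_y \neq h_y$,
\[
\al^{(\pi)}_{o',y} \;=\; 2\arg h_{o'} + \arg \xi - \arg(g_y - h_y) + O\bigl(\lm + (1 - c_*)\bigr),
\]
and $\ma{\al^{(\pi)}_{o',y}} \ge 4\de_0$ whenever $\arg(g_y - h_y)$ lies within $\sim \de_0$ of the ``generic'' upper-sphere direction $\arg \xi$. For $y$ with $g_y = h_y$ the convention $Q^{(\pi)}_{o',y} = 0$ trivially furnishes the bound. Choosing $c := \cos(4\de_0) < 1$ and then $c_*$ close enough to $1$ and $\lm_0$ small enough to swallow the $O$-term completes the argument in the ``generic'' subcase.

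\emph{Main obstacle.} The delicate point, and where the ``all $y \in \pi(S_o)$'' clause bites, is the non-generic situation in which some $y \in S_o \setminus \{o'\}$ has $\arg(g_y - h_y)$ accidentally near $2\arg h_{o'} + \arg \xi$, so that $\cos \al^{(\pi)}_{o',y}$ is close to $1$. Here one must exploit instead the $Q^{(\pi)}_{o',y}$ bound: the alignment case combined with the quantitative lower bound from Step~1 pins the magnitude $|g^{(\pi)}_{o'} - h_{o'}|$ to that of the upper-sphere deviation, whereas $\Im g^{(\pi)}_{o'}$ and $\Im g_y$ may be scaled independently, so that the ratio $\te_{o',y} = \Im g^{(\pi)}_{o'} \Im h_y \gm_y / (\Im g_y \Im h_{o'} \gm_{o'})$ is bounded away from $1$, whence $Q^{(\pi)}_{o',y}$ is bounded away from $1$. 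Working through these estimates uniformly in $\pi$ and in the label class of $y$, and then selecting $R(\lm) \to 0$ as $\lm \to 0$ so that every approximate equality above becomes a strict inequality, yields the proposition.
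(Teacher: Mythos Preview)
Your outline has the right architecture (dichotomy, the identity in Step~1, the rotation induced by the prefactor $g_{o'}^{(\pi)}h_{o'}$), but Step~3 contains a genuine error and the ``main obstacle'' paragraph is a red herring.

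\textbf{The error in Step~3.} You write that $g_{o'}^{(\pi)}$ is close to $h_{o'}$ ``once $R(\lm)$ and $\lm$ are small (by continuity)''. But the hypothesis is $g\in\h^{S_{o,o'}}\setminus B_{R(\lm)}(h)$: the vector $g$ lies \emph{outside} the ball, not inside, and may be arbitrarily far from $h$. No continuity argument is available, and $\arg g_{o'}^{(\pi)}$ need not be close to $\arg h_{o'}$. The correct observation is much cruder and much more robust: since $g_{o'}^{(\pi)}\in\h$ one has $\arg g_{o'}^{(\pi)}\in(0,\pi)$, and since $\arg h_{o'}\in[4\de_0,\pi-4\de_0]$ by definition of $\de_0$, the sum $\arg(g_{o'}^{(\pi)}h_{o'})$ lies in $(4\de_0,2\pi-4\de_0)$, hence $\ma{\arg(g_{o'}^{(\pi)}h_{o'})}\geq 4\de_0$. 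That inequality, not an approximate equality with $2\arg h_{o'}$, is what drives the conclusion.

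\textbf{The ``main obstacle'' is not an obstacle.} Your worry that some $y\in\pi(S_o)$ might have $\arg(g_y-h_y)$ accidentally aligned so that $\cos\al_{o',y}^{(\pi)}$ is near $1$ does not arise, and the fallback to $Q_{o',y}^{(\pi)}$ is unnecessary. The point you are missing is that the alignment assumption $\ma{\al_{x,y}}\leq\de_0$ is being made for $x,y\in\pi(S_{o'})$ \emph{for every} $\pi\in\Pi$. Because $a(S_o\setminus\{o'\})\subseteq a(S_{o'})$, any vertex in the lower sphere can be permuted into the upper sphere by some $\pi'\in\Pi$; chaining through a common vertex then gives $\ma{\arg((g_x-h_x)\ov{(g_y-h_y)})}\leq 2\de_0$ for \emph{all} $x,y\in S_{o,o'}$ (this is Lemma~\ref{l:sum}(1) in the paper). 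So in the alignment case every $g_y-h_y$, including those in the lower sphere, already lies within $2\de_0$ of the common direction $\xi$, and the estimate $\ma{\al_{o',y}^{(\pi)}}\geq\de_0$ follows uniformly in $y$.

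\textbf{On the role of $R(\lm)$.} Your $O(\lm)$ for the $v$-term is too vague: the relative perturbation is $v/\tau$ with $\tau=\sum_{y\in\pi(S_{o'})}|t(o',y)|^2(g_y-h_y)$, and $|\tau|$ could be small. The fix is to choose $\pi$ so that the vertex $x_*$ witnessing $g\notin B_{R(\lm)}(h)$ lies in $\pi(S_{o'})$; then Lemma~\ref{l:sum}(2) gives $|\tau|\geq |t(o',x_*)|^2|g_{x_*}-h_{x_*}|\geq |t(o',x_*)|^2\,\eps_1(R(\lm))$ via Lemma~\ref{l:eta}(3). One then calibrates $R(\lm)$ so that $\eps_1(R(\lm))$ is a fixed multiple of $\lm$, which bounds $|v/\tau|$ by $\de_0/(1+\de_0)$ and hence (Lemma~\ref{l:arg}) the argument shift by $\de_0$.
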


Note that the $Q_{x,y}\leq c$ is in the statement only to exclude the trivial case which is $g_{x}=h_{x}$ or $g_{y}=h_{y}$.

The proof carries some similarities to the analysis in Subsection~\ref{ss:relative_arguments}. We need some basic geometric observations. The first one is about perturbations of arguments. Recall that the argument $\arg:\C\setminus\{0\}\to\Sp^{1}$ is a continuous group homomorphism  and we have the modulus $\ma{\cdot}=d_{\Sp^{1}}(\cdot,1)$ in $\Sp^{1}$. \medskip

\begin{lemma}\label{l:arg} Let $\xi,\zeta\in\C$, $\xi\neq0$ such that $\arg\xi\in[-\pi/2,\pi/2]$ and $\mo{\zeta}<1$.
Then
\begin{align*}
\ma{\arg\ap{1+ \xi+\zeta}}\leq \ma{\arg \xi}+\frac{\mo{\zeta}}{1-\mo{\zeta}}.
\end{align*}
In particular,  for $\xi=0$,  we get
\begin{align*}
\ma{\arg\ap{1+ \zeta}}&\leq \frac{\mo{\zeta}}{1-\mo{\zeta}}.
\end{align*}
\begin{proof}
We write $\xi$, $\zeta$ in polar coordinates $\xi=r e^{i\de}$ and $\zeta=\eps e^{i\te}$. We denote the left hand side of the inequality by $\be$, i.e., $\be=\ma{\arg\ap{1+ r e^{i\de}+\eps e^{i\te}}}$. Assume without loss of generality that $\de\ge0$. Since we have for $z\in \C$ with $\Re z,\Im z\geq0$ and $u\geq0$ that $\ma{\arg{z+iu}}\geq\ma{\arg(z-iu )}$ we conclude $\be\leq\ma{\arg{(1+ r e^{i\de}+\eps e^{i|\te|})}}$. Hence, we may assume without loss of generality that $\te\ge0$. We  use $\arg z=\arctan (\Im z/\Re z)$ and subadditivity, monotonicity  on $[0,\infty)$  and  $0\leq\arctan'\leq1$ to calculate
\begin{align*}
\be&=\arctan\ap{\frac{r\sin\de+\eps\sin\te}{1+r\cos\de+\eps\cos\te}}\\
&\leq\arctan\ap{\frac{r\sin\de}{1+r\cos\de+\eps\cos\te}}+
\arctan\ap{\frac{\eps\sin\te}{1+r\cos\de+\eps\cos\te}}\\
&\leq \arctan\ap{\frac{\sin\de}{\cos\de}}+ \arctan\ap{\frac{\eps}{1-\eps}}\\
&\leq\de+ \frac{\eps}{1-\eps}.
\end{align*}
The statement about $\xi=0$ is a direct consequence from the first statement.
\end{proof}
\end{lemma}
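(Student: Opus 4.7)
The natural approach is to pass to polar coordinates: write $\xi = re^{i\de}$ with $\de = \arg\xi \in [-\pi/2,\pi/2]$ and $\zeta = \eps e^{i\te}$ with $\eps = \mo{\zeta}<1$, and read off the argument of $w:=1+\xi+\zeta$ from its real and imaginary parts. The crucial preliminary observation is that $\Re w = 1+r\cos\de+\eps\cos\te \geq 1-\eps > 0$, using $\cos\de\geq 0$ (because $\de\in[-\pi/2,\pi/2]$) together with $\mo{\zeta}<1$. This guarantees that $\arg w$ lies in $(-\pi/2,\pi/2)$ and is honestly given by $\arctan(\Im w/\Re w)$, with no branch-cut trouble.

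Before estimating, I would make two symmetry reductions. First, the map $(\xi,\zeta)\mapsto(\ov\xi,\ov\zeta)$ conjugates $w$ and leaves both $\ma{\arg\xi}$ and $\mo{\zeta}$ invariant, so one may assume $\de\geq 0$. Second, for $a>0$ and $b,u\geq 0$ an elementary computation gives $\ma{\arg(a+i(b+u))}\geq \ma{\arg(a+i(b-u))}$, because $b+u\geq \mo{b-u}$ and $\arctan$ is increasing on $\R$. Applied with $a = 1+r\cos\de+\eps\cos\te$, $b = r\sin\de$, $u = \eps\mo{\sin\te}$, this lets me also assume $\te\geq 0$, at the cost of possibly enlarging the left-hand side.

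With $\de,\te\geq 0$ the numerator of $\arg w$ is nonnegative, and subadditivity of $\arctan$ on $[0,\infty)$ lets me split
\[
\ma{\arg w} \;\leq\; \arctan\!\ap{\tfrac{r\sin\de}{1+r\cos\de+\eps\cos\te}} + \arctan\!\ap{\tfrac{\eps\sin\te}{1+r\cos\de+\eps\cos\te}}.
\]
In the first piece, the summand $1+\eps\cos\te\geq 1-\eps>0$ in the denominator is nonnegative, so dropping it bounds the piece by $\arctan(\tan\de)=\de=\ma{\arg\xi}$. In the second piece, using $r\cos\de\geq 0$, $\sin\te\leq 1$ and $\eps\cos\te\geq -\eps$, the argument of arctan is at most $\eps/(1-\eps)$, and since $\arctan(x)\leq x$ for $x\geq 0$ this contributes at most $\mo{\zeta}/(1-\mo{\zeta})$. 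Summing the two pieces gives the claim, and the special case $\xi=0$ is immediate by taking $r=0$.

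I do not foresee any real obstacle. The least automatic step is the auxiliary comparison $\ma{\arg(a+i(b+u))}\geq \ma{\arg(a+i(b-u))}$ needed for the $\te\geq 0$ reduction, but this reduces to the trivial inequality $b+u\geq\mo{b-u}$. Everything else is routine $\arctan$ arithmetic, and the sharpness of the denominator bound $1+\eps\cos\te\geq 1-\eps$ is exactly what produces the $1/(1-\mo{\zeta})$ blow-up on the right-hand side.
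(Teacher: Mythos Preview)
Your proof is correct and follows essentially the same route as the paper: polar coordinates, the two symmetry reductions to $\de\ge0$ and $\te\ge0$ (via conjugation and the $\ma{\arg(a+i(b+u))}\ge\ma{\arg(a+i(b-u))}$ comparison), then subadditivity and monotonicity of $\arctan$ on $[0,\infty)$ to split and bound the two pieces. If anything, your version is slightly more careful in explicitly noting $\Re w\ge1-\eps>0$ to justify the $\arctan$ representation and in spelling out the justification $b+u\ge\mo{b-u}$ for the second reduction.
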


The second auxiliary lemma deals with sums of complex numbers.\medskip
%Recall that the map $\tau_{x}:\h^{S_{x}}\to \h$, $g\mapsto\sum_{y\in S_{x}}g_{y}$ extends to a linear map $\C^{S_{x}}\to \C$.

\begin{lemma}\label{l:sum} Let $\de\in[0,\pi/2]$, $\xi\in\C^{S_{o,o'}}$ with $\xi_x\neq0$ for all $x\in S_{o,o'}$  and $\arg(\xi_x\ov\xi_y)\in[-\de,\de]$ for all $\pi\in\Pi$, $x,y\in\pi(S_{o'})$. Then,
\begin{itemize}
\item [(1.)] $\arg (\xi_x\ov\xi_{y})\in[-2\de,2\de]$ for all $x,y\in S_{o,o'}$.
\item [(2.)]  $\mo{\sum_{y\in\pi(S_{o'})}\xi_{y}}\geq\mo{\xi_{x}}$ for all $\pi\in\Pi$ and $x\in\pi(S_{o'})$.
\item [(3.)] $\arg\ap{\sum_{y\in\pi(S_{o'})}\xi_{y}\ov{\xi_{x}}}\in[-2\de,2\de]$ for all $\pi\in\Pi$ and $x\in S_{o,o'}$.
\end{itemize}
\begin{proof}
The numbers $\xi_{x}$, $x\in S_{o,o'}$ shall be thought as non zero vectors in $\C$. Then, the assumption about the arguments means that they  point approximately in the same direction. The first statement follows as we can compare two elements $\xi_{x}$, $\xi_{y}$ always over a third one. The second and the third statement can be easily seen by drawing a picture. However,  let us give a precise argument.

(1.) Let $x,y\in S_{o,o'}$ and $\pi\in\Pi$ such that $x\in\pi(S_{o'})$.  Suppose $y\in\pi(S_{o})$ otherwise the statement follows directly by the assumption. Let $\pi_y$ be a permutation that permutes $y\in\pi(S_{o})$ with a vertex  $\ow y\in\pi(S_{o'})$ with $a(y)=a(\ow y)$ and leaves every other vertex invariant.
In particular,  one has $y\in\pi_{y}\circ\pi(S_{o'})$.
Let  $w\in \pi(S_{o'})$ with $w\neq\ow y$. We have by assumption that $\aM{\arg(\xi_{x}\ov\xi_{w})}\le\de$ as  $w\in \pi(S_{o'})$ and $\aM{\arg(\xi_{y}\ov\xi_{w})}\le\de$ as $w\neq \ow y$ and therefore $w\in\pi_{y}\circ\pi(S_{o'})$. Statement (1.) now follows from the triangle inequality of $\am{\cdot}$.

(2.) One calculates $|\sum_{y\in\pi(S_{o'})}\xi_{y}| =|1+\sum_{y\in\pi(S_{o'})}\xi_{y}/\xi_{x}|\mo{\xi_x}$. Now, since $\arg(\xi_{y}/\xi_{x})=\arg(\xi_x\ov\xi_y)\in[-\pi/2,\pi/2]$ the first factor is larger or equal to one.

(3.) Let first $x\in\pi(S_{o'})$ and calculate
$
\arg({\sum_{y\in\pi(S_{o'})}\xi_{y}\ov{\xi_{x}}}) =\arg({1+\sum_{y\in\pi(S_{o'})}\xi_{y}/\xi_{x}}).$
Note that the terms in the sum on the right hand side have arguments in $[-\de,\de]$ and therefore the sum has argument in $[-\de,\de]$. By Lemma~\ref{l:arg} applied with $\xi$ equal to the sum and $\zeta=0$ yields the statement.
If on the other hand $x\in\pi(S_{o})$, we find by a similar argument as in (2.), a vertex $w\in\pi(S_{o'})$ with $\aM{\arg(\xi_{x}\ov \xi_{w})}\leq\de$. By the triangle inequality for $\am{\cdot}$ and the considerations of the first case, we get,
$\am{\arg({\sum_{y\in\pi(S_{o'})}\xi_{y}\ov{\xi_{x}}}) }\leq \am{\arg({\sum_{y\in\pi(S_{o'})}\xi_{y}\ov{\xi_{w}}}) }+ \am{\arg(\xi_{w}\ov{\xi_{x}})} \leq2\de.
$
\end{proof}
\end{lemma}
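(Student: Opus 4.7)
The plan is to prove the three claims in the order stated, exploiting the combinatorial fact recorded at the beginning of Section~\ref{s:expansion} that $a(S_o\setminus\{o'\})\subseteq a(S_{o'})$: for any $y\in S_o\setminus\{o'\}$ there is $\ow y\in S_{o'}$ with $a(\ow y)=a(y)$, and the transposition swapping $y$ with $\ow y$ belongs to $\Pi$. Combined with $|S_{o'}|\ge 2$ (which follows because (M0)--(M2) force every vertex degree to be at least three), this will let me connect any two indices $x,y\in S_{o,o'}$ through a bridging vertex $w$ that lies simultaneously in two suitably chosen permuted upper spheres, so that the hypothesis can be applied twice and combined via the triangle inequality for $\ma{\cdot}$ in $\Sp^1$.

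For (1), fix $x,y\in S_{o,o'}$ and a permutation $\pi\in\Pi$ with $x\in\pi(S_{o'})$. If $y\in\pi(S_{o'})$ the bound $\ma{\arg(\xi_x\ov\xi_y)}\le\delta$ is just the hypothesis. Otherwise $y\in\pi(S_o)\setminus\{o'\}$, and I pick $\ow y\in\pi(S_{o'})$ with $a(\ow y)=a(y)$ and let $\pi_y\circ\pi\in\Pi$ be the composition with the transposition of $y$ and $\ow y$, so that $\pi_y\circ\pi(S_{o'})=(\pi(S_{o'})\setminus\{\ow y\})\cup\{y\}$. Since $|\pi(S_{o'})|\ge 2$ I can choose $w\in\pi(S_{o'})$ with $w\neq\ow y$; then $w$ belongs to both $\pi(S_{o'})$ and $\pi_y\circ\pi(S_{o'})$, so the hypothesis applied twice yields $\ma{\arg(\xi_x\ov\xi_w)}\le\delta$ and $\ma{\arg(\xi_y\ov\xi_w)}\le\delta$, and the triangle inequality for $\ma{\cdot}$ gives $\ma{\arg(\xi_x\ov\xi_y)}\le 2\delta$.

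For (2) and (3) with $x\in\pi(S_{o'})$, I factor out $\xi_x$ and write
\begin{equation*}
\sum_{y\in\pi(S_{o'})}\xi_y\ov\xi_x \;=\; |\xi_x|^2\,\ap{1+\sum_{y\in\pi(S_{o'})\setminus\{x\}}\xi_y/\xi_x}.
\end{equation*}
By hypothesis each ratio $\xi_y/\xi_x$ has argument in $[-\delta,\delta]\subseteq[-\pi/2,\pi/2]$ and hence non-negative real part, so the parenthesized factor has real part at least $1$ and in particular modulus at least $1$; this yields (2). The same sign information shows that $\sum_{y\neq x}\xi_y/\xi_x$ itself has argument in $[-\delta,\delta]$, so Lemma~\ref{l:arg} applied with this sum as $\xi$ and $\zeta=0$ gives $\ma{\arg(1+\sum_{y\neq x}\xi_y/\xi_x)}\le\delta$, which is (3) for $x\in\pi(S_{o'})$. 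For the remaining case $x\in\pi(S_o)\setminus\{o'\}$ I bridge again: by the argument of~(1) there is $w\in\pi(S_{o'})$ with $\ma{\arg(\xi_x\ov\xi_w)}\le 2\delta$, and combining this with the bound $\ma{\arg(\sum_{y\in\pi(S_{o'})}\xi_y\ov\xi_w)}\le\delta$ just proved, one more application of the triangle inequality in $\Sp^1$ delivers the $2\delta$ bound claimed in~(3).

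I do not anticipate a genuine obstacle: the content is purely geometric once the combinatorial observation about bridging vertices is in hand, and the whole argument rests on the triangle inequality for $\ma{\cdot}$ and on Lemma~\ref{l:arg}. The only delicate point to check carefully is that a bridging $w\in\pi(S_{o'})\setminus\{\ow y\}$ actually exists, i.e.\ that $|S_{o'}|\ge 2$; this is precisely where the standing assumptions (M0)--(M2) on the substitution matrix enter, via the observation in Section~\ref{s:LIOp} that every vertex in our trees has degree at least three.
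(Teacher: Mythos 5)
Parts (1.), (2.) and the first case of (3.) are correct and follow essentially the same route as the paper: bridging through label-preserving transpositions, the cone argument for sums, and Lemma~\ref{l:arg} with $\zeta=0$; your treatment of (2.) is in fact slightly cleaner, since you factor out $\xi_x$ correctly where the paper's displayed identity has a small typo. The problem is the last step of (3.), in the case $x\in\pi(S_o)\setminus\{o'\}$. There you invoke the \emph{conclusion} of (1.), which only gives $\aM{\arg(\xi_x\ov{\xi_w})}\leq 2\de$ for the bridging vertex $w\in\pi(S_{o'})$, and then add the bound $\aM{\arg(\sum_{y\in\pi(S_{o'})}\xi_y\ov{\xi_w})}\leq\de$ from the first case. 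The triangle inequality then yields $3\de$, not the claimed $2\de$, so the step as written does not deliver the statement. The constant is not cosmetic here: in the proof of Proposition~\ref{p:al>0} (Claim~2) the $2\de_0$ from this lemma is combined with a further $\de_0$ and compared against the $4\de_0$ built into the definition of $\de_0$; with $3\de$ in place of $2\de$ that comparison degenerates unless $\de_0$ is redefined.

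The repair is already contained in your own argument for (1.): for $x$ in the lower sphere, do not pass through the $2\de$ estimate but use the \emph{intermediate} bridge directly. Swap $x$ itself with a vertex $\ow x\in\pi(S_{o'})$ of the same label (such an $\ow x$ exists since $a(S_o\setminus\{o'\})\subseteq a(S_{o'})$ and $\pi$ preserves labels) and pick any $w\in\pi(S_{o'})$ with $w\neq\ow x$ (possible since $|S_{o'}|\geq2$). Then $x$ and $w$ both lie in $\pi_x\circ\pi(S_{o'})$, so the hypothesis gives $\aM{\arg(\xi_x\ov{\xi_w})}\leq\de$, and combining this with the $\de$-bound for $\arg(\sum_{y\in\pi(S_{o'})}\xi_y\ov{\xi_w})$ from the first case gives the asserted $2\de$. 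This is exactly how the paper argues, so with this one-line correction your proof coincides with the paper's.
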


We are now in the position to prove Proposition~\ref{p:al>0}.

\begin{proof}[Proof of Proposition~\ref{p:al>0}]
Let $R:[0,\lm_0)\to[0,\infty)$ with $\lm_{0}:={\eps_0\de_0}/\ap{1+\de_0}$ be defined as
\begin{align*}\label{e:R}
R(\lm)=\frac{\de(\lm)^2}{(\eps_0-\de(\lm))\eps_0},\quad\mbox{with } \de(\lm)=\frac{(1+\de_0)}{\de_0}\frac{\lm}{\min_{y\in S_{o'}}|t(o',y)|^2}.
\end{align*}
Take $g\in \h^{S_{o,o'}}\setminus B_{R(\lm)}(h)$. If there is $x\in S_{o,o'}$ such that $g_{x}=h_{x}$ then $Q_{x,y}=0$ by definition for all $y\in S_{o,o'}$. In this case we are done. Therefore,  assume that $g_{x}\neq h_{x}$ for all $x\in S_{o,o'}$.
Moreover, assume $\aM{\al_{x,y}}\leq\de_0$ for all $\pi\in\Pi$ and $x,y\in\pi(S_{o'})$ since otherwise we are also done.

Our aim is to show $\ma{{\al_{o',y}^{(\pi)}}}>\de_0$ for some $\pi\in\Pi$ and all $y\in\pi(S_{o})$.
We start with a claim.

\emph{Claim 1: There is  $\pi\in\Pi$ such that
\begin{align*}
\mo{\sum_{y\in\pi(S_{o'})}\mo{t(o',y)}^2\ap{g_{y}-h_{y}}} \geq \frac{(1+\de_0)}{\de_0}\lm.
\end{align*}}
Proof of Claim~1.
By Lemma~\ref{l:sum}~(2.) we get for all $\pi\in \Pi$ and $x\in \pi(S_{o'})$
\begin{align*}
\mo{\sum_{y\in\pi(S_{o'})}\mo{t(o',y)}^2\ap{ g_{y}-h_{y}}}\geq \mo{t(o',x)}^2\mo{ g_{x}-h_{x}}.
\end{align*}
Moreover, we assumed $g\not\in B_{R(\lm)}(h)$, so there is $\oh x\in S_{o,o'}$ such that $\gm_{\oh x}\geq R(\lm)$. By the definition  $\eps_{1}(r)=\inf _{g\in \h^{S_{o,o'}}\setminus B_r(h)}\min_{x\in S_{o,o'}}\mo{g_{x}-h_{x}}$ for $r\geq0$ from Subsection~\ref{ss:eps1},  the definition of $R$ above and Lemma~\ref{l:eta}~(3.) we get
\begin{align*}
\mo{g_{\oh x}-h_{\oh x}}\geq \eps_{1}(R(\lm))=\de(\lm)=\frac{(1+\de_0)}{\de_0}\frac{\lm}{\min_{y\in S_{o'}}|t(o',y)|^2}.
\end{align*}
By choosing $\pi\in\Pi$ such that $\oh x\in\pi(S_{o'})$ we conclude the claim.

\emph{Claim 2: There is  $\pi\in\Pi$ such that for all $x\in S_{o,o'}$}
\begin{align*}
\aM{\arg{\ap{\sum_{y\in\pi(S_{o'})}\mo{t(o',y)}^2\ap{g_{y}-h_{y}}-v} \ov{(g_{x}-h_{x})}} }\leq 3\de_0.
\end{align*}
Proof of Claim~2. Set $\tau:=\sum_{y\in\pi(S_{o'})}\mo{t(o',y)}^2\ap{g_{y}-h_{y}}$. Note that $|v/\tau|\leq \de_0/(1+\de_0)$  by the assumption $v\in[-\lm,\lm]$ and Claim~1. We get, by the triangle inequality, Lemma~\ref{l:arg} (applied with $\xi=0$ and $\zeta=v/\tau$)  and Lemma~\ref{l:sum}~(3.)
\begin{align*}
\aM{\arg{\ap{\tau-v}\ov{(g_{x}-h_{x})}} }=\aM{\arg{\ap{1-v/\tau}} }+\aM{\arg{\ap{\tau\ov{(g_{x}-h_{x})}}}}\leq3\de_0.
\end{align*}

By the definitions $g_{o'}^{(\pi)}=\Psi_{z-v,o'}^{(T)}\ap{(g_{\pi(x)})_{x\in S_{o'}}}$ and $h_{o'}=\Psi_{z,o'}^{(T)}\ap{h_{S_{o'}}}$, we get  for $\pi\in \Pi$
\begin{align*}
\arg\ap{g_{o'}^{(\pi)}-h_{o'}} &=\arg\ap{\frac{-1}{z-v+\sum_{x\in\pi(S_{o'})}|t(o',x)|^2 g_{x}}-\frac{-1}{z+\sum_{x\in\pi(S_{o'})}|t(o',x)|^2 h_{x}}}\\
&=\arg\ap{g_{o'}^{(\pi)}h_{o'}
\ap{\sum_{y\in\pi(S_{o'})}\mo{t(o',y)}^2\ap{g_{y}-h_{y}} -v}}.
\end{align*}
This calculation is similar to the proof of Lemma~\ref{l:taual}.
By definition of $\de_0$ we have that
$\arg h_{o'}\in[4\de_0,\pi-4\de_0]$.
Moreover, since $g_{o}^{(\pi)}\in\h$ we have $\arg g_{o}^{(\pi)}\in(0,\pi)$. Hence, $\am{\arg({g_{o'}^{(\pi)}h_{o'}})}>4\de_0$.
Combining this with  the equality above, Claim~2 and the triangle inequality we get for $\pi$ taken from Claim~2
\begin{align*}
\aM{\al_{o',x}^{(\pi)}}&\geq \aM{\arg({g_{o'}^{(\pi)}h_{o'}})} -\aM{\arg{\ap{\sum_{y\in\pi(S_{o'})}\mo{t(o',y)}^2\ap{g_{y}-h_{y}}-v} \ov{(g_{x}-h_{x})}}}\\
&\geq\de_0
\end{align*}
 for all $x\in\pi(S_{o})$.
The assertion follows by letting $c:=\cos\de_0$.
\end{proof}

\subsection{Proof}\label{ss:kaproof}
In this final subsection we put the pieces together in order to prove Proposition~\ref{p:ka}.

\begin{proof}[Proof of Proposition~\ref{p:ka}]
Let $I\subset\Sigma$ be compact, $h=\Gm_{S_{o,o'}}(z,T)$ for $z\in I+i[0,1]$ and $\eps_0=\eps_0(I)$, $\de_0=\de_0(I)$ defined in Subsection~\ref{ss:Case2} and~\ref{ss:Case3}. Moreover, let $R:[0,\lm_0)\to[0,\infty)$ and $\lm_0>0$ be given by Proposition~\ref{p:al>0}. Choose $\de_1\in(0,c_1)$, where $c_1$ is defined in Subsection~\ref{ss:Case1} and $\de_2\in(0,\eps_0 \de_1)$.

Let $\lm\in[0,\lm_0)$, $g\in \h^{S_{o,o'}}\setminus B_{R(\lm)}(h)$, $v\in[-\lm,\lm]$  and $z\in I+i[0,1]$. We now treat the three cases which we already distinguished above:

\emph{Case~1: $\Vis_{\gm}(g,\de_1)\neq S_{o,o'}$.}
The statement  follows directly from Proposition~\ref{p:invisgm}.

\emph{Case~2: $\Vis_{\gm}(g,\de_1)=S_{o,o'}$ but $\Vis_{\Im}^{o'}(\pi,g,\de_2)\neq\pi(S_{o'})$ for some $\pi\in\Pi$.}
The statement  follows by the combination of Proposition~\ref{p:invisIm} and Lemma~\ref{l:using_c<1}.

\emph{Case~3: $\Vis_{\gm}(g,\de_1)=S_{o,o'}$ and $\Vis_{\Im}^{o'}(\pi,g,\de_2)=\pi(S_{o'})$ for all $\pi\in\Pi$.} We can now find  $\pi$, $i$, $\ow x$, $\ow y$ which satisfy the assumptions of Lemma~\ref{l:using_alQ<1} as follows: By Proposition~\ref{p:al>0}  there is $\pi\in\Pi$ such that  either for some $x,y\in\pi(S_{o'})$ or $x=o'$ and all $y\in \pi(S_{o})$ we have $Q_{x,y}^{(\pi)}\leq c$ or $\cos\al_{x,y}^{(\pi)}\leq c$.\\
In the first case we have $y\in\Vis_{\Im}^{o'}(\pi,g,\de_2)$ by assumption. We choose consequently $i=o'$, $\ow x=x$ and $\ow y=y$. \\
For the second case let $w\in \Vis_{\Im}^{o}(\pi,g,\de_2)$. We set $i=o$, $\ow y=w$ and $\ow x=o'$ if $w\neq o'$ or pick $\ow x\in\pi(S_{o})\setminus\{o'\}$ arbitrary if $w=o'$.\\
With these choices we see by Lemma~\ref{l:using_alQ<1} that the assumptions of Lemma~\ref{l:using_c<1} are satisfied and the statement follows.
\end{proof}

\section{Off~diagonal perturbations}\label{s:offdiagonal}

In this section we take a look at operators whose off diagonal elements are perturbed by a small random quantity. Indeed, a similar result as Theorem~\ref{main3} can be proven.

Such a model was first introduced by \cite{Ham} (see also \cite{Kes} for a survey) under the name \emph{first passage percolation}. There, a   graph is studied  whose edge weights are given by independent random variables. The edge weights are interpreted as passage times and then transit times are studied.  Here, we want to look at the spectral properties of the corresponding operators rather than transit times.

Let us be more precise about our model. For  a vertex
$x\in\V$  which is not the root $o\in\V$, let $\dot{x}\in\V$ be the unique vertex which precedes $x$ with respect to the root. Let $\lm\geq0$ and $v\in\W_{\mathrm{rand}}(\Om,\T)$. We define by $\te(\lm)=(\te_{e}(\lm))_{e\in\E}$ the random variables given by
\begin{align*}
\te_{\{\dot{x},x\}}(\lm,{\om})=(1+\lm v_{x}^{\om}),\quad\om\in\Om.
\end{align*}
For a given label invariant operator $T$ we define the operator $H^{\te(\lm,\om)}:\ell^{2}(\V,\nu)\to\ell^{2}(\V,\nu)$ by
\begin{align*}
(H^{\te(\lm,\om)}\ph)(x)=\sum_{y\sim x}\te_{\{x,y\}}(\lm,{\om})t(x,y)\ph(y)
+w(x)\ph(x).
\end{align*}

The following result for these operators with random off diagonal perturbations is similar to Theorem~\ref{main3}  for random diagonal perturbations, i.e., random potentials. We will sketch a proof below.\medskip

\begin{thm}\label{t:offdiag} Let $T$ be a label invariant operator.  There exists a finite subset $\Sigma_0\subset\si(T)$ such that for every compact set $I\subseteq \si(T) \setminus \Sigma_0$ there exists $\lm_0>0$ such that for all $\lm\in[0,\lm_0]$ and almost every $\om\in\Om$
\begin{align*}
I\subseteq \si_{\mathrm{ac}}(H^{\te(\lm,\om)}) \qqand I\cap \si_{\mathrm{sing}}(H^{\te(\lm,\om)})=\emptyset.
\end{align*}
\end{thm}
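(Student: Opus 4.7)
\section*{Proof proposal for Theorem~\ref{t:offdiag}}

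The plan is to mirror the strategy used for random potentials: we establish uniform mean value bounds for the Green functions $G_x(z,H^{\te(\lm)})$ in the region $z \in I + i(0,1]$ for $\lm$ small, and then deduce absence of singular spectrum and inclusion of $I$ in the absolutely continuous spectrum via the criterion of Theorem~\ref{t:Klein}, the pointwise convergence of the Green function, and the vague convergence of the spectral measures. As in Chapter~\ref{c:main3}, this will be achieved through a vector inequality for the quantity $\EE(\gm(\Gm_x(z,H^{\te(\lm)}),\Gm_x(z,T))^p)$ indexed by the labels, combined with a Perron--Frobenius argument. The set $\Sigma_0$ to be excluded is the same as in Theorem~\ref{main3}.

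First, I would write down the perturbed recursion relation. Since the edge weight on $\{\dot y,y\}$ is multiplied by $(1+\lm v_y^\om)$, the recursion of Proposition~\ref{p:Gm} becomes
\begin{align*}
-\frac{1}{\Gm_x(z,H^{\te(\lm,\om)})} = z - w(x) + \sum_{y\in S_x} (1+\lm v_y^\om)^2\, |t(x,y)|^2\, \Gm_y(z,H^{\te(\lm,\om)}).
\end{align*}
Crucially, the random factor $(1+\lm v_y^\om)^2$ at the forward neighbor $y$ depends only on $v^\om$ restricted to $\V_y$, so the independence/identical distribution properties (P1), (P2) transfer verbatim to the family $\om\mapsto\Gm_x(z,H^{\te(\lm,\om)})$. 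Thus the probabilistic setup (in particular the use of label-invariant permutations from Definition~\ref{d:Pi}) is identical.

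Second, I would rederive the two-step expansion Proposition~\ref{p:expansion}. Writing $\Psi^{(\te)}_{z,x} = \rho\circ\si_{z}\circ\tau^{(\te)}_x$ where $\tau^{(\te)}_x(g) = \sum_{y\in S_x}(1+\lm v_y)^2|t(x,y)|^2 g_y$, the only difference from the analysis in Section~\ref{s:Contraction} is the presence of the multiplicative perturbation inside $\tau^{(\te)}_x$. Here the second inequality of Lemma~\ref{l:ti} is tailor-made: applying it pointwise in each summand (with $\lm$ replaced by $(1+\lm v_y)^2 -1 = O(\lm)$) yields
\begin{align*}
\gm(\tau^{(\te)}_x(g),\tau_x(h)) \leq (1+c(\lm))\sum_{y\in S_x} p_{x,y}(h)c_{x,y}(g,h)\gm_y + C(\lm),
\end{align*}
with $c(\lm),C(\lm)\to 0$ as $\lm\downarrow 0$, uniformly for $z\in I+i[0,1]$, where one uses the uniform lower bound on $\Im\Gm_y(z,T)$ and the uniform upper bound on $|\Gm_y(z,T)|$ from Lemma~\ref{l:Gmbounds} and Theorem~\ref{t:LIG}. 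Iterating this one step further yields exactly the same two-step expansion as Proposition~\ref{p:expansion}, with adjusted $c,C$.

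Third, the averaged contraction coefficient $\ka_o^{(p)}$ is defined with respect to the unperturbed quantities and so the uniform contraction Proposition~\ref{p:ka} applies verbatim. What needs to be checked is only that the replacement of $g_{o'}(z,v_{o'})=\Psi^{(T)}_{z-v_{o'},o'}(g_{S_{o'}})$ by the off-diagonal analogue $g_{o'}^{(\te)}(z) := -1/(z - w(o') + \sum_{y\in S_{o'}}(1+\lm v_y)^2|t(o',y)|^2 g_y)$ in the definition of the $c_y$'s still allows the proof of Proposition~\ref{p:al>0} to go through. Inspecting that proof, the key computation is the identity
\begin{align*}
\arg(g^{(\pi)}_{o'}-h_{o'}) = \arg\!\Big(g^{(\pi)}_{o'}h_{o'}\cdot\Big(\sum_{y\in\pi(S_{o'})}|t(o',y)|^2(g_y-h_y) + \lm R(\lm,g,v)\Big)\Big),
\end{align*}
where the remainder $R$ collects linear and quadratic terms in $\lm v_y$ and is bounded by a constant times $\max_y(|g_y|+|h_y|)$. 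Since $g\in \h^{S_{o,o'}}\setminus B_{R(\lm)}(h)$ with $R(\lm)\to 0$, for $\lm$ small the perturbation $\lm R$ is dominated by the main term exactly as $v$ was in the original argument, and the geometric lemmas (Lemma~\ref{l:arg}, Lemma~\ref{l:sum}) yield the same bound on the relative argument. This is the step I expect to be the main obstacle, because one has to track how the multiplicative random factor couples with the Green functions at the upper sphere $\pi(S_{o'})$.

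With these three ingredients in place, the vector inequality of Proposition~\ref{p:vector} is established by the same computation (with the new $c(\lm), C(\lm)$). The Perron--Frobenius argument in the proof of Theorem~\ref{t:EGm} then yields, for every compact $I\subset \Sigma$ and $p>1$, a $\lm_0>0$ and a function $c(\lm)\to 0$ with
\begin{align*}
\sup_{x\in\V}\sup_{E\in I}\sup_{\eta\in(0,1]} \EE\!\left(\gm(\Gm_x(E+i\eta,H^{\te(\lm)}),\Gm_x(E+i\eta,T))^p\right) \leq c(\lm),\qquad \lm\in[0,\lm_0].
\end{align*}
Combining this with the FHS inequality $|\xi|\leq 4\gm(\xi,\zeta)\Im\zeta + 2|\zeta|$, Proposition~\ref{p:G} relating $\Gm_x$ to $G_x$, Fatou's lemma and Fubini's theorem exactly as in the proof of Theorem~\ref{t:KleinG} yields for every $x\in\V$ that $\liminf_{\eta\downarrow 0}\int_I|G_x(E+i\eta,H^{\te(\lm,\om)})|^p\,dE<\infty$ and $\liminf_{\eta\downarrow 0}\Im G_x(E+i\eta,H^{\te(\lm,\om)})>0$ for almost every $(\om,E)\in\Om\times I$. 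Theorem~\ref{t:Klein} and Lemma~\ref{l:mu} then give $I\cap\si_{\mathrm{sing}}(H^{\te(\lm,\om)})=\emptyset$ and $I\subseteq\si_{\mathrm{ac}}(H^{\te(\lm,\om)})$ almost surely, finishing the proof with $\Sigma_0$ as in Theorem~\ref{main3}.
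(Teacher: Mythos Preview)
Your overall architecture is right and matches the paper's: a two--step expansion, uniform contraction of the averaged coefficient, the vector inequality, Perron--Frobenius, and then Theorem~\ref{t:Klein} plus Lemma~\ref{l:mu}. The difference is in how you handle the multiplicative perturbation, and the paper's route is considerably simpler than the one you propose.

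The paper does \emph{not} keep the perturbation inside the recursion map and redo Proposition~\ref{p:al>0}. Instead it absorbs the random factor into the variable by setting
\[
g_x(\lm,\om):=(1+\lm v_x^\om)^2\,\Gm_x(z,H^{\te(\lm,\om)}),\qquad x\in S_{o,o'}.
\]
With this choice the recursion reads $\Gm_{o'}(z,H^{\te(\lm,\om)})=\Psi^{(T)}_{z,o'}(g_{S_{o'}})$, i.e.\ the \emph{unperturbed} map with $v=0$. The two--step expansion (Proposition~\ref{p:expansion3}) then produces $Z_0(z,0,g)$; the only place the randomness enters is in passing from $\gm(g_{o'},h_{o'})=\gm((1+\lm v_{o'})^2\Gm_{o'},h_{o'})$ to $\gm(\Gm_{o'},h_{o'})$, and that single step is exactly what the second inequality of Lemma~\ref{l:ti} is for. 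The payoff is that the contraction estimate is now $\ka_o^{(p)}(z,0,g)\leq 1-\de$, which is Proposition~\ref{p:ka} with $v=0$. Since $R(0)=0$ there, the bound holds on \emph{all} of $\h^{S_{o,o'}}$: no ball has to be excluded, and Proposition~\ref{p:al>0} is not touched at all (this is the content of Proposition~\ref{p:kaoffdiag}).

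Your third step, by contrast, keeps the perturbation in the map and sets out to rerun Proposition~\ref{p:al>0} with the off--diagonal remainder. That is more work than necessary, and the sketch has a soft spot: you say the remainder $\lm R$ is ``dominated by the main term exactly as $v$ was in the original argument,'' but in the diagonal case $v\in[-\lm,\lm]$ is a fixed real number independent of $g$, whereas your $R$ is of order $\max_y|g_y|$. One can still close this (for large $|g_y|$ one has $|g_y-h_y|\gtrsim|g_y|$, and for bounded $|g_y|$ the remainder is $O(\lm)$), but it is not ``exactly as before'' and needs a case split. The paper's substitution sidesteps all of this: once the factor is inside $g$, the problem reduces to the already--proved Proposition~\ref{p:ka} at $v=0$, and the rest of your outline (vector inequality, Theorem~\ref{t:EGm}--type bound, and the conclusion via Theorem~\ref{t:Klein}) goes through unchanged.
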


For the truncated Green function of the operators $H^{\te(\lm,\om)}$, the recursion formula \eqref{e:Gm} reads as
\begin{align*}
-\frac{1}{\Gm_{x}\ap{z,H^{\te(\lm,\om)}}}=z-m_{a(x)}+\sum_{y\in S_{x}}|\te_{\{x,y\}}(\lm,{\om})|^{2}|t(x,y)|^{2} \Gm_{y}\ap{z,H^{\te(\lm,\om)}}.
\end{align*}
Define for $z\in\h$, $\lm\geq0$  and  $\om\in\Om$
\begin{align*}
&h_x:=\Gm_{x}(z,T),&& x\in S_{o}\cup S_{o'},    \\
&g_{x}(\lm,\om):=|\te_{{i,x}}(\lm,\om)|^{2} \Gm_{x}(z,H^{\te(\lm,\om)}),\quad &&x\in S_{i},\,i\in\{o,o'\}
\end{align*}
and denote $h=(h_{x})_{x\in S_{o,o'}}$, $g(\lm,\om)=(g_{x}(\lm,\om))_{x\in S_{o,o'}}$.
With these definitions we have
\begin{align*}
\Gm_{o'}\ap{z,H^{\te(\lm,\om)}}=\Psi^{(T)}_{z,o'}\ap{g_{S_{o'}(\lm,\om)}}.
\end{align*}
We get the following two step expansion formula similar to Proposition~\ref{p:expansion}.
\medskip

\begin{prop}\label{p:expansion3}(Two step expansion - off~diagonal version.)
Let $I\subset \Sigma$ be compact and $o\in\V$. Then there exist $c,C:[0,\infty)\to[0,\infty)$ with $c(\lm), C(\lm)\to 0$ for $\lm\to0$ such that for all $z\in I+i[0,1]$, $\lm\in[0,\infty)$, $\om\in\Om$ we have
\begin{align*}
{\gm\ap{\Gm_{o}\ap{z,H^{\te(\lm,\om)}},\Gm_{o}(z,T)}} &\leq(1+c(\lm))Z_{0}(z,0,g(\lm,\om))+C(\lm).
\end{align*}
\begin{proof}
We follow the proof of Proposition~\ref{p:expansion} with only one difference. Whenever the  first inequality of Lemma~\ref{l:ti} is used, we employ the second  inequality of Lemma~\ref{l:ti} instead.
\end{proof}
\end{prop}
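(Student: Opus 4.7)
The plan is to mimic the proof of Proposition~\ref{p:expansion} while accounting for the fact that the perturbation now enters multiplicatively into the off diagonal terms rather than additively into the diagonal. First I would observe that the recursion formula given just before the proposition rewrites as
\begin{equation*}
\Gm_{x}\ap{z,H^{\te(\lm,\om)}}=\Psi^{(T)}_{z,x}\ap{g_{S_{x}}(\lm,\om)},
\end{equation*}
since the random factors $|\te_{\{x,y\}}(\lm,\om)|^{2}$ have been absorbed into the definition of $g$. In particular no $v$ appears explicitly in the unperturbed recursion map, and correspondingly the unperturbed quantity $\Gm_{o}(z,T)=\Psi^{(T)}_{z,o}(h_{S_{o}})$ is obtained by applying the same map to $h$. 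This is why the proposition compares against $Z_{0}(z,0,g(\lm,\om))$ rather than $Z_{0}(z,v,g(\lm,\om))$.

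Next I would invoke the decomposition $\Psi^{(T)}_{z,o}=\rho\circ\si_{z}\circ\tau$ from Section~\ref{s:Contraction}. By Lemma~\ref{l:Psicontr} the maps $\rho$ and $\si_{z}$ (with no additive $v$ to correct for) are quasi contractions on $(\h,\gm)$, so
\begin{equation*}
\gm\ap{\Gm_{o}(z,H^{\te(\lm,\om)}),\Gm_{o}(z,T)}\le\gm\ap{\tau_{o}(g_{S_{o}}),\tau_{o}(h_{S_{o}})}=\sum_{x\in S_{o}}p_{x}c_{x}\gm(g_{x},h_{x}).
\end{equation*}
The key departure from the proof of Proposition~\ref{p:expansion} occurs when bounding each $\gm(g_{x},h_{x})$. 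For $x\neq o'$ we have $g_{x}=(1+\lm v_{x}^{\om})^{2}\Gm_{x}(z,H^{\te})$, so the perturbation manifests as a multiplicative factor $(1+\lm')$ with $\lm'=(1+\lm v_{x}^{\om})^{2}-1$, uniformly small for $\lm$ small. Here I would apply the \emph{second} (multiplicative) inequality of Lemma~\ref{l:ti} instead of the first, yielding $\gm(g_{x},h_{x})\le(1+\lm')^{-1}(c_{1}(\lm')\gm(\Gm_{x}(z,H^{\te}),h_{x})+c_{1}(\lm')-1)$. For $x=o'$ I would substitute $\Gm_{o'}(z,H^{\te})=\Psi^{(T)}_{z,o'}(g_{S_{o'}})$, apply the second inequality of Lemma~\ref{l:ti} once to peel off the multiplicative factor $|\te_{\{o,o'\}}|^{2}$, and then run the same decomposition/formula~(3) of Lemma~\ref{l:Psicontr} on the next sphere to generate the double sum $\sum_{y\in S_{o'}}p_{o'}p_{y}c_{o'}c_{y}\gm(g_{y},h_{y})$. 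Collecting the two contributions reconstructs $Z_{0}(z,0,g(\lm,\om))$, with the accumulated prefactor absorbed into $(1+c(\lm))$ and the accumulated additive errors absorbed into $C(\lm)$.

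The main obstacle, which is really a bookkeeping one, will be ensuring that the resulting $c(\lm)$ and $C(\lm)$ depend only on $\lm$ (and on the compact set $I$), not on $\om$, $z$, or the choice of root $o$. The coefficient $c_{1}(\lm')=(1+2\sqrt{2}|\lm'||h|/\Im h)^{2}$ in Lemma~\ref{l:ti} requires control of $|h_{x}|$ from above and of $\Im h_{x}$ from below, uniformly in $x$ and in $z\in I+i[0,1]$. The first is supplied by Lemma~\ref{l:Gmbounds}, and the second by the definition of $\Sigma$ together with the hypothesis $I\subset\Sigma$ compact, since $\Gm_{x}(\cdot,T)$ has a continuous extension with strictly positive imaginary part to $\h\cup\Sigma$ (Theorem~\ref{t:LIG}). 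Once these uniform bounds are plugged in, $\lm'\to0$ uniformly in $\om$ forces both $c(\lm)\to0$ and $C(\lm)\to0$, and the estimate closes exactly as in Proposition~\ref{p:expansion}.
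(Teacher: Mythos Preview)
Your approach is the paper's own: mimic Proposition~\ref{p:expansion} and swap the additive inequality of Lemma~\ref{l:ti} for the multiplicative one. There is one mis-step, though. You propose to apply the second inequality of Lemma~\ref{l:ti} to each $\gm(g_{x},h_{x})$ with $x\in S_{o}\setminus\{o'\}$, but this is both unnecessary and counterproductive: those $\gm_{x}=\gm(g_{x}(\lm,\om),h_{x})$ are by definition the very quantities occurring in $Z_{0}(z,0,g(\lm,\om))$, so stripping the factor $|\te|^{2}$ would leave you with $\gm(\Gm_{x}(z,H^{\te}),h_{x})$ instead, and your claimed reconstruction of $Z_{0}(z,0,g(\lm,\om))$ would fail. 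In the proof of Proposition~\ref{p:expansion}, Lemma~\ref{l:ti} enters only at the two recursion steps---to remove the additive $-v_{i}$ from $\si_{z-v_{i}}$ for $i\in\{o,o'\}$---never on the individual sphere terms. Here there is no $v$ inside $\si_{z}$, so at the $o$-level nothing is needed; the multiplicative correction is required only at the $o'$-slot, where the entry feeding $\tau_{o}$ is $|\te_{\{o,o'\}}|^{2}\Gm_{o'}(z,H^{\te})$ and the second inequality strips $|\te_{\{o,o'\}}|^{2}$ to expose $\Gm_{o'}=\Psi^{(T)}_{z,o'}(g_{S_{o'}})$, after which you expand once more via $\tau_{o'}$. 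Your closing remarks on the uniformity of $c(\lm)$ and $C(\lm)$ are correct.
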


The following estimate for the contraction coefficient is a corollary of Proposition~\ref{p:ka}.\medskip

\begin{prop}\label{p:kaoffdiag}
Let $I\in \Sigma$ be compact, $p>1$ and $o\in \V$. There exists $\de=\de_o(I)>0$ such that for all $z\in I+i(0,1]$ and $\lm\in[0,1)$ and for all $\om\in\Om$
\begin{align*}
\ka_{o}^{(p)}(z,0,g(\lm,\om))\leq 1-\de.
\end{align*}
\begin{proof}
The statement directly follows by Proposition~\ref{p:ka}. Note that since there is no potential and $R(0)=0$ for the function $R$ of Proposition~\ref{p:ka}, we even do not have to exclude a ball.
\end{proof}
\end{prop}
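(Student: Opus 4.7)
The plan is to recognize that the statement is essentially Proposition~\ref{p:ka} with its perturbation parameter set to zero, applied pointwise to the random argument $g(\lm,\om)$ produced by the off-diagonal model. The key observation is that the averaged contraction coefficient $\ka^{(p)}_o(z,v,g)$, as defined in Subsection~\ref{ss:kaformula}, depends only on $z$, on the diagonal potential parameter $v$, and on its vector argument $g\in\h^{S_{o,o'}}$ together with the fixed reference $h=\Gm_{S_{o,o'}}(z,T)$; it does not itself see the off-diagonal coefficients $\te$. Hence $\ka^{(p)}_o(z,0,\cdot)$ is literally the same function that appears in Proposition~\ref{p:ka} for the special choice $v=0$, and the whole task reduces to invoking that proposition once and then inserting the specific $g=g(\lm,\om)$.

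First I would apply Proposition~\ref{p:ka} at the degenerate endpoint $\lm=0$ of the range $[0,\lm_o(I)]$ allowed there. At this endpoint the admissible potentials $v\in[-\lm,\lm]$ collapse to the singleton $\{0\}$, which is exactly what we need. Inspecting the explicit function $R$ from Proposition~\ref{p:al>0}, namely $R(\lm)=\de(\lm)^{2}/((\eps_0-\de(\lm))\eps_0)$ with $\de(\lm)$ linear in $\lm$, one reads off $R_o(0)=0$, so that the excluded hyperbolic ball $B_{R_o(0)}(h)$ reduces to the single point $\{h\}$. Proposition~\ref{p:ka} therefore delivers a constant $\de_o=\de_o(I,p)>0$ such that
$$\ka^{(p)}_o(z,0,g)\leq 1-\de_o\qquad\text{for every }z\in I+i[0,1]\text{ and every }g\in\h^{S_{o,o'}}\setminus\{h\}.$$
The crucial point is that this bound is uniform in $g$ once $g\neq h$; it neither depends on $\lm$ nor on $\om$.

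Next I would insert the specific argument $g(\lm,\om)=(|\te_{\{i,x\}}(\lm,\om)|^{2}\Gm_x(z,H^{\te(\lm,\om)}))_{x\in S_{o,o'}}$ coming from the off-diagonal model. For $\lm\in[0,1)$ and $v^{\om}_x\in[-1,1]$ the weights $|\te_{\{\dot x,x\}}(\lm,\om)|^{2}=(1+\lm v^{\om}_x)^{2}$ are strictly positive, and the truncated Green functions lie in $\h$ by Lemma~\ref{l:Herglotz}, so $g(\lm,\om)\in\h^{S_{o,o'}}$. Whenever $g(\lm,\om)\neq h$ the bound from the previous paragraph applies verbatim with the same constant $\de_o=\de_o(I,p)$. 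The one remaining case is $g(\lm,\om)=h$, which occurs only in the trivial situation $\lm=0$ (where $\te\equiv 1$, so $H^{\te(0,\om)}=T$); there all $\gm_x$ vanish, both $Z_0$ and $Z_1^{(p)}$ are identically zero, and $\ka^{(p)}_o$ is formally $0/0$. Extending $\ka^{(p)}_o$ by the natural convention $\ka^{(p)}_o=0$ at this single degenerate point makes the inequality hold trivially.

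The argument is thus a one-line specialization of Proposition~\ref{p:ka}, and the only conceivable obstacle is cosmetic: clarifying the $0/0$ convention at $g=h$, or equivalently noting that for $\lm>0$ one has $g(\lm,\om)\neq h$ almost surely and the inequality is substantive, whereas the $\lm=0$ case is vacuous. No new contraction analysis is required because Case~1, Case~2, and Case~3 of the proof of Proposition~\ref{p:ka} have already handled every $g$ bounded away from $h$, and bounded away for $\lm=0$ means simply $g\neq h$.
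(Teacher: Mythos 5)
Your proposal is correct and follows essentially the same route as the paper: invoke Proposition~\ref{p:ka} with the potential parameter set to zero, observe that $R_o(0)=0$ so the excluded ball degenerates, and then insert the specific random argument $g(\lm,\om)$, which lies in $\h^{S_{o,o'}}$ because the weights $(1+\lm v_x^{\om})^{2}$ stay positive for $\lm<1$. Your extra remark about the degenerate case $g=h$ (the $0/0$ convention) is a harmless clarification the paper leaves implicit; otherwise the two arguments coincide.
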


Putting the statements of these two propositions into the calculations of the proof of the vector  inequality, Proposition~\ref{p:vector}, we obtain a similar result. In particular,  for all  $I\subset\Sigma$ compact and $p>1$ there are $\lm_0>0$ and $\de>0$ such that for all $\lm\in[0,\lm_0)$
\begin{align*}
\EE\gm\leq (1-\de)P\;\EE\gm+C(\lm),
\end{align*}
where $C(\lm)\to0$ as $\lm\to0$,
\begin{align*}
\EE\gm&:=\int_{\Om}\ap{ \gm\ap{\Gm_{o(j)}(z,H^{\te(\lm,\om)}), \Gm_{o(j)}(z,T)}^{p}}_{j\in\A}d\PP(\om).
\end{align*}
and $P:\A\times\A\to[0,\infty)$ as it is defined above Proposition~\ref{p:vector}.
Now, by the Perron-Frobenius argument, we get a theorem similar to Theorem~\ref{t:EGm}.\medskip

\begin{thm}\label{t:EGm2}
Let $I\subset\Sigma$ be compact and $p> 1$. Then there exist $\lm_0=\lm_0(I,p)>0$ and $c:[0,\lm_0)\to[0,\infty)$ monotone decreasing with $c(\lm)\to0$ for $\lm\to0$ such that for all $\lm\in[0,\lm_0)$
\begin{equation*}
\sup_{x\in\V}\sup_{E\in I}\sup_{\eta\in(0,1]}\int_{\Om}{\gm\ap{\Gm_x(E+i\eta, H^{\te(\lm,\om)}),\Gm_x(E+i\eta, T)}^p}d\PP(\om)\leq c(\lm).
\end{equation*}
\end{thm}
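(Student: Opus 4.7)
My plan is to mimic the proof of Theorem~\ref{t:EGm} line by line, exploiting that all the heavy lifting has already been done in Propositions~\ref{p:expansion3} and~\ref{p:kaoffdiag}, together with the remark just before the statement that the vector inequality
\begin{align*}
\EE\gm\leq(1-\de)P\,\EE\gm+C(\lm)
\end{align*}
holds for $\lm\in[0,\lm_0)$ with $C(\lm)\to0$ as $\lm\to0$ and with $P=P(z)$ the continuous stochastic matrix defined just before Proposition~\ref{p:vector}. Thus the only thing left to do is to convert this componentwise inequality in $\R^{\A}$ into a uniform scalar bound via a Perron-Frobenius argument.

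First I would fix $p>1$, a compact set $I\subset\Sigma$, and a generic $z\in I+i(0,1]$. Since $P(z)$ is stochastic, depends continuously on $z\in I+i[0,1]$, and has entries $P_{j,k}(z)$ that are strictly positive precisely when $M_{j,k}>0$, primitivity of $M$ (assumption (M2)) yields that $P(z)$ is primitive as well. Therefore the Perron-Frobenius theorem provides a positive normalized left eigenvector $u=u(z)\in\R^{\A}$, $P(z)^{\top}u=u$, which depends continuously on $z\in I+i[0,1]$.

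Next, I pair the vector inequality with $u$ and use $\langle u,P\,\EE\gm\rangle=\langle P^{\top}u,\EE\gm\rangle=\langle u,\EE\gm\rangle$ to obtain
\begin{align*}
\langle u,\EE\gm\rangle\leq(1-\de)\langle u,\EE\gm\rangle+C(\lm)\|u\|_\infty,
\end{align*}
hence $\langle u,\EE\gm\rangle\leq C(\lm)\|u\|_\infty/\de$. Setting $\eps:=\min_{j\in\A}\min_{z\in I+i[0,1]}u_j(z)$, which is strictly positive by continuity and compactness, every component $\EE\gm_j$ is bounded by $c(\lm):=C(\lm)\|u\|_\infty/(\de\eps)$, and $c(\lm)\to0$ as $\lm\to 0$. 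Finally, since $(P2)$ implies that $\om\mapsto\Gm_x(z,H^{\te(\lm,\om)})$ is identically distributed for all vertices $x$ with the same label, the bound on $\EE\gm_{a(x)}$ translates into the stated bound for every $x\in\V$, uniformly in $E\in I$ and $\eta\in(0,1]$, by continuity of the truncated Green functions in $\eta$.

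The main obstacle, as in the diagonal case, is not the Perron-Frobenius step itself but the vector inequality that feeds into it. Here, however, the excerpt has essentially already carried this out: Proposition~\ref{p:expansion3} supplies the two-step expansion with $v_{o'}=0$ (no shift of $z$ appears since the randomness is absorbed into the off-diagonal weights $|\te_{\{i,x\}}(\lm,\om)|^2$ which are hidden inside $g$), and Proposition~\ref{p:kaoffdiag} gives contraction $\ka_o^{(p)}\leq 1-\de$ on all of $\h^{S_{o,o'}}$ (no ball needs to be excluded since $R(0)=0$). Replaying the algebraic manipulation of the proof of Proposition~\ref{p:vector} verbatim, with indicator functions of excluded balls dropped, produces the required vector inequality. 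Once this is in hand, Perron-Frobenius closes the argument exactly as for Theorem~\ref{t:EGm}.
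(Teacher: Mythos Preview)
Your proposal is correct and follows essentially the same approach as the paper: the paper, too, obtains the vector inequality from Propositions~\ref{p:expansion3} and~\ref{p:kaoffdiag} by replaying the algebra of Proposition~\ref{p:vector} (with no ball excluded), and then simply invokes the Perron-Frobenius argument from the proof of Theorem~\ref{t:EGm} to pass to the scalar bound. Your write-up is in fact more detailed than the paper's, which just says ``by the Perron-Frobenius argument, we get a theorem similar to Theorem~\ref{t:EGm}.''
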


Application of Proposition~\ref{p:G}, Fatou's lemma and Fubini's theorem yields the inequality
\begin{align*}
\liminf_{\eta\downarrow 0}\int_I \mo{G_x(E+i\eta, H^{\te(\lm,\om)})}^pdE <\infty,
\end{align*}
for $p>1$, $I\subset\Sigma$ compact, $\lm>0$ sufficiently small and almost all $\om\in\Om$. Moreover, we can derive $\liminf_{\eta\downarrow 0}\Im G_{x}(E+i\eta,H^{\te(\lm,\om)})>0$ for $(\om,E)$ on a set of full $\PP\times\Leb$-measure.
By the vague convergence of the spectral measures, Lemma~\ref{l:mu}, and the criterion for the absence of singular spectrum, Theorem~\ref{t:Klein}, we conclude the statement of Theorem~\ref{t:offdiag}.

\section{Open problems and remarks}
We have proven in this chapter that the absolutely continuous spectrum of a label invariant operator remains stable on certain  subsets under sufficiently small random perturbations. We gave the proof in full detail for random potentials. Moreover, in the previous section we showed that our method also applies to off diagonal perturbations. Therefore,  it is natural to ask to what other models our method might apply as well.

One of these models are Galton-Watson trees. In a multi-type Galton-Watson tree the number of forward vertices for a vertex  of a certain type is given by a random variable. These random variables are independent in every vertex. Moreover, they are identically distributed in each vertex with the same type.
To each realization we associate a nearest neighbor operator, for instance the Laplacian $\Delta$. This way we get a family of random operators. Note that the stationary case, where one realization occurs with probability one, can be considered as a tree generated by a substitution matrix.
It is clear that if one allows  with positive probability for dead ends, i.e, vertices with no forward neighbors, then, one has to expect plenty of point spectrum. However,  if one excludes this case, one should be able to answer the following question by our methods.\medskip

\begin{question}
Does a nearest neighbor operator of a multi-type Galton-Watson tree with no dead ends have pure absolutely continuous spectrum if it is close to the stationary case in distribution, i.e., to a tree which is generated by a substitution matrix?
\end{question}

As mentioned above, the case where one allows for vertices to have no forward neighbors is much more involved. We can describe a special case as a percolation model. Let $\T=(\V,\E)$ be a tree generated by a substitution matrix and a random variable on $\E$ which deletes an edge with probability $p\in[0,1]$ and keeps it with probability $1-p$. As mentioned above, we cannot expect pure absolutely continuous spectrum in this case. However,  one might ask the following question:\medskip

\begin{question}
Does a nearest neighbor operator of a percolation tree have absolutely continuous spectrum, if $p$ is close to zero?
\end{question}

Let us turn to some questions regarding large disorder.
By the fractional moments method of Aizenman/Molchanov \cite{AM} it is not too hard to prove that for large perturbations by a random potential one has pure point spectrum  almost surely. Asking the same question for large off diagonal perturbations seems to be more involved since the perturbations are not monotone anymore. We want to ask three questions.

The first concerns the model of first passage percolation discussed in the previous section. In contrast to random trees the number of forward neighbors is fixed  and we only perturb the weights randomly.
\medskip

\begin{question}
Does a nearest neighbor operator in a first passage percolation model have almost surely pure singular or even pure point spectrum, if the perturbation is large enough?
\end{question}

We may also ask the question for Galton-Watson trees with no dead ends.\medskip

\begin{question}
Does a nearest neighbor operator of a Galton-Watson tree with no dead ends have almost surely pure singular or even pure point spectrum if it is far away from the stationary case in distribution?
\end{question}

Finally,  we want to ask the question about the percolation model. As already mentioned, there will be plenty of point spectrum in this case. Moreover, it is clear that in the subcritical case there are only compactly supported eigenfunctions since all components are finite with probability one.
\medskip

\begin{question}
Can one exclude continuous spectrum almost surely for a nearest neighbor operator of a percolation tree if $p$ is not close to one?
\end{question}

\pagestyle{empty}\begin{center}\section*{Curriculum Vitae - Matthias Keller}
%\section*{ Matthias Keller\\}
\end{center}

\subsection*{Personal and contact information}
\begin{quote}
 Department of Mathematics \\Friedrich Schiller University \\
 Ernst-Abbe-Platz 2 \\
 D-07743 Jena, Germany\\
{Office:} +49-(0)3641-9-46135\\
{E-mail:} \href{mailto:m.keller@uni-jena.de}{m.keller@uni-jena.de}

{Date of birth:}  December 31, 1980\\
{Country of Citizenship:} Germany %\\
%\textbf{Marital Status:} married, one child
\end{quote}

\subsection*{Present position}
\begin{quote}Research and teaching assistant, Friedrich Schiller University Jena
\end{quote}

\subsection*{Education}
\begin{quote}
\textbf{Ph.D. in Mathematics},\\
December 2010, Friedrich Schiller University Jena\\
Title: On the spectral theory of operators on trees\\
1st Advisor: Prof. Dr. Daniel Lenz\\
2nd Advisor: Prof. Dr. Simone Warzel\medskip

\textbf{Diploma in Mathematics},\\ June 2006,
Chemnitz University of Technology,
Germany\\
Title: Convergence Properties of Products of Random Matrices %(in German)
\\
Advisors: PD Dr. Daniel Lenz and Prof. Dr. Peter Stollmann
%\textbf{High school diploma}, June 1999, Werner-Heisenberg-Gymnasium, Chemnitz
\end{quote}

\subsection*{Academic activity}
\begin{quote}
2007-2008 Visiting  Student Research Collaborator, Princeton University

2006-2007 Research assistant, Chemnitz University of Technology %funded by German Science Foundation (DFG)\medskip

2004-2006 Student assistant, Chemnitz University of Technology

2001-2004 Student assistant, Fraunhofer Institute Chemnitz
\end{quote}

\subsection*{Awards}
\begin{quote}
2007-2010 Klaus Murmann Fellowship Programme (sdw)
\end{quote}

\end{document}